\documentclass[11pt]{article}
\usepackage{enumerate}
\usepackage{bbm}
\usepackage{amsfonts}
\usepackage{amsmath}
\usepackage{amssymb}
\usepackage{amsthm}
\usepackage{indentfirst}
\usepackage{graphicx}
\usepackage{subfigure}
\usepackage{mathrsfs}
\usepackage{booktabs}
\usepackage{epstopdf}
\usepackage{epsfig}
\usepackage{caption}
\usepackage{dsfont}
\usepackage{array}
\usepackage{bm}
\usepackage{enumitem}
\usepackage{color}

\newtheorem{theorem}{Theorem}[section]
\newtheorem{lemma}[theorem]{Lemma}

\newtheorem{corollary}[theorem]{Corollary}
\newtheorem{remark}[theorem]{Remark}
\newtheorem{definition}[theorem]{Definition}
\newtheorem{proposition}[theorem]{Proposition}

\makeatletter
\@addtoreset{equation}{section}

\newcommand{\Rmnum}[1]{\expandafter\@slowromancap\romannumeral #1@}

\makeatother

\usepackage[colorlinks,linkcolor={blue},
citecolor={blue},urlcolor={blue}]{hyperref}

\title{{\LARGE\bf
               Well-posedness of the Stochastic Landau--Lifshitz--Baryakhtar Equation in $\mathbb{R}^d$\thanks{This work was partially supported by the National Natural Science Foundation of China (Grant No. 12231008) and the Postdoctoral Fellowship Program of China Postdoctoral Science Foundation (Grant No. GZC20261670).}}
}
\author{\large Bin Liu$^{a,b}$\thanks{E-mail: binliu@mail.hust.edu.cn.}, \ Fan Xu$^{a,b}$\thanks{E-mail: 1754149729@qq.com.}\\
       $^a$School of Mathematics and Statistics, Huazhong University of Science \\and Technology, Wuhan 430074, Hubei, P. R. China \\ $^b$Hubei Key Laboratory of Engineering Modeling and Scientific Computing,\\
       Huazhong University of Science and Technology,\\
       Wuhan 430074, Hubei, P. R. China.
       }

\begin{document}
\date{}
\maketitle
\par\noindent
{\bf Abstract:}
This paper studies the Cauchy problem for the stochastic Landau--Lifshitz--Baryakhtar equation on $\mathbb{R}^d$, $d=1,2,3$, subject to Stratonovich Gaussian perturbations. We establish a low-regularity well-posedness theory for initial data in $\mathbb{L}^2$, $\mathbb{H}^1$, and $\mathbb{H}^2$. For $\mathbb{L}^2$ initial data, we prove the existence and pathwise uniqueness of global very weak solutions in dimensions one and two, while in dimension three we construct martingale very weak solutions on every finite time interval. For arbitrary $\mathbb{H}^1$ initial data, we prove the existence and uniqueness of global pathwise weak solutions in all dimensions $d\leq3$. Furthermore, for $\mathbb{H}^2$ initial data, we establish the existence and uniqueness of global pathwise strong solutions in the same range of dimensions. The analysis is based on a frequency-truncation approximation scheme combined with stochastic compactness arguments in the whole-space setting. The key step in global continuation is to justify the stochastic effective-field balance at $\mathbb{H}^1$ regularity, where the drift is available only in a negative Sobolev space. Spatial mollification and passage to the limit yield the energy identity for local weak solutions. A coercive modification then gives global energy moments and excludes finite-time blow-up.

\par\vskip3mm\noindent
{\bf Keywords:} Stochastic Landau--Lifshitz--Baryakhtar equation; Global well-posedness; Low-regularity solutions; Stochastic energy estimates.
 \par\vskip 3mm\noindent
\textbf{AMS Subject Classification 2020:} 35Q56; 35Q60; 60H15.
\newpage


\section{Introduction}\label{sec1}

\subsection{Statement of the problem}

The Landau--Lifshitz--Baryakhtar (LLBar) equation describes magnetization dynamics in ferromagnetic materials with both local and nonlocal damping. It extends the Landau--Lifshitz and
Landau--Lifshitz--Gilbert models
\cite{gilbert1955lagrangian,landau1935theory}
by incorporating exchange damping while allowing longitudinal
relaxation of the magnetization; see
\cite{baryakhtar1984phenomenological,baryakhtar2013phenomenological,baryakhtar1997soliton}.
In the absence of external and demagnetizing fields, the equation
takes the form
\begin{equation}\label{sys0}
\left\{
\begin{aligned}
\partial_t\mathbf{u}
&=\lambda_r\mathbf{H}_{\textrm{eff}}
-\lambda_e\Delta\mathbf{H}_{\textrm{eff}}
-\gamma\mathbf{u}\times\mathbf{H}_{\textrm{eff}},\\
\mathbf{H}_{\textrm{eff}}
&=\Delta\mathbf{u}
+\frac{1}{2\chi}(1-|\mathbf{u}|^2)\mathbf{u}.
\end{aligned}
\right.
\end{equation}
Here $\mathbf{u}\in\mathbb{R}^3$ is the magnetization field,
$\lambda_r,\lambda_e>0$ are the local and exchange damping
coefficients, $\gamma>0$ is the gyromagnetic ratio, and $\chi>0$
is the magnetic susceptibility. Throughout this paper, we use
the normalization $\lambda_r=\lambda_e=\gamma=1$ and
$\chi=\frac14$.

The mathematical theory of magnetization dynamics includes
extensive work on the Landau--Lifshitz--Gilbert and
Landau--Lifshitz--Bloch equations. The latter incorporates
longitudinal relaxation at finite temperatures
\cite{garanin1997fokker}. For results on existence, uniqueness,
and regularity, we refer to
\cite{alouges1992global,carbou2001regular1,carbou2001regular,feischl2017existence,gutierrez2019cauchy,he2023landau,le2016weak,li2021weak,li2021smooth,lin2015global,peng2022strong,pu2022global}.
Thermal fluctuations motivate stochastic formulations; see
\cite{neel1946bases,brown1963thermal,kamppeter1999stochastic}
for their physical background. Stochastic
Landau--Lifshitz--Gilbert equations have been studied in
\cite{brzezniak2013weak,brzezniak2016weak,brzezniak2019weak},
with large deviation results in
\cite{brzezniak2017large,gussetti2023pathwise}.
For stochastic Landau--Lifshitz--Bloch equations, we refer to
\cite{huang2025random,jiang2019martingale}
for solvability and regularity,
\cite{brzezniak2020existence,qiu2026invariant}
for invariant measures, and
\cite{qiu2020asymptotic}
for large deviations.

For the deterministic LLBar equation, Soenjaya and Tran
\cite{soenjaya2023global} established global solvability on bounded
domains. The stochastic bounded-domain theories with Gaussian noise
treat different levels of initial regularity. Xu, Zhang and Liu
\cite{xu2024wellposednessinvariantmeasuresstochastically} constructed
local pathwise weak solutions for $\mathbb{H}^1$ initial data in
dimensions one to three, and proved global pathwise weak solvability
in this class in dimension one. They also established the
$\mathbb{L}^2$ theory and the existence of invariant measures in
low dimensions. Goldys, Soenjaya and Tran
\cite{goldys2024stochasticlandaulifshitzbaryakhtarequationglobal}
proved global pathwise strong solvability for $\mathbb{H}^2$ initial
data on bounded domains in dimensions one to three, together with
invariant measures. Multiplicative L\'evy perturbations and a
corresponding large deviation principle were studied in
\cite{xu2024well}.

The present paper concerns the stochastic Cauchy problem
\begin{equation}\label{sys1}
\left\{
\begin{aligned}
\mathrm{d}\mathbf{u}
&=\Big[-\Delta^2\mathbf{u}-\Delta\mathbf{u}
+2(1-|\mathbf{u}|^2)\mathbf{u}
+2\Delta(|\mathbf{u}|^2\mathbf{u})
-\mathbf{u}\times\Delta\mathbf{u}\Big]\,\mathrm{d}t\\
&\quad+\sum_{j=1}^{\infty}
(\mathbf{u}\times\mathbf{h}_j+\mathbf{g}_j)
\circ\,\mathrm{d}W_j(t),
\qquad (t,x)\in(0,\infty)\times\mathbb{R}^d,\\
\mathbf{u}(0)&=\mathbf{u}_0,
\qquad x\in\mathbb{R}^d.
\end{aligned}
\right.
\end{equation}
Here $d\in\{1,2,3\}$, $\{W_j\}_{j\geq1}$ are independent real-valued Wiener processes, and $\mathbf{h}_j,\mathbf{g}_j$ are deterministic spatial coefficients. The stochastic integrals are interpreted in the Stratonovich sense. The affine diffusion includes both rotational multiplicative noise and additive forcing.

Our main result is global pathwise weak solvability for arbitrary
$\mathbb{H}^1$ initial data in all three dimensions. For
$\mathbb{H}^2$ initial data, the solution is a global pathwise strong
solution under the same assumption on the noise coefficients.
We also obtain unique global pathwise very weak solutions for
$\mathbb{L}^2$ initial data in dimensions one and two, and martingale
very weak solutions on every finite time interval in dimension three.

\subsection{Main ideas of the proof}\label{subsec:proof-ideas}

A central difficulty is to obtain an estimate for global continuation
at $\mathbb{H}^1$ regularity. The bounds used in the local construction
depend on a cutoff radius, which must be removed. We use the
effective-field energy
\[
\mathcal{E}(\mathbf{v})
=\frac12\|\nabla\mathbf{v}\|_{\mathbb{L}^2}^2
+\frac12\|\mathbf{v}\|_{\mathbb{L}^4}^4
-\|\mathbf{v}\|_{\mathbb{L}^2}^2.
\]
This functional is twice continuously differentiable on
$\mathbb{H}^1$, whereas the drift of a local weak solution is
available only in $\mathbb{H}^{-1}$. The standard Hilbert-space
It\^{o} formula therefore cannot be applied directly to
$\mathcal{E}(\mathbf{u})$. A formal test against the effective
field does not by itself justify the required stochastic balance.

We first identify the frequency limit and work with the local weak
solution on the interval where the scalar cutoff equals one.
Applying spatial resolvents to its stopped equation gives a
regularized $\mathbb{H}^1$-valued semimartingale. A cubic continuity
estimate allows the regularized effective field to converge in
$L^2$ in time with values in $\mathbb{H}^1$, which is the topology
needed to pair it with the original drift. The It\^{o} correction,
the trace term and the stochastic integral also converge. This
recovers the effective-field energy identity at weak-solution
regularity, including its $\mathbb{H}^1$ dissipation, without
requiring an exact cancellation in the frequency-projected equation.

The coercive modification
$\mathcal{V}(\mathbf{v})=1+\mathcal{E}(\mathbf{v})
+2\|\mathbf{v}\|_{\mathbb{L}^2}^2$
controls both the $\mathbb{H}^1$ norm and the quartic term.
Combining the energy identity with the $\mathbb{L}^2$ estimate
yields a stochastic energy inequality whose drift grows at most
linearly in $\mathcal{V}$ and whose martingale quadratic variation
is bounded by a multiple of $\mathcal{V}^2$. Consequently, all
finite-order energy moments are bounded independently of the exit
radius. These bounds exclude finite-time blow-up in $\mathbb{H}^1$
for $d=1,2,3$ and support the higher-order estimates giving global
$\mathbb{H}^2$ regularity.

The construction uses frequency truncation and stochastic compactness
on $\mathbb{R}^d$. Frequency truncation produces an
infinite-dimensional approximation, and global Sobolev compactness
is unavailable. For the $\mathbb{L}^2$ theory, local spatial
compactness and global estimates identify the nonlinear drift;
the stochastic terms are identified through their scalar projections.
For the local $\mathbb{H}^1$ theory, a cutoff depending on the global
$\mathbb{H}^1$ norm requires additional control. For each fixed cutoff
radius, spatial tail estimates and time regularity, together with
the uniform bounds, yield tightness in
$\mathcal{C}([0,T];\mathbb{L}^2)\cap L^2(0,T;\mathbb{H}^2)$.
The resulting strong convergence identifies the cutoff and the
nonlinearities. The limiting cutoff solution inherits the
$L^2(0,T;\mathbb{H}^3)$ bound, and the variational It\^{o} theorem
gives continuity in $\mathbb{H}^1$. Pathwise uniqueness and
consistency up to the exit times then yield the maximal local
pathwise weak solution to which the energy argument applies.

\subsection{Notation}\label{sub1-2}

We write
\[
\mathbb{L}^p=L^p(\mathbb{R}^d;\mathbb{R}^3),
\qquad
\mathbb{H}^s=H^s(\mathbb{R}^d;\mathbb{R}^3),
\qquad
\mathbb{W}^{s,p}=W^{s,p}(\mathbb{R}^d;\mathbb{R}^3).
\]
We use the Bessel operator $\Lambda^s=(I-\Delta)^{s/2}$ and the norm
\[
\|f\|_{\mathbb{H}^s}
=\|\Lambda^sf\|_{\mathbb{L}^2}
=\left(
\int_{\mathbb{R}^d}
(1+|\xi|^2)^s|\widehat f(\xi)|^2\,\mathrm{d}\xi
\right)^{1/2}.
\]
The frequency projection is defined by
\[
P_{\leq N}f
=\mathcal{F}^{-1}
\left(\mathbf{1}_{\{|\xi|\leq N\}}\widehat f\right).
\]
We write $A\lesssim B$ if $A\leq CB$ for a positive constant independent of the approximation parameters, and $A\asymp B$ if both $A\lesssim B$ and $B\lesssim A$ hold. Dependence on specified parameters is indicated by subscripts.

For Banach spaces $X,Y$, $\mathcal{L}(X;Y)$ denotes the space of bounded linear operators, and $\langle\cdot,\cdot\rangle_{X',X}$ denotes the duality pairing between $X$ and its dual $X'$. If $X$ is a Hilbert space, its inner product is denoted by $(\cdot,\cdot)_X$. For separable Hilbert spaces $X,Y$, $\mathcal{L}_2(X;Y)$ denotes the space of Hilbert--Schmidt operators. We use the standard Bochner spaces $L^p(0,T;X)$.

Let $Q_w$ denote the Hilbert space $Q$ endowed with the weak topology. Let $\{\mathcal{O}_m\}_{m\in\mathbb{N}}$ be a sequence of bounded open subsets of $\mathbb{R}^d$ with regular boundaries $\partial\mathcal{O}_m$ such that $\mathcal{O}_m\subset\mathcal{O}_{m+1}$ and $\bigcup_{m=1}^{\infty}\mathcal{O}_m=\mathbb{R}^d$.  Let $\mathbb{H}^s(\mathcal{O}_m)$ denote the space of restrictions of functions defined on $\mathbb{R}^d$ to subsets $\mathcal{O}_m$, i.e.
$
\mathbb{H}^s(\mathcal{O}_m):=\{f|_{\mathcal{O}_m}; f\in \mathbb{H}^s\}.
$
We will use the following spaces in the subsequent content:
\begin{equation*} \begin{split}
\mathcal{C}([0,T];Q_w):=& \textrm{the~space~of~weakly~continuous~functions}~f:[0,T]\rightarrow Q\\
 & \textrm{with~the~weakest~topology~}\mathcal{T}_1\textrm{~such~that~for~all~}g\in Q \textrm{~the mappings}\\
 &\mathcal{C}([0,T];Q_w)\ni f\mapsto(f(\cdot),g)_{Q}\in \mathcal{C}([0,T];\mathbb{R})~\textrm{are~continuous}.\\
L^2_w(0,T;Q):=& \textrm{the~space}~L^2(0,T;Q)~\textrm{endowed~with~the~weak~topology~}\mathcal{T}_2.\\
L^2(0,T;\mathbb{H}^s_{loc}):=& \textrm{the~space~of~measurable~functions}~f:[0,T]\rightarrow~\mathbb{H}^s_{loc}~\textrm{such~that~for~all}~m\in\mathbb{N}\\
 & p_{T,m}(f):=\|f\|_{L^2(0,T;\mathbb{H}^s(\mathcal{O}_m))}:=\left(\int_0^T\|f(t)\|_{\mathbb{H}^s(\mathcal{O}_m)}^2\,\mathrm{d}t\right)^{\frac{1}{2}}<\infty,\\
 & \textrm{with~the~topology}~\mathcal{T}_3\textrm{~generated~by~the~seminorms}~(p_{T,m})_{m\in\mathbb{N}}.
 \end{split} \end{equation*}

\subsection{Solution concepts and main results}

All initial data are deterministic. A stochastic basis
$(\Omega,\mathbf{F},\mathfrak{F},\mathbb{P})$,
with $\mathfrak{F}=\{\mathbf{F}_t\}_{t\geq0}$,
is assumed to satisfy the usual conditions. Each $W_j$ is a Wiener process relative to $\mathfrak{F}$, and
$W=\{W_j\}_{j\geq1}$
is viewed as a cylindrical Wiener process on $\ell^2$.

We use the coefficient assumptions
\begin{equation}\label{asum1}
\sum_{j=1}^{\infty}
\left(
\|\mathbf{h}_j\|_{\mathbb{L}^{\infty}}^2
+\|\mathbf{g}_j\|_{\mathbb{L}^2}^2
+\|\mathbf{g}_j\|_{\mathbb{L}^{\infty}}^2
\right)
\leq C_*<\infty
\end{equation}
and
\begin{equation}\label{asum2}
\sum_{j=1}^{\infty}
\left(
\|\mathbf{h}_j\|_{\mathbb{H}^2}^2
+\|\mathbf{g}_j\|_{\mathbb{H}^2}^2
\right)
\leq C_{**}<\infty.
\end{equation}
Assumption \eqref{asum2} implies \eqref{asum1} for $d\leq3$.

For each $j\geq1$, define
\[
\mathbf{N}_j(\mathbf{v})
=\mathbf{v}\times\mathbf{h}_j+\mathbf{g}_j,
\qquad \mathbf{v}\in\mathbb{L}^2.
\]
Under \eqref{asum1}, these coefficients define a mapping
$\mathbf{N}:\mathbb{L}^2\to
\mathcal{L}_2(\ell^2;\mathbb{L}^2)$ by
\[
\mathbf{N}(\mathbf{v})a
=\sum_{j=1}^{\infty}a_j\mathbf{N}_j(\mathbf{v}),
\qquad a=\{a_j\}_{j\geq1}\in\ell^2.
\]
Indeed,
\[
\|\mathbf{N}(\mathbf{v})\|_{\mathcal{L}_2(\ell^2;\mathbb{L}^2)}^2
=\sum_{j=1}^{\infty}
\|\mathbf{N}_j(\mathbf{v})\|_{\mathbb{L}^2}^2
\lesssim_{C_*}1+\|\mathbf{v}\|_{\mathbb{L}^2}^2.
\]
Writing $W$ for the cylindrical Wiener process on $\ell^2$
with coordinates $\{W_j\}_{j\geq1}$, we use the notation
\[
\int_0^t\mathbf{N}(\mathbf{u}(s))\,\mathrm{d}W(s)
=\sum_{j=1}^{\infty}\int_0^t
\mathbf{N}_j(\mathbf{u}(s))\,\mathrm{d}W_j(s).
\]

Since $D\mathbf{N}_j(\mathbf{v})[\mathbf{z}]
=\mathbf{z}\times\mathbf{h}_j$, the Stratonovich correction is
\begin{equation}\label{ito-convention}
\mathbf{C}(\mathbf{v})
=\frac12\sum_{j=1}^{\infty}
\mathbf{N}_j(\mathbf{v})\times\mathbf{h}_j.
\end{equation}
Consequently,
\[
\sum_{j=1}^{\infty}\int_0^t
\mathbf{N}_j(\mathbf{u}(s))\circ\,\mathrm{d}W_j(s)
=
\int_0^t\mathbf{C}(\mathbf{u}(s))\,\mathrm{d}s
+\int_0^t\mathbf{N}(\mathbf{u}(s))\,\mathrm{d}W(s).
\]
Under \eqref{asum1}, the series defining $\mathbf{C}(\mathbf{v})$
converges absolutely in $\mathbb{L}^2$.
Under \eqref{asum2}, the corresponding definitions and convergence
statements hold in $\mathbb{H}^s$ for
$\mathbf{v}\in\mathbb{H}^s$, $s=1,2$.
All solution identities below are written in It\^{o} form,
with stochastic integrals defined by localization whenever necessary.

\begin{definition}\label{def1-1}
Let $T>0$, let $\mathbf{u}_0\in\mathbb{L}^2$, and assume \eqref{asum1}.
A \emph{martingale very weak solution} of \eqref{sys1} on $[0,T]$ is a tuple
\[
\big((\Omega,\mathbf{F},\mathfrak{F},\mathbb{P}),W,\mathbf{u}\big)
\]
consisting of a stochastic basis, a family $W$ of independent Wiener processes, and an $\mathbb{L}^2$-valued progressively measurable process $\mathbf{u}$ such that
\[
\mathbf{u}\in
\mathcal{C}([0,T];\mathbb{L}^2_w)
\cap L^2(0,T;\mathbb{H}^2),
\qquad\mathbb{P}\textrm{-a.s.},
\]
and, almost surely, for all $0\leq t\leq T$ and $\phi\in\mathbb{H}^2$,
\begin{equation}\label{veryweak-id}
\begin{split}
(\mathbf{u}(t),\phi)_{\mathbb{L}^2}
={}&(\mathbf{u}_0,\phi)_{\mathbb{L}^2}
-\int_0^t
(\Delta\mathbf{u}(s),\Delta\phi)_{\mathbb{L}^2}\,\mathrm{d}s
-\int_0^t
(\mathbf{u}(s),\Delta\phi)_{\mathbb{L}^2}\,\mathrm{d}s\\
&+2\int_0^t
((1-|\mathbf{u}(s)|^2)\mathbf{u}(s),\phi)_{\mathbb{L}^2}\,\mathrm{d}s\\
&+2\int_0^t
(|\mathbf{u}(s)|^2\mathbf{u}(s),\Delta\phi)_{\mathbb{L}^2}\,\mathrm{d}s\\
&+\int_0^t
(\mathbf{u}(s)\times\nabla\mathbf{u}(s),\nabla\phi)_{\mathbb{L}^2}\,\mathrm{d}s
+\int_0^t
(\mathbf{C}(\mathbf{u}(s)),\phi)_{\mathbb{L}^2}\,\mathrm{d}s\\
&+\sum_{j=1}^{\infty}\int_0^t
(\mathbf{N}_j(\mathbf{u}(s)),\phi)_{\mathbb{L}^2}\,\mathrm{d}W_j(s).
\end{split}
\end{equation}
Global martingale solvability means that such a solution exists on every finite interval $[0,T]$.
\end{definition}

\begin{definition}\label{def1-2}
Let $\mathbf{u}_0\in\mathbb{L}^2$ and assume \eqref{asum1}.
On a prescribed stochastic basis carrying $W$, a \emph{global pathwise very weak solution} is an $\mathbb{L}^2$-valued progressively measurable process satisfying \eqref{veryweak-id} and, for every $T>0$,
\[
\mathbf{u}\in
\mathcal{C}([0,T];\mathbb{L}^2)
\cap L^2(0,T;\mathbb{H}^2),
\qquad\mathbb{P}\textrm{-a.s.}
\]
\end{definition}

For each of the pathwise solution classes below, uniqueness means that two solutions on the same stochastic basis, with the same initial value and Wiener processes, are indistinguishable on their common interval of existence. The terms \emph{very weak}, \emph{weak} and \emph{strong} refer to spatial regularity and the interpretation of the equation; \emph{pathwise} specifies that the stochastic basis and driving Wiener processes are prescribed.

\begin{definition}\label{def2}
Let $\mathbf{u}_0\in\mathbb{H}^1$, assume \eqref{asum2}, and fix a stochastic basis carrying $W$.
\begin{enumerate}
\item[(1)]
A \emph{local pathwise weak solution} is a pair $(\mathbf{u},\tau)$, where $\tau$ is a strictly positive stopping time and $\mathbf{u}$ is an $\mathbb{H}^1$-valued progressively measurable process up to $\tau$, such that, for every $T>0$,
\[
\mathbf{u}(\cdot\wedge\tau)\in\mathcal{C}([0,T];\mathbb{H}^1),
\qquad
\int_0^{T\wedge\tau}\|\mathbf{u}(s)\|_{\mathbb{H}^3}^2\,\mathrm{d}s<\infty,
\qquad\mathbb{P}\textrm{-a.s.}
\]
Moreover, almost surely, for all $0\leq t\leq T$ and $\phi\in\mathbb{H}^1$,
\begin{equation}\label{1def2}
\begin{split}
(\mathbf{u}(t\wedge\tau),\phi)_{\mathbb{L}^2}
={}&(\mathbf{u}_0,\phi)_{\mathbb{L}^2}
+\int_0^{t\wedge\tau}
(\nabla\Delta\mathbf{u}(s),\nabla\phi)_{\mathbb{L}^2}\,\mathrm{d}s\\
&+\int_0^{t\wedge\tau}
(\nabla\mathbf{u}(s),\nabla\phi)_{\mathbb{L}^2}\,\mathrm{d}s\\
&+2\int_0^{t\wedge\tau}
((1-|\mathbf{u}(s)|^2)\mathbf{u}(s),\phi)_{\mathbb{L}^2}\,\mathrm{d}s\\
&-2\int_0^{t\wedge\tau}
(\nabla(|\mathbf{u}(s)|^2\mathbf{u}(s)),\nabla\phi)_{\mathbb{L}^2}\,\mathrm{d}s\\
&+\int_0^{t\wedge\tau}
(\mathbf{u}(s)\times\nabla\mathbf{u}(s),\nabla\phi)_{\mathbb{L}^2}\,\mathrm{d}s+\int_0^{t\wedge\tau}
(\mathbf{C}(\mathbf{u}(s)),\phi)_{\mathbb{L}^2}\,\mathrm{d}s\\
&+\sum_{j=1}^{\infty}\int_0^{t\wedge\tau}
(\mathbf{N}_j(\mathbf{u}(s)),\phi)_{\mathbb{L}^2}\,\mathrm{d}W_j(s).
\end{split}
\end{equation}

\item[(2)]
Local pathwise weak solutions are unique if any two such solutions coincide almost surely up to the minimum of their lifetimes.

\item[(3)]
A \emph{maximal pathwise weak solution} is a triple
$(\mathbf{u},\tau_*,\{\tau_n\}_{n\geq1})$,
where $\tau_n$ are strictly positive stopping times increasing to $\tau_*$, each $(\mathbf{u},\tau_n)$ is a local pathwise weak solution, and
\[
\lim_{n\to\infty}
\sup_{0\leq t\leq\tau_n}
\|\mathbf{u}(t)\|_{\mathbb{H}^1}
=\infty
\quad\textrm{on }\{\tau_*<\infty\}.
\]

\item[(4)]
A \emph{global pathwise weak solution} is an $\mathbb{H}^1$-valued progressively measurable process such that, for every $T>0$,
\[
\mathbf{u}\in
\mathcal{C}([0,T];\mathbb{H}^1)
\cap L^2(0,T;\mathbb{H}^3),
\qquad\mathbb{P}\textrm{-a.s.},
\]
and \eqref{1def2} holds with $\tau=\infty$.
\end{enumerate}
\end{definition}

\begin{definition}\label{def-strong}
Let $\mathbf{u}_0\in\mathbb{H}^2$, assume \eqref{asum2}, and fix a stochastic basis carrying $W$.
A \emph{global pathwise strong solution} of \eqref{sys1} is an $\mathbb{H}^2$-valued progressively measurable process $\mathbf{u}$ such that, for every $T>0$,
\[
\mathbf{u}\in
\mathcal{C}([0,T];\mathbb{H}^2)
\cap L^2(0,T;\mathbb{H}^4),
\qquad\mathbb{P}\textrm{-a.s.},
\]
and, almost surely for all $0\leq t\leq T$, the identity
\begin{equation}\label{strong-id}
\begin{split}
\mathbf{u}(t)
={}&\mathbf{u}_0
+\int_0^t
\Big[
-\Delta^2\mathbf{u}(s)-\Delta\mathbf{u}(s)
+2(1-|\mathbf{u}(s)|^2)\mathbf{u}(s)\\
&\hspace{16mm}
+2\Delta(|\mathbf{u}(s)|^2\mathbf{u}(s))
-\mathbf{u}(s)\times\Delta\mathbf{u}(s)
+\mathbf{C}(\mathbf{u}(s))
\Big]\,\mathrm{d}s\\
&+\sum_{j=1}^{\infty}\int_0^t
\mathbf{N}_j(\mathbf{u}(s))\,\mathrm{d}W_j(s).
\end{split}
\end{equation}
holds in $\mathbb{L}^2$.
\end{definition}

Under the stated regularity, all terms in these identities are well defined. Integration by parts shows that a global pathwise strong solution is a global pathwise weak solution, and that a global pathwise weak solution is a global pathwise very weak solution.

\begin{theorem}\label{the1}
Let $\mathbf{u}_0\in\mathbb{L}^2$ and assume \eqref{asum1}.
\begin{enumerate}
\item[(1)]
For $d=1,2$, equation \eqref{sys1} admits a unique global pathwise very weak solution in the sense of Definition~\ref{def1-2}.
\item[(2)]
For $d=3$ and every $T>0$, equation \eqref{sys1} admits a martingale very weak solution on $[0,T]$ in the sense of Definition~\ref{def1-1}.
\end{enumerate}
On each finite interval $[0,T]$, the corresponding solutions satisfy, for every $p\geq1$,
\begin{equation}\label{main-L2}
\mathbb{E}\sup_{0\leq t\leq T}
\|\mathbf{u}(t)\|_{\mathbb{L}^2}^{2p}
+\mathbb{E}
\left(
\int_0^T
\|\mathbf{u}(t)\|_{\mathbb{H}^2}^2\,\mathrm{d}t
\right)^p
\lesssim_{p,T,C_*,\|\mathbf{u}_0\|_{\mathbb{L}^2}}1.
\end{equation}
\end{theorem}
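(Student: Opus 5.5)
The plan is a Galerkin-type frequency truncation, uniform $\mathbb{L}^2$ energy bounds, stochastic compactness on $\mathbb{R}^d$, identification of the limit, and then, for $d=1,2$, pathwise uniqueness combined with the Gy\"ongy--Krylov lemma. We replace \eqref{sys1} by its projection onto $\mathrm{Ran}\,P_{\leq N}$: the drift and the noise coefficients $\mathbf{N}_j$ are composed with $P_{\leq N}$. On this closed subspace of $\mathbb{L}^2$ every Sobolev norm is equivalent to the $\mathbb{L}^2$ norm, with constants depending on $N$, so the truncated equation has locally Lipschitz coefficients and a unique local strong solution $\mathbf{u}^N$.

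The $N$-independent estimate comes from It\^o's formula for $\|\mathbf{u}^N\|_{\mathbb{L}^2}^2$. The terms contribute as follows:
\begin{itemize}
\item $-\Delta^2$ gives $-2\|\Delta\mathbf{u}\|^2$.
\item $-\Delta$ gives $2\|\nabla\mathbf{u}\|^2\le \|\Delta\mathbf{u}\|^2+C\|\mathbf{u}\|^2$.
\item The cubic term gives $4\|\mathbf{u}\|^2-4\|\mathbf{u}\|_{\mathbb{L}^4}^4$.
\item The gyro term $-\mathbf{u}\times\Delta\mathbf{u}$ vanishes after pairing with $\mathbf{u}$.
\item The term $2\Delta(|\mathbf{u}|^2\mathbf{u})$ gives $-4\int|\mathbf{u}|^2|\nabla\mathbf{u}|^2-8\int|\mathbf{u}\cdot\nabla\mathbf{u}|^2\le0$. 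This is a good-sign term, and it is what one needs for $\mathbb{L}^2$ control.
\end{itemize}
The projection is self-adjoint and commutes with derivatives, so all these identities survive the truncation.

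For the noise, the rotational part is skew: $(\mathbf{u}\times\mathbf{h}_j,\mathbf{u})=0$. The Stratonovich correction $(\mathbf{C}(\mathbf{u}),\mathbf{u})$ combines with the It\^o trace $\sum_j\|P_{\leq N}\mathbf{N}_j(\mathbf{u})\|^2$ to give a quantity $\lesssim_{C_*}1+\|\mathbf{u}\|^2$. The cross terms involving $\mathbf{g}_j$ are handled using $\|\mathbf{g}_j\|_{\mathbb{L}^2},\|\mathbf{g}_j\|_{\mathbb{L}^\infty}$. The martingale $\sum_j\int(\mathbf{g}_j,\mathbf{u})\,\mathrm{d}W_j$ is controlled by BDG. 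For $\|\mathbf{u}\|^{2p}$ one applies It\^o to the $p$-th power and then Gronwall, which yields \eqref{main-L2} for $\mathbf{u}^N$, uniformly in $N$. Consequently $\mathbf{u}^N$ is global.

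We also need time regularity. Writing $\mathbf{u}^N$ as drift plus martingale, the drift is bounded in $W^{1,1}(0,T;\mathbb{H}^{-s})$ for some large $s$, using $\||\mathbf{u}|^2\mathbf{u}\|_{\mathbb{L}^1}\le\|\mathbf{u}\|_{\mathbb{L}^2}\|\mathbf{u}\|_{\mathbb{L}^4}^2$ and $\|\mathbf{u}\|_{\mathbb{L}^\infty}\lesssim\|\mathbf{u}\|_{\mathbb{H}^2}$ for $d\le3$. The martingale is bounded in $W^{\alpha,p}(0,T;\mathbb{L}^2)$.

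Tightness is obtained in
\[
\mathcal{C}([0,T];\mathbb{L}^2_w)\cap L^2_w(0,T;\mathbb{H}^2)\cap L^2(0,T;\mathbb{H}^1_{loc})\cap \mathcal{C}([0,T];\mathbb{H}^{-s}_{loc}),
\]
using the Aubin--Lions--Simon criterion on each $\mathcal{O}_m$. Since these spaces are not Polish, we use Jakubowski's version of Skorokhod's theorem. Local strong convergence in $L^2(0,T;\mathbb{H}^1(\mathcal{O}_m))$ together with the uniform $L^2\mathbb{H}^2$ and $L^\infty\mathbb{L}^2$ moments lets us pass to the limit in each term of \eqref{veryweak-id}. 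The natural procedure is to do this first for $\phi\in C_c^\infty$ and then extend by density to $\phi\in\mathbb{H}^2$:
\begin{itemize}
\item The cubic terms paired with $\Delta\phi$ converge by local $\mathbb{L}^3$ convergence.
\item $\mathbf{u}\times\nabla\mathbf{u}$ converges by local $\mathbb{L}^2\times\mathbb{L}^2$ convergence.
\item $P_{\leq N}\phi\to\phi$ in $\mathbb{H}^2$.
\end{itemize}
The stochastic integral is identified through the scalar martingales $(\mathbf{u},\phi)$, their quadratic variations, and their cross variations with $W_j$, using the standard martingale-representation-free argument. This proves part (2) in every dimension $d\le3$, together with \eqref{main-L2}, by lower semicontinuity.

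For $d=1,2$, the main obstacle is pathwise uniqueness in the class $\mathcal{C}([0,T];\mathbb{L}^2)\cap L^2\mathbb{H}^2$. Let $\mathbf{w}=\mathbf{u}_1-\mathbf{u}_2$. The noise difference $\mathbf{w}\times\mathbf{h}_j$ is linear and skew, so its contribution is bounded by $C\|\mathbf{w}\|^2$. The worst drift terms are $2(\Delta(|\mathbf{u}_1|^2\mathbf{u}_1-|\mathbf{u}_2|^2\mathbf{u}_2),\mathbf{w})$ and $(\mathbf{w}\times\nabla\mathbf{u}_1,\nabla\mathbf{w})$. We move the Laplacian onto $\mathbf{w}$ and bound
\[
\|(|\mathbf{u}_1|^2+|\mathbf{u}_2|^2)\mathbf{w}\|\,\|\Delta\mathbf{w}\|\le\tfrac14\|\Delta\mathbf{w}\|^2+C\big(\|\mathbf{u}_1\|_{\mathbb{L}^\infty}^4+\|\mathbf{u}_2\|_{\mathbb{L}^\infty}^4\big)\|\mathbf{w}\|^2 .
\]
In $d\le2$, Gagliardo--Nirenberg gives $\|\mathbf{u}\|_{\mathbb{L}^\infty}^4\lesssim\|\mathbf{u}\|^{2}\|\mathbf{u}\|_{\mathbb{H}^2}^{2}$, which is integrable in time; in $d=3$ the exponent would be $\|\mathbf{u}\|^{1}\|\mathbf{u}\|_{\mathbb{H}^2}^{3}$, which is not integrable, and this is why uniqueness is restricted to $d=1,2$. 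The gyro term is bounded by $\|\mathbf{w}\|_{\mathbb{L}^4}\|\nabla\mathbf{u}_1\|_{\mathbb{L}^4}\|\nabla\mathbf{w}\|$ and treated similarly, using interpolation between $\mathbb{L}^2$ and $\mathbb{H}^2$ in $d\le2$.

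It\^o's formula for $\|\mathbf{w}\|^2$ is justified by the Gelfand triple $\mathbb{H}^2\subset\mathbb{L}^2\subset\mathbb{H}^{-2}$, because the drift lies in $L^2\mathbb{H}^{-2}$ when $d\le2$. A stochastic Gronwall argument, stopping at times where $\int\|\mathbf{u}_i\|_{\mathbb{H}^2}^2$ is large, then gives $\mathbf{w}=0$. The same triple yields $\mathbf{u}\in\mathcal{C}([0,T];\mathbb{L}^2)$. Finally, the Gy\"ongy--Krylov lemma applied to pairs $(\mathbf{u}^N,\mathbf{u}^M)$ upgrades martingale existence to a pathwise solution on the original basis, and uniqueness together with the $T$-uniform argument gives the global solution of part (1).
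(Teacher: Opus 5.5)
Your proposal is correct. Through the construction of the martingale very weak solution and the uniqueness estimate, it follows the paper's argument:
\begin{itemize}
\item the same projection onto $\mathbb{L}^2_N$;
\item the same $\mathbb{L}^2$ balance with the good-sign cubic gradient dissipation;
\item Jakubowski--Skorokhod with local compactness on $\mathcal{O}_m$ and test-function approximation to identify the drift;
\item scalar martingales and their cross variations with $W_j$ for the noise;
\item the variational It\^o formula in $\mathbb{H}^2\subset\mathbb{L}^2\subset\mathbb{H}^{-2}$;
\item the Gronwall coefficient $\|\mathbf{u}_i\|_{\mathbb{L}^\infty}^4$, whose time integrability is no longer available for $d=3$.
\end{itemize}

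\textbf{Main difference.} The genuine difference is the passage to pathwise solutions. You use the Gy\"ongy--Krylov lemma on pairs of approximations, whereas the paper applies Yamada--Watanabe in Kurtz's form to the martingale solution already built in Section~\ref{sec3}. Your route gives, in addition, convergence in probability of the $\mathbf{u}^N$ themselves on the prescribed basis, and yields \eqref{main-L2} directly by Fatou. The cost is that you must:
\begin{itemize}
\item rerun tightness and identification for triples $(\mathbf{u}^{N_k},\mathbf{u}^{M_k},W)$;
\item check that both limits are solutions relative to one common filtration, so that your uniqueness result applies to them;
\item work in a Polish component such as $L^2(0,T;\mathbb{H}^1_{loc})\cap\mathcal{C}([0,T];\mathbb{H}^{-s}_{loc})$;
\item identify the limit once more on the original space.
\end{itemize}
The paper's route avoids this repetition and only has to verify the compatibility of $W$ with the solution filtration.

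\textbf{Smaller differences.} These are harmless:
\begin{itemize}
\item $\mathcal{C}([0,T];\mathbb{H}^{-s}_{loc})$ plays the role of the paper's weighted dual $\mathcal{C}([0,T];U')$.
\item Your precession bound $\|\mathbf{w}\|_{\mathbb{L}^4}\|\nabla\mathbf{u}_1\|_{\mathbb{L}^4}\|\nabla\mathbf{w}\|_{\mathbb{L}^2}$ does close: the resulting coefficient $\|\nabla\mathbf{u}_1\|_{\mathbb{L}^4}^{16/(12-d)}$ is a power below $2$ of $\|\mathbf{u}_1\|_{\mathbb{H}^2}$. The paper's $\|\mathbf{u}\|_{\mathbb{L}^\infty}\|\Delta\mathbf{w}\|_{\mathbb{L}^2}\|\mathbf{w}\|_{\mathbb{L}^2}$ is simpler, since its coefficient is already dominated by the cubic one.
\item No stochastic Gronwall is needed. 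Since $(\mathbf{w}\times\mathbf{h}_j,\mathbf{w})_{\mathbb{L}^2}=0$, the martingale term vanishes, and the trace cancels the correction, so a pathwise Gronwall suffices.
\end{itemize}

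\textbf{One correction.} A $W^{1,1}(0,T;\mathbb{H}^{-s})$ bound on the drift primitive gives no equicontinuity. Aubin--Lions--Simon would then only give compactness in $L^p(0,T;\mathbb{H}^{-s}_{loc})$, not in $\mathcal{C}([0,T];\mathbb{H}^{-s}_{loc})$. Your own estimates in fact put the drift in $L^2(0,T;\mathbb{H}^{-s})$ with all moments; for instance $\||\mathbf{u}|^2\mathbf{u}\|_{\mathbb{L}^1}\le\|\mathbf{u}\|_{\mathbb{L}^2}\|\mathbf{u}\|_{\mathbb{L}^4}^2\in L^2(0,T)$. That gives the needed H\"older bound, so state $W^{1,2}$ (the paper uses $L^{4/3}(0,T;\mathbb{H}^{-2})$).
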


\begin{theorem}\label{the2}
Let $d\in\{1,2,3\}$, $\mathbf{u}_0\in\mathbb{H}^1$, and assume \eqref{asum2}.
Then \eqref{sys1} admits a unique global pathwise weak solution in the sense of Definition~\ref{def2}(4).
For every $p\geq1$ and $T>0$,
\begin{equation}\label{main-H1}
\mathbb{E}\sup_{0\leq t\leq T}
\|\mathbf{u}(t)\|_{\mathbb{H}^1}^{2p}
+\mathbb{E}
\left(
\int_0^T
\|\mathbf{u}(t)\|_{\mathbb{H}^3}^2\,\mathrm{d}t
\right)^p
\lesssim_{p,T,C_{**},\|\mathbf{u}_0\|_{\mathbb{H}^1}}1.
\end{equation}
Moreover, writing
\[
\mathbf{H}(\mathbf{v})
=\Delta\mathbf{v}+2(1-|\mathbf{v}|^2)\mathbf{v},
\qquad
\mathcal{V}(\mathbf{v})
=1+\frac12\|\nabla\mathbf{v}\|_{\mathbb{L}^2}^2
+\frac12\|\mathbf{v}\|_{\mathbb{L}^4}^4
+\|\mathbf{v}\|_{\mathbb{L}^2}^2,
\]
we have
\begin{equation}\label{main-energy}
\begin{split}
&\mathbb{E}\sup_{0\leq t\leq T}
\mathcal{V}(\mathbf{u}(t))^p+
\mathbb{E}
\left(
\int_0^T
\big(
\|\mathbf{H}(\mathbf{u}(t))\|_{\mathbb{H}^1}^2
+\|\mathbf{u}(t)\|_{\mathbb{H}^2}^2
\big)\,\mathrm{d}t
\right)^p
\leq C_{p,T,C_{**}}\mathcal{V}(\mathbf{u}_0)^p.
\end{split}
\end{equation}
The solution also satisfies the stochastic energy identity
\eqref{stochastic-energy-identity}
on every finite interval, with the stopping removed.
\end{theorem}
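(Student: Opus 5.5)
The proof will run in three stages: a maximal local solution, an energy identity at $\mathbb{H}^1$ regularity, and global moment bounds that rule out blow-up.

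\textbf{Stage 1: maximal local solution.} Fix a smooth scalar cutoff $\theta_R(\|\mathbf{v}\|_{\mathbb{H}^1})$ multiplying the nonlinear drift. Consider the frequency-truncated system with drift projected by $P_{\leq N}$.
\begin{itemize}
\item For fixed $R$, the truncated equation has global solutions. These obey uniform bounds in $L^{2p}(\Omega;\mathcal{C}([0,T];\mathbb{H}^1))\cap L^{2p}(\Omega;L^2(0,T;\mathbb{H}^3))$, obtained from the $\mathbb{H}^1$ Itô formula, since the cutoff makes every nonlinear term bounded by $C_R\|\mathbf{u}\|_{\mathbb{H}^3}\|\mathbf{u}\|_{\mathbb{H}^1}^k$ through Gagliardo--Nirenberg for $d\le 3$.
\item Spatial tail estimates (testing against a cutoff $1-\psi(x/M)$) together with fractional time regularity give tightness in $\mathcal{C}([0,T];\mathbb{L}^2)\cap L^2(0,T;\mathbb{H}^2)$.
\item Skorokhod--Jakubowski yields a martingale solution of the cutoff problem. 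The variational Itô theorem in the triple $\mathbb{H}^3\subset\mathbb{H}^1\subset\mathbb{H}^{-1}$ gives continuity in $\mathbb{H}^1$.
\item Pathwise uniqueness follows from an $\mathbb{L}^2$ (or $\mathbb{H}^1$) difference estimate with stopping times. Gyöngy--Krylov then gives pathwise solutions.
\item Gluing over $R\to\infty$ with $\tau_R=\inf\{t:\|\mathbf{u}(t)\|_{\mathbb{H}^1}\ge R\}$ produces the maximal solution $(\mathbf{u},\tau_*,\{\tau_R\})$ of Definition~\ref{def2}(3). Uniqueness for the global statement is inherited from it.
\end{itemize}

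\textbf{Stage 2: energy identity (the main obstacle).} On $[0,\tau_R]$ we must derive an Itô formula for $\mathcal{E}(\mathbf{u})$. The difficulty is that the drift lies only in $L^2(0,T;\mathbb{H}^{-1})$, while $D\mathcal{E}(\mathbf{u})=-\mathbf{H}(\mathbf{u})$ lies only in $L^2(0,T;\mathbb{H}^1)$ along the path.
\begin{itemize}
\item Regularize with $J_\varepsilon=(I-\varepsilon\Delta)^{-1}$. Then $\mathbf{u}_\varepsilon=J_\varepsilon\mathbf{u}$ is an $\mathbb{H}^1$-valued semimartingale with drift in $L^2(0,T;\mathbb{H}^1)$, and the classical Itô formula applies to $\mathcal{E}(\mathbf{u}_\varepsilon)$.
\item Pass $\varepsilon\to0$. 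The drift term is $-\langle J_\varepsilon F(\mathbf{u}),\mathbf{H}(\mathbf{u}_\varepsilon)\rangle$. It converges provided $\mathbf{H}(\mathbf{u}_\varepsilon)\to\mathbf{H}(\mathbf{u})$ in $L^2(0,T\wedge\tau_R;\mathbb{H}^1)$.
\item For this, use the cubic estimate $\|\,|a|^2a-|b|^2b\|_{\mathbb{H}^1}\lesssim(\|a\|_{\mathbb{H}^2}^2+\|b\|_{\mathbb{H}^2}^2)\|a-b\|_{\mathbb{H}^1}$, valid for $d\le3$ (it could also be run with $\mathbb{H}^1\cap\mathbb{L}^\infty$ factors). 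Combine it with dominated convergence, using $\mathbf{u}\in L^2\mathbb{H}^3\cap L^\infty\mathbb{H}^1$.
\item The Itô correction terms $\tfrac12\sum_j D^2\mathcal{E}(\mathbf{u}_\varepsilon)[J_\varepsilon\mathbf{N}_j,J_\varepsilon\mathbf{N}_j]$ converge by \eqref{asum2}. The stochastic integrals converge in probability by BDG.
\end{itemize}

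Evaluating the limiting drift pairing $\langle F(\mathbf{u}),-\mathbf{H}(\mathbf{u})\rangle$, the exchange part gives $-\|\nabla\mathbf{H}\|^2$ and the local part gives $-\|\mathbf{H}\|^2$. The gyromagnetic term vanishes because $(\mathbf{u}\times\mathbf{H},\mathbf{H})=0$ pointwise, noting that $\mathbf{u}\times\Delta\mathbf{u}=\mathbf{u}\times\mathbf{H}$. This gives the stochastic energy identity with dissipation $\|\mathbf{H}(\mathbf{u})\|_{\mathbb{H}^1}^2$.

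\textbf{Stage 3: moments and globality.} Add twice the $\mathbb{L}^2$ Itô identity to obtain $\mathrm{d}\mathcal{V}(\mathbf{u})$. Check the following bounds.
\begin{itemize}
\item The lower-order drift, the Stratonovich correction and the trace terms are bounded by $C(1+\mathcal{V})+\tfrac12\|\mathbf{H}\|_{\mathbb{H}^1}^2$. The trace term involves $\|\nabla(\mathbf{u}\times\mathbf{h}_j)\|^2$ and $\int|\mathbf{u}|^2|\mathbf{u}\times\mathbf{h}_j|^2$, both controlled by $\mathcal{V}$ since $\mathbf{h}_j\in\mathbb{L}^\infty$.
\item The martingale integrand $(\mathbf{H}(\mathbf{u}),\mathbf{N}_j(\mathbf{u}))$ has quadratic variation bounded by $C\mathcal{V}^2$. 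This follows by integrating by parts the $\Delta\mathbf{u}$ part and using $(\mathbf{u},\mathbf{u}\times\mathbf{h}_j)=0$ pointwise, which removes the cubic term.
\item The $\mathbb{H}^2$ bound follows from $\|\mathbf{u}\|_{\mathbb{H}^2}\lesssim\|\mathbf{H}\|_{\mathbb{L}^2}+\|\mathbf{u}\|_{\mathbb{L}^6}^3+\|\mathbf{u}\|_{\mathbb{L}^2}$, whose right-hand side is controlled by $\mathcal{V}$ and the dissipation.
\end{itemize}

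Apply Itô to $\mathcal{V}^p$, then BDG and Gronwall, up to $T\wedge\tau_R$. This gives \eqref{main-energy} uniformly in $R$. Hence $\mathbb{P}(\tau_R\le T)\le R^{-2p}C\to0$, so $\tau_*=\infty$ almost surely.

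Finally, \eqref{main-H1} follows from the $\mathbb{H}^3$ estimate. The parabolic part of the $\mathbb{H}^1$ Itô formula gives $\|\mathbf{u}\|_{\mathbb{H}^3}^2$ up to nonlinear terms bounded by powers of the $\mathbb{H}^1$ and $\mathbb{H}^2$ norms. These are handled by interpolation, using $\|\mathbf{u}\|_{\mathbb{H}^2}\le\|\mathbf{u}\|_{\mathbb{H}^1}^{1/2}\|\mathbf{u}\|_{\mathbb{H}^3}^{1/2}$ and the moment bounds already obtained, via a stochastic Gronwall lemma with random integrable coefficient $\|\mathbf{u}\|_{\mathbb{H}^2}^2$.
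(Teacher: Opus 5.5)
Your proposal follows the paper's proof in its essentials. The shared steps are: cutoff frequency approximations with spatial tail control, so that tightness holds in $\mathcal C([0,T];\mathbb L^2)\cap L^2(0,T;\mathbb H^2)$ and the global-norm cutoff can be identified; $\mathbb H^1$ continuity from the variational It\^o theorem, pathwise uniqueness and gluing along $\tau_R$; a resolvent regularisation of the stopped equation, whose key step is $\mathbf H(J_\varepsilon\mathbf u)\to\mathbf H(\mathbf u)$ in $L^2(0,T\wedge\tau_R;\mathbb H^1)$; and It\^o's formula for $\mathcal V^p$ with $\mathcal V=1+\mathcal E+2\|\cdot\|_{\mathbb L^2}^2$, followed by BDG, Gronwall and the exit-probability bound. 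Your variants in Stage 2 are fine. The operator $(I-\varepsilon\Delta)^{-1}$ already maps $\mathbb H^{-1}$ into $\mathbb H^1$; the paper uses its square. Your cubic estimate with $\mathbb H^2$ factors and an $\mathbb H^1$ difference closes by dominated convergence, since $\|\mathbf u\|_{\mathbb H^2}^4\le\|\mathbf u\|_{\mathbb H^1}^2\|\mathbf u\|_{\mathbb H^3}^2\le R^2\|\mathbf u\|_{\mathbb H^3}^2$ before $\tau_R$.

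Two steps in Stage 3 need repair.

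First, the $\mathbb H^2$ dissipation. Deriving it from $\|\mathbf u\|_{\mathbb H^2}\lesssim\|\mathbf H\|_{\mathbb L^2}+\|\mathbf u\|_{\mathbb L^6}^3+\|\mathbf u\|_{\mathbb L^2}$ costs $\int_0^T\|\mathbf u\|_{\mathbb L^6}^6\,\mathrm ds\lesssim T\sup_t\mathcal V(\mathbf u(t))^3$. This only yields $\mathcal V(\mathbf u_0)^{3p}$ on the right, not the $\mathcal V(\mathbf u_0)^p$ asserted in \eqref{main-energy}. The fix is already in your computation, and it is what the paper does. Keep the term $2\|\Delta\mathbf u\|_{\mathbb L^2}^2$ from the $\mathbb L^2$ balance you add twice, and use $\|\mathbf u\|_{\mathbb H^2}^2\lesssim\|\mathbf u\|_{\mathbb L^2}^2+\|\Delta\mathbf u\|_{\mathbb L^2}^2$.

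Second, the $\mathbb H^3$ bound in \eqref{main-H1}. A stochastic Gronwall lemma with random coefficient $\|\mathbf u\|_{\mathbb H^2}^2$ would not give moments of every order $p\ge1$. That would require exponential integrability of $\int_0^T\|\mathbf u\|_{\mathbb H^2}^2\,\mathrm ds$, which you do not have. No second It\^o computation is needed at all. The paper writes $\Delta\mathbf v=\mathbf H(\mathbf v)-2\mathbf v+2\mathbf F(\mathbf v)$, with $\mathbf F(\mathbf v)=|\mathbf v|^2\mathbf v$, and uses $\|\mathbf F(\mathbf v)\|_{\mathbb H^1}\lesssim\|\mathbf v\|_{\mathbb H^1}^2\|\mathbf v\|_{\mathbb H^2}$. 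This gives $\|\mathbf v\|_{\mathbb H^3}^2\lesssim\|\mathbf H(\mathbf v)\|_{\mathbb H^1}^2+(1+\|\mathbf v\|_{\mathbb H^1}^4)\|\mathbf v\|_{\mathbb H^2}^2$, and the paper then applies Cauchy--Schwarz in $\omega$ together with \eqref{main-energy} at orders $p$, $2p$ and $4p$. If you keep the $\mathbb H^1$ It\^o route, bound its right-hand side directly by these known moments instead of invoking Gronwall, for instance through $\sup_t\|\mathbf u\|_{\mathbb H^1}^4\int_0^T\|\mathbf u\|_{\mathbb H^2}^2\,\mathrm ds$.
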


\begin{corollary}\label{main-H2-corollary}
Let $d\in\{1,2,3\}$, $\mathbf{u}_0\in\mathbb{H}^2$, and assume \eqref{asum2}.
Then \eqref{sys1} admits a unique global pathwise strong solution in the sense of Definition~\ref{def-strong}.
In particular, for every $T>0$,
\begin{equation}\label{main-H2}
\mathbf{u}\in
\mathcal{C}([0,T];\mathbb{H}^2)
\cap L^2(0,T;\mathbb{H}^4),
\qquad\mathbb{P}\textrm{-a.s.},
\end{equation}
and \eqref{strong-id} holds in $\mathbb{L}^2$.
\end{corollary}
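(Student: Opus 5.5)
Since $\mathbf{u}_0\in\mathbb{H}^2\subset\mathbb{H}^1$, Theorem~\ref{the2} already provides a unique global pathwise weak solution $\mathbf{u}$ with the moment bounds \eqref{main-H1} and \eqref{main-energy}. The plan is to show that this solution has the extra regularity \eqref{main-H2}. No new construction is needed. Uniqueness in the strong class then follows at once: a global pathwise strong solution is a global pathwise weak solution, and those are unique by Theorem~\ref{the2}.

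\textbf{Step 1: a priori $\mathbb{H}^2$ estimate.} I will prove it first at the level of the frequency-truncated (or cutoff) approximations used in the local $\mathbb{H}^1$ theory, with the initial datum $P_{\leq N}\mathbf{u}_0$. Apply It\^o's formula to $\|\Delta\mathbf{u}\|_{\mathbb{L}^2}^2$, stopped at
\[
\sigma_R=\inf\{t:\ \|\mathbf{u}(t)\|_{\mathbb{H}^1}+\int_0^t\|\mathbf{u}\|_{\mathbb{H}^3}^2\,\mathrm{d}s\geq R\}.
\]
The leading term gives dissipation $-2\|\Delta^2\mathbf{u}\|_{\mathbb{L}^2}^2$. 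The nonlinear terms are handled by Gagliardo--Nirenberg in $d\leq3$:
\begin{itemize}
\item[(a)] For $(\Delta(|\mathbf{u}|^2\mathbf{u}),\Delta^2\mathbf{u})$, bound $\|\Delta(|\mathbf{u}|^2\mathbf{u})\|_{\mathbb{L}^2}\lesssim\|\mathbf{u}\|_{\mathbb{L}^\infty}^2\|\mathbf{u}\|_{\mathbb{H}^2}+\|\mathbf{u}\|_{\mathbb{L}^\infty}\|\nabla\mathbf{u}\|_{\mathbb{L}^4}^2$.
\item[(b)] For $(\mathbf{u}\times\Delta\mathbf{u},\Delta^2\mathbf{u})$, bound by $\|\mathbf{u}\|_{\mathbb{L}^\infty}\|\mathbf{u}\|_{\mathbb{H}^2}\|\Delta^2\mathbf{u}\|_{\mathbb{L}^2}$.
\end{itemize}
Using $\|\mathbf{u}\|_{\mathbb{L}^\infty}^2\lesssim\|\mathbf{u}\|_{\mathbb{H}^1}\|\mathbf{u}\|_{\mathbb{H}^2}$ for $d\leq3$, together with Young's inequality, yields
\[
\mathrm{d}\|\Delta\mathbf{u}\|^2+\|\Delta^2\mathbf{u}\|^2\,\mathrm{d}t\leq C\big(1+\|\mathbf{u}\|_{\mathbb{H}^1}^4+\|\mathbf{u}\|_{\mathbb{H}^3}^2\big)\big(1+\|\mathbf{u}\|_{\mathbb{H}^2}^2\big)\,\mathrm{d}t+\mathrm{d}M.
\]
The noise terms are controlled by \eqref{asum2}. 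The It\^o correction is bounded by $C_{**}(1+\|\mathbf{u}\|_{\mathbb{H}^2}^2)$, and the martingale $M$ has quadratic variation at most $C(1+\|\mathbf{u}\|_{\mathbb{H}^2}^2)^2$.

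\textbf{Step 2: globalization by the pathwise Gronwall argument.} The coefficient $a(t)=1+\|\mathbf{u}\|_{\mathbb{H}^1}^4+\|\mathbf{u}\|_{\mathbb{H}^3}^2$ is only a.s.\ integrable; its moments are not uniformly bounded. I will therefore use a stochastic Gronwall lemma (or exponential weighting by $e^{-\int_0^t Ca}$) with the stopping times $\sigma_R$. On $[0,T\wedge\sigma_R]$ the integral $\int a$ is bounded by a constant depending on $R$. BDG then gives
\[
\mathbb{E}\sup_{t\leq T\wedge\sigma_R}\|\mathbf{u}\|_{\mathbb{H}^2}^2+\mathbb{E}\int_0^{T\wedge\sigma_R}\|\mathbf{u}\|_{\mathbb{H}^4}^2\leq C_R\|\mathbf{u}_0\|_{\mathbb{H}^2}^2,
\]
uniformly in the approximation parameter. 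This bound passes to the limit identified in Theorem~\ref{the2} by lower semicontinuity. By \eqref{main-H1}, $\mathbb{P}(\sigma_R<T)\to0$ as $R\to\infty$, so $\mathbf{u}\in L^\infty(0,T;\mathbb{H}^2)\cap L^2(0,T;\mathbb{H}^4)$ almost surely for every $T$.

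\textbf{Step 3: continuity and the strong identity.} The drift now lies in $L^2(0,T\wedge\sigma_R;\mathbb{L}^2)$, since
\[
\|\Delta(|\mathbf{u}|^2\mathbf{u})\|_{\mathbb{L}^2}+\|\mathbf{u}\times\Delta\mathbf{u}\|_{\mathbb{L}^2}\lesssim\|\mathbf{u}\|_{\mathbb{H}^2}^3,
\]
and the noise lies in $\mathcal{L}_2(\ell^2;\mathbb{H}^2)$. The variational It\^o theorem on the Gelfand triple $\mathbb{H}^4\subset\mathbb{H}^2\subset\mathbb{L}^2$, with $\mathbb{H}^2$ as pivot via $\Lambda^2$, then gives $\mathbf{u}\in\mathcal{C}([0,T];\mathbb{H}^2)$. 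Because every term in \eqref{1def2} is now an $\mathbb{L}^2$ function, integrating by parts back turns \eqref{1def2} into \eqref{strong-id} in $\mathbb{L}^2$.

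The main obstacle is Step 2. The $\mathbb{H}^2$ estimate closes only with a random, non-uniformly integrable Gronwall coefficient, so some care is needed to combine the stopping with \eqref{main-H1}. The estimate also has to be justified rigorously at the approximation level rather than formally.
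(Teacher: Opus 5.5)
Your Steps 1 and 3 and the uniqueness argument agree with the paper's Lemma~\ref{cutoff-H2-regularity}: the same Gronwall coefficient $1+\|\mathbf v_N\|_{\mathbb L^\infty}^4\lesssim1+\|\mathbf v_N\|_{\mathbb H^1}^2\|\mathbf v_N\|_{\mathbb H^3}^2$, stopping with exponential weighting, and Pardoux's theorem for $\Lambda^2\mathbf u$. Step 2, however, does not close as written.

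Your stopped bound holds up to the exit times $\sigma_R^N$ of the \emph{approximants}. To conclude anything you need $\sup_N\mathbb P\{\sigma_R^N<T\}\to0$ as $R\to\infty$, and \eqref{main-H1} cannot supply this, since it concerns only the limiting solution.

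For the uncut frequency approximations, no uniform-in-$N$ bound on $\sup_t\|\mathbf u_N\|_{\mathbb H^1}+\int_0^T\|\mathbf u_N\|_{\mathbb H^3}^2$ is available. The energy balance of Lemma~\ref{weak-energy-identity} is proved only for the limit: after projection, the precession term no longer cancels against $\mathbf H(\mathbf u_N)$, which is not band-limited. The only uniform-in-$N$ control is Lemma~\ref{H1-estimate}. It applies to the cutoff approximations $\mathbf u_N^R$ at a \emph{fixed} cutoff level, with constants depending on that level.

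Moreover, ``the bound passes to the limit by lower semicontinuity'' does not give a bound up to the limit's own exit time. Exit times of a lower semicontinuous functional are not stable along the sequence: an approximant may exit before the limit does. The limit also lives on a different probability space, obtained through the Skorokhod representation, so it is not directly the solution of Theorem~\ref{the2}.

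The repair is the paper's structure:
\begin{enumerate}
\item Fix the cutoff level $R$, and bound the moments of $\int_0^T(1+\|\mathbf v_N\|_{\mathbb L^\infty}^4)\,\mathrm ds$ uniformly in $N$ by \eqref{cutoff-H1}.
\item Turn the stopped estimate into a uniform bound in probability for $Y_N=\sup_t\|\mathbf v_N(t)\|_{\mathbb H^2}^2+\int_0^T\|\mathbf v_N\|_{\mathbb H^4}^2\,\mathrm ds$, as in \eqref{H2-probability-bound}.
\item Pass this bound to the Skorokhod limit. The paper proves tightness in $\mathcal Z_{\mathbf u}^{(2)}\cap\mathcal X_1$; lower semicontinuity of $Y$ on $\mathcal X_1$, applied to the open sets $\{Y>L\}$, would also suffice.
\item Transfer $Y<\infty$ to the pathwise cutoff solution $\mathbf u^R$ by equality of laws, which follows from pathwise uniqueness and Yamada--Watanabe.
\end{enumerate}
This yields $\mathbb H^2$ regularity of $\mathbf u^R$ only. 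The solution of Theorem~\ref{the2} coincides with $\mathbf u^R$ on $[0,\tau_R]$, so the last step is to remove the cutoff using $\tau_R\uparrow\infty$ from Proposition~\ref{global-H1}. That is where the global $\mathbb H^1$ estimates actually enter, not in bounding $\mathbb P\{\sigma_R<T\}$.
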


\begin{remark}\label{comparison-bounded-domain}
On bounded domains,
\cite[Theorems~1.3 and~1.4]{xu2024wellposednessinvariantmeasuresstochastically}
establishes local pathwise weak solvability for $\mathbb{H}^1$
initial data in dimensions $d=1,2,3$, but global solvability only
for $d=1$. Theorem~\ref{the2} extends this theory to
$\mathbb{R}^d$ and yields global pathwise weak solutions for
arbitrary $\mathbb{H}^1$ initial data in all three dimensions.
Moreover, Corollary~\ref{main-H2-corollary} establishes existence
and pathwise uniqueness of global $\mathbb{H}^2$ strong solutions
under the same noise assumption \eqref{asum2}, a result not
addressed in that work. The present results therefore strengthen
the global solvability and spatial regularity conclusions of
that bounded-domain theory while extending the analysis to the
whole space.
\end{remark}

\begin{remark}
Under \eqref{asum2}, for $d=1,2$, the solution of
Theorem~\ref{the2} coincides with the pathwise very weak solution
of Theorem~\ref{the1} with the same initial value and Wiener
processes, by pathwise uniqueness in that class.
In dimension three, pathwise uniqueness in Theorem~\ref{the2}
holds in
$\mathcal{C}([0,T];\mathbb{H}^1)\cap L^2(0,T;\mathbb{H}^3)$.
The space-time regularity available for martingale very weak solutions
in Definition~\ref{def1-1} is insufficient to close
the uniqueness estimates used here.
\end{remark}
\subsection{Organization}
Section~\ref{sec2} constructs the frequency approximations and
establishes their global solvability.
Section~\ref{sec3} derives uniform estimates, proves tightness,
and identifies the nonlinear and stochastic terms in the limit
to obtain martingale very weak solutions on arbitrary finite time intervals.
Section~\ref{sec4} proves pathwise uniqueness in dimensions
one and two and completes the proof of Theorem~\ref{the1}.
Section~\ref{sec5} constructs solutions of the cutoff equation
by uniform estimates, spatial tail control and stochastic compactness,
then proves pathwise uniqueness and removes the cutoff to obtain
the maximal local pathwise weak solution.
Section~\ref{sec6} justifies the stochastic effective-field
energy identity at $\mathbb{H}^1$ regularity and derives
global bounds, completing the proof of Theorem~\ref{the2}.
Finally, Section~\ref{sec7} establishes persistence of
$\mathbb{H}^2$ regularity and proves the existence and uniqueness
of global pathwise strong solutions.
\section{Global solvability of the truncated system}\label{sec2}
We work on a prescribed stochastic basis carrying $W$ and assume \eqref{asum1}. The operators $\mathbf{N}$ and $\mathbf{C}$ are those defined in Section~\ref{sec1}. We first convert \eqref{sys1} to It\^{o} form and then project its drift and diffusion. This gives
\begin{equation}\label{Mod-1}
\left\{
\begin{aligned}
\mathrm{d}\mathbf{u}_N(t)
={}&\Big[-\Delta^2\mathbf{u}_N(t)-\Delta\mathbf{u}_N(t)+2\mathbf{u}_N(t)
-2P_{\leq N}(|\mathbf{u}_N(t)|^2\mathbf{u}_N(t))\\
&\quad+2\Delta P_{\leq N}(|\mathbf{u}_N(t)|^2\mathbf{u}_N(t))
-P_{\leq N}(\mathbf{u}_N(t)\times\Delta\mathbf{u}_N(t))
+P_{\leq N}\mathbf{C}(\mathbf{u}_N(t))\Big]\,\mathrm{d}t\\
&+P_{\leq N}\mathbf{N}(\mathbf{u}_N(t))\,\mathrm{d}W(t),\\
\mathbf{u}_N(0)={}&P_{\leq N}\mathbf{u}_0.
\end{aligned}
\right.
\end{equation}
The equation is posed in the closed subspace
\[
\mathbb{L}^2_N
=\{f\in\mathbb{L}^2:\operatorname{supp}\widehat f\subset\{\xi:|\xi|\leq N\}\}.
\]
This is a separable, infinite-dimensional Hilbert space. Every $f\in\mathbb{L}^2_N$ satisfies $P_{\leq N}f=f$, and, for $k\geq0$,
\begin{equation}\label{frequency-Sobolev-bound}
\|f\|_{\mathbb{H}^k}\leq(1+N^2)^{k/2}\|f\|_{\mathbb{L}^2},
\qquad \|f\|_{\mathbb{L}^\infty}\leq C_dN^{d/2}\|f\|_{\mathbb{L}^2}.
\end{equation}
The second estimate follows from Cauchy--Schwarz applied to the Fourier inversion formula. All uses of $P_{\leq N}$ below rely on its self-adjointness, commutation with derivatives and Sobolev contraction.

\begin{lemma}\label{lem2-1}
Let $E$ be a separable Hilbert space. Suppose that $b:E\to E$ and $\sigma:E\to\mathcal{L}_2(\ell^2;E)$ are Lipschitz on every bounded ball and satisfy
\begin{equation}\label{Hilbert-nonexplosion}
2(b(v),v)_E+\|\sigma(v)\|_{\mathcal{L}_2(\ell^2;E)}^2
\leq K(1+\|v\|_E^2),\qquad v\in E.
\end{equation}
For every $v_0\in E$, the equation
\[
v(t)=v_0+\int_0^tb(v(s))\,\mathrm{d}s
+\int_0^t\sigma(v(s))\,\mathrm{d}W(s)
\]
has a unique global adapted solution with paths in $\mathcal{C}([0,T];E)$ for every $T>0$.
\end{lemma}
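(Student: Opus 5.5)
The plan is the usual localization-plus-Lyapunov argument for SDEs in a separable Hilbert space with locally Lipschitz coefficients.

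\emph{Truncated problems.} For each $R>0$ let $\pi_R:E\to E$ be the radial retraction onto the closed ball $B_R$:
\[
\pi_R(v)=v\ \text{if }\|v\|_E\le R,\qquad \pi_R(v)=\frac{Rv}{\|v\|_E}\ \text{otherwise}.
\]
Set $b_R=b\circ\pi_R$ and $\sigma_R=\sigma\circ\pi_R$. Since $\pi_R$ is $1$-Lipschitz with range in $B_R$, and $b,\sigma$ are Lipschitz on $B_R$, the maps $b_R$ and $\sigma_R$ are globally Lipschitz on $E$ with linear growth. The classical Picard iteration in $L^2(\Omega;\mathcal{C}([0,T];E))$ then gives, for each $R$, a unique continuous adapted solution $v_R$. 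The tools are the Burkholder--Davis--Gundy inequality for Hilbert-space stochastic integrals with $\mathcal{L}_2(\ell^2;E)$ integrands, and Gronwall's inequality.

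\emph{Consistency.} Define the stopping times $\tau_R=\inf\{t:\|v_R(t)\|_E\ge R\}$. For $R<R'$ the coefficients agree on $B_R$, so uniqueness for the truncated equation, applied to the processes stopped at $\tau_R\wedge\tau_{R'}$, gives $v_R=v_{R'}$ on $[0,\tau_R]$. Hence $\tau_R$ is nondecreasing in $R$. We may therefore define $v=v_R$ on $[0,\tau_R]$, which yields a local solution up to $\tau_\infty=\lim_R\tau_R$. Any other solution agrees with $v$ up to its own exit time from $B_R$, by the same local Lipschitz argument. This gives uniqueness.

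\emph{Non-explosion.} This is the key step, and it is where \eqref{Hilbert-nonexplosion} enters. Apply It\^o's formula to $\|v(t\wedge\tau_R)\|_E^2$:
\[
\|v(t\wedge\tau_R)\|_E^2=\|v_0\|_E^2+\int_0^{t\wedge\tau_R}\Big(2(b(v),v)_E+\|\sigma(v)\|_{\mathcal{L}_2(\ell^2;E)}^2\Big)\,\mathrm{d}s+M_{t\wedge\tau_R}.
\]
Here $M$ is a genuine martingale once stopped, because its integrand $2(v,\sigma(v)\,\cdot)_E$ is bounded on $B_R$. Taking expectations and using \eqref{Hilbert-nonexplosion} gives
\[
\mathbb{E}\|v(t\wedge\tau_R)\|_E^2\le \|v_0\|_E^2+K\int_0^t\big(1+\mathbb{E}\|v(s\wedge\tau_R)\|_E^2\big)\,\mathrm{d}s.
\]
By Gronwall, $\mathbb{E}\|v(t\wedge\tau_R)\|_E^2\le (\|v_0\|_E^2+KT)e^{KT}$ for $t\le T$, uniformly in $R$. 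Continuity of paths gives $\|v(\tau_R)\|_E=R$ on $\{\tau_R\le T\}$ (taking $R>\|v_0\|_E$). Chebyshev's inequality then yields
\[
\mathbb{P}(\tau_R\le T)\le R^{-2}(\|v_0\|_E^2+KT)e^{KT}\to0 .
\]
Hence $\tau_\infty=\infty$ almost surely, and $v$ is global with continuous paths on every $[0,T]$.

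The only delicate point I expect is bookkeeping rather than analysis. It\^o's formula for $\|\cdot\|_E^2$ must be justified in infinite dimensions, but it holds for strong solutions with Bochner drift and $\mathcal{L}_2$ diffusion. Measurability of the pasted process is routine.
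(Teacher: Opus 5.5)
Your proposal is correct and follows essentially the same route as the paper. Both use the projection onto the ball (your radial retraction is exactly the paper's metric projection) to get globally Lipschitz truncated problems solved by Picard iteration. Both then patch the truncated solutions consistently via exit times and exclude explosion by the stopped It\^o formula, Gronwall and a Chebyshev bound $\mathbb{P}(\tau_R\le T)\lesssim R^{-2}$.
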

\begin{proof}[\emph{\textbf{Proof}}]
For $R>\|v_0\|_E$, let $\pi_R$ be the metric projection onto the closed ball of radius $R$ in $E$. The maps $b\circ\pi_R$ and $\sigma\circ\pi_R$ are globally Lipschitz and bounded. The stochastic Picard iteration gives their unique continuous adapted solution; see \cite{da2014stochastic,prevot2007concise}. Solutions for different $R$ agree until the first exit from the smaller ball. They therefore define a maximal solution $v$, with exit times
\[
\rho_R=\inf\{t\geq0:\|v(t)\|_E\geq R\},
\qquad \rho_* =\lim_{R\to\infty}\rho_R.
\]
The stopped It\^{o} formula and \eqref{Hilbert-nonexplosion} imply, for $0\leq t\leq T$,
\[
\mathbb{E}\|v(t\wedge\rho_R)\|_E^2
\leq\|v_0\|_E^2
+K\int_0^t\big(1+\mathbb{E}\|v(s\wedge\rho_R)\|_E^2\big)\,\mathrm{d}s.
\]
The stopped martingale has zero expectation because the coefficients are bounded before $\rho_R$. Gronwall's lemma yields
\[
\sup_{0\leq t\leq T}\mathbb{E}\|v(t\wedge\rho_R)\|_E^2
\leq C_{K,T}(1+\|v_0\|_E^2),
\qquad
\mathbb{P}\{\rho_R\leq T\}\leq\frac{C_{K,T}(1+\|v_0\|_E^2)}{R^2}.
\]
Thus $\rho_* =\infty$ almost surely. Uniqueness follows from the Lipschitz estimate up to each exit time and then from $R\to\infty$.
\end{proof}

\begin{proposition}\label{pro2-1}
Let $\mathbf{u}_0\in\mathbb{L}^2$ and assume \eqref{asum1}. For every $N\geq1$, equation \eqref{Mod-1} admits a unique global adapted solution $\mathbf{u}_N$ with paths in $\mathcal{C}([0,T];\mathbb{L}^2_N)$ for every $T>0$.
\end{proposition}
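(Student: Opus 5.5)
We apply Lemma~\ref{lem2-1} with $E=\mathbb{L}^2_N$, drift $b=b_N$ given by the bracket in \eqref{Mod-1}, and diffusion $\sigma(\mathbf{v})=P_{\leq N}\mathbf{N}(\mathbf{v})$. The proof has three steps: show that $b_N$ and $\sigma$ map $\mathbb{L}^2_N$ into the right spaces, show that they are Lipschitz on balls, and verify the growth condition \eqref{Hilbert-nonexplosion}.

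\textbf{Mapping properties.} Every linear term, such as $-\Delta^2\mathbf{v}$, $-\Delta\mathbf{v}$ or $2\mathbf{v}$, preserves frequency support. By \eqref{frequency-Sobolev-bound} these terms are bounded operators on $\mathbb{L}^2_N$, with norm of order $(1+N^2)^2$. The nonlinear terms all carry a projection $P_{\leq N}$, so they land in $\mathbb{L}^2_N$ provided their arguments lie in $\mathbb{L}^2$. This holds because
\[
\||\mathbf{v}|^2\mathbf{v}\|_{\mathbb{L}^2}\leq\|\mathbf{v}\|_{\mathbb{L}^\infty}^2\|\mathbf{v}\|_{\mathbb{L}^2}\lesssim N^d\|\mathbf{v}\|_{\mathbb{L}^2}^3
\]
and $\|\mathbf{v}\times\Delta\mathbf{v}\|_{\mathbb{L}^2}\leq\|\mathbf{v}\|_{\mathbb{L}^\infty}\|\Delta\mathbf{v}\|_{\mathbb{L}^2}$. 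The term $\Delta P_{\leq N}(\cdot)$ costs at most a further factor $N^2$. For the noise, $\mathbf{C}$ is affine and bounded on $\mathbb{L}^2$ under \eqref{asum1}, and $P_{\leq N}$ is a contraction. Hence
\[
\sum_j\|P_{\leq N}\mathbf{N}_j(\mathbf{v})\|_{\mathbb{L}^2}^2\lesssim_{C_*}1+\|\mathbf{v}\|_{\mathbb{L}^2}^2 .
\]

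\textbf{Local Lipschitz property.} Differences of the cubic term are handled by writing $|\mathbf{v}|^2\mathbf{v}-|\mathbf{w}|^2\mathbf{w}$ as a sum of products each containing one factor $\mathbf{v}-\mathbf{w}$. Bounding two factors in $\mathbb{L}^\infty$ through \eqref{frequency-Sobolev-bound} gives a Lipschitz constant of order $N^{d+2}R^2$ on the ball of radius $R$. The term $\mathbf{v}\times\Delta\mathbf{v}$ is handled the same way. The diffusion is affine with linear part $\mathbf{v}\mapsto\mathbf{v}\times\mathbf{h}_j$, so it is globally Lipschitz with
\[
\sum_j\|(\mathbf{v}-\mathbf{w})\times\mathbf{h}_j\|_{\mathbb{L}^2}^2\leq C_*\|\mathbf{v}-\mathbf{w}\|_{\mathbb{L}^2}^2 .
\]

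\textbf{Growth condition (main step).} We must show $2(b_N(\mathbf{v}),\mathbf{v})_{\mathbb{L}^2}+\|\sigma(\mathbf{v})\|^2\leq K(1+\|\mathbf{v}\|^2)$, and here the nonlinearities have to be controlled without any $N$-dependent blow-up in $\|\mathbf{v}\|$. For $\mathbf{v}\in\mathbb{L}^2_N$, self-adjointness of $P_{\leq N}$ and $P_{\leq N}\mathbf{v}=\mathbf{v}$ remove every projection in the pairing. The individual contributions are then as follows.
\begin{itemize}
\item The biharmonic term gives $-\|\Delta\mathbf{v}\|^2$.
\item The term $-\Delta\mathbf{v}$ gives $\|\nabla\mathbf{v}\|^2$.
\item The quartic term gives $-2\|\mathbf{v}\|_{\mathbb{L}^4}^4$.
\item The gyromagnetic term vanishes, since $(\mathbf{v}\times\Delta\mathbf{v},\mathbf{v})=0$ pointwise.
\item The term $2(|\mathbf{v}|^2\mathbf{v},\Delta\mathbf{v})$ equals $-2\int\big(|\mathbf{v}|^2|\nabla\mathbf{v}|^2+\tfrac12|\nabla|\mathbf{v}|^2|^2\big)\leq0$ after integration by parts, which is legitimate because $\mathbf{v}$ is smooth and all its derivatives lie in $\mathbb{L}^2$.
\end{itemize}
Interpolation gives $\|\nabla\mathbf{v}\|^2\leq\frac12\|\Delta\mathbf{v}\|^2+\frac12\|\mathbf{v}\|^2$, so the first two contributions together are at most $\frac12\|\mathbf{v}\|^2$. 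The remaining pieces are $(\mathbf{C}(\mathbf{v}),\mathbf{v})$ and the Hilbert--Schmidt norm of the diffusion, and both are $\lesssim_{C_*}1+\|\mathbf{v}\|^2$. This yields \eqref{Hilbert-nonexplosion} with a constant $K$ depending only on $C_*$, and Lemma~\ref{lem2-1} then gives the unique global solution.
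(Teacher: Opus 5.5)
Your proposal is correct and follows the paper's proof essentially step by step. You use Lemma~\ref{lem2-1} on $E=\mathbb{L}^2_N$, local Lipschitz bounds via \eqref{frequency-Sobolev-bound}, and the growth condition from the same pairing identity (vanishing precession term, nonpositive cubic pairings) together with the interpolation $\|\nabla\mathbf{v}\|_{\mathbb{L}^2}^2\leq\frac12\|\Delta\mathbf{v}\|_{\mathbb{L}^2}^2+\frac12\|\mathbf{v}\|_{\mathbb{L}^2}^2$. The only slip is bookkeeping: your list of pairings omits the contribution $2\|\mathbf{v}\|_{\mathbb{L}^2}^2$ of the zeroth-order term $2\mathbf{v}$, which is harmless because it is absorbed into $K(1+\|\mathbf{v}\|_{\mathbb{L}^2}^2)$.
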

\begin{proof}[\emph{\textbf{Proof}}]
Denote the drift and diffusion in \eqref{Mod-1} by $\mathbf{M}^N$ and $\mathbf{N}^N=P_{\leq N}\mathbf{N}$. They take values in $\mathbb{L}^2_N$ and $\mathcal{L}_2(\ell^2;\mathbb{L}^2_N)$, respectively. We verify the hypotheses of Lemma~\ref{lem2-1}.

Fix $R>0$ and $\mathbf{v},\mathbf{w}\in\mathbb{L}^2_N$ with $\|\mathbf{v}\|_{\mathbb{L}^2},\|\mathbf{w}\|_{\mathbb{L}^2}\leq R$. For the cubic terms, the pointwise inequality
\[
\big||a|^2a-|b|^2b\big|\leq C(|a|^2+|b|^2)|a-b|,
\qquad a,b\in\mathbb{R}^3,
\]
and \eqref{frequency-Sobolev-bound} give
\begin{equation}\label{3pro2-1}
\big\||\mathbf{v}|^2\mathbf{v}-|\mathbf{w}|^2\mathbf{w}\big\|_{\mathbb{L}^2}
\leq C(\|\mathbf{v}\|_{\mathbb{L}^\infty}^2+\|\mathbf{w}\|_{\mathbb{L}^\infty}^2)
\|\mathbf{v}-\mathbf{w}\|_{\mathbb{L}^2}
\leq C_{N,R}\|\mathbf{v}-\mathbf{w}\|_{\mathbb{L}^2}.
\end{equation}
Since $\|\Delta P_{\leq N}f\|_{\mathbb{L}^2}\leq N^2\|f\|_{\mathbb{L}^2}$, this also controls the differentiated cubic term. For the precession term, write
\[
\mathbf{v}\times\Delta\mathbf{v}-\mathbf{w}\times\Delta\mathbf{w}
=\mathbf{v}\times\Delta(\mathbf{v}-\mathbf{w})
+(\mathbf{v}-\mathbf{w})\times\Delta\mathbf{w}.
\]
Consequently,
\begin{equation}\label{5pro2-1}
\begin{split}
\|\mathbf{v}\times\Delta\mathbf{v}-\mathbf{w}\times\Delta\mathbf{w}\|_{\mathbb{L}^2}
\leq{}&\|\mathbf{v}\|_{\mathbb{L}^\infty}\|\Delta(\mathbf{v}-\mathbf{w})\|_{\mathbb{L}^2}\\
&+\|\mathbf{v}-\mathbf{w}\|_{\mathbb{L}^\infty}\|\Delta\mathbf{w}\|_{\mathbb{L}^2}
\leq C_{N,R}\|\mathbf{v}-\mathbf{w}\|_{\mathbb{L}^2}.
\end{split}
\end{equation}
The linear terms are bounded operators on $\mathbb{L}^2_N$. Moreover,
\begin{equation}\label{6pro2-1}
\begin{split}
\|\mathbf{C}(\mathbf{v})-\mathbf{C}(\mathbf{w})\|_{\mathbb{L}^2}
&\leq\frac12\sum_{j=1}^{\infty}\|\mathbf{h}_j\|_{\mathbb{L}^\infty}^2
\|\mathbf{v}-\mathbf{w}\|_{\mathbb{L}^2},\\
\|\mathbf{N}^N(\mathbf{v})-\mathbf{N}^N(\mathbf{w})\|_{\mathcal{L}_2(\ell^2;\mathbb{L}^2)}^2
&\leq\sum_{j=1}^{\infty}\|\mathbf{h}_j\|_{\mathbb{L}^\infty}^2
\|\mathbf{v}-\mathbf{w}\|_{\mathbb{L}^2}^2.
\end{split}
\end{equation}
Thus both coefficients are Lipschitz on every bounded ball, with a constant allowed to depend on $N$.

To prove nonexplosion, use $P_{\leq N}\mathbf{v}=\mathbf{v}$ and integrate by parts. The identities
\[
(\mathbf{v}\times\Delta\mathbf{v},\mathbf{v})_{\mathbb{L}^2}=0,
\qquad
(\nabla(|\mathbf{v}|^2\mathbf{v}),\nabla\mathbf{v})_{\mathbb{L}^2}
=2\|\mathbf{v}\cdot\nabla\mathbf{v}\|_{\mathbb{L}^2}^2
+\||\mathbf{v}||\nabla\mathbf{v}|\|_{\mathbb{L}^2}^2
\]
give
\begin{equation}\label{8pro2-1}
\begin{split}
(\mathbf{M}^N(\mathbf{v}),\mathbf{v})_{\mathbb{L}^2}
={}&-\|\Delta\mathbf{v}\|_{\mathbb{L}^2}^2+\|\nabla\mathbf{v}\|_{\mathbb{L}^2}^2
+2\|\mathbf{v}\|_{\mathbb{L}^2}^2-2\|\mathbf{v}\|_{\mathbb{L}^4}^4\\
&-4\|\mathbf{v}\cdot\nabla\mathbf{v}\|_{\mathbb{L}^2}^2
-2\||\mathbf{v}||\nabla\mathbf{v}|\|_{\mathbb{L}^2}^2
+(\mathbf{C}(\mathbf{v}),\mathbf{v})_{\mathbb{L}^2}.
\end{split}
\end{equation}
All these terms are integrable because $\mathbf{v}\in\mathbb{L}^2_N$ belongs to every Sobolev space. Integration by parts follows by density in the relevant Sobolev norms, without a boundary term at infinity. The noise estimates are
\begin{equation}\label{7pro2-1}
\begin{split}
\|\mathbf{N}^N(\mathbf{v})\|_{\mathcal{L}_2(\ell^2;\mathbb{L}^2)}^2
&\leq2\sum_{j=1}^{\infty}
\big(\|\mathbf{h}_j\|_{\mathbb{L}^\infty}^2\|\mathbf{v}\|_{\mathbb{L}^2}^2
+\|\mathbf{g}_j\|_{\mathbb{L}^2}^2\big),\\
|(\mathbf{C}(\mathbf{v}),\mathbf{v})_{\mathbb{L}^2}|
&\leq C_{C_*}(1+\|\mathbf{v}\|_{\mathbb{L}^2}^2).
\end{split}
\end{equation}
Combining \eqref{8pro2-1} and \eqref{7pro2-1} with
\[
\|\nabla\mathbf{v}\|_{\mathbb{L}^2}^2
\leq\frac12\|\Delta\mathbf{v}\|_{\mathbb{L}^2}^2
+\frac12\|\mathbf{v}\|_{\mathbb{L}^2}^2
\]
proves \eqref{Hilbert-nonexplosion}, uniformly in $N$. Lemma~\ref{lem2-1} yields the desired global solution. Its paths are also continuous in every $\mathbb{H}^k$ for fixed $N$, by \eqref{frequency-Sobolev-bound}.
\end{proof}

\section{Existence of a martingale very weak solution}\label{sec3}
Throughout this section, $d\in\{1,2,3\}$ and \eqref{asum1} holds. We write
\begin{equation}\label{drift-notation}
\begin{split}
\mathbf{F}(\mathbf{v})&=|\mathbf{v}|^2\mathbf{v},\\
\mathbf{M}(\mathbf{v})&=-\Delta^2\mathbf{v}-\Delta\mathbf{v}+2\mathbf{v}
-2\mathbf{F}(\mathbf{v})+2\Delta\mathbf{F}(\mathbf{v})
-\mathbf{v}\times\Delta\mathbf{v}+\mathbf{C}(\mathbf{v}).
\end{split}
\end{equation}
The coefficient bounds give
\begin{equation}\label{noise-L2}
\begin{split}
\|\mathbf{N}(\mathbf{v})\|_{\mathcal{L}_2(\ell^2;\mathbb{L}^2)}^2
&\lesssim_{C_*}1+\|\mathbf{v}\|_{\mathbb{L}^2}^2,\\
\|\mathbf{C}(\mathbf{v})\|_{\mathbb{L}^2}
&\lesssim_{C_*}1+\|\mathbf{v}\|_{\mathbb{L}^2}.
\end{split}
\end{equation}
Both maps are globally Lipschitz between the corresponding spaces. The series defining $\mathbf{C}$ converges absolutely in $\mathbb{L}^2$, by Cauchy--Schwarz in the index $j$.

\subsection{Uniform estimates}
Define the nonnegative dissipation
\begin{equation}\label{dissipation}
\begin{split}
\mathcal{D}(\mathbf{v})={}&\|\Delta\mathbf{v}\|_{\mathbb{L}^2}^2
+2\|\mathbf{v}\|_{\mathbb{L}^4}^4
+4\|\mathbf{v}\cdot\nabla\mathbf{v}\|_{\mathbb{L}^2}^2
+2\||\mathbf{v}||\nabla\mathbf{v}|\|_{\mathbb{L}^2}^2.
\end{split}
\end{equation}

\begin{lemma}\label{lem3}
For every $p\geq1$ and $T>0$, the solutions of \eqref{Mod-1} satisfy
\begin{equation}\label{lem3-1}
\mathbb{E}\sup_{0\leq t\leq T}\|\mathbf{u}_N(t)\|_{\mathbb{L}^2}^{2p}
+\mathbb{E}\left(\int_0^T\mathcal{D}(\mathbf{u}_N(t))\,\mathrm{d}t\right)^p
+\mathbb{E}\left(\int_0^T\|\mathbf{u}_N(t)\|_{\mathbb{H}^2}^2\,\mathrm{d}t\right)^p
\leq C,
\end{equation}
where $C$ depends only on $p,T,C_*$ and $\|\mathbf{u}_0\|_{\mathbb{L}^2}$.
\end{lemma}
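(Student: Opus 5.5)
We apply It\^{o}'s formula to $\|\mathbf{u}_N(t)\|_{\mathbb{L}^2}^2$ in the Hilbert space $\mathbb{L}^2_N$. This is legitimate by Proposition~\ref{pro2-1}: for fixed $N$ the drift $\mathbf{M}^N$ and diffusion $\mathbf{N}^N$ are continuous on $\mathbb{L}^2_N$, and the paths are continuous in every $\mathbb{H}^k$. Since $P_{\leq N}\mathbf{u}_N=\mathbf{u}_N$ and $P_{\leq N}$ is self-adjoint, the projections disappear when pairing with $\mathbf{u}_N$. Identity \eqref{8pro2-1} then gives
\[
\mathrm{d}\|\mathbf{u}_N\|_{\mathbb{L}^2}^2
+2\mathcal{D}(\mathbf{u}_N)\,\mathrm{d}t
=\Big(2\|\nabla\mathbf{u}_N\|_{\mathbb{L}^2}^2+4\|\mathbf{u}_N\|_{\mathbb{L}^2}^2
+2(\mathbf{C}(\mathbf{u}_N),\mathbf{u}_N)_{\mathbb{L}^2}
+\|P_{\leq N}\mathbf{N}(\mathbf{u}_N)\|_{\mathcal{L}_2(\ell^2;\mathbb{L}^2)}^2\Big)\mathrm{d}t
+2\big(\mathbf{u}_N,\mathbf{N}(\mathbf{u}_N)\,\mathrm{d}W\big)_{\mathbb{L}^2}.
\]
We absorb the gradient term using $2\|\nabla\mathbf{v}\|_{\mathbb{L}^2}^2\leq\|\Delta\mathbf{v}\|_{\mathbb{L}^2}^2+\|\mathbf{v}\|_{\mathbb{L}^2}^2$, which costs one copy of $\|\Delta\mathbf{u}_N\|^2$ out of $2\mathcal{D}$. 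We bound the remaining drift terms by \eqref{7pro2-1} and \eqref{noise-L2}. The result is
\[
\|\mathbf{u}_N(t)\|_{\mathbb{L}^2}^2+\int_0^t\mathcal{D}(\mathbf{u}_N)\,\mathrm{d}s
\leq\|\mathbf{u}_0\|_{\mathbb{L}^2}^2+C\int_0^t(1+\|\mathbf{u}_N\|_{\mathbb{L}^2}^2)\,\mathrm{d}s+|M_N(t)|,
\]
where $M_N(t)=2\sum_j\int_0^t(\mathbf{u}_N,\mathbf{N}_j(\mathbf{u}_N))_{\mathbb{L}^2}\,\mathrm{d}W_j$. Here we use $\|P_{\leq N}\mathbf{u}_0\|\leq\|\mathbf{u}_0\|$ and $\mathcal{D}\geq\|\Delta\mathbf{v}\|^2$. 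All constants are independent of $N$.

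Next we raise both sides to the power $p$, take $\sup_{s\leq t\wedge\rho_R}$ for the stopping times $\rho_R=\inf\{t:\|\mathbf{u}_N(t)\|_{\mathbb{L}^2}\geq R\}$, and then take expectations. We treat the martingale by the Burkholder--Davis--Gundy inequality. The key observation is that $\mathbf{u}_N\times\mathbf{h}_j\perp\mathbf{u}_N$ pointwise, so only the additive part contributes:
\[
(\mathbf{u}_N,\mathbf{N}_j(\mathbf{u}_N))_{\mathbb{L}^2}=(\mathbf{u}_N,\mathbf{g}_j)_{\mathbb{L}^2}.
\]
Consequently the quadratic variation is at most $4C_*\int_0^t\|\mathbf{u}_N\|^2_{\mathbb{L}^2}\,\mathrm{d}s$. (Even without this cancellation, the bound $\lesssim\int(1+\|\mathbf{u}_N\|^4)\,\mathrm{d}s$ suffices.) Hence
\[
\mathbb{E}\sup_{s\leq t\wedge\rho_R}|M_N(s)|^p
\leq C\,\mathbb{E}\Big(\int_0^{t\wedge\rho_R}\|\mathbf{u}_N\|^4\,\mathrm{d}s\Big)^{p/2}
\leq\tfrac12\mathbb{E}\sup_{s\leq t\wedge\rho_R}\|\mathbf{u}_N(s)\|^{2p}+C\int_0^t\big(1+\mathbb{E}\sup_{r\leq s\wedge\rho_R}\|\mathbf{u}_N(r)\|^{2p}\big)\,\mathrm{d}s
\]
by Young's and H\"older's inequalities. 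Stopping at $\rho_R$ keeps every quantity finite, so the absorption is legitimate. Gronwall's lemma then yields a bound uniform in $N$ and $R$. Letting $R\to\infty$, using $\rho_R\to\infty$ from Lemma~\ref{lem2-1}, and applying Fatou's lemma gives the first two terms of \eqref{lem3-1}. The $\mathbb{L}^2$-supremum and $\int\mathcal{D}$ bounds are obtained simultaneously, since $(a+b)^p\asymp a^p+b^p$ on the left side.

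The third term follows from $\|\mathbf{v}\|_{\mathbb{H}^2}^2\asymp\|\mathbf{v}\|_{\mathbb{L}^2}^2+\|\Delta\mathbf{v}\|_{\mathbb{L}^2}^2\leq\|\mathbf{v}\|^2_{\mathbb{L}^2}+\mathcal{D}(\mathbf{v})$. Thus $\mathbb{E}(\int_0^T\|\mathbf{u}_N\|_{\mathbb{H}^2}^2\,\mathrm{d}t)^p\lesssim T^p\,\mathbb{E}\sup_t\|\mathbf{u}_N\|^{2p}+\mathbb{E}(\int_0^T\mathcal{D}\,\mathrm{d}t)^p$.

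The main obstacle is keeping every constant independent of $N$. The first point is verifying that the projected cubic and precession terms pair with $\mathbf{u}_N$ exactly as in the unprojected problem; this is where self-adjointness of $P_{\leq N}$ and $P_{\leq N}\mathbf{u}_N=\mathbf{u}_N$ are essential, as already used in \eqref{8pro2-1}. The second point is the BDG absorption step, which requires the localization by $\rho_R$.
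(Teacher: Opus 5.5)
Your proposal is correct and follows essentially the same route as the paper. Both arguments apply It\^{o}'s formula to the $\mathbb{L}^2$ norm up to a norm exit time and use the orthogonality $(\mathbf{u}_N\times\mathbf{h}_j,\mathbf{u}_N)_{\mathbb{L}^2}=0$, so that only the additive part remains in the martingale. They then absorb $\|\nabla\mathbf{u}_N\|_{\mathbb{L}^2}^2$ by interpolation, apply Burkholder--Davis--Gundy, Young and Gronwall uniformly in the exit level, and finish with the equivalence $\|\mathbf{v}\|_{\mathbb{H}^2}^2\asymp\|\mathbf{v}\|_{\mathbb{L}^2}^2+\|\Delta\mathbf{v}\|_{\mathbb{L}^2}^2$. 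One cosmetic slip: the integrand in your BDG bound should read $1+\|\mathbf{u}_N\|_{\mathbb{L}^2}^4$, or simply $\|\mathbf{u}_N\|_{\mathbb{L}^2}^2$ once the cancellation is used; this does not affect the argument.
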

\begin{proof}[\emph{\textbf{Proof}}]
Apply It\^{o}'s formula to $\frac12\|\mathbf{u}_N\|_{\mathbb{L}^2}^2$, initially up to a norm exit time. Since $P_{\leq N}$ is self-adjoint, the identities
\begin{equation}\label{algebra-L2}
\begin{split}
(\mathbf{v}\times\Delta\mathbf{v},\mathbf{v})_{\mathbb{L}^2}&=0,\\
(\nabla\mathbf{F}(\mathbf{v}),\nabla\mathbf{v})_{\mathbb{L}^2}
&=2\|\mathbf{v}\cdot\nabla\mathbf{v}\|_{\mathbb{L}^2}^2
+\||\mathbf{v}||\nabla\mathbf{v}|\|_{\mathbb{L}^2}^2
\end{split}
\end{equation}
give
\begin{equation}\label{exact-L2}
\begin{split}
\frac12\|\mathbf{u}_N(t)\|_{\mathbb{L}^2}^2
+\int_0^t\mathcal{D}(\mathbf{u}_N(s))\,\mathrm{d}s
&=\frac12\|P_{\leq N}\mathbf{u}_0\|_{\mathbb{L}^2}^2
+\int_0^t\big(\|\nabla\mathbf{u}_N(s)\|_{\mathbb{L}^2}^2
+2\|\mathbf{u}_N(s)\|_{\mathbb{L}^2}^2\big)\,\mathrm{d}s\\
&\quad+\int_0^t(\mathbf{C}(\mathbf{u}_N(s)),\mathbf{u}_N(s))_{\mathbb{L}^2}\,\mathrm{d}s\\
&\quad+\frac12\int_0^t\|P_{\leq N}\mathbf{N}(\mathbf{u}_N(s))\|_{\mathcal{L}_2(\ell^2;\mathbb{L}^2)}^2\,\mathrm{d}s\\
&\quad+\sum_{j=1}^{\infty}\int_0^t(\mathbf{g}_j,\mathbf{u}_N(s))_{\mathbb{L}^2}\,\mathrm{d}W_j(s).
\end{split}
\end{equation}
The multiplicative part of the real martingale vanishes because
$(\mathbf{u}_N(s)\times\mathbf{h}_j,\mathbf{u}_N(s))_{\mathbb{L}^2}=0$.
The projection remains in the trace term. To estimate the finite-variation terms, use \eqref{noise-L2}, contraction of $P_{\leq N}$ on $\mathbb{L}^2$, and
\begin{equation*}
\|\nabla\mathbf{v}\|_{\mathbb{L}^2}^2
\leq\varepsilon\|\Delta\mathbf{v}\|_{\mathbb{L}^2}^2
+C_\varepsilon\|\mathbf{v}\|_{\mathbb{L}^2}^2.
\end{equation*}
If $M_N$ denotes the last term in \eqref{exact-L2}, then
\begin{equation}\label{L2-qv}
\mathrm{d}\langle M_N\rangle_t
=\sum_j(\mathbf{g}_j,\mathbf{u}_N(t))_{\mathbb{L}^2}^2\,\mathrm{d}t
\leq C_*\|\mathbf{u}_N(t)\|_{\mathbb{L}^2}^2\,\mathrm{d}t.
\end{equation}
Consequently, after taking suprema and powers, the Burkholder--Davis--Gundy and Young inequalities yield
\begin{equation*}
\begin{split}
&\mathbb{E}\sup_{0\leq s\leq t}\|\mathbf{u}_N(s)\|_{\mathbb{L}^2}^{2p}
+\mathbb{E}\left(\int_0^t\mathcal{D}(\mathbf{u}_N(s))\,\mathrm{d}s\right)^p\\
&\quad\leq C+\frac12\mathbb{E}\sup_{0\leq s\leq t}\|\mathbf{u}_N(s)\|_{\mathbb{L}^2}^{2p}
+C\int_0^t\mathbb{E}\sup_{0\leq r\leq s}\|\mathbf{u}_N(r)\|_{\mathbb{L}^2}^{2p}\,\mathrm{d}s,
\qquad 0\leq t\leq T.
\end{split}
\end{equation*}
Gronwall's lemma gives the first two bounds, uniformly in the exit time. Letting that time increase to infinity proves the assertion, since
$\|\mathbf{v}\|_{\mathbb{H}^2}^2\asymp\|\mathbf{v}\|_{\mathbb{L}^2}^2+\|\Delta\mathbf{v}\|_{\mathbb{L}^2}^2$ on $\mathbb{R}^d$.
\end{proof}

The following estimates specify the time integrability available at the very weak level. By the Gagliardo--Nirenberg inequality \cite{bahouri2011fourier},
\begin{equation}\label{3pro-3}
\|\mathbf{v}\|_{\mathbb{L}^6}
\lesssim\|\mathbf{v}\|_{\mathbb{L}^2}^{1-d/6}\|\mathbf{v}\|_{\mathbb{H}^2}^{d/6},
\qquad d=1,2,3.
\end{equation}
Moreover, using the divergence form of the cross product,
\begin{equation}\label{cross-negative}
\begin{split}
\|\mathbf{v}\times\Delta\mathbf{v}\|_{\mathbb{H}^{-2}}
&\lesssim\|\mathbf{v}\|_{\mathbb{L}^2}\|\nabla\mathbf{v}\|_{\mathbb{L}^3}
\lesssim\|\mathbf{v}\|_{\mathbb{L}^2}\|\mathbf{v}\|_{\mathbb{H}^2}.
\end{split}
\end{equation}
Indeed, test against $\phi\in\mathbb{H}^2$, use $\nabla\phi\in\mathbb{L}^6$, and integrate by parts. These estimates imply, uniformly for all three dimensions,
\begin{equation}\label{drift-integrability}
\begin{split}
&\|\mathbf{M}(\mathbf{v})\|_{\mathbb{H}^{-2}}
\lesssim_{C_*}{}1+\|\mathbf{v}\|_{\mathbb{H}^2}
+\|\mathbf{v}\|_{\mathbb{L}^2}^{3/2}\|\mathbf{v}\|_{\mathbb{H}^2}^{3/2}
+\|\mathbf{v}\|_{\mathbb{L}^2}\|\mathbf{v}\|_{\mathbb{H}^2},\\
&\sup_N\mathbb{E}\|P_{\leq N}\mathbf{M}(\mathbf{u}_N)\|_{L^{4/3}(0,T;\mathbb{H}^{-2})}^p<\infty,
\qquad p\geq1.
\end{split}
\end{equation}
In particular, all deterministic integrals in \eqref{veryweak-id} are well defined under the regularity in Definition~\ref{def1-1}.
We shall also use the consequence
\begin{equation}\label{L4-time-L6}
\begin{split}
\int_0^T\|\mathbf{v}(s)\|_{\mathbb{L}^6}^4\,\mathrm{d}s
&\lesssim T^{1-d/3}
\left(\sup_{0\leq s\leq T}\|\mathbf{v}(s)\|_{\mathbb{L}^2}\right)^{4-2d/3}
\left(\int_0^T\|\mathbf{v}(s)\|_{\mathbb{H}^2}^2\,\mathrm{d}s\right)^{d/3}.
\end{split}
\end{equation}
Thus an energy-bounded sequence is bounded in $L^4(0,T;\mathbb{L}^6)$, including the endpoint $d=3$.

\subsection{Tightness in the whole space}
Choose a separable Hilbert space $U$ densely and compactly embedded in $\mathbb{H}^2$. One concrete choice is the completion of $\mathcal{C}_0^\infty(\mathbb{R}^d;\mathbb{R}^3)$ under
\begin{equation*}
\|\phi\|_U^2=\sum_{|\alpha|\leq4}\int_{\mathbb{R}^d}(1+|x|^2)^2|\partial^\alpha\phi(x)|^2\,\mathrm{d}x.
\end{equation*}
Rellich's theorem on balls and the weighted tail bound prove compactness into $\mathbb{H}^2$. Thus $\mathbb{H}^{-2}\hookrightarrow U'$ continuously, and bounded subsets of $\mathbb{L}^2$ are relatively compact in $U'$. Set
\begin{equation}\label{work}
\mathcal{Z}_{\mathbf{u}}=
\mathcal{C}([0,T];U')\cap L^2_w(0,T;\mathbb{H}^2)
\cap L^2(0,T;\mathbb{H}^1_{loc})\cap\mathcal{C}([0,T];\mathbb{L}^2_w),
\end{equation}
with the supremum of the four topologies, as in \cite{11brzezniak2013existence}.

\begin{lemma}\label{4lem}
A subset of $\mathcal{Z}_{\mathbf{u}}$ is relatively compact if it is bounded in
$L^\infty(0,T;\mathbb{L}^2)\cap L^2(0,T;\mathbb{H}^2)$ and equicontinuous in $U'$.
\end{lemma}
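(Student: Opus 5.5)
The plan is to verify relative compactness separately in each of the four topologies and then combine them. This follows the standard argument of Brzeźniak–Motyl for unbounded domains. Let $\mathcal{K}\subset\mathcal{Z}_{\mathbf{u}}$ be bounded by $R$ in $L^\infty(0,T;\mathbb{L}^2)\cap L^2(0,T;\mathbb{H}^2)$ and equicontinuous in $U'$. All four spaces are Hausdorff, and on bounded sets the relevant topologies are metrizable. It therefore suffices to show that every sequence $(\mathbf{v}_n)\subset\mathcal{K}$ has a subsequence converging in each of the four topologies, and that the limits coincide. The limits will agree because each topology is finer than the distributional topology on $(0,T)\times\mathbb{R}^d$.

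\emph{Step 1 ($\mathcal{C}([0,T];U')$).} Bounded subsets of $\mathbb{L}^2$ are relatively compact in $U'$, as noted above, since $U\hookrightarrow\mathbb{H}^2$ compactly. Hence for each $t$ the set $\{\mathbf{v}(t):\mathbf{v}\in\mathcal{K}\}$ is relatively compact in $U'$. Together with the assumed equicontinuity, Arzelà–Ascoli gives a subsequence converging in $\mathcal{C}([0,T];U')$ to some $\mathbf{v}$.

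\emph{Step 2 ($L^2_w(0,T;\mathbb{H}^2)$ and $\mathcal{C}([0,T];\mathbb{L}^2_w)$).} Banach–Alaoglu in the separable Hilbert space $L^2(0,T;\mathbb{H}^2)$ gives a further weakly convergent subsequence, whose limit must equal $\mathbf{v}$. For the weakly continuous part, fix $\phi\in\mathbb{L}^2$ and $\varepsilon>0$, and choose $\psi\in U$ with $\|\phi-\psi\|_{\mathbb{L}^2}<\varepsilon$, using density of $U$ in $\mathbb{L}^2$. Then
\[
\sup_t|(\mathbf{v}_n(t)-\mathbf{v}(t),\phi)|\le \sup_t|\langle \mathbf{v}_n(t)-\mathbf{v}(t),\psi\rangle_{U',U}|+2R\varepsilon.
\]
Here $\sup_t\|\mathbf{v}(t)\|_{\mathbb{L}^2}\le R$ by weak lower semicontinuity. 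This gives convergence in $\mathcal{C}([0,T];\mathbb{L}^2_w)$, and also shows that $\mathbf{v}$ is weakly continuous into $\mathbb{L}^2$.

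\emph{Step 3 ($L^2(0,T;\mathbb{H}^1_{loc})$).} I expect this to be the main step. Fix $m$ and work on $\mathcal{O}_m$. Restriction maps the bounded set into a bounded set of $L^2(0,T;\mathbb{H}^2(\mathcal{O}_m))$. The restrictions are equicontinuous in $\mathcal{C}([0,T];(\mathbb{H}^2_0(\mathcal{O}_m))')$ because $\mathbb{H}^2_0(\mathcal{O}_m)$, extended by zero, embeds continuously in $U$. The embeddings $\mathbb{H}^2(\mathcal{O}_m)\hookrightarrow\mathbb{H}^1(\mathcal{O}_m)\hookrightarrow(\mathbb{H}^2_0(\mathcal{O}_m))'$ are compact and continuous respectively, the first by Rellich on the regular bounded domain. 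The Aubin–Lions–Simon lemma (the $L^\infty$-in-time plus $L^2$ compactness version with equicontinuity in the weak space) then yields relative compactness in $L^2(0,T;\mathbb{H}^1(\mathcal{O}_m))$.

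A diagonal argument over $m$ produces one subsequence converging in every seminorm $p_{T,m}$, and the limit is again $\mathbf{v}$ by uniqueness of distributional limits. The obstacle here is ensuring that $U'$-equicontinuity transfers to the local negative space; the zero-extension embedding handles this.
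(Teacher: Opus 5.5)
\paragraph{Step 3 fails as written.} Steps 1 and 2 are fine and match the paper. The gap is in Step 3, at exactly the point you flagged. Your claim that $\mathbb{H}^2_0(\mathcal{O}_m)$, extended by zero, embeds continuously in $U$ is false. The norm of $U$ controls all derivatives up to order four, so $U\hookrightarrow\mathbb{H}^4$. The zero extension of an element of $\mathbb{H}^2_0(\mathcal{O}_m)$ is in general only in $\mathbb{H}^2$; even inside $\mathcal{O}_m$ it need not have third derivatives in $L^2$. The $U'$ norm is therefore weaker than the $(\mathbb{H}^2_0(\mathcal{O}_m))'$ norm of the restriction. Equicontinuity in $U'$ does not give equicontinuity in $\mathcal{C}([0,T];(\mathbb{H}^2_0(\mathcal{O}_m))')$, so the hypothesis you feed into Aubin--Lions--Simon is not verified.

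\paragraph{Repair, and the paper's route.} The fix is easy, because the third space in Aubin--Lions--Simon may be as weak as you like. Take $Y_m=(\mathbb{H}^4_0(\mathcal{O}_m))'$. Zero extension maps $\mathbb{H}^4_0(\mathcal{O}_m)$ continuously into $U$: the weight $(1+|x|^2)^2$ is bounded on $\mathcal{O}_m$, and $\mathcal{C}_0^\infty(\mathcal{O}_m)$ is dense. Hence $\|f|_{\mathcal{O}_m}\|_{Y_m}\leq C_m\|f\|_{U'}$ for $f\in\mathbb{L}^2$, and $\mathbb{H}^1(\mathcal{O}_m)\hookrightarrow Y_m$ is still continuous and injective. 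With this change, your local compactness argument and the diagonal extraction go through.

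The paper avoids local dual spaces, Aubin--Lions--Simon and the diagonal argument altogether. It proves by contradiction the Ehrling-type inequality
\[
\|\mathbf{v}\|_{\mathbb{H}^1(\mathcal{O}_m)}^2\leq\varepsilon\|\mathbf{v}\|_{\mathbb{H}^2}^2+C_{\varepsilon,m}\|\mathbf{v}\|_{U'}^2,
\]
which involves the \emph{global} $U'$ norm. Integrating it for differences, then letting $\varepsilon\downarrow0$, shows that the single subsequence from Step 1, convergent in $\mathcal{C}([0,T];U')$, already converges in every seminorm $p_{T,m}$.
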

\begin{proof}[\emph{\textbf{Proof}}]
Arzel\`a--Ascoli gives a subsequence convergent in $\mathcal{C}([0,T];U')$. For each bounded set $\mathcal{O}_m$, compactness of the restriction $\mathbb{H}^2\to\mathbb{H}^1(\mathcal{O}_m)$ implies
\begin{equation}\label{local-interpolation}
\|\mathbf{v}\|_{\mathbb{H}^1(\mathcal{O}_m)}^2
\leq\varepsilon\|\mathbf{v}\|_{\mathbb{H}^2}^2
+C_{\varepsilon,m}\|\mathbf{v}\|_{U'}^2.
\end{equation}
For completeness, a failure of this inequality would give a bounded sequence in $\mathbb{H}^2$ whose $U'$ norm tends to zero and whose local $\mathbb{H}^1$ norm stays positive. Its local strong limit would be zero as a distribution, a contradiction. Integrating \eqref{local-interpolation} for differences and then letting $\varepsilon\downarrow0$ gives strong convergence in $L^2(0,T;\mathbb{H}^1_{loc})$. Weak compactness gives convergence in $L^2_w(0,T;\mathbb{H}^2)$. Finally, the uniform $\mathbb{L}^2$ bound and density of $U$ in $\mathbb{L}^2$ give uniform convergence of all scalar $\mathbb{L}^2$ pairings. This is convergence in $\mathcal{C}([0,T];\mathbb{L}^2_w)$.
\end{proof}

\begin{proposition}\label{3pro}
The laws of $\mathbf{u}_N$ are tight on $\mathcal{Z}_{\mathbf{u}}$.
\end{proposition}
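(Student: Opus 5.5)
We will use Lemma~\ref{4lem} in its probabilistic form. Given $\varepsilon>0$, it suffices to exhibit a set $K_\varepsilon\subset\mathcal{Z}_{\mathbf{u}}$ that is relatively compact in the sense of Lemma~\ref{4lem} and satisfies $\mathbb{P}\{\mathbf{u}_N\in K_\varepsilon\}\geq1-\varepsilon$ for all $N$.

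For constants $R,\,\eta$ and a modulus $\delta_k\downarrow0$, we take $K_\varepsilon$ to be the set of $\mathbf{v}$ with
\[
\sup_t\|\mathbf{v}(t)\|_{\mathbb{L}^2}\leq R,\qquad \int_0^T\|\mathbf{v}\|_{\mathbb{H}^2}^2\,\mathrm{d}t\leq R,
\]
and with $U'$-modulus of continuity bounded by $\eta$ on scales $\delta_k$ (in the standard Aldous/Kolmogorov sense). By Lemma~\ref{lem3} with $p=1$ and Chebyshev's inequality, the two bounds hold with probability at least $1-\varepsilon/2$ once $R$ is large, uniformly in $N$. It remains to control the modulus of continuity in $U'$.

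For the time regularity we write, in $\mathbb{H}^{-2}\hookrightarrow U'$,
\[
\mathbf{u}_N(t)-\mathbf{u}_N(s)=\int_s^tP_{\leq N}\mathbf{M}(\mathbf{u}_N)\,\mathrm{d}r+\int_s^tP_{\leq N}\mathbf{N}(\mathbf{u}_N)\,\mathrm{d}W.
\]
The drift part is handled by H\"older's inequality together with \eqref{drift-integrability}:
\[
\Big\|\int_s^t P_{\leq N}\mathbf{M}(\mathbf{u}_N)\,\mathrm{d}r\Big\|_{\mathbb{H}^{-2}}\leq|t-s|^{1/4}\,\|P_{\leq N}\mathbf{M}(\mathbf{u}_N)\|_{L^{4/3}(0,T;\mathbb{H}^{-2})}.
\]
Since the last norm has moments bounded uniformly in $N$, the drift increment is $1/4$-H\"older with uniformly bounded moments. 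For the stochastic integral we apply the Burkholder--Davis--Gundy inequality in $\mathbb{L}^2$ together with \eqref{noise-L2} and the uniform $\mathbb{L}^2$ moments of Lemma~\ref{lem3}. This gives
\[
\mathbb{E}\|I(t)-I(s)\|_{\mathbb{L}^2}^{2p}\lesssim|t-s|^p.
\]
By the Kolmogorov criterion, the stochastic part then has uniformly bounded moments in $\mathcal{C}^{\alpha}([0,T];\mathbb{L}^2)$ for some $\alpha<1/2$, hence also in $\mathcal{C}^\alpha([0,T];U')$. Combining the two pieces, $\mathbf{u}_N$ lies in a ball of $\mathcal{C}^{1/4\wedge\alpha}([0,T];U')$ with probability at least $1-\varepsilon/2$. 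Such a ball is equicontinuous, so $K_\varepsilon$ satisfies the hypotheses of Lemma~\ref{4lem} and tightness follows.

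The main point of care is the uniform-in-$N$ estimate \eqref{drift-integrability} for the cubic and precession terms, in particular at the endpoint $d=3$. These terms require the Gagliardo--Nirenberg bound \eqref{3pro-3} and the divergence form \eqref{cross-negative}. Everything is taken in $\mathbb{H}^{-2}$, where $P_{\leq N}$ acts as a contraction, so no factor depending on $N$ enters. A remaining technical point is that $\mathcal{Z}_{\mathbf{u}}$ is not metrizable. Tightness is nevertheless meaningful there, since each factor topology is generated by countably many continuous functions on bounded sets, as in \cite{11brzezniak2013existence}. This is what later permits the use of Jakubowski's version of Skorokhod's theorem.
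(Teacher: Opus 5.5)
Your proposal is correct and follows the paper's proof essentially step for step. You bound the drift primitive in $\mathbb{H}^{-2}$ by a $1/4$-H\"older seminorm via the $L^{4/3}$ estimate \eqref{drift-integrability}, and you treat the stochastic integral by Burkholder--Davis--Gundy plus Kolmogorov. Markov's inequality with Lemma~\ref{lem3} then places the laws on sets satisfying Lemma~\ref{4lem}. The only cosmetic difference is that you keep the stochastic part in $\mathcal{C}^\alpha([0,T];\mathbb{L}^2)$ before embedding into $U'$, whereas the paper states one H\"older bound for $\mathbf{u}_N$ in $\mathcal{C}^\alpha([0,T];\mathbb{H}^{-2})$ with $\alpha<1/4$.
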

\begin{proof}[\emph{\textbf{Proof}}]
The drift primitive in \eqref{Mod-1} has a $1/4$-H\"older seminorm in $\mathbb{H}^{-2}$ bounded by the $L^{4/3}$ norm in \eqref{drift-integrability}. Let
\[
Z_N(t)
:=\int_0^t
P_{\leq N}\mathbf{N}(\mathbf{u}_N(r))\,\mathrm{d}W(r)
=\sum_{j=1}^{\infty}\int_0^t
P_{\leq N}\mathbf{N}_j(\mathbf{u}_N(r))\,\mathrm{d}W_j(r),
\qquad 0\leq t\leq T,
\]
denote the stochastic integral term in \eqref{Mod-1}.
The Burkholder--Davis--Gundy inequality, the contraction
property of $P_{\leq N}$ on $\mathbb{L}^2$, and
Lemma~\ref{lem3} imply that, for every integer $q\geq2$
and all $0\leq s\leq t\leq T$,
\begin{equation*}
\begin{aligned}
\mathbb{E}\|Z_N(t)-Z_N(s)\|_{\mathbb{L}^2}^{2q}
&\lesssim_{q,C_*}
\mathbb{E}\left(
\int_s^t
\big(1+\|\mathbf{u}_N(r)\|_{\mathbb{L}^2}^2\big)
\,\mathrm{d}r
\right)^q\\
&\lesssim_{q,T,C_*,\|\mathbf{u}_0\|_{\mathbb{L}^2}}
|t-s|^q.
\end{aligned}
\end{equation*}
The Banach-valued Kolmogorov criterion, with $q$ sufficiently large, therefore gives
\begin{equation}\label{time-holder}
\sup_N\mathbb{E}\|\mathbf{u}_N\|_{\mathcal{C}^{\alpha}([0,T];\mathbb{H}^{-2})}^p<\infty,
\qquad 0<\alpha<\frac14,\quad p\geq1.
\end{equation}
Markov's inequality, \eqref{lem3-1} and \eqref{time-holder} place the laws, with arbitrarily high probability, on sets satisfying Lemma~\ref{4lem}. Their closures are compact, proving tightness. The separating scalar coordinates can be chosen from countable dense sets of spatial tests and rational times, so this space has the countable separation property required by the representation theorem.
\end{proof}

A cylindrical Wiener process on $\ell^2$ is not an $\ell^2$-valued random variable. To record its paths, let
\begin{equation}\label{noise-path-space}
U_0=\left\{a=\{a_j\}:\sum_{j=1}^{\infty}j^{-2}a_j^2<\infty\right\},\qquad
\|a\|_{U_0}^2=\sum_{j=1}^{\infty}j^{-2}a_j^2.
\end{equation}
The inclusion $\ell^2\hookrightarrow U_0$ is Hilbert--Schmidt, and $W$ has continuous $U_0$-valued paths. Its law on $\mathcal{C}([0,T];U_0)$ is tight. For notational convenience, set $W^N:=W$ for every $N\in\mathbb{N}$. Applying the Jakubowski--Skorokhod theorem \cite{jakubowski1998almost} to the joint laws $\{\mathscr{L}(\mathbf{u}_N,W^N)\}_{N\in\mathbb{N}}$ yields, along a subsequence of frequency levels $k\to\infty$, random variables $(\bar{\mathbf{u}}_k,\bar W^k)$ and $(\mathbf{u}_*,W_*)$ on a new complete probability space $(\bar{\Omega},\bar{\mathbf{F}},\bar{\mathbb{P}})$ such that
\begin{equation}\label{con}
\begin{split}
\mathscr{L}(\bar{\mathbf{u}}_k,\bar W^k)&=\mathscr{L}(\mathbf{u}_k,W^k),\\
(\bar{\mathbf{u}}_k,\bar W^k)&\longrightarrow(\mathbf{u}_*,W_*)
\quad\textrm{in }\mathcal{Z}_{\mathbf{u}}\times\mathcal{C}([0,T];U_0),
\quad\bar{\mathbb{P}}\textrm{-a.s.}
\end{split}
\end{equation}
We write $\bar{\mathbb{E}}$ for expectation with respect to $\bar{\mathbb{P}}$.

Equality of laws transfers the bounds of Lemma~\ref{lem3} to the represented sequence. The norm functionals are measurable on the path space: the supremum of the $\mathbb{L}^2$ norm is determined by countably many spatial pairings and rational times, and the time-integrated Sobolev norms are measurable by finite-dimensional approximation. Their lower semicontinuity and Fatou's lemma give the same bounds for $\mathbf{u}_*$. The nonlinear dissipations are also lower semicontinuous along \eqref{con}. Indeed, on each path, take a subsequence realizing the lower limit and then a further subsequence for which the functions and their first derivatives converge almost everywhere on every bounded cylinder. Fatou's lemma, followed by an exhaustion of $\mathbb{R}^d$, applies to each nonnegative integrand in \eqref{dissipation}. In particular,
\begin{equation}\label{limit-moments}
\bar{\mathbb{E}}\sup_{0\leq t\leq T}\|\mathbf{u}_*(t)\|_{\mathbb{L}^2}^{2p}
+\bar{\mathbb{E}}\left(\int_0^T\|\mathbf{u}_*(t)\|_{\mathbb{H}^2}^2\,\mathrm{d}t\right)^p
\leq C.
\end{equation}

\subsection{Identification of the drift}
We work on the full-measure set on which \eqref{con} holds. Weak convergence gives boundedness in $L^2(0,T;\mathbb{H}^2)$. Moreover, for every $\psi\in\mathbb{L}^2$, convergence in $\mathcal{C}([0,T];\mathbb{L}^2_w)$ gives
\[
\sup_k\sup_{0\leq t\leq T}|(\bar{\mathbf{u}}_k(t),\psi)_{\mathbb{L}^2}|<\infty.
\]
Applying the uniform boundedness principle to the functionals indexed by $(k,t)$, we obtain
\begin{equation}\label{path-bounds}
\begin{split}
R_T:={}&1+\sup_k\left(\|\bar{\mathbf{u}}_k\|_{L^\infty(0,T;\mathbb{L}^2)}
+\|\bar{\mathbf{u}}_k\|_{L^2(0,T;\mathbb{H}^2)}\right)\\
&+\|\mathbf{u}_*\|_{L^\infty(0,T;\mathbb{L}^2)}
+\|\mathbf{u}_*\|_{L^2(0,T;\mathbb{H}^2)}<\infty.
\end{split}
\end{equation}
Here $R_T$ may depend on the sample point. Below, $C_T(\bar\omega)$ denotes a finite path-dependent constant independent of the approximation index and the density parameter. Set $\mathbf{z}_k(s)=\bar{\mathbf{u}}_k(s)-\mathbf{u}_*(s)$.

\begin{proposition}\label{4pro}
For every $\phi\in\mathbb{H}^2$,
\begin{equation}\label{drift-limit}
\begin{split}
\sup_{0\leq t\leq T}\left|
\int_0^t\langle P_{\leq k}\mathbf{M}(\bar{\mathbf{u}}_k(s)),\phi\rangle\,\mathrm{d}s
-\int_0^t\langle\mathbf{M}(\mathbf{u}_*(s)),\phi\rangle\,\mathrm{d}s
\right|\longrightarrow0,
\quad\bar{\mathbb{P}}\textrm{-a.s.}
\end{split}
\end{equation}
The brackets denote the $\mathbb{H}^{-2},\mathbb{H}^2$ duality pairing. This convergence holds for each term in \eqref{drift-notation} separately.
\end{proposition}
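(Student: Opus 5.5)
The plan is to fix $\phi\in\mathbb{H}^2$ and a sample point in the full-measure set where \eqref{con} and \eqref{path-bounds} hold. First I remove the projection: since $P_{\leq k}$ is self-adjoint and commutes with derivatives, $\langle P_{\leq k}\mathbf{M}(\bar{\mathbf{u}}_k),\phi\rangle=\langle\mathbf{M}(\bar{\mathbf{u}}_k),P_{\leq k}\phi\rangle$. Then I split
\[
\langle\mathbf{M}(\bar{\mathbf{u}}_k),P_{\leq k}\phi\rangle-\langle\mathbf{M}(\mathbf{u}_*),\phi\rangle
=\langle\mathbf{M}(\bar{\mathbf{u}}_k),P_{\leq k}\phi-\phi\rangle
+\langle\mathbf{M}(\bar{\mathbf{u}}_k)-\mathbf{M}(\mathbf{u}_*),\phi\rangle .
\]
The first piece is handled by \eqref{drift-integrability}: its time integral is bounded by $\|\mathbf{M}(\bar{\mathbf{u}}_k)\|_{L^{4/3}(0,T;\mathbb{H}^{-2})}T^{1/4}\|P_{\leq k}\phi-\phi\|_{\mathbb{H}^2}$. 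The $L^{4/3}$ factor is bounded by a power of $R_T$, and the last factor tends to $0$ by dominated convergence in Fourier variables. This bound is uniform in $t$. For the second piece I treat each term of \eqref{drift-notation} separately, and in each case aim for a bound uniform in $t\in[0,T]$.

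\emph{Linear terms.} The terms $-\Delta^2$, $-\Delta$ and $2\,\mathrm{id}$ pair as $\int_0^t(\mathbf{z}_k,\psi)$-type expressions with $\psi\in L^2(0,T;\mathbb{L}^2)$; for example, $(\Delta\mathbf{z}_k,\Delta\phi)$. The map $\mathbf{v}\mapsto\int_0^t(\Delta\mathbf{v},\Delta\phi)\,\mathrm{d}s$ is a continuous linear functional on $L^2(0,T;\mathbb{H}^2)$, so weak convergence gives convergence for each fixed $t$. Uniformity in $t$ follows from equicontinuity: the increments over $[t_1,t_2]$ are bounded by $R_T|t_2-t_1|^{1/2}\|\phi\|_{\mathbb{H}^2}$, and pointwise convergence on a finite grid then suffices. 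The same equicontinuity device, which uses only the $L^{4/3}$ or $L^1$ in time bounds from \eqref{path-bounds}, gives uniformity for every term below. Hence it suffices to prove convergence of $\int_0^T$ of absolute values, or convergence for fixed $t$.

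\emph{Nonlinear terms via localization.} This is the main obstacle, since we only have strong convergence in $L^2(0,T;\mathbb{H}^1_{loc})$ and $\phi$ is not compactly supported. First I approximate $\phi$ in $\mathbb{H}^2$ by $\phi_\varepsilon\in\mathcal{C}^\infty_0$ supported in some $\mathcal{O}_m$. The error is bounded uniformly in $k$ by $C_T(\bar\omega)\|\phi-\phi_\varepsilon\|_{\mathbb{H}^2}$, again through \eqref{drift-integrability} and \eqref{path-bounds}. For compactly supported $\phi_\varepsilon$ I proceed term by term.

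For the cubic term, $|\mathbf{F}(\bar{\mathbf{u}}_k)-\mathbf{F}(\mathbf{u}_*)|\lesssim(|\bar{\mathbf{u}}_k|^2+|\mathbf{u}_*|^2)|\mathbf{z}_k|$. Pairing with $\phi_\varepsilon$ or $\Delta\phi_\varepsilon$ in $\mathbb{L}^2$ and using Hölder gives a bound
\[
\int_0^T\big(\|\bar{\mathbf{u}}_k\|_{\mathbb{L}^6}^2+\|\mathbf{u}_*\|_{\mathbb{L}^6}^2\big)\|\mathbf{z}_k\|_{\mathbb{L}^6(\mathcal{O}_m)}\|\Delta\phi_\varepsilon\|_{\mathbb{L}^2}\,\mathrm{d}s .
\]
By \eqref{L4-time-L6}, the $\mathbb{L}^6$ factors are bounded in $L^4$ in time by a path constant. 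Since $\mathbb{H}^1(\mathcal{O}_m)\hookrightarrow\mathbb{L}^6(\mathcal{O}_m)$ for $d\leq3$, the factor $\|\mathbf{z}_k\|_{L^2(0,T;\mathbb{L}^6(\mathcal{O}_m))}\to0$, and Hölder in time with exponents $(2,4,\ldots)$ closes this with room to spare.

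For the precession term in divergence form, $(\mathbf{u}\times\nabla\mathbf{u},\nabla\phi_\varepsilon)$, I split the difference as $\mathbf{z}_k\times\nabla\bar{\mathbf{u}}_k+\mathbf{u}_*\times\nabla\mathbf{z}_k$. The first part is bounded by $\|\mathbf{z}_k\|_{\mathbb{L}^2(\mathcal{O}_m)}\|\nabla\bar{\mathbf{u}}_k\|_{\mathbb{L}^3}\|\nabla\phi_\varepsilon\|_{\mathbb{L}^6}$, and the second by $\|\mathbf{u}_*\|_{\mathbb{L}^2}\|\nabla\mathbf{z}_k\|_{\mathbb{L}^2(\mathcal{O}_m)}\|\nabla\phi_\varepsilon\|_{\mathbb{L}^\infty}$. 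Both integrate in time to $0$ by local strong convergence and \eqref{path-bounds}.

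Finally, $\mathbf{C}$ is globally Lipschitz on $\mathbb{L}^2$, so its contribution is bounded by $\int_0^T\|\mathbf{z}_k\|_{\mathbb{L}^2(\mathcal{O}_m)}\,\mathrm{d}s\to0$ after localizing $\phi_\varepsilon$. Letting $k\to\infty$ and then $\varepsilon\to0$ yields \eqref{drift-limit}, term by term.
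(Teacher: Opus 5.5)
Your proposal is correct and follows essentially the paper's argument. The paper also controls the projection error by $\|(P_{\leq k}-I)\phi\|_{\mathbb{H}^2}$, treats the linear terms by weak convergence plus an equicontinuity/partition argument, approximates $\phi$ by compactly supported $\phi_h$, and uses local strong convergence in $L^2(0,T;\mathbb{H}^1_{loc})$ with the $L^4(0,T;\mathbb{L}^6)$ bound \eqref{L4-time-L6} for the cubic terms and the divergence form for the precession term. The only difference is cosmetic: you remove the projection and the test-function approximation once for the whole drift through the $L^{4/3}(0,T;\mathbb{H}^{-2})$ bound, instead of term by term.

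There are two small inaccuracies. First, for $\mathbf{C}$ the localized estimate comes from the pointwise form $\frac12\sum_j(\mathbf{z}_k\times\mathbf{h}_j)\times\mathbf{h}_j$ of the difference, not from global Lipschitz continuity on $\mathbb{L}^2$, which by itself does not localize. Linearity in $\mathbf{z}_k$ together with $\mathcal{C}([0,T];\mathbb{L}^2_w)$ convergence would also work. Second, at $d=3$ the time exponents $(4,4,2)$ in your cubic estimate are exactly critical, so there is no room to spare, though the estimate still closes.
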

\begin{proof}[\emph{\textbf{Proof}}]
\emph{The linear terms.}
Self-adjointness and commutation with derivatives give
\[
\begin{split}
&\int_0^t\langle P_{\leq k}\Delta^2\bar{\mathbf{u}}_k(s)-\Delta^2\mathbf{u}_*(s),\phi\rangle\,\mathrm{d}s\\
&\quad=\int_0^t(\Delta\bar{\mathbf{u}}_k(s),(P_{\leq k}-I)\Delta\phi)_{\mathbb{L}^2}\,\mathrm{d}s
+\int_0^t(\Delta\mathbf{z}_k(s),\Delta\phi)_{\mathbb{L}^2}\,\mathrm{d}s.
\end{split}
\]
The first term satisfies
\[
\begin{split}
&\sup_{0\leq t\leq T}\left|\int_0^t(\Delta\bar{\mathbf{u}}_k(s),(P_{\leq k}-I)\Delta\phi)_{\mathbb{L}^2}\,\mathrm{d}s\right|\\
&\qquad\leq T^{1/2}\|\bar{\mathbf{u}}_k\|_{L^2(0,T;\mathbb{H}^2)}
\|(P_{\leq k}-I)\phi\|_{\mathbb{H}^2}\longrightarrow0.
\end{split}
\]
For each fixed $t$, the second term tends to zero by weak convergence in $L^2(0,T;\mathbb{H}^2)$, tested against the bounded linear functional
$\mathbf{v}\mapsto\int_0^t(\Delta\mathbf{v}(s),\Delta\phi)_{\mathbb{L}^2}\,\mathrm{d}s$.
Denote this second term by $A_k(t)$. For $0\leq s\leq t\leq T$,
\[
|A_k(t)-A_k(s)|\leq |t-s|^{1/2}\|\mathbf{z}_k\|_{L^2(0,T;\mathbb{H}^2)}\|\phi\|_{\mathbb{H}^2}
\leq R_T\|\phi\|_{\mathbb{H}^2}|t-s|^{1/2}.
\]
On a partition of mesh at most $\delta$, $\sup_{0\leq t\leq T}|A_k(t)|$ is bounded by the maximum at the partition points plus $R_T\|\phi\|_{\mathbb{H}^2}\delta^{1/2}$. Letting first $k\to\infty$ and then $\delta\downarrow0$ proves uniform convergence.

For the second-order and zeroth-order terms, take $B=\Delta$ and $B=I$, respectively. Since $B\phi\in\mathbb{L}^2$, we have
\[
\begin{split}
&\sup_{0\leq t\leq T}\left|
\int_0^t(\bar{\mathbf{u}}_k(s),P_{\leq k}B\phi)_{\mathbb{L}^2}\,\mathrm{d}s
-\int_0^t(\mathbf{u}_*(s),B\phi)_{\mathbb{L}^2}\,\mathrm{d}s\right|\\
&\quad\leq T\|\bar{\mathbf{u}}_k\|_{L^\infty(0,T;\mathbb{L}^2)}\|(P_{\leq k}-I)B\phi\|_{\mathbb{L}^2}
+T\sup_{0\leq s\leq T}|(\mathbf{z}_k(s),B\phi)_{\mathbb{L}^2}|\longrightarrow0.
\end{split}
\]

\emph{The cubic terms.}
We give the calculation for the differentiated cubic term. Fix $h>0$, choose $\phi_h\in\mathcal{C}_0^\infty(\mathbb{R}^d;\mathbb{R}^3)$ with $\|\phi-\phi_h\|_{\mathbb{H}^2}<h$, and choose $m$ such that $\operatorname{supp}\phi_h\subset\mathcal{O}_m$. Then
\[
\begin{split}
&\sup_{0\leq t\leq T}\left|
\int_0^t(\mathbf{F}(\bar{\mathbf{u}}_k(s)),P_{\leq k}\Delta\phi)_{\mathbb{L}^2}\,\mathrm{d}s
-\int_0^t(\mathbf{F}(\mathbf{u}_*(s)),\Delta\phi)_{\mathbb{L}^2}\,\mathrm{d}s\right|\\
&\quad\leq\int_0^T|(\mathbf{F}(\bar{\mathbf{u}}_k(s)),(P_{\leq k}-I)\Delta\phi)_{\mathbb{L}^2}|\,\mathrm{d}s\\
&\qquad+\int_0^T|(\mathbf{F}(\bar{\mathbf{u}}_k(s))-\mathbf{F}(\mathbf{u}_*(s)),\Delta(\phi-\phi_h))_{\mathbb{L}^2}|\,\mathrm{d}s\\
&\qquad+\int_0^T|(\mathbf{F}(\bar{\mathbf{u}}_k(s))-\mathbf{F}(\mathbf{u}_*(s)),\Delta\phi_h)_{\mathbb{L}^2}|\,\mathrm{d}s\\
&\quad=:K_{1,k}+K_{2,k,h}+K_{3,k,h}.
\end{split}
\]
For any trajectory with the bounds in \eqref{path-bounds}, \eqref{3pro-3} and H\"older's inequality imply
\[
\begin{split}
\int_0^T\|\mathbf{F}(\mathbf{v}(s))\|_{\mathbb{L}^2}\,\mathrm{d}s
&=\int_0^T\|\mathbf{v}(s)\|_{\mathbb{L}^6}^3\,\mathrm{d}s\\
&\lesssim T^{1-d/4}\|\mathbf{v}\|_{L^\infty(0,T;\mathbb{L}^2)}^{3-d/2}
\|\mathbf{v}\|_{L^2(0,T;\mathbb{H}^2)}^{d/2}.
\end{split}
\]
Therefore,
\[
\begin{split}
K_{1,k}&\leq\|(P_{\leq k}-I)\phi\|_{\mathbb{H}^2}\int_0^T\|\bar{\mathbf{u}}_k(s)\|_{\mathbb{L}^6}^3\,\mathrm{d}s\\
&\leq C_T(\bar\omega)\|(P_{\leq k}-I)\phi\|_{\mathbb{H}^2}\longrightarrow0,\\
K_{2,k,h}&\leq h\int_0^T\big(\|\bar{\mathbf{u}}_k(s)\|_{\mathbb{L}^6}^3+\|\mathbf{u}_*(s)\|_{\mathbb{L}^6}^3\big)\,\mathrm{d}s
\leq C_T(\bar\omega)h.
\end{split}
\]
For the local term, the identity
\[
|a|^2a-|b|^2b=|a|^2(a-b)+((a+b)\cdot(a-b))b
\]
gives $|\mathbf{F}(a)-\mathbf{F}(b)|\leq C(|a|^2+|b|^2)|a-b|$. By using H\"older's inequality, it follows that
\[
\begin{split}
&\|\mathbf{F}(\bar{\mathbf{u}}_k(s))-\mathbf{F}(\mathbf{u}_*(s))\|_{\mathbb{L}^2(\mathcal{O}_m)}\\
&\qquad\lesssim\|\mathbf{z}_k(s)\|_{\mathbb{L}^6(\mathcal{O}_m)}
\big(\|\bar{\mathbf{u}}_k(s)\|_{\mathbb{L}^6}^2+\|\mathbf{u}_*(s)\|_{\mathbb{L}^6}^2\big).
\end{split}
\]
The local Sobolev embedding and Cauchy--Schwarz in time now imply
\begin{equation}\label{cubic-local}
\begin{split}
&\int_0^T\|\mathbf{F}(\bar{\mathbf{u}}_k(s))-\mathbf{F}(\mathbf{u}_*(s))\|_{\mathbb{L}^2(\mathcal{O}_m)}\,\mathrm{d}s\\
&\quad\lesssim_m\|\mathbf{z}_k\|_{L^2(0,T;\mathbb{H}^1(\mathcal{O}_m))}
\left[\left(\int_0^T\|\bar{\mathbf{u}}_k(s)\|_{\mathbb{L}^6}^4\,\mathrm{d}s\right)^{1/2}
+\left(\int_0^T\|\mathbf{u}_*(s)\|_{\mathbb{L}^6}^4\,\mathrm{d}s\right)^{1/2}\right]
\longrightarrow0.
\end{split}
\end{equation}
The square-bracketed factor is uniformly bounded by \eqref{L4-time-L6}, including when $d=3$, whereas the first factor tends to zero by \eqref{con}. Since $\Delta\phi_h$ is supported in $\mathcal{O}_m$,
\[
K_{3,k,h}\leq\|\Delta\phi_h\|_{\mathbb{L}^2}
\int_0^T\|\mathbf{F}(\bar{\mathbf{u}}_k(s))-\mathbf{F}(\mathbf{u}_*(s))\|_{\mathbb{L}^2(\mathcal{O}_m)}\,\mathrm{d}s\longrightarrow0
\]
for fixed $h$. Taking the upper limit in $k$ and then letting $h\downarrow0$ proves the differentiated cubic limit.

For the undifferentiated cubic term, replace the three tests by $(P_{\leq k}-I)\phi$, $\phi-\phi_h$, and $\phi_h$. The corresponding bounds are
\[
\begin{split}
&C_T(\bar\omega)\|(P_{\leq k}-I)\phi\|_{\mathbb{L}^2},\quad
C_T(\bar\omega)\|\phi-\phi_h\|_{\mathbb{L}^2},\quad\|\phi_h\|_{\mathbb{L}^2}\int_0^T\|\mathbf{F}(\bar{\mathbf{u}}_k(s))-\mathbf{F}(\mathbf{u}_*(s))\|_{\mathbb{L}^2(\mathcal{O}_m)}\,\mathrm{d}s.
\end{split}
\]
They vanish in the same order. In particular, identifying the differentiated cubic term does not require differentiating the limiting nonlinearity.

\emph{The precession term.}
Since $\partial_i\mathbf{v}\times\partial_i\mathbf{v}=0$, integration by parts gives
\[
\begin{split}
\operatorname{div}(\mathbf{v}\times\nabla\mathbf{v})&=\mathbf{v}\times\Delta\mathbf{v},\\
\langle P_{\leq k}(\bar{\mathbf{u}}_k(s)\times\Delta\bar{\mathbf{u}}_k(s)),\phi\rangle
&=-(\bar{\mathbf{u}}_k(s)\times\nabla\bar{\mathbf{u}}_k(s),P_{\leq k}\nabla\phi)_{\mathbb{L}^2}.
\end{split}
\]
The spatial pairing on the right is interpreted by integration and is continuous on $\mathbb{H}^2$ by \eqref{cross-negative}. The supremum in time of the difference is bounded by $J_{1,k}+J_{2,k,h}+J_{3,k,h}$, where
\[
\begin{split}
J_{1,k}&=\int_0^T|(\bar{\mathbf{u}}_k(s)\times\nabla\bar{\mathbf{u}}_k(s),(P_{\leq k}-I)\nabla\phi)_{\mathbb{L}^2}|\,\mathrm{d}s,\\
J_{2,k,h}&=\int_0^T|(\bar{\mathbf{u}}_k(s)\times\nabla\bar{\mathbf{u}}_k(s)-\mathbf{u}_*(s)\times\nabla\mathbf{u}_*(s),\nabla(\phi-\phi_h))_{\mathbb{L}^2}|\,\mathrm{d}s,\\
J_{3,k,h}&=\int_0^T|(\bar{\mathbf{u}}_k(s)\times\nabla\bar{\mathbf{u}}_k(s)-\mathbf{u}_*(s)\times\nabla\mathbf{u}_*(s),\nabla\phi_h)_{\mathbb{L}^2}|\,\mathrm{d}s.
\end{split}
\]
Using Cauchy--Schwarz in time, we find
\[
\begin{split}
J_{1,k}&\leq\|(P_{\leq k}-I)\nabla\phi\|_{\mathbb{L}^6}
\int_0^T\|\bar{\mathbf{u}}_k(s)\|_{\mathbb{L}^2}\|\nabla\bar{\mathbf{u}}_k(s)\|_{\mathbb{L}^3}\,\mathrm{d}s\\
&\lesssim T^{1/2}\|(P_{\leq k}-I)\phi\|_{\mathbb{H}^2}
\|\bar{\mathbf{u}}_k\|_{L^\infty(0,T;\mathbb{L}^2)}\|\bar{\mathbf{u}}_k\|_{L^2(0,T;\mathbb{H}^2)}\longrightarrow0,\\
J_{2,k,h}&\lesssim T^{1/2}\|\phi-\phi_h\|_{\mathbb{H}^2}
\big(\|\bar{\mathbf{u}}_k\|_{L^\infty(0,T;\mathbb{L}^2)}\|\bar{\mathbf{u}}_k\|_{L^2(0,T;\mathbb{H}^2)}+\|\mathbf{u}_*\|_{L^\infty(0,T;\mathbb{L}^2)}\|\mathbf{u}_*\|_{L^2(0,T;\mathbb{H}^2)}\big)\\
&\leq C_T(\bar\omega)h.
\end{split}
\]
For $J_{3,k,h}$, expand the difference as
\[
\mathbf{z}_k(s)\times\nabla\bar{\mathbf{u}}_k(s)
+\mathbf{u}_*(s)\times\nabla\mathbf{z}_k(s).
\]
Since $\nabla\phi_h$ is bounded and supported in $\mathcal{O}_m$,
H\"older's inequality in space and Cauchy--Schwarz in time give
\[
\begin{split}
J_{3,k,h}
&\leq \|\nabla\phi_h\|_{\mathbb{L}^{\infty}}
\int_0^T
\Big(
\|\mathbf{z}_k(s)\|_{\mathbb{L}^2(\mathcal{O}_m)}
\|\nabla\bar{\mathbf{u}}_k(s)\|_{\mathbb{L}^2}+\|\mathbf{u}_*(s)\|_{\mathbb{L}^2}
\|\nabla\mathbf{z}_k(s)\|_{\mathbb{L}^2(\mathcal{O}_m)}
\Big)\,\mathrm{d}s\\
&\leq \|\nabla\phi_h\|_{\mathbb{L}^{\infty}}
\|\mathbf{z}_k\|_{L^2(0,T;\mathbb{H}^1(\mathcal{O}_m))}
\Big(
\|\bar{\mathbf{u}}_k\|_{L^2(0,T;\mathbb{H}^1)}
+\|\mathbf{u}_*\|_{L^2(0,T;\mathbb{L}^2)}
\Big)
\longrightarrow0
\end{split}
\]
for each fixed $h>0$, by \eqref{con} and \eqref{path-bounds}.
Together with the bounds for $J_{1,k}$ and $J_{2,k,h}$,
this proves convergence of the precession term upon letting
$k\to\infty$ and then $h\downarrow0$.

\emph{The It\^{o} correction.}
Set
\[
H_0=\sum_{j=1}^{\infty}\|\mathbf{h}_j\|_{\mathbb{L}^{\infty}}^2,
\qquad
G_0=\sum_{j=1}^{\infty}\|\mathbf{g}_j\|_{\mathbb{L}^2}^2.
\]
Cauchy--Schwarz in the noise index yields
\[
\|\mathbf{C}(\mathbf{v})\|_{\mathbb{L}^2}
\leq \frac12H_0\|\mathbf{v}\|_{\mathbb{L}^2}
+\frac12H_0^{1/2}G_0^{1/2}.
\]
Since the additive part cancels in the difference,
\[
\begin{split}
\mathbf{C}(\bar{\mathbf{u}}_k(s))
-\mathbf{C}(\mathbf{u}_*(s))
&=\frac12\sum_{j=1}^{\infty}
(\mathbf{z}_k(s)\times\mathbf{h}_j)\times\mathbf{h}_j,\\
\|\mathbf{C}(\bar{\mathbf{u}}_k(s))
-\mathbf{C}(\mathbf{u}_*(s))\|_{\mathbb{L}^2(\mathcal{O}_m)}
&\leq \frac12H_0
\|\mathbf{z}_k(s)\|_{\mathbb{L}^2(\mathcal{O}_m)}.
\end{split}
\]
Separating the projection error, the test-function approximation,
and the local difference, we obtain
\[
\begin{split}
&\sup_{0\leq t\leq T}\left|
\int_0^t
(\mathbf{C}(\bar{\mathbf{u}}_k(s)),
P_{\leq k}\phi)_{\mathbb{L}^2}\,\mathrm{d}s
-\int_0^t
(\mathbf{C}(\mathbf{u}_*(s)),\phi)_{\mathbb{L}^2}\,\mathrm{d}s
\right|\\
&\quad\leq
\|(P_{\leq k}-I)\phi\|_{\mathbb{L}^2}
\int_0^T
\|\mathbf{C}(\bar{\mathbf{u}}_k(s))\|_{\mathbb{L}^2}\,\mathrm{d}s\\
&\qquad+\frac12H_0\|\phi-\phi_h\|_{\mathbb{L}^2}
\int_0^T\|\mathbf{z}_k(s)\|_{\mathbb{L}^2}\,\mathrm{d}s\\
&\qquad+\frac12H_0\|\phi_h\|_{\mathbb{L}^2}
\int_0^T
\|\mathbf{z}_k(s)\|_{\mathbb{L}^2(\mathcal{O}_m)}\,\mathrm{d}s\\
&\quad\leq
C_{T,C_*}(\bar\omega)
\big(\|(P_{\leq k}-I)\phi\|_{\mathbb{L}^2}+h\big)+\frac12H_0T^{1/2}\|\phi_h\|_{\mathbb{L}^2}
\|\mathbf{z}_k\|_{L^2(0,T;\mathbb{L}^2(\mathcal{O}_m))}.
\end{split}
\]
By \eqref{con} and \eqref{path-bounds}, the right-hand side
vanishes upon first letting $k\to\infty$ and then $h\downarrow0$.
Combining the preceding estimates proves \eqref{drift-limit},
uniformly for $0\leq t\leq T$.
\end{proof}

\subsection{Identification of the stochastic integral}
We first establish convergence of the scalar diffusion coefficients. We then identify the limiting martingale by its quadratic and cross variations relative to the filtration generated jointly by the solution and the noise.

For $a=\{a_j\}_{j\geq1}\in U_0$, let $\pi_j(a)=a_j$ and set
\[
\bar W_j^k(t)=\pi_j(\bar W^k(t)),
\qquad W_{*,j}(t)=\pi_j(W_*(t)).
\]
These coordinate maps are continuous, since $|\pi_j(a)|\leq j\|a\|_{U_0}$.
The raw canonical filtrations are
\[
\begin{split}
\mathcal{G}_t^k&=\sigma\{\bar{\mathbf{u}}_k(r),\bar W^k(r):0\leq r\leq t\},\\
\mathcal{G}_t^*&=\sigma\{\mathbf{u}_*(r),W_*(r):0\leq r\leq t\}.
\end{split}
\]
They are generated by the scalar spatial pairings and the noise coordinates.
Indeed, the strong and weak Borel sigma-fields on the separable Hilbert space
$\mathbb{L}^2$ coincide, and the coordinates $\pi_j$ generate the Borel
sigma-field of $U_0$.

For $\phi\in\mathbb{H}^2$, define
\begin{equation}\label{scalar-coefficients}
\begin{split}
b_{k,j}^{\phi}(s)&=(\mathbf{N}_j(\bar{\mathbf{u}}_k(s)),P_{\leq k}\phi)_{\mathbb{L}^2},\\
b_j^{\phi}(s)&=(\mathbf{N}_j(\mathbf{u}_*(s)),\phi)_{\mathbb{L}^2}.
\end{split}
\end{equation}
Write
\[
b_k^\phi(s):=\{b_{k,j}^\phi(s)\}_{j\geq1},
\qquad
b^\phi(s):=\{b_j^\phi(s)\}_{j\geq1}.
\]
The coefficient bound \eqref{noise-L2} and the contraction property of the projections give
\[
\begin{split}
\|b_k^\phi(s)\|_{\ell^2}^2+\|b^\phi(s)\|_{\ell^2}^2
&\lesssim_{C_*}\|\phi\|_{\mathbb{L}^2}^2
\big(1+\|\bar{\mathbf{u}}_k(s)\|_{\mathbb{L}^2}^2
+\|\mathbf{u}_*(s)\|_{\mathbb{L}^2}^2\big).
\end{split}
\]
Thus both processes belong to $L^2(0,T;\ell^2)$ almost surely by
\eqref{path-bounds}. Each scalar coefficient is continuous and adapted
to the corresponding raw filtration: for any $\psi\in\mathbb{L}^2$,
\[
(\mathbf v\times\mathbf h_j,\psi)_{\mathbb{L}^2}
=(\mathbf v,\mathbf h_j\times\psi)_{\mathbb{L}^2},
\qquad \mathbf h_j\times\psi\in\mathbb{L}^2.
\]
Taking pointwise limits of their finite-coordinate truncations shows that
$b_k^\phi$ and $b^\phi$ are progressively measurable as $\ell^2$-valued processes.
We claim that
\begin{equation}\label{scalar-noise-limit}
\|b_k^\phi-b^\phi\|_{L^2(0,T;\ell^2)}\longrightarrow0,
\qquad\bar{\mathbb{P}}\textrm{-a.s.}
\end{equation}
To prove this, take the compactly supported $\phi_h$ used above and write
\[
\begin{split}
b_{k,j}^{\phi}(s)-b_j^{\phi}(s)
&=(\mathbf{N}_j(\bar{\mathbf{u}}_k(s)),(P_{\leq k}-I)\phi)_{\mathbb{L}^2}\\
&\quad+(\mathbf{z}_k(s)\times\mathbf{h}_j,\phi-\phi_h)_{\mathbb{L}^2}
+(\mathbf{z}_k(s)\times\mathbf{h}_j,\phi_h)_{\mathbb{L}^2}.
\end{split}
\]
Denote these three terms by $b_{k,j}^{(1)}(s)$, $b_{k,j,h}^{(2)}(s)$, and $b_{k,j,h}^{(3)}(s)$. Cauchy--Schwarz in space and \eqref{noise-L2} give
\[
\begin{split}
\sum_j\int_0^T|b_{k,j}^{(1)}(s)|^2\,\mathrm{d}s
&\lesssim_{C_*}\|(P_{\leq k}-I)\phi\|_{\mathbb{L}^2}^2
\int_0^T(1+\|\bar{\mathbf{u}}_k(s)\|_{\mathbb{L}^2}^2)\,\mathrm{d}s,\\
\sum_j\int_0^T|b_{k,j,h}^{(2)}(s)|^2\,\mathrm{d}s
&\leq H_0\|\phi-\phi_h\|_{\mathbb{L}^2}^2
\int_0^T\|\mathbf{z}_k(s)\|_{\mathbb{L}^2}^2\,\mathrm{d}s,\\
\sum_j\int_0^T|b_{k,j,h}^{(3)}(s)|^2\,\mathrm{d}s
&\leq H_0\|\phi_h\|_{\mathbb{L}^2}^2
\int_0^T\|\mathbf{z}_k(s)\|_{\mathbb{L}^2(\mathcal{O}_m)}^2\,\mathrm{d}s,
\end{split}
\]
where $H_0=\sum_j\|\mathbf{h}_j\|_{\mathbb{L}^\infty}^2$. The first and third bounds tend to zero for fixed $h$, while the second is at most $C_{T,C_*}(\bar\omega)h^2$. Since the square of the sum is at most three times the sum of the squares, taking the upper limit in $k$ and then $h\downarrow0$ proves \eqref{scalar-noise-limit}. This argument only uses the spatial $L^\infty$ bounds on the multiplicative coefficients.

To control the tails of the noise series uniformly in $k$, define
\[
\eta_J:=\sum_{j>J}\left(\|\mathbf{h}_j\|_{\mathbb{L}^{\infty}}^2
+\|\mathbf{g}_j\|_{\mathbb{L}^2}^2\right).
\]
The square-summability assumption \eqref{asum1} implies
$\eta_J\to0$ as $J\to\infty$.
Using the contraction property of $P_{\leq k}$ on $\mathbb{L}^2$, we obtain
\begin{equation}\label{scalar-noise-tail}
\begin{split}
\sum_{j>J}\int_0^T|b_{k,j}^{\phi}(s)|^2\,\mathrm{d}s
&\leq2\|\phi\|_{\mathbb{L}^2}^2\int_0^T\sum_{j>J}
\big(\|\bar{\mathbf{u}}_k(s)\|_{\mathbb{L}^2}^2\|\mathbf{h}_j\|_{\mathbb{L}^\infty}^2
+\|\mathbf{g}_j\|_{\mathbb{L}^2}^2\big)\,\mathrm{d}s\\
&\leq2\eta_J\|\phi\|_{\mathbb{L}^2}^2
\int_0^T(1+\|\bar{\mathbf{u}}_k(s)\|_{\mathbb{L}^2}^2)\,\mathrm{d}s.
\end{split}
\end{equation}
The same bound holds for $b^\phi$ with $\mathbf{u}_*$ in place of
$\bar{\mathbf{u}}_k$. By \eqref{path-bounds}, these tails tend to zero
almost surely, uniformly in $k$ for the represented sequence.
Moreover, equality of laws, Lemma~\ref{lem3}, and \eqref{limit-moments} imply
\[
\begin{split}
&\sup_k\bar{\mathbb{E}}\int_0^T\sum_{j>J}|b_{k,j}^\phi(s)|^2\,\mathrm{d}s
+\bar{\mathbb{E}}\int_0^T\sum_{j>J}|b_j^\phi(s)|^2\,\mathrm{d}s\\
&\qquad\lesssim_{T,C_*,\|\mathbf{u}_0\|_{\mathbb{L}^2}}
\eta_J\|\phi\|_{\mathbb{L}^2}^2\longrightarrow0.
\end{split}
\]

Equality of laws and \eqref{con} give, on a single event of full probability,
\[
\begin{aligned}
\bar{\mathbf{u}}_k(0)&=P_{\leq k}\mathbf{u}_0,
&\mathbf{u}_*(0)&=\mathbf{u}_0,\\
\bar W^k(0)&=0,&W_*(0)&=0.
\end{aligned}
\]
Here the limiting initial value follows from
$P_{\leq k}\mathbf{u}_0\to\mathbf{u}_0$ in $\mathbb{L}^2$.
Define the drift-subtracted scalar processes
\begin{equation}\label{canonical-martingale}
\begin{split}
M_k^\phi(t)&=(\bar{\mathbf{u}}_k(t),\phi)_{\mathbb{L}^2}
-(P_{\leq k}\mathbf{u}_0,\phi)_{\mathbb{L}^2}
-\int_0^t\langle P_{\leq k}\mathbf{M}(\bar{\mathbf{u}}_k(s)),\phi\rangle\,\mathrm{d}s,\\
M^\phi(t)&=(\mathbf{u}_*(t),\phi)_{\mathbb{L}^2}
-(\mathbf{u}_0,\phi)_{\mathbb{L}^2}
-\int_0^t\langle\mathbf{M}(\mathbf{u}_*(s)),\phi\rangle\,\mathrm{d}s.
\end{split}
\end{equation}
In particular, $M_k^\phi(0)=M^\phi(0)=0$.

Proposition~\ref{4pro}, \eqref{con}, and convergence of the initial projections imply
\[
\sup_{0\leq t\leq T}|M_k^\phi(t)-M^\phi(t)|\longrightarrow0,
\qquad\bar{\mathbb{P}}\textrm{-a.s.}
\]

On the original basis, the drift-subtracted approximate equation is a
stochastic integral. The drift primitives and the integrals of the scalar
coefficients are measurable functions of the solution path, depending only
on its history up to the indicated time. Indeed, $\mathbb{H}^2$ is a Borel subset of $\mathbb{L}^2$,
and $\mathbf{M}:\mathbb{H}^2\to\mathbb{H}^{-2}$ is continuous.
Extending this map by zero outside $\mathbb{H}^2$ gives a Borel map
on $\mathbb{L}^2$; its time integral agrees with the drift primitive
on the energy class. The coefficient integrals are measurable by
continuity of their spatial pairings and time integration. Equality of the joint laws in \eqref{con}
therefore transfers the corresponding martingale identities to the
represented pair.
For every $\phi,\psi\in\mathbb{H}^2$ and $i,j\in\mathbb{N}$,
$M_k^\phi$ and each of the processes
\[
\begin{split}
&M_k^\phi(t)M_k^\psi(t)
-\int_0^t\sum_j b_{k,j}^\phi(r)b_{k,j}^\psi(r)\,\mathrm{d}r,\qquad M_k^\phi(t)\bar W_j^k(t)-\int_0^t b_{k,j}^\phi(r)\,\mathrm{d}r,\\
&\bar W_j^k(t),\qquad
\bar W_i^k(t)\bar W_j^k(t)-\delta_{ij}t
\end{split}
\]
are $\mathcal{G}^k$-martingales.

We give the limit argument for these identities. Fix $0\leq s<t\leq T$, times $r_1,\ldots,r_n\in[0,s]$, spatial tests $e_1,\ldots,e_a\in\mathbb{L}^2$, and finitely many noise coordinates. For a bounded continuous function $\Psi$ of these finitely many real variables, set
\[
\Psi_k=\Psi\big(\{(\bar{\mathbf{u}}_k(r_l),e_i)_{\mathbb{L}^2}\}_{i,l},
\{\bar W_j^k(r_l)\}_{j,l}\big),
\]
and define $\Psi_*$ by replacing the represented paths with the limit paths. Since coordinate evaluation on $U_0$ is continuous, \eqref{con} gives $\Psi_k\to\Psi_*$ almost surely. For every martingale $Y_k$ in the preceding list,
\begin{equation*}
\bar{\mathbb{E}}[(Y_k(t)-Y_k(s))\Psi_k]=0.
\end{equation*}
The corresponding processes converge uniformly in time almost surely. For the bracket terms, this follows from \eqref{scalar-noise-limit} and the explicit estimates
\[
\begin{split}
&\sup_{0\leq t\leq T}\left|
\int_0^t\sum_j\big(b_{k,j}^\phi(r)b_{k,j}^\psi(r)
-b_j^\phi(r)b_j^\psi(r)\big)\,\mathrm{d}r\right|\\
&\quad\leq\|b_k^\phi-b^\phi\|_{L^2(0,T;\ell^2)}\|b_k^\psi\|_{L^2(0,T;\ell^2)}
+\|b^\phi\|_{L^2(0,T;\ell^2)}\|b_k^\psi-b^\psi\|_{L^2(0,T;\ell^2)}\longrightarrow0,\\
&\sup_{0\leq t\leq T}\left|\int_0^t(b_{k,j}^\phi(r)-b_j^\phi(r))\,\mathrm{d}r\right|
\leq T^{1/2}\|b_k^\phi-b^\phi\|_{L^2(0,T;\ell^2)}\longrightarrow0.
\end{split}
\]
It remains to justify passage of expectations. The Burkholder--Davis--Gundy inequality on the original basis, followed by equality of laws, gives, for $p\geq2$,
\[
\begin{split}
\bar{\mathbb{E}}\sup_{0\leq t\leq T}|M_k^\phi(t)|^p
&\lesssim_p\bar{\mathbb{E}}\left(\int_0^T\|b_k^\phi(r)\|_{\ell^2}^2\,\mathrm{d}r\right)^{p/2}\\
&\lesssim_{p,C_*}\|\phi\|_{\mathbb{L}^2}^p
\bar{\mathbb{E}}\left(\int_0^T(1+\|\bar{\mathbf{u}}_k(r)\|_{\mathbb{L}^2}^2)\,\mathrm{d}r\right)^{p/2}\\
&\lesssim_{p,T,C_*,\|\mathbf{u}_0\|_{\mathbb{L}^2}}\|\phi\|_{\mathbb{L}^2}^p.
\end{split}
\]
The same bounds control all powers of $\int_0^T\|b_k^\phi(r)\|_{\ell^2}^2\,\mathrm{d}r$. The unchanged Wiener marginal law gives moments of every finite order for each noise-coordinate supremum. Taking, for example, $p=4$ bounds the products $M_k^\phi M_k^\psi$ and $M_k^\phi\bar W_j^k$ in $L^2$ by Cauchy--Schwarz; the bracket products have the same uniform integrability. Therefore Vitali's theorem gives
\[
\bar{\mathbb{E}}[(Y_*(t)-Y_*(s))\Psi_*]=0
\]
for each limiting process $Y_*$. A monotone class argument extends this equality from continuous cylinder functions to all bounded $\mathcal{G}_s^*$-measurable functions. One may first use a countable dense family of spatial tests and rational times; continuity extends the identities to every $0\leq s<t\leq T$.

Let $\mathcal{N}$ be the collection of all $\bar{\mathbb{P}}$-null sets
in the completed probability space. For $\alpha=k$ or $\alpha=*$,
define the usual augmentations by
\[
\mathcal{F}_t^\alpha
=\bigcap_{t<r\leq T}(\mathcal{G}_r^\alpha\vee\mathcal{N}),
\quad 0\leq t<T,
\qquad
\mathcal{F}_T^\alpha=\mathcal{G}_T^\alpha\vee\mathcal{N}.
\]
All the preceding martingales remain martingales under these filtrations.
Indeed, if $A\in\mathcal{F}_s^\alpha$ and $s_n\downarrow s$ with
$s<s_n<t$, then $A$ agrees up to a null set with an event in
$\mathcal{G}_{s_n}^\alpha$. Apply the raw martingale identity at $s_n$
and let $n\to\infty$. Continuity and the moment bounds above justify
passage to the limit in expectation.

We use $\langle\cdot,\cdot\rangle_t$ for the quadratic covariation
of continuous martingales with respect to these augmented filtrations.
The coordinate martingale and product identities imply
\[
\langle\bar W_i^k,\bar W_j^k\rangle_t=\delta_{ij}t,
\qquad
\langle W_{*,i},W_{*,j}\rangle_t=\delta_{ij}t.
\]
L\'{e}vy's characterization, applied to every finite vector of coordinates,
shows that $\{\bar W_j^k\}_{j\geq1}$ and $\{W_{*,j}\}_{j\geq1}$ are
families of independent Brownian motions relative to
$\mathcal{F}^k$ and $\mathcal{F}^*$, respectively. Their Brownian property
therefore holds relative to the filtration containing the corresponding
solution history.
The transferred product-martingale identities also give, for
$\phi,\psi\in\mathbb{H}^2$,
\begin{equation}\label{brackets-k}
\begin{split}
\langle M_k^\phi,M_k^\psi\rangle_t
&=\int_0^t\sum_j b_{k,j}^\phi(s)b_{k,j}^\psi(s)\,\mathrm{d}s,\\
\langle M_k^\phi,\bar W_j^k\rangle_t
&=\int_0^t b_{k,j}^\phi(s)\,\mathrm{d}s,
\qquad 0\leq t\leq T.
\end{split}
\end{equation}

The limit is progressively measurable as an $\mathbb{L}^2$-valued process. To see this, expand it in an orthonormal basis of $\mathbb{L}^2$. Its scalar coordinates are continuous and adapted; the finite sums are progressively measurable and converge in $\mathbb{L}^2$ at every time. The bounds \eqref{noise-L2} and \eqref{limit-moments} now define the $\mathbb{L}^2$-valued stochastic integral with integrand $\mathbf{N}(\mathbf{u}_*)$.

The preceding limiting martingale identities give
\begin{equation}\label{brackets-limit}
\begin{split}
\langle M^\phi,M^\psi\rangle_t
&=\int_0^t\sum_jb_j^\phi(s)b_j^\psi(s)\,\mathrm{d}s,\\
\langle M^\phi,W_{*,j}\rangle_t
&=\int_0^tb_j^\phi(s)\,\mathrm{d}s,
\qquad 0\leq t\leq T.
\end{split}
\end{equation}
To identify $M^\phi$, put
\[
I_J^\phi(t)=\sum_{j=1}^J\int_0^t b_j^\phi(s)\,\mathrm{d}W_{*,j}(s),
\qquad
I^\phi(t)=\sum_{j=1}^{\infty}\int_0^t b_j^\phi(s)\,\mathrm{d}W_{*,j}(s).
\]
The tail estimate \eqref{scalar-noise-tail} and the martingale maximal inequality imply
\[
\bar{\mathbb{E}}\sup_{0\leq t\leq T}|I^\phi(t)-I_J^\phi(t)|^2
\leq4\bar{\mathbb{E}}\int_0^T\sum_{j>J}|b_j^\phi(s)|^2\,\mathrm{d}s
\longrightarrow0.
\]
For finite $J$, the rule for cross variation of a stochastic integral and \eqref{brackets-limit} yield
\[
\begin{split}
\langle M^\phi,I_J^\phi\rangle_t
&=\sum_{j=1}^J\int_0^t b_j^\phi(s)\,\mathrm{d}\langle M^\phi,W_{*,j}\rangle_s
=\int_0^t\sum_{j=1}^J|b_j^\phi(s)|^2\,\mathrm{d}s,\\
\langle I_J^\phi\rangle_t
&=\int_0^t\sum_{j=1}^J|b_j^\phi(s)|^2\,\mathrm{d}s.
\end{split}
\]
Letting $J\to\infty$ gives the same formulas with $I^\phi$ and the full sum. This passage follows also from the Kunita--Watanabe inequality, since $I_J^\phi\to I^\phi$ in the square-integrable martingale norm. Consequently,
\[
\begin{split}
\langle M^\phi-I^\phi\rangle_t
&=\langle M^\phi\rangle_t+\langle I^\phi\rangle_t-2\langle M^\phi,I^\phi\rangle_t\\
&=\int_0^t\sum_j|b_j^\phi(s)|^2\,\mathrm{d}s
+\int_0^t\sum_j|b_j^\phi(s)|^2\,\mathrm{d}s
-2\int_0^t\sum_j|b_j^\phi(s)|^2\,\mathrm{d}s=0.
\end{split}
\]
Both martingales start at zero, so they are indistinguishable. This proves the scalar stochastic identity.

The same argument on each represented pair identifies
$M_k^\phi(t)=\sum_j\int_0^t b_{k,j}^\phi(s)\,\mathrm{d}\bar W_j^k(s)$
relative to its own augmented canonical filtration. In particular, the uniform almost sure convergence of $M_k^\phi$ and their uniform fourth moments imply
\begin{equation}\label{stochastic-scalar-convergence}
\bar{\mathbb{E}}\sup_{0\leq t\leq T}\left|
\sum_j\int_0^t b_{k,j}^\phi(s)\,\mathrm{d}\bar W_j^k(s)
-\sum_j\int_0^t b_j^\phi(s)\,\mathrm{d}W_{*,j}(s)
\right|^2\longrightarrow0.
\end{equation}

Choose a countable dense subset of $\mathbb{H}^2$ and intersect the full-measure sets for its elements. The drift primitive belongs to $\mathcal{C}([0,T];\mathbb{H}^{-2})$ by \eqref{drift-integrability}; the stochastic integral has continuous $\mathbb{L}^2$ paths. Density and weak $\mathbb{L}^2$ continuity therefore extend the identity to every $\phi\in\mathbb{H}^2$ and every $0\leq t\leq T$ on a single full-measure set. The initial identity follows from $P_{\leq k}\mathbf{u}_0\to\mathbf{u}_0$ and \eqref{con}. We have obtained a martingale very weak solution on $[0,T]$ satisfying \eqref{limit-moments}. Since $T>0$ is arbitrary, this proves global martingale solvability in the sense of Definition~\ref{def1-1}.

\section{Proof of Theorem \ref{the1}}\label{sec4}
Fix an arbitrary $T>0$. Section~\ref{sec3} gives a martingale very weak solution on $[0,T]$ in all three dimensions. For $d=1,2$, we first establish the time regularity needed to compare two such solutions, then prove pathwise uniqueness and construct a solution on the prescribed stochastic basis.

\subsection{Strong continuity and pathwise uniqueness}
\begin{lemma}\label{L2-continuity}
If $d=1$ or $2$, every martingale very weak solution has a modification in $\mathcal{C}([0,T];\mathbb{L}^2)$. The variational It\^{o} formula for its squared $\mathbb{L}^2$ norm holds on $[0,T]$, with the stochastic integral understood by localization.
\end{lemma}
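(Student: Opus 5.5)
We will apply the Krylov--Rozovskii/Pardoux variational It\^{o} formula for the Gelfand triple $V=\mathbb{H}^2\hookrightarrow H=\mathbb{L}^2\hookrightarrow V'=\mathbb{H}^{-2}$. That theorem says the following. Suppose $\mathbf{u}\in L^2(0,T;V)$ almost surely, the drift $\mathbf{M}(\mathbf{u})$ belongs to $L^2(0,T;V')$ almost surely, and the diffusion $\mathbf{N}(\mathbf{u})$ belongs to $L^2(0,T;\mathcal{L}_2(\ell^2;H))$. Suppose also that the identity \eqref{veryweak-id} holds in $V'$. Then $\mathbf{u}$ has an $H$-continuous modification, and
\[
\|\mathbf{u}(t)\|_{\mathbb{L}^2}^2=\|\mathbf{u}_0\|_{\mathbb{L}^2}^2+\int_0^t\big(2\langle\mathbf{M}(\mathbf{u}),\mathbf{u}\rangle+\|\mathbf{N}(\mathbf{u})\|_{\mathcal{L}_2}^2\big)\,\mathrm{d}s+2\sum_j\int_0^t(\mathbf{N}_j(\mathbf{u}),\mathbf{u})_{\mathbb{L}^2}\,\mathrm{d}W_j .
\]
The theorem is pathwise in the drift, so $\omega$-wise $L^2$-integrability suffices; there is no need for expectations. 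To handle integrability issues we localize with the stopping times $\sigma_R=\inf\{t:\int_0^t\|\mathbf{u}\|_{\mathbb{H}^2}^2\,\mathrm{d}s\geq R\}\wedge T$. Since $\sup_t\|\mathbf{u}\|_{\mathbb{L}^2}<\infty$ almost surely by weak continuity, these stopping times increase to $T$.

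The linear part of the verification is immediate. From $\mathbf{u}\in L^2(0,T;\mathbb{H}^2)$ we get $\Delta^2\mathbf{u},\Delta\mathbf{u},\mathbf{u}\in L^2(0,T;\mathbb{H}^{-2})$. The correction term satisfies $\mathbf{C}(\mathbf{u})\in L^\infty(0,T;\mathbb{L}^2)$ by \eqref{noise-L2}, and the noise has the required Hilbert--Schmidt integrability by the same bound. The key step is to show that the nonlinear terms lie in $L^2(0,T;\mathbb{H}^{-2})$ when $d\le2$. This is exactly where the restriction on the dimension enters: \eqref{drift-integrability} only gives $L^{4/3}$ in time, which is the worst case $d=3$.

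\emph{Cubic terms.} We have $\|\Delta\mathbf{F}(\mathbf{u})\|_{\mathbb{H}^{-2}}+\|\mathbf{F}(\mathbf{u})\|_{\mathbb{H}^{-2}}\lesssim\|\mathbf{u}\|_{\mathbb{L}^6}^3$. By \eqref{3pro-3} this is at most $\|\mathbf{u}\|_{\mathbb{L}^2}^{3-d/2}\|\mathbf{u}\|_{\mathbb{H}^2}^{d/2}$. For $d\le2$ the exponent satisfies $d/2\le1$, so the square in time is integrable against $L^\infty(0,T;\mathbb{L}^2)\cap L^2(0,T;\mathbb{H}^2)$.

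\emph{Precession term.} We refine \eqref{cross-negative}. Pair $\mathbf{u}\times\nabla\mathbf{u}$ with $\nabla\phi$ and use Hölder's inequality with $\mathbf{u}\in\mathbb{L}^4$, $\nabla\mathbf{u}\in\mathbb{L}^4$ and $\nabla\phi\in\mathbb{L}^2$. This gives
\[
\|\mathbf{u}\times\Delta\mathbf{u}\|_{\mathbb{H}^{-2}}\lesssim\|\mathbf{u}\|_{\mathbb{L}^4}\|\nabla\mathbf{u}\|_{\mathbb{L}^4}.
\]
Gagliardo--Nirenberg in $d\le2$ gives $\|\mathbf{u}\|_{\mathbb{L}^4}\lesssim\|\mathbf{u}\|_{\mathbb{L}^2}^{1-d/8}\|\mathbf{u}\|_{\mathbb{H}^2}^{d/8}$ and $\|\nabla\mathbf{u}\|_{\mathbb{L}^4}\lesssim\|\mathbf{u}\|_{\mathbb{L}^2}^{1-(2+d/2)/4\cdot}\cdots$. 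More simply, interpolating $\mathbb{H}^{1+d/4}$ between $\mathbb{L}^2$ and $\mathbb{H}^2$ yields a total $\mathbb{H}^2$-exponent of $(1+d/4)/2+d/8=\tfrac12+d/4\le1$ for $d\le2$. Hence the square of this bound is integrable in time as well.

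I expect this precession exponent count to be the main obstacle. For $d=2$ the exponent is exactly $1$, the borderline case, so the precise interpolation needs to be checked. For $d=3$ it fails, which is consistent with the statement of the lemma.

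Once all the hypotheses hold up to $\sigma_R$, the abstract theorem gives a continuous $\mathbb{L}^2$-valued version of $\mathbf{u}(\cdot\wedge\sigma_R)$ together with the Itô formula on $[0,\sigma_R]$. The modification agrees with the weakly continuous process, because both are weakly continuous and coincide for almost every $t$. We then let $R\to\infty$: the versions are consistent on the sets $\{\sigma_R=T\}$, whose union has full probability. This produces the modification in $\mathcal{C}([0,T];\mathbb{L}^2)$ and the localized Itô formula on all of $[0,T]$. Finally, the drift pairing $\langle\mathbf{M}(\mathbf{u}),\mathbf{u}\rangle$ can be rewritten as in \eqref{8pro2-1} by density, since every term in that expression is now integrable.
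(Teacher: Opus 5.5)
Your proposal is correct and follows the paper's route. You check that $\mathbf M(\mathbf u)\in L^2(0,T;\mathbb H^{-2})$ and $\mathbf N(\mathbf u)\in L^2(0,T;\mathcal L_2(\ell^2;\mathbb L^2))$ almost surely for $d\le2$ via the bound on $\|\mathbf u\|_{\mathbb L^6}^3$, apply the localized variational It\^{o} theorem in $\mathbb H^2\subset\mathbb L^2\subset\mathbb H^{-2}$, and identify the continuous version with the given process by weak continuity.

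Two small points. The precession term is not the obstruction: \eqref{cross-negative}, which pairs with $\nabla\phi\in\mathbb L^6$ rather than $\mathbb L^2$, already gives $\|\mathbf u\times\Delta\mathbf u\|_{\mathbb H^{-2}}\lesssim\|\mathbf u\|_{\mathbb L^2}\|\mathbf u\|_{\mathbb H^2}$ for every $d\le3$. So the restriction $d\le2$ comes only from the cubic term, and your borderline $\mathbb L^4\times\mathbb L^4$ count is unnecessary. Also, if you use a version of the theorem that requires moments, localize on the time integrals of the drift and diffusion norms as the paper does, since $\int_0^t\|\mathbf u\|_{\mathbb H^2}^2\,\mathrm ds$ alone does not bound $\int_0^t\|\mathbf M(\mathbf u)\|_{\mathbb H^{-2}}^2\,\mathrm ds$.
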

\begin{proof}[\emph{\textbf{Proof}}]
Weak $\mathbb L^2$ continuity and Definition~\ref{def1-1} give
$\mathbf u\in L^\infty(0,T;\mathbb L^2)\cap L^2(0,T;\mathbb H^2)$ almost surely.
For $d\leq2$, \eqref{3pro-3} and H\"older's inequality in time yield
\begin{equation}\label{cubic-L2-time}
\begin{split}
\int_0^T\|\mathbf F(\mathbf u(s))\|_{\mathbb L^2}^2\,\mathrm ds
&=\int_0^T\|\mathbf u(s)\|_{\mathbb L^6}^6\,\mathrm ds\\
&\leq C_dT^{1-d/2}
\|\mathbf u\|_{L^\infty(0,T;\mathbb L^2)}^{6-d}
\|\mathbf u\|_{L^2(0,T;\mathbb H^2)}^d<\infty.
\end{split}
\end{equation}
Together with \eqref{cross-negative} and \eqref{noise-L2}, this implies
\[
\mathbf M(\mathbf u)\in L^2(0,T;\mathbb H^{-2}),\qquad
\mathbf N(\mathbf u)\in L^2(0,T;\mathcal L_2(\ell^2;\mathbb L^2)),
\qquad\mathbb P\textrm{-a.s.}
\]
The very weak identity holds in $\mathbb H^{-2}$. We may therefore apply the localized variational It\^{o} theorem for the Hilbert triple
\[
\mathbb H^2\subset\mathbb L^2\subset\mathbb H^{-2};
\]
see \cite[Lemma~1.4]{pardouxt1980stochastic} and \cite[Theorem~4.2.5]{prevot2007concise}.
It gives an $\mathbb L^2$-continuous modification and the asserted norm formula. Localization is taken with respect to the time integrals of the squared solution, drift and diffusion norms above; these are finite almost surely. The resulting process agrees with the given one at every time by the very weak identity and weak continuity.
\end{proof}

\begin{proposition}\label{pro4}
Pathwise uniqueness holds in the class of Definition~\ref{def1-2} for $d=1,2$.
\end{proposition}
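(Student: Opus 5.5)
Let $\mathbf u,\mathbf v$ be two pathwise very weak solutions on the same basis with the same $\mathbf u_0$ and $W$, and set $\mathbf w=\mathbf u-\mathbf v$. Subtracting the identities \eqref{veryweak-id} shows that $\mathbf w$ solves, in $\mathbb H^{-2}$,
\[
\mathrm d\mathbf w=\big[\mathbf M(\mathbf u)-\mathbf M(\mathbf v)\big]\mathrm dt+\sum_j(\mathbf w\times\mathbf h_j)\,\mathrm dW_j,\qquad \mathbf w(0)=0.
\]
The additive parts $\mathbf g_j$ cancel, and the correction difference is $\frac12\sum_j(\mathbf w\times\mathbf h_j)\times\mathbf h_j$. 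Lemma~\ref{L2-continuity} (for $d=1,2$) puts us in the Hilbert triple $\mathbb H^2\subset\mathbb L^2\subset\mathbb H^{-2}$, so the variational It\^o formula applies to $\|\mathbf w\|_{\mathbb L^2}^2$. The multiplicative martingale term vanishes, since $(\mathbf w\times\mathbf h_j,\mathbf w)=0$. The It\^o trace and the correction pairing are bounded by $H_0\|\mathbf w\|_{\mathbb L^2}^2$. Hence the plan is a pathwise Gronwall estimate for $\|\mathbf w\|_{\mathbb L^2}^2$ with a random but integrable weight, made rigorous by stopping at
\[
\tau_R=\inf\Big\{t:\ \sup_{s\le t}\big(\|\mathbf u\|_{\mathbb L^2}+\|\mathbf v\|_{\mathbb L^2}\big)+\int_0^t\big(\|\mathbf u\|_{\mathbb H^2}^2+\|\mathbf v\|_{\mathbb H^2}^2\big)\,\mathrm ds\ge R\Big\}.
\]

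The key step is the bound on the drift pairing $\langle\mathbf M(\mathbf u)-\mathbf M(\mathbf v),\mathbf w\rangle$. The linear part contributes $-\|\Delta\mathbf w\|^2+\|\nabla\mathbf w\|^2+2\|\mathbf w\|^2\le-\frac12\|\Delta\mathbf w\|^2+C\|\mathbf w\|^2$. The undifferentiated cubic term has the good sign by monotonicity: $-2(\mathbf F(\mathbf u)-\mathbf F(\mathbf v),\mathbf w)\le0$. For the differentiated cubic term, write it as $2(\mathbf F(\mathbf u)-\mathbf F(\mathbf v),\Delta\mathbf w)$ and use $|\mathbf F(\mathbf u)-\mathbf F(\mathbf v)|\lesssim(|\mathbf u|^2+|\mathbf v|^2)|\mathbf w|$. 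In $d\le2$, Gagliardo--Nirenberg gives
\[
\|\mathbf w\|_{\mathbb L^\infty}\lesssim\|\mathbf w\|_{\mathbb L^2}^{1-d/4}\|\mathbf w\|_{\mathbb H^2}^{d/4},\qquad \big\||\mathbf u|^2\big\|_{\mathbb L^2}=\|\mathbf u\|_{\mathbb L^4}^2\lesssim\|\mathbf u\|_{\mathbb L^2}^{2-d/4}\|\mathbf u\|_{\mathbb H^2}^{d/4}.
\]
So this term is at most $\|\Delta\mathbf w\|\,\|\mathbf w\|^{1-d/4}\|\mathbf w\|_{\mathbb H^2}^{d/4}\,a(t)$, where $a(t)=\|\mathbf u\|_{\mathbb L^2}^{2-d/4}\|\mathbf u\|_{\mathbb H^2}^{d/4}+(\text{same for }\mathbf v)$. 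Young's inequality with exponents $\big(\frac{2}{1+d/4},\frac{2}{1-d/4}\big)$ then gives $\varepsilon\|\mathbf w\|_{\mathbb H^2}^2+C a(t)^{8/(4-d)}\|\mathbf w\|^2$. Since $a^{8/(4-d)}\lesssim\|\mathbf u\|_{\mathbb L^2}^{\cdots}\|\mathbf u\|_{\mathbb H^2}^{2d/(4-d)}$ and $2d/(4-d)\le2$ for $d\le2$, the weight is integrable in time on $[0,\tau_R]$. This is exactly where $d=3$ fails.

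For the precession term, use $\mathbf u\times\Delta\mathbf u-\mathbf v\times\Delta\mathbf v=\mathbf w\times\Delta\mathbf u+\mathbf v\times\Delta\mathbf w$. The first piece vanishes against $\mathbf w$. The second equals $-(\mathbf v\times\nabla\mathbf w,\nabla\mathbf w)$ after integration by parts, since the other part cancels by antisymmetry. It is therefore bounded by $\|\mathbf v\|_{\mathbb L^\infty}\|\nabla\mathbf w\|_{\mathbb L^2}^2\le\|\mathbf v\|_{\mathbb L^\infty}\|\mathbf w\|\,\|\mathbf w\|_{\mathbb H^2}\le\varepsilon\|\mathbf w\|_{\mathbb H^2}^2+C\|\mathbf v\|_{\mathbb L^\infty}^2\|\mathbf w\|^2$. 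Here $\|\mathbf v\|_{\mathbb L^\infty}^2\lesssim\|\mathbf v\|_{\mathbb L^2}^{2-d/2}\|\mathbf v\|_{\mathbb H^2}^{d/2}$, which is integrable. All these integrations by parts are legitimate at $\mathbb H^2$ regularity, using the duality form of the terms as in \eqref{cross-negative}.

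Collecting the terms and absorbing the $\varepsilon$-terms into the dissipation yields, almost surely, $\|\mathbf w(t\wedge\tau_R)\|^2\le\int_0^{t\wedge\tau_R}\rho(s)\|\mathbf w(s)\|^2\,\mathrm ds$ with $\int_0^T\rho\le C_R$. Because the martingale term is identically zero, no expectation is needed: a pathwise Gronwall argument gives $\mathbf w\equiv0$ on $[0,\tau_R]$. Letting $R\to\infty$ uses $\tau_R\uparrow T$ a.s., which holds by Definition~\ref{def1-2}. The main obstacle I expect is the differentiated cubic term. I need to check the Young exponent bookkeeping carefully, and to verify that the It\^o formula of Lemma~\ref{L2-continuity} applies to the difference. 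It does, because both the drift difference and the diffusion difference lie in the required $L^2$-in-time spaces.
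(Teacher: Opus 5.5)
Your proposal follows the paper's scheme with one real error, in the precession step. The shared steps are: subtract the very weak identities; apply the variational It\^o formula in $\mathbb{H}^2\subset\mathbb{L}^2\subset\mathbb{H}^{-2}$, justified by the $L^2(0,T;\mathbb{H}^{-2})$ drift integrability from Lemma~\ref{L2-continuity}; note that the multiplicative martingale vanishes (the trace in fact cancels the It\^o correction exactly); use monotonicity of $\mathbf{F}$; and close with a pathwise Gronwall argument.

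Your handling of the differentiated cubic term differs from the paper's, and it is fine. You put $\mathbb{L}^\infty$ on $\mathbf{w}$ and $\mathbb{L}^4$ on $\mathbf{u},\mathbf{v}$, which gives a weight whose $\mathbb{H}^2$-exponent is $2d/(4-d)\le 2$. The paper instead bounds $\|\mathbf{G}\|_{\mathbb{L}^2}\lesssim(\|\mathbf{u}\|_{\mathbb{L}^\infty}^2+\|\mathbf{v}\|_{\mathbb{L}^\infty}^2)\|\mathbf{z}\|_{\mathbb{L}^2}$ and uses the weight $\|\mathbf{u}\|_{\mathbb{L}^\infty}^4\lesssim\|\mathbf{u}\|_{\mathbb{L}^2}^{4-d}\|\mathbf{u}\|_{\mathbb{H}^2}^{d}$. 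Both weights are time-integrable exactly when $d\le 2$, and your Young-exponent bookkeeping checks out.

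The precession step contains a false identity. You claim $(\mathbf{v}\times\Delta\mathbf{w},\mathbf{w})_{\mathbb{L}^2}=-(\mathbf{v}\times\nabla\mathbf{w},\nabla\mathbf{w})_{\mathbb{L}^2}$. The right-hand side vanishes identically, because $(\mathbf{v}\times\partial_i\mathbf{w})\cdot\partial_i\mathbf{w}=0$ pointwise, so your formula would mean the precession term contributes nothing. Integrating by parts correctly, it is exactly this piece that drops out by antisymmetry. What survives is $-\sum_i(\partial_i\mathbf{v}\times\partial_i\mathbf{w},\mathbf{w})_{\mathbb{L}^2}$, which involves $\nabla\mathbf{v}$. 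It is not controlled by $\|\mathbf{v}\|_{\mathbb{L}^\infty}\|\nabla\mathbf{w}\|_{\mathbb{L}^2}^2$, so your chain of inequalities breaks at that point.

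The fix is to skip the integration by parts. For a.e.\ $s$ both $\mathbf{v}(s)$ and $\mathbf{w}(s)$ lie in $\mathbb{H}^2$, so Cauchy--Schwarz gives directly $|(\mathbf{v}\times\Delta\mathbf{w},\mathbf{w})_{\mathbb{L}^2}|\le\|\mathbf{v}\|_{\mathbb{L}^\infty}\|\Delta\mathbf{w}\|_{\mathbb{L}^2}\|\mathbf{w}\|_{\mathbb{L}^2}$. This yields the bound $\varepsilon\|\mathbf{w}\|_{\mathbb{H}^2}^2+C\|\mathbf{v}\|_{\mathbb{L}^\infty}^2\|\mathbf{w}\|_{\mathbb{L}^2}^2$ that you go on to use, with an integrable weight. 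This is what the paper does; its decomposition leaves $\mathbf{u}\times\Delta\mathbf{z}$ rather than $\mathbf{v}\times\Delta\mathbf{w}$. With this step replaced, the rest of your argument goes through.
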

\begin{proof}[\emph{\textbf{Proof}}]
Let $\mathbf{u}$ and $\mathbf{v}$ be two solutions on the same stochastic basis, driven by the same Wiener processes. Set $\mathbf{z}=\mathbf{u}-\mathbf{v}$ and
$\mathbf{G}(s)=\mathbf{F}(\mathbf{u}(s))-\mathbf{F}(\mathbf{v}(s))$.
Subtracting their equations gives, in $\mathbb{H}^{-2}$,
\begin{equation}\label{L2-difference-equation-expanded}
\begin{split}
\mathrm{d}\mathbf{z}(s)
&=\Big[-\Delta^2\mathbf{z}(s)-\Delta\mathbf{z}(s)+2\mathbf{z}(s)
-2\mathbf{G}(s)+2\Delta\mathbf{G}(s)\\
&\qquad-\mathbf{u}(s)\times\Delta\mathbf{u}(s)
+\mathbf{v}(s)\times\Delta\mathbf{v}(s)
+\frac12\sum_{j=1}^{\infty}(\mathbf{z}(s)\times\mathbf{h}_j)\times\mathbf{h}_j\Big]\,\mathrm{d}s\\
&\quad+\sum_{j=1}^{\infty}\mathbf{z}(s)\times\mathbf{h}_j\,\mathrm{d}W_j(s).
\end{split}
\end{equation}
The additive coefficients cancel. In particular,
\begin{equation}\label{difference-cancellation}
\begin{split}
(\mathbf{z}(s)\times\mathbf{h}_j,\mathbf{z}(s))_{\mathbb{L}^2}&=0,\\
((\mathbf{z}(s)\times\mathbf{h}_j)\times\mathbf{h}_j,\mathbf{z}(s))_{\mathbb{L}^2}
&=-\|\mathbf{z}(s)\times\mathbf{h}_j\|_{\mathbb{L}^2}^2.
\end{split}
\end{equation}
All sums in these calculations converge absolutely, because
\[
\sum_{j=1}^{\infty}\|\mathbf{z}(s)\times\mathbf{h}_j\|_{\mathbb{L}^2}^2
\leq C_*\|\mathbf{z}(s)\|_{\mathbb{L}^2}^2.
\]
The integrability verified in Lemma~\ref{L2-continuity} applies to both solutions and hence to their difference drift and diffusion. The localized variational It\^{o} formula in
$\mathbb H^2\subset\mathbb L^2\subset\mathbb H^{-2}$
therefore applies to \eqref{L2-difference-equation-expanded}. By \eqref{difference-cancellation}, the scalar martingale vanishes and the quadratic variation term cancels the It\^{o} correction. Expanding the remaining drift and integrating by parts gives, almost surely for every $0\leq t\leq T$,
\begin{equation}\label{difference-energy}
\begin{split}
&\frac12\|\mathbf{z}(t)\|_{\mathbb{L}^2}^2
+\int_0^t\|\Delta\mathbf{z}(s)\|_{\mathbb{L}^2}^2\,\mathrm{d}s\\
&=\frac12\|\mathbf{z}(0)\|_{\mathbb{L}^2}^2
+\int_0^t\big(\|\nabla\mathbf{z}(s)\|_{\mathbb{L}^2}^2+2\|\mathbf{z}(s)\|_{\mathbb{L}^2}^2\big)\,\mathrm{d}s\\
&\quad-2\int_0^t(\mathbf{G}(s),\mathbf{z}(s))_{\mathbb{L}^2}\,\mathrm{d}s
+2\int_0^t(\mathbf{G}(s),\Delta\mathbf{z}(s))_{\mathbb{L}^2}\,\mathrm{d}s\\
&\quad-\int_0^t(\mathbf{u}(s)\times\Delta\mathbf{u}(s)
-\mathbf{v}(s)\times\Delta\mathbf{v}(s),\mathbf{z}(s))_{\mathbb{L}^2}\,\mathrm{d}s.
\end{split}
\end{equation}

We estimate these terms explicitly. For $a,b\in\mathbb{R}^3$,
\[
\begin{split}
(|a|^2a-|b|^2b)\cdot(a-b)
&=\frac12(|a|^2+|b|^2)|a-b|^2
+\frac12(|a|^2-|b|^2)^2\geq0.
\end{split}
\]
Integrating in space proves $(\mathbf{G}(s),\mathbf{z}(s))_{\mathbb{L}^2}\geq0$, so the zero-order cubic term has the favorable sign. Moreover,
\[
|a|^2a-|b|^2b=|a|^2(a-b)+((a+b)\cdot(a-b))b,
\]
and hence $||a|^2a-|b|^2b|\leq C(|a|^2+|b|^2)|a-b|$. Consequently, for almost every $s$ and every $\varepsilon>0$,
\begin{equation}\label{cubic-difference}
\begin{split}
2|(\mathbf{G}(s),\Delta\mathbf{z}(s))_{\mathbb{L}^2}|
&\leq C(\|\mathbf{u}(s)\|_{\mathbb{L}^{\infty}}^2+\|\mathbf{v}(s)\|_{\mathbb{L}^{\infty}}^2)
\|\mathbf{z}(s)\|_{\mathbb{L}^2}\|\Delta\mathbf{z}(s)\|_{\mathbb{L}^2}\\
&\leq\varepsilon\|\Delta\mathbf{z}(s)\|_{\mathbb{L}^2}^2
+C_\varepsilon(\|\mathbf{u}(s)\|_{\mathbb{L}^{\infty}}^4+\|\mathbf{v}(s)\|_{\mathbb{L}^{\infty}}^4)
\|\mathbf{z}(s)\|_{\mathbb{L}^2}^2.
\end{split}
\end{equation}
For the precession term, the decomposition
\[
\begin{split}
\mathbf{u}(s)\times\Delta\mathbf{u}(s)-\mathbf{v}(s)\times\Delta\mathbf{v}(s)
=\mathbf{u}(s)\times\Delta\mathbf{z}(s)+\mathbf{z}(s)\times\Delta\mathbf{v}(s)
\end{split}
\]
eliminates the second summand when paired with $\mathbf{z}(s)$. Thus
\begin{equation}\label{cross-difference}
\begin{split}
&|(\mathbf{u}(s)\times\Delta\mathbf{u}(s)-\mathbf{v}(s)\times\Delta\mathbf{v}(s),\mathbf{z}(s))_{\mathbb{L}^2}|\\
&\qquad=|(\mathbf{u}(s)\times\Delta\mathbf{z}(s),\mathbf{z}(s))_{\mathbb{L}^2}|\\
&\qquad\leq\|\mathbf{u}(s)\|_{\mathbb{L}^{\infty}}\|\Delta\mathbf{z}(s)\|_{\mathbb{L}^2}\|\mathbf{z}(s)\|_{\mathbb{L}^2}\\
&\qquad\leq\varepsilon\|\Delta\mathbf{z}(s)\|_{\mathbb{L}^2}^2
+C_\varepsilon\|\mathbf{u}(s)\|_{\mathbb{L}^{\infty}}^2\|\mathbf{z}(s)\|_{\mathbb{L}^2}^2.
\end{split}
\end{equation}
Finally, integration by parts and Young's inequality imply
\[
\|\nabla\mathbf{z}(s)\|_{\mathbb{L}^2}^2
=-(\mathbf{z}(s),\Delta\mathbf{z}(s))_{\mathbb{L}^2}
\leq\varepsilon\|\Delta\mathbf{z}(s)\|_{\mathbb{L}^2}^2+C_\varepsilon\|\mathbf{z}(s)\|_{\mathbb{L}^2}^2.
\]
Choose $\varepsilon$ sufficiently small in these three estimates and use $x^2\leq1+x^4$. Equation~\eqref{difference-energy} becomes
\begin{equation}\label{L2-stability}
\begin{split}
\|\mathbf{z}(t)\|_{\mathbb{L}^2}^2+\int_0^t\|\Delta\mathbf{z}(s)\|_{\mathbb{L}^2}^2\,\mathrm{d}s
\leq\|\mathbf{z}(0)\|_{\mathbb{L}^2}^2
+C_d\int_0^t a(s)\|\mathbf{z}(s)\|_{\mathbb{L}^2}^2\,\mathrm{d}s,
\end{split}
\end{equation}
where $a(s)=1+\|\mathbf{u}(s)\|_{\mathbb{L}^{\infty}}^4+\|\mathbf{v}(s)\|_{\mathbb{L}^{\infty}}^4$.
The whole-space Gagliardo--Nirenberg inequality gives
\begin{equation}\label{Linfty-L2}
\|\mathbf{w}\|_{\mathbb{L}^{\infty}}^4
\lesssim_d\|\mathbf{w}\|_{\mathbb{L}^2}^{4-d}\|\mathbf{w}\|_{\mathbb{H}^2}^{d},
\qquad d\leq3.
\end{equation}
For either $\mathbf{w}=\mathbf{u}$ or $\mathbf{w}=\mathbf{v}$ and $d=1,2$, this implies the precise bound
\begin{equation}\label{L2-uniqueness-coefficient}
\begin{split}
\int_0^T\|\mathbf{w}(s)\|_{\mathbb{L}^{\infty}}^4\,\mathrm{d}s
\leq C_dT^{1-d/2}
\left(\sup_{0\leq s\leq T}\|\mathbf{w}(s)\|_{\mathbb{L}^2}\right)^{4-d}
\left(\int_0^T\|\mathbf{w}(s)\|_{\mathbb{H}^2}^2\,\mathrm{d}s\right)^{d/2}<\infty.
\end{split}
\end{equation}
In particular $a\in L^1(0,T)$ almost surely. To make the localization explicit, put
\[
\sigma_R=T\wedge\inf\left\{t\in[0,T]:\int_0^t a(s)\,\mathrm{d}s\geq R\right\}.
\]
On $[0,\sigma_R]$, deterministic Gronwall applied pathwise to \eqref{L2-stability} yields
\[
\|\mathbf{z}(t\wedge\sigma_R)\|_{\mathbb{L}^2}^2
\leq\|\mathbf{z}(0)\|_{\mathbb{L}^2}^2
\exp\left(C_d\int_0^{t\wedge\sigma_R}a(s)\,\mathrm{d}s\right),
\qquad0\leq t\leq T.
\]
Almost surely $\sigma_R=T$ for all sufficiently large $R$, by \eqref{L2-uniqueness-coefficient}. Equal initial values therefore imply $\mathbf{u}(t)=\mathbf{v}(t)$ for all $0\leq t\leq T$ on a single event of probability one. This proves indistinguishability.
\end{proof}

\begin{proof}[\emph{\textbf{Proof of Theorem \ref{the1}}}]
Section~\ref{sec3} proves the assertion for $d=3$. For $d=1,2$, Lemma~\ref{L2-continuity} gives continuous $\mathbb L^2$ paths for the martingale very weak solution, and Proposition~\ref{pro4} establishes pathwise uniqueness. The Yamada--Watanabe principle \cite{yamada1971uniqueness}, in the general form of \cite[Theorem~1.5 and Lemma~2.10]{kurtz2014weak}, therefore yields a unique pathwise very weak solution on any prescribed stochastic basis. It applies to solution paths in $\mathcal C([0,T];\mathbb L^2)$ and noise paths in $\mathcal C([0,T];U_0)$; the required temporal compatibility follows from the Wiener property relative to the solution filtration. The solution has the law of the martingale solution constructed above and hence inherits \eqref{main-L2} from \eqref{limit-moments}. The solutions on integer time intervals agree on their overlaps by pathwise uniqueness and hence define a global solution. This completes the proof.
\end{proof}

\section{Local pathwise weak solutions}\label{sec5}
Throughout this section $d\in\{1,2,3\}$, $\mathbf{u}_0\in\mathbb{H}^1$, and \eqref{asum2} holds. We use the frequency approximation of Section~\ref{sec2}, with an additional scalar cutoff in the differentiated cubic term.

\subsection{The cutoff equation and uniform estimates}
Choose a nonincreasing function $\theta\in\mathcal{C}^\infty([0,\infty);[0,1])$ such that $\theta(r)=1$ for $0\leq r\leq1$ and $\theta(r)=0$ for $r\geq2$. For $R\geq1$, set
\begin{equation}\label{cutoff-definition}
\theta_R(r)=\theta(r/R),\qquad
a_R(\mathbf{v})=\theta_R(\|\mathbf{v}\|_{\mathbb{H}^1}),\qquad
|a_R(\mathbf{v})-a_R(\mathbf{w})|
\leq C_R\|\mathbf{v}-\mathbf{w}\|_{\mathbb{H}^1}.
\end{equation}
Write
\begin{equation}\label{cutoff-drift}
\begin{split}
\mathbf{M}_R(\mathbf{v})={}&-\Delta^2\mathbf{v}-\Delta\mathbf{v}
+2\mathbf{v}-2\mathbf{F}(\mathbf{v})
+2a_R(\mathbf{v})\Delta\mathbf{F}(\mathbf{v})
-\mathbf{v}\times\Delta\mathbf{v}+\mathbf{C}(\mathbf{v}),\\
\mathbf{D}_R(\mathbf{v})={}&-2\mathbf{F}(\mathbf{v})
+2a_R(\mathbf{v})\Delta\mathbf{F}(\mathbf{v})
-\mathbf{v}\times\Delta\mathbf{v}+\mathbf{C}(\mathbf{v}).
\end{split}
\end{equation}
The approximate It\^{o} equation is
\begin{equation}\label{cutoff-approx}
\left\{\begin{aligned}
\mathrm{d}\mathbf{u}_N^R(t)
&=P_{\leq N}\mathbf{M}_R(\mathbf{u}_N^R(t))\,\mathrm{d}t
+P_{\leq N}\mathbf{N}(\mathbf{u}_N^R(t))\,\mathrm{d}W(t),\\
\mathbf{u}_N^R(0)&=P_{\leq N}\mathbf{u}_0.
\end{aligned}\right.
\end{equation}
The correction is $P_{\leq N}\mathbf{C}$, consistently with the It\^{o} approximation in Section~\ref{sec2}. For fixed $N,R$, the coefficients are Lipschitz on bounded balls of $\mathbb{L}^2_N$: the norm defining $a_R$ is Lipschitz in $\mathbb{L}^2_N$ by \eqref{frequency-Sobolev-bound}. Since $a_R$ is nonnegative, the differentiated cubic term retains its nonpositive $\mathbb{L}^2$ pairing. The argument of Proposition~\ref{pro2-1} gives a global solution on the prescribed stochastic basis. The proof of the estimates in Section~\ref{sec3} also gives, uniformly in $N$ and $R$,
\begin{equation}\label{cutoff-L2}
\begin{split}
&\mathbb{E}\sup_{0\leq t\leq T}\|\mathbf{u}_N^R(t)\|_{\mathbb{L}^{2}}^{2p}
+\mathbb{E}\left(\int_0^T
\big(\|\mathbf{u}_N^R(s)\|_{\mathbb{H}^2}^2
+\|\mathbf{u}_N^R(s)\|_{\mathbb{L}^{4}}^4\big)\,\mathrm{d}s\right)^p
\leq C_{p,T}.
\end{split}
\end{equation}
In the identity \eqref{exact-L2}, only the two cubic gradient dissipations acquire the factor $a_R(\mathbf{u}_N^R(s))$; their sign is unchanged.

For the noise, Sobolev embedding gives
\begin{equation}\label{H2-multiplier}
\|\mathbf{v}\times\mathbf{h}\|_{\mathbb{H}^1}
\lesssim\|\mathbf{h}\|_{\mathbb{H}^2}\|\mathbf{v}\|_{\mathbb{H}^1}.
\end{equation}
Indeed, $\mathbb{H}^2\hookrightarrow\mathbb{L}^\infty$ controls the undifferentiated multiplier, while
$\|\mathbf{v}\nabla\mathbf{h}\|_{\mathbb{L}^{2}}
\leq\|\mathbf{v}\|_{\mathbb{L}^{6}}\|\nabla\mathbf{h}\|_{\mathbb{L}^{3}}$
controls its derivative. Consequently,
\begin{equation}\label{noise-H1}
\begin{split}
\|\mathbf{N}(\mathbf{v})\|_{\mathcal{L}_2(\ell^2;\mathbb{H}^1)}^2
&\lesssim_{C_{**}}1+\|\mathbf{v}\|_{\mathbb{H}^1}^2,\\
\|\mathbf{C}(\mathbf{v})\|_{\mathbb{H}^1}
&\lesssim_{C_{**}}1+\|\mathbf{v}\|_{\mathbb{H}^1}.
\end{split}
\end{equation}
Both maps satisfy the corresponding global Lipschitz bounds in $\mathbb{H}^1$.

\begin{lemma}\label{H1-estimate}
For every fixed $R\geq1$, $p\geq1$ and $T>0$,
\begin{equation}\label{cutoff-H1}
\sup_N\left\{
\mathbb{E}\sup_{0\leq t\leq T}\|\mathbf{u}_N^R(t)\|_{\mathbb{H}^1}^{2p}
+\mathbb{E}\left(\int_0^T
\|\mathbf{u}_N^R(s)\|_{\mathbb{H}^3}^2\,\mathrm{d}s\right)^p
\right\}
\leq C_{R,p,T,C_{**},\|\mathbf{u}_0\|_{\mathbb{H}^1}}.
\end{equation}
\end{lemma}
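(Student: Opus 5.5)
We apply It\^{o}'s formula to $\frac12\|\nabla\mathbf{u}_N^R\|_{\mathbb{L}^2}^2$ for the finite-frequency process, first up to a norm exit time, and combine it with the uniform $\mathbb{L}^2$ bound \eqref{cutoff-L2}. Since $\mathbf{u}_N^R$ takes values in $\mathbb{L}^2_N$ and $P_{\leq N}$ is self-adjoint and commutes with derivatives, the projections drop out of the drift pairing. Testing the drift against $-\Delta\mathbf{u}_N^R$ gives
\[
(P_{\leq N}\mathbf{M}_R(\mathbf{v}),-\Delta\mathbf{v})_{\mathbb{L}^2}
=-\|\nabla\Delta\mathbf{v}\|_{\mathbb{L}^2}^2+\|\Delta\mathbf{v}\|_{\mathbb{L}^2}^2
+2\|\nabla\mathbf{v}\|_{\mathbb{L}^2}^2
+2(\mathbf{F}(\mathbf{v}),\Delta\mathbf{v})_{\mathbb{L}^2}
+2a_R(\mathbf{v})(\nabla\mathbf{F}(\mathbf{v}),\nabla\Delta\mathbf{v})_{\mathbb{L}^2}
+(\mathbf{v}\times\Delta\mathbf{v},\Delta\mathbf{v})_{\mathbb{L}^2}
-(\mathbf{C}(\mathbf{v}),\Delta\mathbf{v})_{\mathbb{L}^2}.
\]
The precession contribution vanishes identically. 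The linear lower-order terms are absorbed into $\frac14\|\nabla\Delta\mathbf{v}\|_{\mathbb{L}^2}^2$ by interpolation between $\mathbb{L}^2$ and $\mathbb{H}^3$. The Stratonovich correction is controlled by \eqref{noise-H1}: $|(\mathbf{C}(\mathbf{v}),\Delta\mathbf{v})|\lesssim (1+\|\mathbf{v}\|_{\mathbb{H}^1})\|\mathbf{v}\|_{\mathbb{H}^1}$ after moving one derivative onto $\mathbf{C}$.

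The cubic terms are where the cutoff $a_R$ is used, and this is the main obstacle. On the support of $a_R$ we have $\|\mathbf{v}\|_{\mathbb{H}^1}\leq 2R$. We estimate $\|\nabla\mathbf{F}(\mathbf{v})\|_{\mathbb{L}^2}\lesssim\|\mathbf{v}\|_{\mathbb{L}^6}^2\|\nabla\mathbf{v}\|_{\mathbb{L}^6}\lesssim\|\mathbf{v}\|_{\mathbb{H}^1}^2\|\mathbf{v}\|_{\mathbb{H}^2}$ for $d\leq3$. Interpolating $\|\mathbf{v}\|_{\mathbb{H}^2}\leq\|\mathbf{v}\|_{\mathbb{H}^1}^{1/2}\|\mathbf{v}\|_{\mathbb{H}^3}^{1/2}$ then gives
\[
2a_R|(\nabla\mathbf{F}(\mathbf{v}),\nabla\Delta\mathbf{v})|\leq\tfrac14\|\nabla\Delta\mathbf{v}\|_{\mathbb{L}^2}^2+C_R\|\mathbf{v}\|_{\mathbb{H}^1}^2,
\]
because the $\mathbb{H}^1$ factors are bounded by $2R$ on the support of $a_R$. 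The undifferentiated cubic term $2(\mathbf{F}(\mathbf{v}),\Delta\mathbf{v})=-2(\nabla\mathbf{F}(\mathbf{v}),\nabla\mathbf{v})\leq0$ has a good sign by \eqref{algebra-L2}, so no cutoff is needed there. The key point is that every superlinear quantity appears only multiplied by $a_R$, so the resulting drift inequality is linear in $\|\mathbf{v}\|_{\mathbb{H}^1}^2$, with an $R$-dependent constant.

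For the stochastic terms, the trace term $\frac12\|\nabla P_{\leq N}\mathbf{N}(\mathbf{v})\|_{\mathcal{L}_2}^2$ is bounded by $C(1+\|\mathbf{v}\|_{\mathbb{H}^1}^2)$ using \eqref{noise-H1} and the contraction property of $P_{\leq N}$ on $\mathbb{H}^1$. The martingale $\sum_j\int(\nabla P_{\leq N}\mathbf{N}_j(\mathbf{u}_N^R),\nabla\mathbf{u}_N^R)\,\mathrm{d}W_j$ has quadratic variation bounded by $C\int(1+\|\mathbf{u}_N^R\|_{\mathbb{H}^1}^2)\|\mathbf{u}_N^R\|_{\mathbb{H}^1}^2\,\mathrm{d}s$. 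We add the $\mathbb{L}^2$ identity to control the full $\mathbb{H}^1$ norm and take the supremum and the $p$-th power. Burkholder--Davis--Gundy together with Young's inequality absorbs half of $\mathbb{E}\sup\|\mathbf{u}_N^R\|_{\mathbb{H}^1}^{2p}$. Gronwall's lemma, applied uniformly in the exit time and then letting that time tend to infinity, gives the first bound in \eqref{cutoff-H1}. The dissipation $\int\|\nabla\Delta\mathbf{u}_N^R\|_{\mathbb{L}^2}^2$, combined with the $\mathbb{L}^2$ bound and the equivalence $\|\mathbf{v}\|_{\mathbb{H}^3}^2\asymp\|\mathbf{v}\|_{\mathbb{L}^2}^2+\|\nabla\Delta\mathbf{v}\|_{\mathbb{L}^2}^2$, then yields the $L^2(0,T;\mathbb{H}^3)$ moment. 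None of the constants depends on $N$.
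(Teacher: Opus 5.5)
Your proposal is correct and follows the paper's proof in its overall structure. The shared steps are: It\^{o}'s formula for $\frac12\|\nabla\mathbf{u}_N^R\|_{\mathbb{L}^2}^2$ up to an $\mathbb{H}^1$ exit time, vanishing of the precession pairing, the good sign of the undifferentiated cubic term, control of the trace and correction through \eqref{noise-H1}, Burkholder--Davis--Gundy with Young and Gronwall, and recovery of the $L^2(0,T;\mathbb{H}^3)$ moment from $\|\nabla\Delta\mathbf{u}_N^R\|_{\mathbb{L}^2}$ and \eqref{cutoff-L2}.

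The only real difference is the differentiated cubic term. You bound $2a_R|(\nabla\mathbf{F}(\mathbf{v}),\nabla\Delta\mathbf{v})_{\mathbb{L}^2}|$ directly by Cauchy--Schwarz, the product estimate $\|\nabla\mathbf{F}(\mathbf{v})\|_{\mathbb{L}^2}\lesssim\|\mathbf{v}\|_{\mathbb{H}^1}^2\|\mathbf{v}\|_{\mathbb{H}^2}$, and $\mathbb{H}^1$--$\mathbb{H}^3$ interpolation. After Young, this leaves only $a_R$ times powers of $\|\mathbf{v}\|_{\mathbb{H}^1}$ of order at least two, hence at most $C_R\|\mathbf{v}\|_{\mathbb{H}^1}^2$. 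The paper instead expands $\Delta\mathbf{F}(\mathbf{v})$ as in \eqref{cubic-laplacian} and keeps the nonnegative terms $\frac12\||\mathbf{v}||\Delta\mathbf{v}|\|_{\mathbb{L}^2}^2+2\|\mathbf{v}\cdot\Delta\mathbf{v}\|_{\mathbb{L}^2}^2$. It then only has to absorb $a_R\|\nabla\mathbf{v}\|_{\mathbb{L}^4}^4$ via Gagliardo--Nirenberg, with the dimension-dependent exponent $\kappa_d$. Your route is cruder but simpler and uniform in $d\leq3$; the paper's keeps the cubic sign structure. Either suffices here, because the cutoff caps every $\mathbb{H}^1$ factor by $2R$ and only an $R$-dependent bound is required.
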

\begin{proof}[\emph{\textbf{Proof}}]
Write $\mathbf{v}(t)=\mathbf{u}_N^R(t)$ and $a(t)=a_R(\mathbf{v}(t))$. Fix $T>0$. For integers $m>\|\mathbf{u}_0\|_{\mathbb H^1}$, define
\[
\zeta_m=T\wedge\inf\{t\geq0:\|\mathbf{v}(t)\|_{\mathbb H^1}\geq m\}.
\]
For fixed $N$, the frequency estimate \eqref{frequency-Sobolev-bound} and the already established global $\mathbb L^2_N$ solution give continuous $\mathbb H^1$ paths. Hence $\zeta_m\uparrow T$ almost surely. All norm identities below are first integrated up to $t\wedge\zeta_m$, for $0\leq t\leq T$. On this interval the stochastic integrands have bounded second moments, so the norm formula and the Burkholder--Davis--Gundy inequality apply without assuming the estimate to be proved. Since
$(\mathbf{v}(t)\times\Delta\mathbf{v}(t),\Delta\mathbf{v}(t))_{\mathbb{L}^2}=0$,
It\^{o}'s formula for $\frac12\|\nabla\mathbf{v}(t)\|_{\mathbb{L}^{2}}^2$ gives
\begin{equation}\label{gradient-identity}
\begin{split}
\frac12\mathrm{d}\|\nabla\mathbf{v}(t)\|_{\mathbb{L}^{2}}^2
&+\Big[\|\nabla\Delta\mathbf{v}(t)\|_{\mathbb{L}^{2}}^2
+2(\nabla\mathbf{F}(\mathbf{v}(t)),\nabla\mathbf{v}(t))_{\mathbb{L}^2}\Big]\,\mathrm{d}t\\
={}&\Big[\|\Delta\mathbf{v}(t)\|_{\mathbb{L}^{2}}^2+2\|\nabla\mathbf{v}(t)\|_{\mathbb{L}^{2}}^2
-2a(t)(\Delta\mathbf{F}(\mathbf{v}(t)),\Delta\mathbf{v}(t))_{\mathbb{L}^2}\Big]\,\mathrm{d}t\\
&+(\nabla\mathbf{C}(\mathbf{v}(t)),\nabla\mathbf{v}(t))_{\mathbb{L}^2}\,\mathrm{d}t
+\frac12\sum_{j=1}^{\infty}
\|\nabla P_{\leq N}\mathbf{N}_j(\mathbf{v}(t))\|_{\mathbb{L}^{2}}^2\,\mathrm{d}t\\
&+\sum_{j=1}^{\infty}
(\nabla\mathbf{N}_j(\mathbf{v}(t)),\nabla\mathbf{v}(t))_{\mathbb{L}^2}\,\mathrm{d}W_j(t).
\end{split}
\end{equation}
For a spatial function $\mathbf{v}$, direct differentiation yields
\begin{equation}\label{cubic-laplacian}
\begin{split}
\Delta\mathbf{F}(\mathbf{v})={}&|\mathbf{v}|^2\Delta\mathbf{v}
+2(\mathbf{v}\cdot\Delta\mathbf{v})\mathbf{v}
+2|\nabla\mathbf{v}|^2\mathbf{v}
+4\sum_{i=1}^d(\mathbf{v}\cdot\partial_i\mathbf{v})\partial_i\mathbf{v},\\
(\Delta\mathbf{F}(\mathbf{v}),\Delta\mathbf{v})_{\mathbb{L}^2}
\geq{}&\frac12\||\mathbf{v}||\Delta\mathbf{v}|\|_{\mathbb{L}^{2}}^2
+2\|\mathbf{v}\cdot\Delta\mathbf{v}\|_{\mathbb{L}^{2}}^2
-C\|\nabla\mathbf{v}\|_{\mathbb{L}^{4}}^4.
\end{split}
\end{equation}
Indeed, the absolute pairing of the last two terms in the first line with $\Delta\mathbf{v}$ is at most
\[
6\int_{\mathbb{R}^d}|\mathbf{v}(x)||\Delta\mathbf{v}(x)||\nabla\mathbf{v}(x)|^2\,\mathrm{d}x.
\]
Young's inequality absorbs half of the first positive term and leaves $C\|\nabla\mathbf{v}\|_{\mathbb{L}^{4}}^4$.

Put $A=\|\nabla\mathbf{v}\|_{\mathbb{L}^{2}}$ and $B=\|\nabla\Delta\mathbf{v}\|_{\mathbb{L}^{2}}$. Interpolation gives
\begin{equation}\label{gradient-interpolation}
\|\Delta\mathbf{v}\|_{\mathbb{L}^{2}}^2\leq AB
\leq\varepsilon B^2+C_\varepsilon A^2,
\qquad
\|\nabla\mathbf{v}\|_{\mathbb{L}^{4}}^4\lesssim A^{4-d/2}B^{d/2}.
\end{equation}
The second bound is the Gagliardo--Nirenberg inequality for $\nabla\mathbf{v}$, with its homogeneous second-derivative norm identified with $B$ by Plancherel's theorem. Thus, writing $a=a_R(\mathbf{v})$,
\begin{equation}\label{cutoff-absorption}
\begin{split}
a\|\nabla\mathbf{v}\|_{\mathbb{L}^{4}}^4
&\leq\varepsilon B^2+C_\varepsilon a A^{\kappa_d}
\leq\varepsilon B^2+C_{\varepsilon,R}A^2,\\
\kappa_d&=\frac{16-2d}{4-d},
\qquad\kappa_1=\frac{14}{3},\quad\kappa_2=6,\quad\kappa_3=10.
\end{split}
\end{equation}
The last inequality uses $A\leq2R$ whenever $a\neq0$. The factor $a$ in the preceding inequality is valid because $0\leq a\leq1$.

Combining \eqref{noise-H1} and \eqref{gradient-identity}--\eqref{cutoff-absorption}, and discarding the nonnegative cubic gradient pairing, yields
\begin{equation}\label{H1-differential}
\mathrm{d}\|\nabla\mathbf{v}(t)\|_{\mathbb{L}^{2}}^2
+c\|\nabla\Delta\mathbf{v}(t)\|_{\mathbb{L}^{2}}^2\,\mathrm{d}t
\leq C_R(1+\|\mathbf{v}(t)\|_{\mathbb{H}^1}^2)\,\mathrm{d}t
+2\,\mathrm{d}\mathcal{M}_N(t),
\end{equation}
where
\[
\mathcal{M}_N(t)=\sum_{j=1}^{\infty}\int_0^t
(\nabla\mathbf{N}_j(\mathbf{v}(s)),\nabla\mathbf{v}(s))_{\mathbb{L}^2}\,\mathrm{d}W_j(s).
\]
Its quadratic variation satisfies
\begin{equation}\label{gradient-martingale}
\mathrm{d}\langle\mathcal{M}_N\rangle_t
\lesssim(1+\|\mathbf{v}(t)\|_{\mathbb{H}^1}^2)
\|\nabla\mathbf{v}(t)\|_{\mathbb{L}^{2}}^2\,\mathrm{d}t.
\end{equation}
By \eqref{gradient-martingale}, the stopped martingale is square integrable; indeed, its quadratic variation is bounded by $C T(1+m^2)m^2$. The Burkholder--Davis--Gundy and Young inequalities give, for every $p\geq1$ and $0\leq t\leq T$,
\[
\begin{split}
&\mathbb E\sup_{0\leq r\leq t}
|\mathcal M_N(r\wedge\zeta_m)|^p\\
&\quad\leq C_p\mathbb E
\left[\sup_{0\leq r\leq t\wedge\zeta_m}
\|\nabla\mathbf v(r)\|_{\mathbb{L}^{2}}^{p}
\left(\int_0^{t\wedge\zeta_m}
(1+\|\mathbf v(s)\|_{\mathbb H^1}^2)\,\mathrm ds\right)^{p/2}\right]\\
&\quad\leq\varepsilon\mathbb E
\sup_{0\leq r\leq t\wedge\zeta_m}
\|\nabla\mathbf v(r)\|_{\mathbb{L}^{2}}^{2p}
+C_{p,\varepsilon,T}\int_0^t
\mathbb E\!\left[\mathbf1_{\{s\leq\zeta_m\}}
(1+\|\mathbf v(s)\|_{\mathbb H^1}^{2p})\right]\,\mathrm ds.
\end{split}
\]
Here the last line also uses
$(\int_0^t f(s)\,\mathrm ds)^p\leq
T^{p-1}\int_0^t f(s)^p\,\mathrm ds$.
Integrate \eqref{H1-differential} up to $t\wedge\zeta_m$, take the supremum and the $p$th moment, and choose $\varepsilon$ small enough to absorb the preceding supremum. With
\[
Y_m(t)=\mathbb E\sup_{0\leq r\leq t\wedge\zeta_m}
\|\nabla\mathbf v(r)\|_{\mathbb{L}^{2}}^{2p},
\]
the $\mathbb L^2$ bound \eqref{cutoff-L2} gives
\[
Y_m(t)+\mathbb E
\left(\int_0^{t\wedge\zeta_m}
\|\nabla\Delta\mathbf v(s)\|_{\mathbb{L}^{2}}^2\,\mathrm ds\right)^p
\leq C_{R,p,T}+C_{R,p,T}\int_0^t Y_m(s)\,\mathrm ds.
\]
The constants also depend on $C_{**}$ and $\|\mathbf u_0\|_{\mathbb H^1}$, but are independent of $m,N$. Gronwall's lemma bounds the two terms uniformly. Letting $m\to\infty$ and using monotone convergence for the increasing time intervals removes the stopping. Finally,
$\|\mathbf v\|_{\mathbb H^3}^2
\asymp\|\mathbf v\|_{\mathbb{L}^{2}}^2+\|\nabla\Delta\mathbf v\|_{\mathbb{L}^{2}}^2$
and \eqref{cutoff-L2} yield \eqref{cutoff-H1}.
\end{proof}

The drift has additional negative-Sobolev regularity. For $\mathbf{v}\in\mathbb{H}^3$,
\begin{equation}\label{Hminus1-drift}
\begin{split}
\|\nabla\mathbf{F}(\mathbf{v})\|_{\mathbb{L}^{2}}
&\lesssim\|\mathbf{v}\|_{\mathbb{L}^{\infty}}^2\|\mathbf{v}\|_{\mathbb{H}^1}
\lesssim\|\mathbf{v}\|_{\mathbb{H}^1}^2\|\mathbf{v}\|_{\mathbb{H}^3},\\
\|\mathbf{v}\times\Delta\mathbf{v}\|_{\mathbb{H}^{-1}}
&\lesssim\|\mathbf{v}\times\nabla\mathbf{v}\|_{\mathbb{L}^{2}}
\lesssim\|\mathbf{v}\|_{\mathbb{H}^1}^{3/2}\|\mathbf{v}\|_{\mathbb{H}^3}^{1/2},\\
\|\mathbf{D}_R(\mathbf{v})\|_{\mathbb{H}^{-1}}^2
&\lesssim1+\|\mathbf{v}\|_{\mathbb{H}^1}^6
+\|\mathbf{v}\|_{\mathbb{H}^1}^4\|\mathbf{v}\|_{\mathbb{H}^3}^2.
\end{split}
\end{equation}
Here we used
$\|\mathbf{v}\|_{\mathbb{L}^{\infty}}\lesssim\|\mathbf{v}\|_{\mathbb{H}^2}
\leq\|\mathbf{v}\|_{\mathbb{H}^1}^{1/2}\|\mathbf{v}\|_{\mathbb{H}^3}^{1/2}$,
the divergence form of the precession term, and
$\|\mathbf{F}(\mathbf{v})\|_{\mathbb{L}^{2}}\lesssim\|\mathbf{v}\|_{\mathbb{H}^1}^3$.
Equation \eqref{cutoff-H1} therefore bounds all finite moments of the
$L^2(0,T;\mathbb{H}^{-1})$ norm of $\mathbf{M}_R(\mathbf{u}_N^R)$, uniformly in $N$ for fixed $R$.

\subsection{Tightness and representation}
For the $\mathbb H^1$ construction, set
\begin{equation}\label{work-H1}
\mathcal Z_{\mathbf u}^{(1)}
=\mathcal C([0,T];U')\cap L^2_w(0,T;\mathbb H^3)
\cap L^2(0,T;\mathbb H^2_{loc})
\cap\mathcal C([0,T];\mathbb H^1_w),
\end{equation}
with the supremum of the four topologies, and let
\begin{equation}\label{global-X}
\mathcal X_1=\mathcal C([0,T];\mathbb L^2)
\cap L^2(0,T;\mathbb H^2),
\end{equation}
with its natural sum norm. We use the spaces $U$ and $U_0$ from Section~\ref{sec3}. The uniform bounds above give local spatial compactness. To identify the cutoff depending on the global $\mathbb H^1$ norm, we also establish spatial tail control and obtain tightness in $\mathcal X_1$.

The drift bound following \eqref{Hminus1-drift} controls all finite moments of the $1/2$-H\"older seminorm in $\mathbb H^{-1}$ of its time integral. For
\[
Z_N^R(t)=\int_0^tP_{\leq N}\mathbf N(\mathbf u_N^R(s))\,\mathrm dW(s),
\]
\eqref{noise-H1}, \eqref{cutoff-H1}, and the Burkholder--Davis--Gundy inequality give
\[
\mathbb E\|Z_N^R(t)-Z_N^R(s)\|_{\mathbb H^1}^{2q}
\leq C_{R,q,T,C_{**},\|\mathbf u_0\|_{\mathbb H^1}}|t-s|^q,
\qquad 0\leq s\leq t\leq T,
\]
for every integer $q\geq2$. Kolmogorov's criterion, with $q$ chosen sufficiently large, therefore gives, for $0<\alpha<1/2$ and $p\geq1$,
\begin{equation}\label{H1-time-holder}
\sup_N\mathbb E\|\mathbf u_N^R\|_{\mathcal C^\alpha([0,T];\mathbb H^{-1})}^p<\infty.
\end{equation}

\begin{lemma}\label{spatial-tail}
For fixed $R\geq1$ and $T>0$, the frequency approximations satisfy
\begin{equation}\label{uniform-spatial-tail}
\lim_{L\to\infty}\sup_{N\geq1}
\mathbb E\sup_{0\leq t\leq T}
\int_{\{|x|\geq L\}}|\mathbf u_N^R(t,x)|^2\,\mathrm dx=0.
\end{equation}
\end{lemma}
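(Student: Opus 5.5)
We will use a weighted $\mathbb L^2$ estimate with a smooth spatial cutoff. Fix $\chi\in\mathcal C^\infty(\mathbb R^d;[0,1])$ with $\chi=0$ on $\{|x|\leq1\}$ and $\chi=1$ on $\{|x|\geq2\}$, and set $\chi_L(x)=\chi(x/L)$, so that $\|\partial^\alpha\chi_L\|_{\mathbb L^\infty}\lesssim L^{-|\alpha|}$. We will apply It\^o's formula in $\mathbb L^2_N$ to the smooth functional $\Phi_L(\mathbf v)=\int\chi_L^2|\mathbf v|^2\,\mathrm dx$. This is legitimate for fixed $N$ because $\Phi_L$ is a bounded quadratic form and $\mathbf u_N^R$ is a genuine $\mathbb L^2_N$-valued semimartingale; we first localize with $\zeta_m$ as in Lemma~\ref{H1-estimate}. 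The drift contribution is $2(P_{\leq N}\mathbf M_R(\mathbf v),\chi_L^2\mathbf v)$, the It\^o correction is $\sum_j\|\chi_LP_{\leq N}\mathbf N_j(\mathbf v)\|^2$, and the martingale is $2\sum_j(P_{\leq N}\mathbf N_j(\mathbf v),\chi_L^2\mathbf v)\,\mathrm dW_j$.

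The main obstacle is the frequency projection. $P_{\leq N}$ does not commute with multiplication by $\chi_L^2$, and it is nonlocal, because the sharp Fourier cutoff has a slowly decaying kernel. So we cannot simply move $P_{\leq N}$ onto $\chi_L^2\mathbf v$ and get $\chi_L^2\mathbf v$ back. We will write $(P_{\leq N}\mathbf f,\chi_L^2\mathbf v)=(\mathbf f,\chi_L^2\mathbf v)+(\mathbf f,[P_{\leq N},\chi_L^2]\mathbf v)$, using $P_{\leq N}\mathbf v=\mathbf v$. The commutator must then be controlled in a negative Sobolev norm with a factor that vanishes as $L\to\infty$, uniformly in $N$, for instance
\[
\|[P_{\leq N},\chi_L^2]\mathbf v\|_{\mathbb H^{1}}\lesssim \|(I-P_{\leq N})(\chi_L^2\mathbf v)\|_{\mathbb H^1}\lesssim \|\chi_L^2\mathbf v\|_{\mathbb H^1}.
\]
That bound gives uniformity but no decay. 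If it proves insufficient, we will instead truncate with a smooth Fourier multiplier. Another option is to use only the compactness of the deterministic initial tail together with the local equicontinuity, which is the fallback I would try if the commutator term resists. First, though, I would try to exploit the fact that the commutator is supported in frequency near $|\xi|\approx N$, where $\nabla\chi_L$-type factors of size $L^{-1}$ appear after a Taylor expansion of the symbol of the multiplication.

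For the local terms, with $\mathbf f$ the unprojected drift, integration by parts produces the main dissipation $-\|\Delta(\chi_L\mathbf v)\|^2$ plus terms in which at least one derivative falls on $\chi_L$. Each of those carries a factor $L^{-1}$ and is bounded by $L^{-1}(\|\mathbf v\|_{\mathbb H^3}^2+\|\mathbf v\|_{\mathbb H^1}^2)$. The cubic term $-2(\mathbf F(\mathbf v),\chi_L^2\mathbf v)$ has a good sign. The term $2a_R(\Delta\mathbf F(\mathbf v),\chi_L^2\mathbf v)$ reduces, after moving one derivative, to a nonpositive term plus commutator terms bounded by $L^{-1}\|\mathbf v\|_{\mathbb H^1}^3\|\mathbf v\|_{\mathbb H^2}$. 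The precession term gives $(\mathbf v\times\nabla\mathbf v,\nabla\chi_L^2\,\mathbf v)$, which is $O(L^{-1})$. The linear lower-order terms and the correction $\mathbf C$ give $C\Phi_L(\mathbf v)$ plus tails $\int_{|x|\geq L}|\mathbf g_j|^2$ and $\|\mathbf g_j\|\,\|\chi_L\mathbf g_j\|$. The multiplicative noise part is $\chi_L$-local up to the projection commutator and contributes $\lesssim C_{**}\Phi_L$.

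We then take the supremum and use BDG. The quadratic variation is $\lesssim\Phi_L(1+\Phi_L)+$ tails, and it is absorbed in the usual way. Gronwall then gives
\[
\mathbb E\sup_t\Phi_L(\mathbf u_N^R(t))\leq C_{R,T}\Big(\int\chi_L^2|\mathbf u_0|^2+\sum_j\|\chi_L\mathbf g_j\|^2+L^{-1}+\text{commutator terms}\Big),
\]
with all moments controlled uniformly in $N$ by \eqref{cutoff-H1}. Each term on the right tends to zero as $L\to\infty$, uniformly in $N$, which yields \eqref{uniform-spatial-tail}.
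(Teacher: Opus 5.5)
\paragraph{Gap: the projection commutator.} The gap is at the obstacle you identified yourself. Your final inequality needs the ``commutator terms'' to vanish as $L\to\infty$ uniformly in $N$, and none of your three suggestions achieves this.

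\emph{The symbolic route fails.} The symbol $\mathbf 1_{\{|\xi|\leq N\}}$ jumps across $|\xi|=N$, so no Taylor expansion produces a factor $\nabla\chi_L=O(L^{-1})$. Conjugating by the unitary dilation $f\mapsto L^{-d/2}f(\cdot/L)$ gives $\|[P_{\leq N},\chi_L^2]\|_{\mathcal L(\mathbb L^2)}=\|[P_{\leq NL},\chi^2]\|_{\mathcal L(\mathbb L^2)}$. Hence any gain in $L$ is really a gain in $NL$, and the jump rules it out. For instance, take $\widehat{\mathbf v}$ supported in the shell $\{N-L^{-1}\leq|\xi|\leq N\}$. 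Writing $\chi_L^2=1-(1-\chi_L^2)$, multiplication by $\chi_L^2$ convolves $\widehat{\mathbf v}$ with a kernel of width $L^{-1}$. It therefore pushes a non-decaying fraction of the mass outside the ball, so $\|(I-P_{\leq N})(\chi_L^2\mathbf v)\|_{\mathbb L^2}\gtrsim\|\mathbf v\|_{\mathbb L^2}$.

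\emph{The other two options fail too.} A smooth Fourier multiplier changes the scheme and loses $P_{\leq N}\mathbf v=\mathbf v$, so it does not prove the lemma for $\mathbf u_N^R$. Local equicontinuity together with the initial tail gives no control on mass escaping to infinity.

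\paragraph{Missing idea: the residual is small in $N$, not in $L$.} Uniformity over all $N$ is then recovered by a separate argument for finitely many $N$. You were one line from the first half.

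Since $P_{\leq N}\mathbf v_N=\mathbf v_N$ and the linear part of $\mathbf M_R$ commutes with $P_{\leq N}$, your commutator term is exactly $-2\big((I-P_{\leq N})\mathbf D_R(\mathbf v_N),\chi_L^2\mathbf v_N\big)$. Measure $\chi_L^2\mathbf v_N$ in $\mathbb H^2$ rather than $\mathbb H^1$. Plancherel gives $\|(I-P_{\leq N})(\chi_L^2\mathbf v)\|_{\mathbb H^1}\leq\epsilon_N\|\chi_L^2\mathbf v\|_{\mathbb H^2}\lesssim\epsilon_N\|\mathbf v\|_{\mathbb H^2}$, with $\epsilon_N=(1+N^2)^{-1/2}$ and a bound uniform in $L$. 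Combined with the $L^2(0,T;\mathbb H^{-1})$ moment bounds on $\mathbf D_R(\mathbf v_N)$ from \eqref{Hminus1-drift} and \eqref{cutoff-H1}, this term is $O(\epsilon_N)$ in expectation.

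The same reasoning handles three further residuals:
\begin{itemize}
\item $(I-P_{\leq N})\mathbf N(\mathbf v_N)$ in the trace, using \eqref{noise-H1};
\item the same residual in the martingale term;
\item the initial value, via $\Phi_L(P_{\leq N}\mathbf u_0)\leq2\Phi_L(\mathbf u_0)+2\epsilon_N^2\|\mathbf u_0\|_{\mathbb H^1}^2$.
\end{itemize}
You then get
\[
\mathbb E\sup_t\Phi_L(\mathbf u_N^R(t))\leq C\Big[\Phi_L(\mathbf u_0)+\sum_j\|\chi_L\mathbf g_j\|_{\mathbb L^2}^2+L^{-1}+\epsilon_N\Big].
\]
This only yields $\lim_{L\to\infty}\limsup_{N\to\infty}=0$.

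For the finitely many $N\leq N_0$, argue path by path. Each $\mathbf u_N^R$ has continuous $\mathbb L^2$ paths, hence compact range on $[0,T]$, so $\sup_t\Phi_L(\mathbf u_N^R(t))\to0$ almost surely. Dominated convergence, using \eqref{cutoff-L2}, then gives convergence of the expectations. This split between large $N$ (via $\epsilon_N$) and small $N$ (via compactness) is the paper's argument, and it is what your proposal is missing.
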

\begin{proof}[\emph{\textbf{Proof}}]
Choose $\chi\in\mathcal C^\infty(\mathbb R^d;[0,1])$ such that $\chi=0$ on $\{|x|\leq1\}$ and $\chi=1$ on $\{|x|\geq2\}$, and set
\[
\chi_L(x)=\chi(x/L),\qquad
Q_L(\mathbf v)=\int_{\mathbb R^d}\chi_L(x)|\mathbf v(x)|^2\,\mathrm dx,
\qquad L\geq1.
\]
The constants below may depend on $R,T,C_{**}$ and $\|\mathbf u_0\|_{\mathbb H^1}$, but not on $L$ or $N$. Write $\mathbf v_N=\mathbf u_N^R$ and
\[
\begin{split}
\mathbf r_N(s)&=(P_{\leq N}-I)\mathbf D_R(\mathbf v_N(s)),\\
\mathbf q_N(s)&=(P_{\leq N}-I)\mathbf N(\mathbf v_N(s)),\qquad
\epsilon_N=(1+N^2)^{-1/2}.
\end{split}
\]
Since $P_{\leq N}\mathbf v_N=\mathbf v_N$, equation \eqref{cutoff-approx} becomes
\begin{equation}\label{tail-residual-equation}
\mathrm d\mathbf v_N(s)
=\big[\mathbf M_R(\mathbf v_N(s))+\mathbf r_N(s)\big]\,\mathrm ds
+\big[\mathbf N(\mathbf v_N(s))+\mathbf q_N(s)\big]\,\mathrm dW(s).
\end{equation}
Plancherel's theorem gives, for every $\sigma\in\mathbb R$,
\begin{equation}\label{projection-residual}
\|(I-P_{\leq N})f\|_{\mathbb H^{\sigma-1}}
\leq\epsilon_N\|f\|_{\mathbb H^\sigma}.
\end{equation}
Consequently, \eqref{cutoff-H1}, \eqref{Hminus1-drift} and \eqref{noise-H1} imply
\begin{equation}\label{tail-residual-bound}
\mathbb E\int_0^T
\left(\|\mathbf r_N(s)\|_{\mathbb H^{-2}}^2
+\|\mathbf q_N(s)\|_{\mathcal L_2(\ell^2;\mathbb L^2)}^2\right)\,\mathrm ds
\leq C\epsilon_N^2.
\end{equation}
For the drift term, for example, the moment of
$\sup_{0\leq t\leq T}\|\mathbf v_N(t)\|_{\mathbb H^1}^4
\int_0^T\|\mathbf v_N(s)\|_{\mathbb H^3}^2\,\mathrm ds$
is uniformly bounded by Cauchy--Schwarz and \eqref{cutoff-H1}.

We apply It\^{o}'s formula to $Q_L(\mathbf v_N)$. The weighted drift pairings can be evaluated at a fixed spatial function $\mathbf v$. Integration by parts gives
\[
\begin{split}
2(-\Delta^2\mathbf v,\chi_L\mathbf v)_{\mathbb L^2}
={}&-2\int_{\mathbb R^d}\chi_L|\Delta\mathbf v|^2\,\mathrm dx
-4\int_{\mathbb R^d}\Delta\mathbf v\cdot(\nabla\chi_L\cdot\nabla\mathbf v)\,\mathrm dx\\
&-2\int_{\mathbb R^d}\Delta\mathbf v\cdot\mathbf v\,\Delta\chi_L\,\mathrm dx,\\
2(-\Delta\mathbf v,\chi_L\mathbf v)_{\mathbb L^2}
={}&2\int_{\mathbb R^d}\chi_L|\nabla\mathbf v|^2\,\mathrm dx
+2\int_{\mathbb R^d}(\nabla\chi_L\cdot\nabla\mathbf v)\cdot\mathbf v\,\mathrm dx.
\end{split}
\]
Since $\|\nabla\chi_L\|_{L^\infty}\leq C L^{-1}$ and
$\|\Delta\chi_L\|_{L^\infty}\leq C L^{-2}$, while
\[
\begin{split}
\int_{\mathbb R^d}\chi_L|\nabla\mathbf v|^2\,\mathrm dx
&=-\int_{\mathbb R^d}\chi_L\mathbf v\cdot\Delta\mathbf v\,\mathrm dx
-\int_{\mathbb R^d}(\nabla\chi_L\cdot\nabla\mathbf v)\cdot\mathbf v\,\mathrm dx\\
&\leq\delta\int_{\mathbb R^d}\chi_L|\Delta\mathbf v|^2\,\mathrm dx
+C_\delta Q_L(\mathbf v)+C L^{-1}\|\mathbf v\|_{\mathbb H^1}^2,
\end{split}
\]
choosing $\delta$ small yields
\begin{equation}\label{tail-linear-bound}
2(-\Delta^2\mathbf v-\Delta\mathbf v,\chi_L\mathbf v)_{\mathbb L^2}
\leq-\int_{\mathbb R^d}\chi_L|\Delta\mathbf v|^2\,\mathrm dx
+C Q_L(\mathbf v)+C L^{-1}\|\mathbf v\|_{\mathbb H^2}^2.
\end{equation}
The zero-order terms satisfy
\[
2(2\mathbf v-2\mathbf F(\mathbf v),\chi_L\mathbf v)_{\mathbb L^2}
=4Q_L(\mathbf v)-4\int_{\mathbb R^d}\chi_L|\mathbf v|^4\,\mathrm dx.
\]
For the differentiated cubic term, the identity
$\partial_i\mathbf F(\mathbf v)\cdot\mathbf v
=\frac34\partial_i|\mathbf v|^4$ gives
\begin{equation}\label{tail-cubic-identity}
\begin{split}
&4a_R(\mathbf v)(\Delta\mathbf F(\mathbf v),\chi_L\mathbf v)_{\mathbb L^2}\\
&\quad=-4a_R(\mathbf v)\int_{\mathbb R^d}\chi_L
\left(|\mathbf v|^2|\nabla\mathbf v|^2
+2\sum_{i=1}^d(\mathbf v\cdot\partial_i\mathbf v)^2\right)\,\mathrm dx
+3a_R(\mathbf v)\int_{\mathbb R^d}\Delta\chi_L|\mathbf v|^4\,\mathrm dx\\
&\quad\leq C L^{-2}\|\mathbf v\|_{\mathbb L^4}^4.
\end{split}
\end{equation}
Moreover, $(\mathbf v\times\Delta\mathbf v,\chi_L\mathbf v)_{\mathbb L^2}=0$.

Put
\[
\Gamma_L=\sum_{j=1}^\infty\int_{\mathbb R^d}
\chi_L(x)|\mathbf g_j(x)|^2\,\mathrm dx.
\]
Assumption \eqref{asum2} and dominated convergence give $\Gamma_L\to0$ as $L\to\infty$. Pointwise orthogonality of the cross product gives the following cancellation between the correction and the unprojected trace:
\begin{equation}\label{tail-noise-cancellation}
\begin{split}
&2(\mathbf C(\mathbf v),\chi_L\mathbf v)_{\mathbb L^2}
+\sum_{j=1}^\infty\|\chi_L^{1/2}\mathbf N_j(\mathbf v)\|_{\mathbb L^2}^2\\
&\quad=\sum_{j=1}^\infty\int_{\mathbb R^d}\chi_L
\left(|\mathbf g_j|^2+\mathbf g_j\cdot(\mathbf v\times\mathbf h_j)\right)\,\mathrm dx
\leq C_{C_{**}}Q_L(\mathbf v)+C\Gamma_L.
\end{split}
\end{equation}
All series converge absolutely by Cauchy--Schwarz and \eqref{asum2}. The trace errors contributed by $\mathbf q_N$ are bounded by
\[
2\|\mathbf N(\mathbf v_N(s))\|_{\mathcal L_2(\ell^2;\mathbb L^2)}
\|\mathbf q_N(s)\|_{\mathcal L_2(\ell^2;\mathbb L^2)}
+\|\mathbf q_N(s)\|_{\mathcal L_2(\ell^2;\mathbb L^2)}^2.
\]
Their expected time integrals are at most $C\epsilon_N$, by \eqref{noise-H1} and \eqref{tail-residual-bound}. Multiplication by $\chi_L$ is bounded on $\mathbb H^2$, uniformly for $L\geq1$, so
\[
\begin{split}
\mathbb E\int_0^T
|\langle\mathbf r_N(s),\chi_L\mathbf v_N(s)\rangle_{\mathbb H^{-2},\mathbb H^2}|\,\mathrm ds
&\leq C\left(\mathbb E\int_0^T\|\mathbf r_N(s)\|_{\mathbb H^{-2}}^2\,\mathrm ds\right)^{1/2}\\
&\quad\times\left(\mathbb E\int_0^T\|\mathbf v_N(s)\|_{\mathbb H^2}^2\,\mathrm ds\right)^{1/2}
\leq C\epsilon_N.
\end{split}
\]

The scalar stochastic integral is
\[
\mathcal M_{L,N}(t)=2\sum_{j=1}^\infty\int_0^t
(\chi_L\mathbf v_N(s),\mathbf g_j+\mathbf q_N(s)e_j)_{\mathbb L^2}\,\mathrm dW_j(s),
\]
since $(\chi_L\mathbf v_N,\mathbf v_N\times\mathbf h_j)_{\mathbb L^2}=0$. The Burkholder--Davis--Gundy inequality therefore gives, for $0\leq t\leq T$,
\[
\begin{split}
\mathbb E\sup_{0\leq r\leq t}|\mathcal M_{L,N}(r)|
&\leq C\mathbb E\left(\Gamma_L\int_0^t Q_L(\mathbf v_N(s))\,\mathrm ds\right)^{1/2}\\
&\quad+C\mathbb E\left[
\sup_{0\leq s\leq T}\|\mathbf v_N(s)\|_{\mathbb L^2}
\left(\int_0^T\|\mathbf q_N(s)\|_{\mathcal L_2(\ell^2;\mathbb L^2)}^2\,\mathrm ds\right)^{1/2}\right]\\
&\leq\frac12\mathbb E\sup_{0\leq r\leq t}Q_L(\mathbf v_N(r))
+C T\Gamma_L+C\epsilon_N.
\end{split}
\]
The integrability needed here follows from the moments already established in \eqref{cutoff-H1}. Combining the preceding estimates with It\^{o}'s formula, discarding the nonnegative dissipations, and applying \eqref{cutoff-L2} yields
\[
\begin{split}
\mathbb E\sup_{0\leq r\leq t}Q_L(\mathbf v_N(r))
&\leq C\big[Q_L(P_{\leq N}\mathbf u_0)+\Gamma_L+L^{-1}+\epsilon_N\big]\\
&\quad+C\int_0^t\mathbb E\sup_{0\leq r\leq s}Q_L(\mathbf v_N(r))\,\mathrm ds.
\end{split}
\]
Furthermore,
\[
Q_L(P_{\leq N}\mathbf u_0)
\leq2Q_L(\mathbf u_0)+2\|(P_{\leq N}-I)\mathbf u_0\|_{\mathbb L^2}^2
\leq2Q_L(\mathbf u_0)+2\epsilon_N^2\|\mathbf u_0\|_{\mathbb H^1}^2.
\]
Gronwall's lemma now gives
\begin{equation}\label{quantitative-spatial-tail}
\mathbb E\sup_{0\leq t\leq T}Q_L(\mathbf u_N^R(t))
\leq C\big[Q_L(\mathbf u_0)+\Gamma_L+L^{-1}+\epsilon_N\big].
\end{equation}
Since $Q_L(\mathbf u_0)$ and $\Gamma_L$ tend to zero, this proves
\[
\lim_{L\to\infty}\limsup_{N\to\infty}
\mathbb E\sup_{0\leq t\leq T}Q_L(\mathbf u_N^R(t))=0.
\]
For each fixed $N$, continuity of $\mathbf u_N^R$ in $\mathbb L^2$ makes its range on $[0,T]$ compact in $\mathbb L^2$. Consequently,
\[
\sup_{0\leq t\leq T}Q_L(\mathbf u_N^R(t))\longrightarrow0,
\qquad\mathbb P\textrm{-a.s.}
\]
Dominated convergence applies by \eqref{cutoff-L2}. Separating finitely many $N$ from the remaining tail proves convergence uniformly in $N$. As $\chi_L=1$ on $\{|x|\geq2L\}$, this gives \eqref{uniform-spatial-tail}.
\end{proof}

\begin{lemma}\label{H1-enhanced-tightness}
For fixed $R\geq1$ and $T>0$, the laws of $\mathbf u_N^R$ are tight on $\mathcal Z_{\mathbf u}^{(1)}\cap\mathcal X_1$.
\end{lemma}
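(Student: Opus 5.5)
Tightness on the intersection space will follow from tightness on each factor: a set that is compact in $\mathcal Z_{\mathbf u}^{(1)}$ and compact in $\mathcal X_1$ is compact in the intersection topology. So we fix $\varepsilon>0$ and build, for each factor, a compact set carrying probability at least $1-\varepsilon$ uniformly in $N$.

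For $\mathcal Z_{\mathbf u}^{(1)}$ we repeat the argument of Lemma~\ref{4lem} and Proposition~\ref{3pro}, raised by one derivative. Markov's inequality, applied to \eqref{cutoff-H1} and \eqref{H1-time-holder}, places the laws with probability at least $1-\varepsilon$ on the set
\[
K_1=\Big\{\|\mathbf v\|_{L^\infty(0,T;\mathbb H^1)}+\|\mathbf v\|_{L^2(0,T;\mathbb H^3)}+\|\mathbf v\|_{\mathcal C^\alpha([0,T];\mathbb H^{-1})}\le M\Big\}.
\]
Since $\mathbb H^{-1}\hookrightarrow U'$, Arzel\`a--Ascoli gives compactness in $\mathcal C([0,T];U')$; a bounded subset of $\mathbb H^1$ is relatively compact in $U'$ because $\mathbb H^1\hookrightarrow\mathbb L^2$. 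Weak compactness handles $L^2_w(0,T;\mathbb H^3)$. The interpolation inequality \eqref{local-interpolation}, with $\mathbb H^3\to\mathbb H^2(\mathcal O_m)$ in place of $\mathbb H^2\to\mathbb H^1(\mathcal O_m)$, gives convergence in $L^2(0,T;\mathbb H^2_{loc})$. The uniform $\mathbb H^1$ bound together with density of $U$ gives convergence in $\mathcal C([0,T];\mathbb H^1_w)$.

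For $\mathcal X_1$, the new ingredient is the spatial tail from Lemma~\ref{spatial-tail}. By Markov's inequality we choose $L_k\uparrow\infty$ with $\sup_N\mathbb P\{\sup_t\int_{|x|\ge L_k}|\mathbf u_N^R|^2>1/k\}\le\varepsilon2^{-k}$, and define $K_2$ as the subset of $K_1$ where the tail is at most $1/k$ at $L_k$ for every $k$. We then show $K_2$ is relatively compact in $\mathcal X_1$. Take a sequence in $K_2$ and pass to a subsequence converging in $\mathcal C([0,T];U')$, in $\mathcal C([0,T];\mathbb H^1_w)$, and in $L^2(0,T;\mathbb H^2_{loc})$.

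\begin{enumerate}
\item[(i)] \emph{Convergence in $\mathcal C([0,T];\mathbb L^2)$.} On $B_{L_k}$, for each fixed $t$, the functions are bounded in $\mathbb H^1$ and hence precompact in $\mathbb L^2(B_{L_k})$ by Rellich. Combined with weak convergence, this gives strong convergence in $\mathbb L^2(B_{L_k})$. Uniformity in $t$ comes from equicontinuity: interpolating between $\mathbb H^{-1}$ and $\mathbb H^1$ gives $\|\mathbf v(t)-\mathbf v(s)\|_{\mathbb L^2}\le\|\cdot\|_{\mathbb H^{-1}}^{1/2}\|\cdot\|_{\mathbb H^1}^{1/2}\le CM|t-s|^{\alpha/2}$, so the family is equicontinuous in $\mathcal C([0,T];\mathbb L^2)$. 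The tail outside $B_{L_k}$ is at most $2/k$ uniformly. Therefore the sequence is Cauchy in $\mathcal C([0,T];\mathbb L^2)$.
\item[(ii)] \emph{Convergence in $L^2(0,T;\mathbb H^2)$.} Interpolate $\|\mathbf w\|_{\mathbb H^2}\le\|\mathbf w\|_{\mathbb L^2}^{1/3}\|\mathbf w\|_{\mathbb H^3}^{2/3}$ for differences and apply H\"older in time. This gives
\[
\|\mathbf w\|_{L^2(0,T;\mathbb H^2)}\lesssim\|\mathbf w\|_{L^2(0,T;\mathbb L^2)}^{1/3}\,M^{2/3},
\]
which tends to zero by step (i).
\end{enumerate}

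Measurability of the tail functional and of the intersection topology is handled as in Proposition~\ref{3pro}, through countable families of pairings and rational times.

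The main obstacle is step (i): turning the expectation-level tail bound \eqref{uniform-spatial-tail} into a deterministic compact set. The choice of the sequence $L_k$ via Markov's inequality and a summable union bound addresses this. It also requires that the local Rellich compactness be uniform in time, which the interpolated equicontinuity supplies.
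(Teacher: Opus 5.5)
Your proposal is correct and follows essentially the paper's route: Markov's inequality on \eqref{cutoff-H1} and \eqref{H1-time-holder}, tail radii $L_k$ with summable exceptional probabilities from Lemma~\ref{spatial-tail}, Rellich on balls combined with the tails and the $\mathbb H^{-1}$--$\mathbb H^1$ interpolation for compactness in $\mathcal C([0,T];\mathbb L^2)$, and the $\mathbb L^2$--$\mathbb H^3$ interpolation for strong convergence in $L^2(0,T;\mathbb H^2)$. The only difference is organizational. You first obtain compactness in $\mathcal Z_{\mathbf u}^{(1)}$ by the Lemma~\ref{4lem} argument and then intersect with $\mathcal X_1$; this intersection step is legitimate because both topologies are finer than the Hausdorff topology of $\mathcal C([0,T];U')$. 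The paper instead applies Arzel\`a--Ascoli in $\mathcal C([0,T];\mathbb L^2)$ directly and reads off the remaining components of \eqref{work-H1} from the stronger convergence.
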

\begin{proof}[\emph{\textbf{Proof}}]
Fix $0<\alpha<1/2$ and $\varepsilon>0$. By \eqref{cutoff-H1} and \eqref{H1-time-holder}, choose $A>0$ so that, uniformly in $N$, the probability that
\[
\|\mathbf u_N^R\|_{L^\infty(0,T;\mathbb H^1)}
+\|\mathbf u_N^R\|_{L^2(0,T;\mathbb H^3)}
+\|\mathbf u_N^R\|_{\mathcal C^\alpha([0,T];\mathbb H^{-1})}>A
\]
is less than $\varepsilon/2$. Lemma~\ref{spatial-tail} allows us to choose $L_m\uparrow\infty$ such that
\[
\sup_N\mathbb P\left\{
\sup_{0\leq t\leq T}\int_{\{|x|\geq L_m\}}
|\mathbf u_N^R(t,x)|^2\,\mathrm dx>\frac1m\right\}
<\varepsilon 2^{-m-1},\qquad m\geq1.
\]
Let $K$ consist of the paths satisfying the preceding norm bound with constant $A$ and all the tail bounds with thresholds $1/m$. Then
$\inf_N\mathbb P\{\mathbf u_N^R\in K\}\geq1-\varepsilon$.

We show that $K$ is relatively compact in the stated space. At each fixed time, its values are relatively compact in $\mathbb L^2$: Rellich's theorem applies on bounded balls, while the tail bounds extend the resulting convergence to $\mathbb R^d$. For $\mathbf v\in K$, interpolation gives
\[
\begin{split}
\|\mathbf v(t)-\mathbf v(s)\|_{\mathbb L^2}^2
&\leq\|\mathbf v(t)-\mathbf v(s)\|_{\mathbb H^{-1}}
\|\mathbf v(t)-\mathbf v(s)\|_{\mathbb H^1}\\
&\leq 2A^2|t-s|^\alpha,
\qquad 0\leq s\leq t\leq T.
\end{split}
\]
The Arzel\`a--Ascoli theorem therefore yields relative compactness in $\mathcal C([0,T];\mathbb L^2)$. If $\mathbf z$ is the difference of two such paths, then
\[
\begin{split}
\int_0^T\|\mathbf z(s)\|_{\mathbb H^2}^2\,\mathrm ds
&\leq\left(\sup_{0\leq s\leq T}\|\mathbf z(s)\|_{\mathbb L^2}^2\right)^{1/3}
\int_0^T\|\mathbf z(s)\|_{\mathbb H^3}^{4/3}\,\mathrm ds\\
&\leq T^{1/3}\left(\sup_{0\leq s\leq T}\|\mathbf z(s)\|_{\mathbb L^2}^2\right)^{1/3}
\left(\int_0^T\|\mathbf z(s)\|_{\mathbb H^3}^2\,\mathrm ds\right)^{2/3}.
\end{split}
\]
Hence the same subsequence converges strongly in $L^2(0,T;\mathbb H^2)$. The uniform bounds also give weak convergence in $L^2(0,T;\mathbb H^3)$ and convergence in $\mathcal C([0,T];\mathbb H^1_w)$, the latter by approximation of $\mathbb H^1$ test functions by smooth functions. The remaining components of \eqref{work-H1} follow from the stronger convergence just obtained. As in Lemma~\ref{4lem}, these bounded weak components are metrizable, so the closure of $K$ is compact. This proves tightness.
\end{proof}

Together with tightness of the Wiener law, Lemma~\ref{H1-enhanced-tightness} gives tightness of the joint laws on
\[
(\mathcal Z_{\mathbf u}^{(1)}\cap\mathcal X_1)
\times\mathcal C([0,T];U_0).
\]
This space has a countable family of continuous functions separating points, obtained from the corresponding families in its factors.

Along a subsequence of frequency levels $k\to\infty$, the Jakubowski--Skorokhod theorem gives random variables $(\bar{\mathbf u}_k^R,\bar W^{R,k})$ and $(\mathbf u_*^R,W_*^R)$ on a new complete probability space $(\bar\Omega,\bar{\mathbf F},\bar{\mathbb P})$ such that
\begin{equation}\label{H1-representation}
\begin{split}
\mathscr L(\bar{\mathbf u}_k^R,\bar W^{R,k})
&=\mathscr L(\mathbf u_k^R,W^k),\\
(\bar{\mathbf u}_k^R,\bar W^{R,k})
&\longrightarrow(\mathbf u_*^R,W_*^R)
\quad\textrm{in }(\mathcal Z_{\mathbf u}^{(1)}\cap\mathcal X_1)
\times\mathcal C([0,T];U_0),
\quad\bar{\mathbb P}\textrm{-a.s.}
\end{split}
\end{equation}
Equality of laws and Fatou's lemma transfer \eqref{cutoff-L2} and \eqref{cutoff-H1} to the represented sequence and its limit. Furthermore, the uniform boundedness principle applied to the weak components of \eqref{H1-representation} gives
\begin{equation}\label{H1-path-bounds}
\sup_k\left(
\|\bar{\mathbf u}_k^R\|_{L^\infty(0,T;\mathbb H^1)}
+\|\bar{\mathbf u}_k^R\|_{L^2(0,T;\mathbb H^3)}\right)
+\|\mathbf u_*^R\|_{L^\infty(0,T;\mathbb H^1)}
+\|\mathbf u_*^R\|_{L^2(0,T;\mathbb H^3)}<\infty
\end{equation}
almost surely.

\subsection{Identification of the limit and pathwise uniqueness}
\begin{proposition}\label{cutoff-solution}
For every $R\geq1$, the equation
\begin{equation}\label{cutoff-limit-equation}
\mathrm{d}\mathbf{u}^R(t)
=\mathbf{M}_R(\mathbf{u}^R(t))\,\mathrm{d}t
+\mathbf{N}(\mathbf{u}^R(t))\,\mathrm{d}W(t),
\qquad\mathbf{u}^R(0)=\mathbf{u}_0,
\end{equation}
has a unique global pathwise solution satisfying, for every $T>0$,
\begin{equation}\label{cutoff-solution-reg}
\mathbf{u}^R\in\mathcal{C}([0,T];\mathbb{H}^1)
\cap L^2(0,T;\mathbb{H}^3),
\qquad\mathbb{P}\textrm{-a.s.}
\end{equation}
It satisfies the moment bounds \eqref{cutoff-L2} and \eqref{cutoff-H1} with the approximation removed.
\end{proposition}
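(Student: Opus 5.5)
The plan follows the scheme of Sections~\ref{sec3}--\ref{sec4}: identify the limit $\mathbf u_*^R$ from \eqref{H1-representation} as a martingale solution of \eqref{cutoff-limit-equation}, prove continuity in $\mathbb H^1$, prove pathwise uniqueness, and then apply Yamada--Watanabe. The first step is to pass to the limit in the drift. For fixed $\phi\in\mathbb H^1$, I would copy Proposition~\ref{4pro} for all terms of $P_{\leq k}\mathbf M_R(\bar{\mathbf u}_k^R)$ except the cutoff term, now pairing in $\mathbb H^{-1}\times\mathbb H^1$ and using \eqref{Hminus1-drift} and \eqref{H1-path-bounds}. The new ingredient is the factor $a_R(\bar{\mathbf u}_k^R(s))$, which depends on the \emph{global} $\mathbb H^1$ norm. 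Here the convergence in $\mathcal X_1$ is essential. Interpolating
\[
\|\mathbf z\|_{\mathbb H^1}^2\leq\|\mathbf z\|_{\mathbb L^2}\|\mathbf z\|_{\mathbb H^2}
\]
for $\mathbf z=\bar{\mathbf u}_k^R-\mathbf u_*^R$ gives $\bar{\mathbf u}_k^R\to\mathbf u_*^R$ in $L^2(0,T;\mathbb H^1)$ globally. By \eqref{cutoff-definition}, $a_R(\bar{\mathbf u}_k^R)\to a_R(\mathbf u_*^R)$ in $L^2(0,T)$, with bound $1$. Next I would write $\nabla\mathbf F(\bar{\mathbf u}_k^R)-\nabla\mathbf F(\mathbf u_*^R)$ and bound it in $L^1(0,T;\mathbb L^2)$ by
\[
\|\mathbf z\|_{L^2(0,T;\mathbb H^2)}
\]
times path-bounded factors, using $\mathbb H^2\hookrightarrow\mathbb L^\infty$ together with \eqref{H1-path-bounds}. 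Splitting the product $a_R\nabla\mathbf F$ into these two differences then gives uniform-in-time convergence of the cutoff term.

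The stochastic integral is identified exactly as in Section~\ref{sec3}. I would use the scalar coefficients \eqref{scalar-coefficients} with $\phi\in\mathbb H^1$, the convergence \eqref{scalar-noise-limit}, the product martingales, and the quadratic variation argument $\langle M^\phi-I^\phi\rangle=0$. Moment bounds come from \eqref{cutoff-H1} transferred by equality of laws and Fatou's lemma. This gives \eqref{cutoff-limit-equation} in the weak form \eqref{1def2} with $\tau=\infty$, with $\mathbf u_*^R\in L^\infty(0,T;\mathbb H^1)\cap L^2(0,T;\mathbb H^3)$. By \eqref{Hminus1-drift}, $\mathbf M_R(\mathbf u_*^R)\in L^2(0,T;\mathbb H^{-1})$, and by \eqref{noise-H1} the noise lies in $L^2(0,T;\mathcal L_2(\ell^2;\mathbb H^1))$. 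The localized variational It\^o theorem for the triple $\mathbb H^3\subset\mathbb H^1\subset\mathbb H^{-1}$, applied as in Lemma~\ref{L2-continuity} after conjugating by $\Lambda^{1}$, then gives a modification in $\mathcal C([0,T];\mathbb H^1)$.

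For pathwise uniqueness, take two solutions $\mathbf u,\mathbf v$ in the class \eqref{cutoff-solution-reg} on the same basis and set $\mathbf z=\mathbf u-\mathbf v$. I would apply the variational It\^o formula to $\|\mathbf z\|_{\mathbb L^2}^2$ in $\mathbb H^2\subset\mathbb L^2\subset\mathbb H^{-2}$. The noise terms cancel exactly as in \eqref{difference-cancellation}. The cross and zero-order cubic terms are handled as in Proposition~\ref{pro4}; for $d=3$ the coefficient $\|\mathbf u\|_{\mathbb L^\infty}^4\lesssim\|\mathbf u\|_{\mathbb H^1}^2\|\mathbf u\|_{\mathbb H^3}^2$ is integrable in time. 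The cutoff term splits as
\[
a_R(\mathbf u)\Delta\mathbf G+(a_R(\mathbf u)-a_R(\mathbf v))\Delta\mathbf F(\mathbf v).
\]
The first piece is treated as in \eqref{cubic-difference}. \textbf{I expect the second piece to be the main obstacle}, because $|a_R(\mathbf u)-a_R(\mathbf v)|\leq C_R\|\mathbf z\|_{\mathbb H^1}$ is not controlled by $\|\mathbf z\|_{\mathbb L^2}$. I would first try pairing it with $\mathbf z$ and bounding it by
\[
C_R\|\mathbf z\|_{\mathbb H^1}\|\mathbf F(\mathbf v)\|_{\mathbb H^1}\|\mathbf z\|_{\mathbb H^1}.
\]
Since $\|\mathbf z\|_{\mathbb H^1}^2\leq\|\mathbf z\|_{\mathbb L^2}\|\mathbf z\|_{\mathbb H^2}$, Young's inequality absorbs $\varepsilon\|\Delta\mathbf z\|^2$ and leaves the coefficient $C\|\mathbf F(\mathbf v)\|_{\mathbb H^1}^2\|\mathbf z\|_{\mathbb L^2}^2$, which is time integrable because $\|\nabla\mathbf F(\mathbf v)\|^2\lesssim\|\mathbf v\|_{\mathbb H^1}^4\|\mathbf v\|_{\mathbb H^3}^2$. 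If this closes, a pathwise Gronwall argument with the localization $\sigma_R$ of Proposition~\ref{pro4} gives $\mathbf z\equiv0$. If not, I would instead estimate $\mathbf z$ in $\mathbb H^1$, using the $\mathbb H^3$ dissipation.

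With uniqueness established, the Yamada--Watanabe--Kurtz principle, as in the proof of Theorem~\ref{the1}, produces a unique pathwise solution on the given basis for each $T$. Consistency across integer $T$ yields the global solution, and the moment bounds follow from the law identification.
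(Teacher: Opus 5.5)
Your proposal is correct and follows essentially the paper's route. The paper likewise identifies the cutoff through the global $L^2(0,T;\mathbb{H}^1)$ convergence supplied by $\mathcal{X}_1$, identifies the noise through scalar martingale brackets, obtains $\mathbb{H}^1$ continuity from the variational It\^{o} theorem applied to $\Lambda\mathbf{u}^R$, proves uniqueness by an $\mathbb{L}^2$ difference estimate with the cutoff term split exactly as you write it, and concludes with Yamada--Watanabe.

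Your first attempt at the cutoff-difference piece does close. The coefficient $\|\mathbf{F}(\mathbf{v})\|_{\mathbb{H}^1}^2\lesssim\|\mathbf{v}\|_{\mathbb{H}^1}^4\|\mathbf{v}\|_{\mathbb{H}^3}^2$ is pathwise integrable in time, so the fallback $\mathbb{H}^1$ estimate is unnecessary. The paper differs only in minor technical choices:
\begin{itemize}
\item In the uniqueness estimate it keeps both derivatives on $\mathbf{z}$ and uses $\|\mathbf{F}(\mathbf{v})\|_{\mathbb{L}^2}\lesssim\|\mathbf{v}\|_{\mathbb{H}^1}^3$, the interpolation $\|\mathbf{z}\|_{\mathbb{H}^1}\lesssim\|\mathbf{z}\|_{\mathbb{L}^2}+\|\mathbf{z}\|_{\mathbb{L}^2}^{1/2}\|\Delta\mathbf{z}\|_{\mathbb{L}^2}^{1/2}$, and the stopping times $\sigma_K$.
\item In the drift identification it passes $\mathbf{F}(\bar{\mathbf{u}}_k^R)\to\mathbf{F}(\mathbf{u}^R)$ in $L^2(0,T;\mathbb{L}^2)$ and tests against $\mathbb{H}^2$, rather than passing $\nabla\mathbf{F}$ to the limit in $L^1(0,T;\mathbb{L}^2)$ and testing against $\mathbb{H}^1$.
\end{itemize}
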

\begin{proof}[\emph{\textbf{Proof}}]
We first identify the martingale solution supplied by \eqref{H1-representation}. In this part of the proof write $\mathbf u^R=\mathbf u_*^R$ and work on the full-measure set on which \eqref{H1-representation} and \eqref{H1-path-bounds} hold.

We next identify all nonlinear terms. The cutoff satisfies
\begin{equation}\label{cutoff-convergence}
\left(\int_0^T
|a_R(\bar{\mathbf u}_k^R(s))-a_R(\mathbf{u}^R(s))|^2\,\mathrm{d}s\right)^{1/2}
\leq C_R\|\bar{\mathbf u}_k^R-\mathbf{u}^R\|_{L^2(0,T;\mathbb{H}^1)}
\longrightarrow0
\end{equation}
almost surely. This estimate uses the global $L^2(0,T;\mathbb H^1)$ convergence supplied by \eqref{global-X}; local spatial convergence would not suffice. For $\mathbf v,\mathbf w\in\mathbb H^1$, put $\mathbf z=\mathbf v-\mathbf w$. The pointwise cubic difference formula and H\"older's inequality yield
\[
\begin{split}
\|\mathbf F(\mathbf v)-\mathbf F(\mathbf w)\|_{\mathbb{L}^{2}}
&\lesssim\big(\|\mathbf v\|_{\mathbb{L}^{6}}^2+\|\mathbf w\|_{\mathbb{L}^{6}}^2\big)\|\mathbf z\|_{\mathbb{L}^{6}}\\
&\lesssim\big(\|\mathbf v\|_{\mathbb H^1}^2+\|\mathbf w\|_{\mathbb H^1}^2\big)\|\mathbf z\|_{\mathbb H^1}.
\end{split}
\]
Thus
\begin{equation}\label{cutoff-cubic-continuity}
\|\mathbf{F}(\mathbf{v})-\mathbf{F}(\mathbf{w})\|_{\mathbb{L}^{2}}
\lesssim(\|\mathbf{v}\|_{\mathbb{H}^1}^2
+\|\mathbf{w}\|_{\mathbb{H}^1}^2)
\|\mathbf{v}-\mathbf{w}\|_{\mathbb{H}^1}.
\end{equation}
Consequently,
\[
\begin{split}
&\|\mathbf{F}(\bar{\mathbf u}_k^R)-\mathbf{F}(\mathbf{u}^R)\|_{L^2(0,T;\mathbb{L}^2)}\\
&\qquad\lesssim\left(
\sup_{0\leq t\leq T}\|\bar{\mathbf u}_k^R(t)\|_{\mathbb{H}^1}^2
+\sup_{0\leq t\leq T}\|\mathbf{u}^R(t)\|_{\mathbb{H}^1}^2\right)
\|\bar{\mathbf u}_k^R-\mathbf{u}^R\|_{L^2(0,T;\mathbb{H}^1)}
\longrightarrow0
\end{split}
\]
almost surely. Moreover,
\[
\begin{split}
&\|a_R(\bar{\mathbf u}_k^R)\mathbf{F}(\bar{\mathbf u}_k^R)
-a_R(\mathbf{u}^R)\mathbf{F}(\mathbf{u}^R)\|_{L^2(0,T;\mathbb{L}^2)}\\
&\quad\leq\|\mathbf{F}(\bar{\mathbf u}_k^R)-\mathbf{F}(\mathbf{u}^R)\|_{L^2(0,T;\mathbb{L}^2)}\\
&\qquad+C\sup_{0\leq t\leq T}\|\mathbf{u}^R(t)\|_{\mathbb{H}^1}^3
\left(\int_0^T
|a_R(\bar{\mathbf u}_k^R(s))-a_R(\mathbf{u}^R(s))|^2\,\mathrm{d}s\right)^{1/2}
\longrightarrow0
\end{split}
\]
almost surely. Applying $\Delta:\mathbb{L}^2\to\mathbb{H}^{-2}$ identifies the differentiated cubic term as well as the cutoff.

For the precession term, testing against $\phi\in\mathbb{H}^2$ and using $\|\phi\|_{\mathbb{L}^{\infty}}\lesssim\|\phi\|_{\mathbb{H}^2}$ gives
\begin{equation}\label{cutoff-precession-continuity}
\|\mathbf{v}\times\Delta\mathbf{v}-\mathbf{w}\times\Delta\mathbf{w}\|_{\mathbb{H}^{-2}}
\lesssim\|\mathbf{v}-\mathbf{w}\|_{\mathbb{L}^{2}}\|\mathbf{v}\|_{\mathbb{H}^2}
+\|\mathbf{w}\|_{\mathbb{L}^{2}}\|\mathbf{v}-\mathbf{w}\|_{\mathbb{H}^2}.
\end{equation}
Indeed, for $\|\phi\|_{\mathbb H^2}\leq1$ and $\mathbf z=\mathbf v-\mathbf w$,
\[
\begin{split}
|\langle\mathbf v\times\Delta\mathbf v-\mathbf w\times\Delta\mathbf w,\phi\rangle|
&=| (\mathbf z\times\Delta\mathbf v+\mathbf w\times\Delta\mathbf z,\phi)_{\mathbb{L}^2}|\\
&\leq\|\phi\|_{\mathbb{L}^{\infty}}(\|\mathbf z\|_{\mathbb{L}^{2}}\|\Delta\mathbf v\|_{\mathbb{L}^{2}}+\|\mathbf w\|_{\mathbb{L}^{2}}\|\Delta\mathbf z\|_{\mathbb{L}^{2}}).
\end{split}
\]
With $\mathbf z_k=\bar{\mathbf u}_k^R-\mathbf u^R$, integration of \eqref{cutoff-precession-continuity} gives
\[
\begin{split}
&\|\bar{\mathbf u}_k^R\times\Delta\bar{\mathbf u}_k^R-\mathbf u^R\times\Delta\mathbf u^R\|_{L^1(0,T;\mathbb H^{-2})}\\
&\quad\leq CT^{1/2}\left[\|\mathbf z_k\|_{\mathcal C([0,T];\mathbb L^2)}\|\bar{\mathbf u}_k^R\|_{L^2(0,T;\mathbb H^2)}
+\|\mathbf u^R\|_{\mathcal C([0,T];\mathbb L^2)}\|\mathbf z_k\|_{L^2(0,T;\mathbb H^2)}\right]\longrightarrow0
\end{split}
\]
almost surely. Here the remaining factors are bounded pathwise by \eqref{H1-path-bounds}. The correction $\mathbf{C}$ is globally Lipschitz on $\mathbb{L}^2$, and the linear differential terms converge in $L^2(0,T;\mathbb{H}^{-2})$. Finally, define
\[
\bar{\mathbf r}_k^R(s)=(P_{\leq k}-I)
\mathbf D_R(\bar{\mathbf u}_k^R(s)).
\]
Equations \eqref{Hminus1-drift}, \eqref{projection-residual}, and \eqref{H1-path-bounds} give
\[
\int_0^T\|\bar{\mathbf r}_k^R(s)\|_{\mathbb H^{-2}}^2\,\mathrm ds
\leq\frac{C_{R,T}(\bar\omega)}{1+k^2}\longrightarrow0.
\]
Thus all drift primitives converge uniformly for $0\leq t\leq T$ in $\mathbb H^{-2}$, almost surely.

We next identify the stochastic term. Put
\[
B_k^R(s)=P_{\leq k}\mathbf N(\bar{\mathbf u}_k^R(s)),
\qquad B^R(s)=\mathbf N(\mathbf u^R(s)).
\]
The global Lipschitz bound for $\mathbf N$ in $\mathbb L^2$ and \eqref{projection-residual} imply
\begin{equation}\label{H1-diffusion-limit}
\begin{split}
&\int_0^T\|B_k^R(s)-B^R(s)\|_{\mathcal L_2(\ell^2;\mathbb L^2)}^2\,\mathrm ds\\
&\quad\leq C_{C_{**}}\int_0^T\|\bar{\mathbf u}_k^R(s)-\mathbf u^R(s)\|_{\mathbb L^2}^2\,\mathrm ds
+\frac{C_{C_{**}}}{1+k^2}\int_0^T
\big(1+\|\bar{\mathbf u}_k^R(s)\|_{\mathbb H^1}^2\big)\,\mathrm ds
\longrightarrow0
\end{split}
\end{equation}
almost surely. The transferred moment bounds make these integrals uniformly integrable, so the convergence also holds in $L^1(\bar\Omega)$.

For $\phi\in\mathbb H^2$, define
\[
\begin{aligned}
b_{k,j}^{R,\phi}(s)&=(B_k^R(s)e_j,\phi)_{\mathbb L^2},
&b_j^{R,\phi}(s)&=(B^R(s)e_j,\phi)_{\mathbb L^2},\\
b_k^{R,\phi}&=(b_{k,j}^{R,\phi})_{j\geq1},
&b^{R,\phi}&=(b_j^{R,\phi})_{j\geq1}.
\end{aligned}
\]
Equation \eqref{H1-diffusion-limit} gives
\[
\bar{\mathbb E}\|b_k^{R,\phi}-b^{R,\phi}\|_{L^2(0,T;\ell^2)}^2\longrightarrow0.
\]
In particular, Cauchy--Schwarz yields
\[
\begin{split}
&\int_0^T\left|
(b_k^{R,\phi}(s),b_k^{R,\psi}(s))_{\ell^2}
-(b^{R,\phi}(s),b^{R,\psi}(s))_{\ell^2}\right|\,\mathrm ds\\
&\quad\leq
\|b_k^{R,\phi}-b^{R,\phi}\|_{L^2(0,T;\ell^2)}
\|b_k^{R,\psi}\|_{L^2(0,T;\ell^2)}
+\|b^{R,\phi}\|_{L^2(0,T;\ell^2)}
\|b_k^{R,\psi}-b^{R,\psi}\|_{L^2(0,T;\ell^2)}
\longrightarrow0
\end{split}
\]
in $L^1(\bar\Omega)$, for $\phi,\psi\in\mathbb H^2$.

Define the scalar drift defects by
\[
\begin{split}
M_k^{R,\phi}(t)
={}&(\bar{\mathbf u}_k^R(t)-P_{\leq k}\mathbf u_0,\phi)_{\mathbb L^2}
-\int_0^t\langle P_{\leq k}\mathbf M_R(\bar{\mathbf u}_k^R(s)),\phi\rangle\,\mathrm ds,\\
M^{R,\phi}(t)
={}&(\mathbf u^R(t)-\mathbf u_0,\phi)_{\mathbb L^2}
-\int_0^t\langle\mathbf M_R(\mathbf u^R(s)),\phi\rangle\,\mathrm ds.
\end{split}
\]
The preceding drift limits imply $M_k^{R,\phi}\to M^{R,\phi}$ uniformly on $[0,T]$, almost surely. Equality of laws makes $M_k^{R,\phi}$ a continuous martingale for the usual augmentation of the filtration generated by $(\bar{\mathbf u}_k^R,\bar W^{R,k})$, with covariations
\[
\begin{aligned}
\langle M_k^{R,\phi},M_k^{R,\psi}\rangle_t
&=\int_0^t(b_k^{R,\phi}(s),b_k^{R,\psi}(s))_{\ell^2}\,\mathrm ds,\\
\langle M_k^{R,\phi},\bar W_j^{R,k}\rangle_t
&=\int_0^t b_{k,j}^{R,\phi}(s)\,\mathrm ds.
\end{aligned}
\]
Their fourth moments are bounded uniformly in $k$ by the Burkholder--Davis--Gundy inequality and \eqref{cutoff-L2}. The martingale identification argument of Section~\ref{sec3} therefore applies: testing increments against bounded continuous functions of the past and using the preceding convergence passes the martingale and covariation identities to the limit. The same argument for the Wiener coordinates shows that $W_*^R$ is cylindrical Wiener noise relative to the usual augmentation of the filtration generated by $(\mathbf u^R,W_*^R)$. In this filtration,
\[
\begin{aligned}
\langle M^{R,\phi},M^{R,\psi}\rangle_t
&=\int_0^t(b^{R,\phi}(s),b^{R,\psi}(s))_{\ell^2}\,\mathrm ds,\\
\langle M^{R,\phi},W_{*,j}^R\rangle_t
&=\int_0^t b_j^{R,\phi}(s)\,\mathrm ds.
\end{aligned}
\]
It follows that
\[
M^{R,\phi}(t)=\sum_{j=1}^\infty\int_0^t b_j^{R,\phi}(s)\,\mathrm dW_{*,j}^R(s).
\]
Indeed, the difference is a continuous local martingale starting from zero whose quadratic variation vanishes by the two covariation identities. A countable dense set of tests, followed by continuity in $\mathbb H^{-2}$, yields \eqref{cutoff-limit-equation} on one full-measure set for all $0\leq t\leq T$, with $W$ replaced by $W_*^R$.

The inherited bounds \eqref{cutoff-H1}, together with \eqref{Hminus1-drift}, \eqref{noise-H1}, and H\"older's inequality, give
\[
\bar{\mathbb E}\int_0^T\left(
\|\mathbf u^R(s)\|_{\mathbb H^3}^2
+\|\mathbf M_R(\mathbf u^R(s))\|_{\mathbb H^{-1}}^2
+\|\mathbf N(\mathbf u^R(s))\|_{\mathcal L_2(\ell^2;\mathbb H^1)}^2
\right)\,\mathrm ds<\infty.
\]
The limiting process is progressively measurable in $\mathbb H^1$, using the Borel inverse of the inclusion $\mathbb H^1\hookrightarrow\mathbb L^2$. The cutoff identity can therefore be read in $\mathbb H^{-1}$. Apply the variational It\^{o} theorem \cite[Theorem~4.2.5]{prevot2007concise} to $\Lambda\mathbf u^R$ in the triple
$\mathbb H^2\subset\mathbb L^2\subset\mathbb H^{-2}$.
Its drift is $\Lambda\mathbf M_R(\mathbf u^R)$ and its diffusion is $\Lambda\mathbf N(\mathbf u^R)$, so the displayed estimate verifies the theorem's hypotheses. It yields a modification in $\mathcal C([0,T];\mathbb H^1)$, agreeing with the continuous $\mathbb L^2$ limit at every time. The drift primitive is continuous in $\mathbb H^{-1}$ and the stochastic integral is continuous in $\mathbb H^1$; hence \eqref{cutoff-limit-equation} holds in $\mathbb H^{-1}$, against every $\phi\in\mathbb H^1$. This proves \eqref{cutoff-solution-reg}.

For pathwise uniqueness, let $\mathbf u^R$ and $\mathbf v^R$ be two solutions with the same initial value and Wiener processes, and put $\mathbf z=\mathbf u^R-\mathbf v^R$. For an integer $K>2\|\mathbf u_0\|_{\mathbb H^1}^2$, define
\begin{equation}\label{pair-stop}
\begin{split}
\sigma_K=T\wedge\inf\Big\{t\geq0:\;&
\sup_{0\leq r\leq t}\big(\|\mathbf u^R(r)\|_{\mathbb H^1}^2
+\|\mathbf v^R(r)\|_{\mathbb H^1}^2\big)\\
&+\int_0^t\big(\|\mathbf u^R(s)\|_{\mathbb H^3}^2
+\|\mathbf v^R(s)\|_{\mathbb H^3}^2\big)\,\mathrm ds\geq K\Big\}.
\end{split}
\end{equation}
By \eqref{Hminus1-drift} and \eqref{noise-H1}, the localized variational It\^{o} formula in $\mathbb H^1\subset\mathbb L^2\subset\mathbb H^{-1}$ applies to the difference equation up to $\sigma_K$ in every dimension $d\leq3$. The scalar martingale vanishes, and the trace cancels the It\^{o} correction, exactly as in \eqref{difference-cancellation}.

The linear terms, the zero-order cubic term and the precession term are estimated as in \eqref{difference-energy}--\eqref{cross-difference}. For the differentiated cubic term, write, at a fixed time,
\[
\begin{split}
a_R(\mathbf u^R)\mathbf F(\mathbf u^R)
-a_R(\mathbf v^R)\mathbf F(\mathbf v^R)
={}&a_R(\mathbf u^R)\big(\mathbf F(\mathbf u^R)-\mathbf F(\mathbf v^R)\big)\\
&+\big(a_R(\mathbf u^R)-a_R(\mathbf v^R)\big)\mathbf F(\mathbf v^R).
\end{split}
\]
The first summand obeys \eqref{cubic-difference}, since $0\leq a_R\leq1$. For almost every $0\leq s\leq\sigma_K$, the remaining term satisfies
\begin{equation}\label{cutoff-difference}
\begin{split}
&|a_R(\mathbf u^R(s))-a_R(\mathbf v^R(s))|
\big|(\mathbf F(\mathbf v^R(s)),\Delta\mathbf z(s))_{\mathbb L^2}\big|\\
&\quad\leq C_R\|\mathbf z(s)\|_{\mathbb H^1}
\|\mathbf v^R(s)\|_{\mathbb H^1}^3\|\Delta\mathbf z(s)\|_{\mathbb L^2}\\
&\quad\leq\delta\|\Delta\mathbf z(s)\|_{\mathbb L^2}^2
+C_{\delta,R,K}\|\mathbf z(s)\|_{\mathbb L^2}^2.
\end{split}
\end{equation}
Here we used \eqref{cutoff-definition} and the embedding
$\mathbb H^1\hookrightarrow\mathbb L^6$. For completeness, the last
step follows from the elliptic norm equivalence and interpolation:
\[
\|\mathbf z\|_{\mathbb H^1}
\leq C_d\big(
\|\mathbf z\|_{\mathbb L^2}
+\|\mathbf z\|_{\mathbb L^2}^{1/2}
\|\Delta\mathbf z\|_{\mathbb L^2}^{1/2}\big).
\]
Since $\|\mathbf v^R(s)\|_{\mathbb H^1}\leq K^{1/2}$ before
$\sigma_K$, Young's inequality gives, at almost every such time,
\[
\begin{split}
C_RK^{3/2}\|\mathbf z\|_{\mathbb H^1}
\|\Delta\mathbf z\|_{\mathbb L^2}
&\leq C_{d,R}K^{3/2}\big(
\|\mathbf z\|_{\mathbb L^2}\|\Delta\mathbf z\|_{\mathbb L^2}
+\|\mathbf z\|_{\mathbb L^2}^{1/2}
\|\Delta\mathbf z\|_{\mathbb L^2}^{3/2}\big)\\
&\leq\delta\|\Delta\mathbf z\|_{\mathbb L^2}^2
+C_{\delta,d,R}(K^3+K^6)\|\mathbf z\|_{\mathbb L^2}^2.
\end{split}
\]
Absorbing the terms with small $\delta$ gives
\[
\begin{split}
&\|\mathbf z(t\wedge\sigma_K)\|_{\mathbb L^2}^2
+\int_0^{t\wedge\sigma_K}\|\Delta\mathbf z(s)\|_{\mathbb L^2}^2\,\mathrm ds\\
&\quad\leq\|\mathbf z(0)\|_{\mathbb L^2}^2
+C_{R,K}\int_0^{t\wedge\sigma_K}
\big(1+\|\mathbf u^R(s)\|_{\mathbb L^\infty}^4
+\|\mathbf v^R(s)\|_{\mathbb L^\infty}^4\big)
\|\mathbf z(s)\|_{\mathbb L^2}^2\,\mathrm ds,
\qquad 0\leq t\leq T.
\end{split}
\]
The coefficient is integrable, since
\begin{equation}\label{coefficient-stop}
\begin{split}
\int_0^{\sigma_K}\big(\|\mathbf u^R(s)\|_{\mathbb L^\infty}^4
+\|\mathbf v^R(s)\|_{\mathbb L^\infty}^4\big)\,\mathrm ds
&\lesssim \int_0^{\sigma_K}
\big(\|\mathbf u^R(s)\|_{\mathbb H^1}^2\|\mathbf u^R(s)\|_{\mathbb H^3}^2\\
&\hspace{23mm}+\|\mathbf v^R(s)\|_{\mathbb H^1}^2\|\mathbf v^R(s)\|_{\mathbb H^3}^2\big)\,\mathrm ds
\lesssim K^2.
\end{split}
\end{equation}
Gronwall's lemma yields $\mathbf z(t\wedge\sigma_K)=0$ for every $0\leq t\leq T$. By \eqref{cutoff-solution-reg}, almost surely $\sigma_K=T$ for all sufficiently large $K$, proving pathwise uniqueness. The Yamada--Watanabe principle, in the form used in the proof of Theorem~\ref{the1}, now gives a unique solution on the prescribed stochastic basis. It has the law of the constructed martingale solution and inherits its moment bounds. Solutions on different finite intervals agree on their overlaps by uniqueness.
\end{proof}

\subsection{Removal of the cutoff}
\begin{proposition}\label{local-existence}
Under \eqref{asum2}, equation \eqref{sys1} has a unique maximal local pathwise weak solution for every $d\in\{1,2,3\}$.
\end{proposition}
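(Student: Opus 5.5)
The plan is to glue the cutoff solutions of Proposition~\ref{cutoff-solution} along the exit times at which the cutoff is inactive. For each integer $R>\|\mathbf u_0\|_{\mathbb H^1}$, let $\mathbf u^R$ be the unique global pathwise solution of \eqref{cutoff-limit-equation} on the prescribed basis. Define
\[
\tau_R=\inf\{t\geq0:\|\mathbf u^R(t)\|_{\mathbb H^1}\geq R\}.
\]
By continuity of $\mathbf u^R$ in $\mathbb H^1$ (see \eqref{cutoff-solution-reg}), $\tau_R$ is a strictly positive stopping time. On $[0,\tau_R]$ we have $a_R(\mathbf u^R)=1$, because $\theta_R(r)=1$ for $r\leq R$. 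Hence $\mathbf M_R(\mathbf u^R)=\mathbf M(\mathbf u^R)$ there, and the stopped cutoff identity, tested against $\phi\in\mathbb H^1$, is exactly \eqref{1def2} with $\tau=\tau_R$. The integrability $\int_0^{T\wedge\tau_R}\|\mathbf u^R\|_{\mathbb H^3}^2\,\mathrm ds<\infty$ is inherited. So $(\mathbf u^R,\tau_R)$ is a local pathwise weak solution.

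The first key step is consistency: $\mathbf u^R=\mathbf u^{R'}$ on $[0,\tau_R\wedge\tau_{R'}]$ for $R<R'$, and $\tau_R\leq\tau_{R'}$. Before $\tau_R\wedge\tau_{R'}$ both processes solve the same cutoff equation with cutoff $R'$, since $a_{R'}=1=a_R$ there. I would therefore rerun the pathwise uniqueness argument of Proposition~\ref{cutoff-solution}, with the stopping time \eqref{pair-stop} further capped by $\tau_R\wedge\tau_{R'}$. The energy estimate for $\mathbf z$ and Gronwall's lemma (using \eqref{coefficient-stop}) give $\mathbf z=0$ up to that time. Then $\tau_R\leq\tau_{R'}$ follows, because $\mathbf u^{R'}$ coincides with $\mathbf u^R$ until $\|\mathbf u^R\|_{\mathbb H^1}$ first reaches $R<R'$.

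Next set $\tau_n:=\tau_n$ for integer $n$, $\tau_*=\lim_n\tau_n$, and $\mathbf u(t)=\mathbf u^n(t)$ for $t\leq\tau_n$. This is well defined by consistency and progressively measurable. Each $(\mathbf u,\tau_n)$ is a local pathwise weak solution. On $\{\tau_*<\infty\}$ every $\tau_n<\infty$, so $\|\mathbf u(\tau_n)\|_{\mathbb H^1}=n$. This gives $\sup_{t\leq\tau_n}\|\mathbf u(t)\|_{\mathbb H^1}\geq n\to\infty$, which is the blow-up criterion of Definition~\ref{def2}(3).

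For uniqueness, take any local pathwise weak solution $(\mathbf v,\sigma)$. I would compare it with $\mathbf u$ up to
\[
\sigma\wedge\tau_n\wedge\inf\{t:\|\mathbf v(t)\|_{\mathbb H^1}\geq n\}.
\]
Up to that time both are solutions of the cutoff equation with $R=n$. The localized difference estimate, in the triple $\mathbb H^1\subset\mathbb L^2\subset\mathbb H^{-1}$ with the $\mathbb H^3$ integral stopping \eqref{pair-stop}, yields agreement. Letting $n\to\infty$ shows agreement up to $\sigma\wedge\tau_*$, using continuity of $\mathbf v$ in $\mathbb H^1$. The same comparison applied to two maximal solutions shows that their lifetimes coincide, since blow-up of one forces blow-up of the other.

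I expect the main obstacle to be a technical point in this uniqueness step. The difference estimate must be justified for general local weak solutions stopped at random times, not just for global cutoff solutions. This requires the variational It\^o formula for a process defined only up to a stopping time. I would handle it by extending the stopped processes as constants after the stopping time, so that the identity holds globally with drift multiplied by $\mathbf 1_{[0,\sigma]}$, and then applying \cite[Theorem~4.2.5]{prevot2007concise} exactly as in Proposition~\ref{cutoff-solution}.
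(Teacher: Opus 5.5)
Your proposal is correct and follows essentially the paper's route. The cutoff solutions $\mathbf u^R$, stopped at their $\mathbb H^1$ exit times $\tau_R$, are local weak solutions. A localized $\mathbb L^2$ difference estimate, whose Gronwall coefficient is controlled by $\sup\|\cdot\|_{\mathbb H^1}^2\int\|\cdot\|_{\mathbb H^3}^2$, gives consistency and $\tau_R\le\tau_S$. Gluing then yields the blow-up alternative and uniqueness.

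The only difference is organizational. The paper proves local pathwise uniqueness once, directly for arbitrary local weak solutions with both cutoffs equal to one, and uses that single statement for both consistency and uniqueness of the maximal solution. You instead rerun the cutoff uniqueness argument, with additional stopping, in each of these two places.
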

\begin{proof}[\emph{\textbf{Proof}}]
Let $\mathbf u$ and $\mathbf v$ be local weak solutions with the same initial value and noise, let $\tau$ be the minimum of their lifetimes, and put $\mathbf z=\mathbf u-\mathbf v$. For either solution, Sobolev embedding and interpolation give
\begin{equation}\label{H1-uniqueness-coefficient}
\begin{split}
\int_0^{T\wedge\tau}\|\mathbf v(s)\|_{\mathbb{L}^{\infty}}^4\,\mathrm ds
&\lesssim\sup_{0\leq s\leq T\wedge\tau}\|\mathbf v(s)\|_{\mathbb H^1}^2\\
&\quad\times\int_0^{T\wedge\tau}\|\mathbf v(s)\|_{\mathbb H^3}^2\,\mathrm ds<\infty,
\qquad\mathbb P\textrm{-a.s.}
\end{split}
\end{equation}
The same variational It\^{o} formula and difference estimates as in Proposition~\ref{cutoff-solution}, with both cutoffs equal to one, give \eqref{L2-stability} up to $T\wedge\tau$ in every dimension $d\leq3$. Thus Gronwall's lemma yields
\[
\begin{split}
\|\mathbf z(t\wedge\tau)\|_{\mathbb{L}^{2}}^2
&\leq\|\mathbf z(0)\|_{\mathbb{L}^{2}}^2\\
&\quad\times\exp\left(C\int_0^{t\wedge\tau}
\big(1+\|\mathbf u(s)\|_{\mathbb{L}^{\infty}}^4+\|\mathbf v(s)\|_{\mathbb{L}^{\infty}}^4\big)\,\mathrm ds\right)=0.
\end{split}
\]
The equality holds for every $0\leq t\leq T$, proving local pathwise uniqueness.

Choose an integer $R_0>\|\mathbf{u}_0\|_{\mathbb{H}^1}+1$. For every integer $R\geq R_0$, construct $\mathbf{u}^R$ with the same initial value and Wiener processes, and set
\begin{equation}\label{exit-time-R}
\tau_R=\inf\{t\geq0:\|\mathbf{u}^R(t)\|_{\mathbb{H}^1}\geq R\},
\qquad\inf\varnothing=\infty.
\end{equation}
Strong $\mathbb{H}^1$ continuity gives $\tau_R>0$ almost surely. The cutoff is identically one for $0\leq t\leq\tau_R$, so the stopped equation is \eqref{1def2}. For every $T>0$, the required $\mathbb{H}^3$ integrability on $[0,T\wedge\tau_R]$ follows from \eqref{cutoff-solution-reg}. Thus $(\mathbf{u}^R,\tau_R)$ is a local pathwise weak solution in the sense of Definition~\ref{def2}(1).

If $S>R$, local uniqueness gives equality of $\mathbf{u}^R$ and $\mathbf{u}^S$ up to $\tau_R\wedge\tau_S$. Were $\tau_S<\tau_R$, continuity at $\tau_S$ would give a common norm equal to $S$ and strictly less than $R$, a contradiction. Therefore $\tau_R\leq\tau_S$, and the solutions coincide for $0\leq t\leq\tau_R$. Define
\begin{equation}\label{maximal-patching}
\tau_*=\lim_{R\to\infty}\tau_R,
\qquad
\mathbf{u}(t)=\mathbf{u}^R(t)
\quad\textrm{for }0\leq t\leq\tau_R.
\end{equation}
Consistency makes this definition independent of $R$ and yields a progressively measurable process up to the strictly positive lifetime $\tau_*$. On $\{\tau_*<\infty\}$, all exit times are finite and
\[
\|\mathbf{u}(\tau_R)\|_{\mathbb{H}^1}=R.
\]
On $\{\tau_*<\infty\}$, continuity also gives $\tau_R<\tau_S$ whenever $S>R$, and hence $\tau_R<\tau_*$. The preceding norm divergence proves the blow-up alternative. Together with local pathwise uniqueness, it yields uniqueness of the maximal solution and its lifetime.

For later use, the $\mathbb{L}^2$ estimate holds up to the exit times with constants independent of $R$:
\begin{equation}\label{local-L2-uniform}
\mathbb{E}\sup_{0\leq t\leq T\wedge\tau_R}\|\mathbf{u}(t)\|_{\mathbb{L}^{2}}^{2p}
+\mathbb{E}\left(\int_0^{T\wedge\tau_R}
\|\mathbf{u}(s)\|_{\mathbb{H}^2}^2\,\mathrm{d}s\right)^p
\leq C_{p,T}.
\end{equation}
Indeed, $\mathbf{u}$ agrees with $\mathbf{u}^R$ before $\tau_R$, and the constants in \eqref{cutoff-L2} are independent of $R$. Fatou's lemma then shows that the $\mathbb{L}^2$ norm stays bounded before $T\wedge\tau_*$ almost surely. A finite maximal lifetime must therefore involve divergence of the gradient norm.
\end{proof}

\section{The energy identity and global \texorpdfstring{$\mathbb{H}^1$}{H1} solutions}\label{sec6}
Throughout this section $d\in\{1,2,3\}$ and \eqref{asum2} holds. Let $(\mathbf{u},\tau_*,\{\tau_R\})$ be the maximal solution of Proposition~\ref{local-existence}, with the integer exit levels chosen as in \eqref{exit-time-R}. We derive estimates independent of these levels.

\subsection{The energy identity at weak-solution regularity}
For $\mathbf{v}\in\mathbb{H}^1$, define
\begin{equation}\label{energy-functional}
\mathcal{E}(\mathbf{v})=\frac12\|\nabla\mathbf{v}\|_{\mathbb{L}^{2}}^2
+\frac12\|\mathbf{v}\|_{\mathbb{L}^{4}}^4-\|\mathbf{v}\|_{\mathbb{L}^{2}}^2,
\qquad
\mathbf{H}(\mathbf{v})=\Delta\mathbf{v}+2\mathbf{v}-2\mathbf{F}(\mathbf{v}).
\end{equation}
The coefficients in $\mathcal{E}$ are chosen so that its first variation is the negative effective field associated with \eqref{sys1}. This relation and the required regularity are verified below. The embedding $\mathbb{H}^1\hookrightarrow\mathbb{L}^4$ implies that $\mathcal{E}$ is twice continuously Fr\'echet differentiable on $\mathbb{H}^1$, with
\begin{equation}\label{energy-derivatives}
\begin{split}
D\mathcal{E}(\mathbf{v})[\mathbf{z}]
={}&(\nabla\mathbf{v},\nabla\mathbf{z})_{\mathbb{L}^2}
+2(\mathbf{F}(\mathbf{v}),\mathbf{z})_{\mathbb{L}^2}-2(\mathbf{v},\mathbf{z})_{\mathbb{L}^2},\\
D^2\mathcal{E}(\mathbf{v})[\mathbf{z},\mathbf{z}]
={}&\|\nabla\mathbf{z}\|_{\mathbb{L}^{2}}^2-2\|\mathbf{z}\|_{\mathbb{L}^{2}}^2
+2\int_{\mathbb{R}^d}|\mathbf{v}(x)|^2|\mathbf{z}(x)|^2\,\mathrm{d}x\\
&+4\int_{\mathbb{R}^d}(\mathbf{v}(x)\cdot\mathbf{z}(x))^2\,\mathrm{d}x.
\end{split}
\end{equation}
Both derivatives are bounded and locally Lipschitz in operator norm on bounded subsets of $\mathbb{H}^1$. Moreover, $D\mathcal{E}(\mathbf{v})=-\mathbf{H}(\mathbf{v})$ in $\mathbb{H}^{-1}$.

We shall use the product estimate
\begin{equation}\label{energy-cubic-H1}
\|\mathbf{F}(\mathbf{v})\|_{\mathbb{H}^1}
\lesssim\|\mathbf{v}\|_{\mathbb{H}^1}^{2}\|\mathbf{v}\|_{\mathbb{H}^2},
\qquad \mathbf{v}\in\mathbb{H}^2.
\end{equation}
Indeed, $\|\mathbf{F}(\mathbf{v})\|_{\mathbb{L}^{2}}=\|\mathbf{v}\|_{\mathbb{L}^{6}}^3$ and
$|\nabla\mathbf{F}(\mathbf{v})|\leq3|\mathbf{v}|^2|\nabla\mathbf{v}|$.
H\"older's inequality and the embeddings
$\mathbb{H}^1\hookrightarrow\mathbb{L}^6$ and
$\mathbb{H}^2\hookrightarrow\mathbb{W}^{1,6}$ prove \eqref{energy-cubic-H1}. Expanding the difference and applying the same inequalities gives
\begin{equation}\label{energy-cubic-continuity}
\begin{split}
\|\mathbf{F}(\mathbf{v})-\mathbf{F}(\mathbf{w})\|_{\mathbb{H}^1}
\lesssim{}& (\|\mathbf{v}\|_{\mathbb{H}^1}^2+\|\mathbf{w}\|_{\mathbb{H}^1}^2)
\|\mathbf{v}-\mathbf{w}\|_{\mathbb{H}^2}\\
&+(\|\mathbf{v}\|_{\mathbb{H}^1}+\|\mathbf{w}\|_{\mathbb{H}^1})
(\|\mathbf{v}\|_{\mathbb{H}^2}+\|\mathbf{w}\|_{\mathbb{H}^2})
\|\mathbf{v}-\mathbf{w}\|_{\mathbb{H}^1}.
\end{split}
\end{equation}
These estimates extend from smooth functions by density. In particular, the local regularity already established implies
\begin{equation}\label{effective-field-reg}
\mathbf{F}(\mathbf{u}),\ \mathbf{H}(\mathbf{u})
\in L^2(0,T\wedge\tau_R;\mathbb{H}^1),
\qquad\mathbb{P}\textrm{-a.s.}
\end{equation}
Writing $\mathbf{A}$ for the drift of \eqref{sys1} before the It\^{o} correction, we have
\begin{equation}\label{effective-drift}
\mathbf{A}(\mathbf{u})=\mathbf{H}(\mathbf{u})-\Delta\mathbf{H}(\mathbf{u})
-\mathbf{u}\times\mathbf{H}(\mathbf{u})
\in L^2(0,T\wedge\tau_R;\mathbb{H}^{-1}).
\end{equation}
For the last term, one may use
$\|\mathbf{v}\times\mathbf{h}\|_{\mathbb{L}^{2}}
\leq\|\mathbf{v}\|_{\mathbb{L}^{6}}\|\mathbf{h}\|_{\mathbb{L}^{3}}
\lesssim\|\mathbf{v}\|_{\mathbb{H}^1}\|\mathbf{h}\|_{\mathbb{H}^1}$.
The energy is thus a $C^2$ functional on $\mathbb{H}^1$, whereas the drift need not belong to this space. The standard Hilbert-space It\^{o} formula is therefore not directly applicable to the local weak solution. We regularize the stopped equation in space and pass to the limit in its energy balance. The argument uses only the local weak-solution regularity and does not assume $\mathbb{H}^2$ regularity of the initial datum.

\begin{lemma}\label{weak-energy-identity}
For every exit level $R$ and every $T>0$, almost surely for all $0\leq t\leq T$,
\begin{equation}\label{stochastic-energy-identity}
\begin{split}
&\mathcal{E}(\mathbf{u}(t\wedge\tau_R))
+\int_0^{t\wedge\tau_R}\|\mathbf{H}(\mathbf{u}(s))\|_{\mathbb{H}^1}^2\,\mathrm{d}s\\
&\qquad=\mathcal{E}(\mathbf{u}_0)
+\int_0^{t\wedge\tau_R}\mathcal{R}(\mathbf{u}(s))\,\mathrm{d}s
+\sum_{j=1}^{\infty}\int_0^{t\wedge\tau_R}m_j(\mathbf{u}(s))\,\mathrm{d}W_j(s),
\end{split}
\end{equation}
where
\begin{equation}\label{energy-noise-terms}
\begin{split}
\mathcal{R}(\mathbf{v})={}&D\mathcal{E}(\mathbf{v})[\mathbf{C}(\mathbf{v})]
+\frac12\sum_{j=1}^{\infty}D^2\mathcal{E}(\mathbf{v})
[\mathbf{N}_j(\mathbf{v}),\mathbf{N}_j(\mathbf{v})],\\
m_j(\mathbf{v})={}&D\mathcal{E}(\mathbf{v})[\mathbf{N}_j(\mathbf{v})].
\end{split}
\end{equation}
\end{lemma}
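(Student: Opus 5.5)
We follow the strategy announced in Subsection~\ref{subsec:proof-ideas}: regularize the stopped equation in space, apply the Hilbert-space It\^{o} formula to the regularized process, and pass to the limit. Fix $R$ and $T$, and write $\tau=\tau_R$. Before $\tau$, the solution $\mathbf u$ coincides with the cutoff solution $\mathbf u^R$ of Proposition~\ref{cutoff-solution}, and the cutoff equals one there. Hence, in $\mathbb H^{-1}$,
\[
\mathbf u(t\wedge\tau)=\mathbf u_0+\int_0^{t\wedge\tau}\big(\mathbf A(\mathbf u)+\mathbf C(\mathbf u)\big)\,\mathrm ds+\int_0^{t\wedge\tau}\mathbf N(\mathbf u)\,\mathrm dW.
\]
For $\varepsilon\in(0,1]$, let $J_\varepsilon=(I-\varepsilon\Delta)^{-1}$ and $\mathbf u_\varepsilon=J_\varepsilon\mathbf u(\cdot\wedge\tau)$. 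Since $J_\varepsilon$ maps $\mathbb H^{-1}$ into $\mathbb H^1$ boundedly, $\mathbf u_\varepsilon$ is an $\mathbb H^1$-valued semimartingale with drift $J_\varepsilon(\mathbf A+\mathbf C)(\mathbf u)\in L^2(0,T\wedge\tau;\mathbb H^1)$ and diffusion $J_\varepsilon\mathbf N(\mathbf u)\in L^2(0,T\wedge\tau;\mathcal L_2(\ell^2;\mathbb H^1))$. The standard It\^{o} formula for the $C^2$ functional $\mathcal E$ on $\mathbb H^1$, with derivatives given by \eqref{energy-derivatives}, therefore applies. Localizing further by the stopping times of \eqref{pair-stop} if needed, we obtain
\[
\mathcal E(\mathbf u_\varepsilon(t))=\mathcal E(J_\varepsilon\mathbf u_0)+\int_0^{t\wedge\tau}\Big(-\langle \mathbf H(\mathbf u_\varepsilon),J_\varepsilon(\mathbf A+\mathbf C)(\mathbf u)\rangle+\tfrac12\sum_j D^2\mathcal E(\mathbf u_\varepsilon)[J_\varepsilon\mathbf N_j(\mathbf u)]^{2}\Big)\mathrm ds+\sum_j\int_0^{t\wedge\tau} D\mathcal E(\mathbf u_\varepsilon)[J_\varepsilon\mathbf N_j(\mathbf u)]\,\mathrm dW_j.
\]

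The key step is to rewrite the drift pairing as $\langle J_\varepsilon\mathbf H(\mathbf u_\varepsilon),\mathbf A(\mathbf u)\rangle_{\mathbb H^1,\mathbb H^{-1}}$ and to show that $J_\varepsilon\mathbf H(\mathbf u_\varepsilon)\to\mathbf H(\mathbf u)$ in $L^2(0,T\wedge\tau;\mathbb H^1)$. The linear part $\Delta\mathbf u_\varepsilon+2\mathbf u_\varepsilon$ converges in this space because $\mathbf u\in L^2(0,T\wedge\tau;\mathbb H^3)$ and $J_\varepsilon\to I$ strongly with $\|J_\varepsilon\|\le1$ on every $\mathbb H^s$. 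For the cubic part we use the continuity estimate \eqref{energy-cubic-continuity}: since $\|\mathbf u_\varepsilon-\mathbf u\|_{\mathbb H^2}\to0$ in $L^2$ in time and $\|\mathbf u_\varepsilon-\mathbf u\|_{\mathbb H^1}\to0$ uniformly in time (the range of $\mathbf u$ on $[0,T\wedge\tau]$ is compact in $\mathbb H^1$), the $\mathbb H^1$-sup bound $R$ gives $\mathbf F(\mathbf u_\varepsilon)\to\mathbf F(\mathbf u)$ in $L^2(0,T\wedge\tau;\mathbb H^1)$, using dominated convergence in time. Combined with \eqref{effective-drift}, the drift pairing converges to $\int\langle -\mathbf H(\mathbf u),\mathbf A(\mathbf u)\rangle\,\mathrm ds$. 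We then evaluate this limit pairing using $\mathbf H(\mathbf u)\in L^2\mathbb H^1$. The term $(\mathbf H,\mathbf H)$ together with $-(\mathbf H,\Delta\mathbf H)=\|\nabla\mathbf H\|^2$ gives $\|\mathbf H\|_{\mathbb H^1}^2$. The precession term vanishes, since $(\mathbf u\times\mathbf H,\mathbf H)=0$ pointwise and this pairing is an honest $L^1$ integral because $\mathbf u\in\mathbb L^\infty$ a.e. in time. This produces the dissipation $-\|\mathbf H(\mathbf u)\|_{\mathbb H^1}^2$, and the $\mathbf C$ pairing becomes $D\mathcal E(\mathbf u)[\mathbf C(\mathbf u)]$.

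The remaining terms converge more easily. For the It\^{o} correction, $D^2\mathcal E$ is locally Lipschitz on $\mathbb H^1$ by \eqref{energy-derivatives}, and $J_\varepsilon\mathbf N(\mathbf u)\to\mathbf N(\mathbf u)$ in $\mathcal L_2(\ell^2;\mathbb H^1)$ for a.e.\ $s$ by \eqref{noise-H1} and dominated convergence in $j$. Dominated convergence in time, with the bound $C_R(1+R^2)$, then gives convergence of the trace integral. For the stochastic integral, $\sum_j|m_j^\varepsilon-m_j(\mathbf u)|^2\to0$ in $L^1(0,T\wedge\tau)$ almost surely, so the stochastic integrals converge uniformly in probability. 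Finally, $\mathcal E(\mathbf u_\varepsilon(t))\to\mathcal E(\mathbf u(t\wedge\tau))$ and $\mathcal E(J_\varepsilon\mathbf u_0)\to\mathcal E(\mathbf u_0)$ by continuity of $\mathcal E$ on $\mathbb H^1$. Extracting an almost surely uniformly convergent subsequence gives \eqref{stochastic-energy-identity} for all $t$ simultaneously, using continuity in $t$ of both sides. I expect the main obstacle to be the cubic convergence in $L^2\mathbb H^1$ together with the legitimacy of the duality pairing $\langle\mathbf H(\mathbf u),\mathbf u\times\mathbf H(\mathbf u)\rangle$. If the $\mathbb L^2$-level cancellation cannot be justified directly, the fallback is to pair first with $J_\varepsilon\mathbf H(\mathbf u_\varepsilon)$ and control the commutator using $\mathbf u\in\mathbb L^\infty$.
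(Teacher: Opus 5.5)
Your proposal is correct and follows essentially the same route as the paper. You regularize the stopped equation by a resolvent, apply the Hilbert-space It\^{o} formula to $\mathcal{E}$ in $\mathbb{H}^1$, and move the regularization onto the effective field by self-adjointness. You then prove $J_\varepsilon\mathbf{H}(J_\varepsilon\mathbf{u})\to\mathbf{H}(\mathbf{u})$ in $L^2(0,T\wedge\tau_R;\mathbb{H}^1)$ via \eqref{energy-cubic-continuity} and the compact $\mathbb{H}^1$ range, and use the pointwise vanishing of the precession pairing. The remaining differences are inessential: a single resolvent instead of $(I-\varepsilon\Delta)^{-2}$, and convergence of the stochastic integrals in probability rather than localization at $\rho_K$ with $L^2$ convergence via Burkholder--Davis--Gundy.
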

\begin{proof}[\emph{\textbf{Proof}}]
Set $J_\varepsilon=(I-\varepsilon\Delta)^{-2}$ for $\varepsilon>0$. This self-adjoint Fourier multiplier commutes with spatial derivatives, is a contraction on every $\mathbb{H}^s$, and converges strongly to the identity on each such space. For fixed $\varepsilon$, it maps $\mathbb{H}^{-1}$ continuously into $\mathbb{H}^3$.
Fix $R,T$ and define
\[
\rho_K=T\wedge\tau_R\wedge\inf\left\{t\geq0:
\int_0^{t\wedge\tau_R}\|\mathbf{u}(s)\|_{\mathbb{H}^3}^2\,\mathrm{d}s\geq K\right\}.
\]
The local regularity implies that $\rho_K=T\wedge\tau_R$ for all sufficiently large integers $K$, almost surely. Write $\mathbf{u}_\varepsilon(t)=J_\varepsilon\mathbf{u}(t\wedge\rho_K)$. Applying $J_\varepsilon$ to the stopped equation gives an $\mathbb{H}^1$-valued semimartingale. To verify the required integrability explicitly, the exit-time definition, \eqref{energy-cubic-H1}, and \eqref{noise-H1} give
\[
\begin{split}
&\int_0^{\rho_K}\Big(
\|\mathbf A(\mathbf u(s))\|_{\mathbb H^{-1}}^2
+\|\mathbf C(\mathbf u(s))\|_{\mathbb H^1}^2
+\|\mathbf N(\mathbf u(s))\|_{\mathcal L_2(\ell^2;\mathbb H^1)}^2\Big)\,\mathrm ds
\leq C_{R,C_{**}}(T+K).
\end{split}
\]
Indeed, $\|\mathbf u(s)\|_{\mathbb H^1}\leq R$ before stopping,
$\|\mathbf F(\mathbf u(s))\|_{\mathbb H^1}\leq C_R\|\mathbf u(s)\|_{\mathbb H^2}$,
and \eqref{effective-drift} bounds $\mathbf A$ by the resulting effective-field norm.
Thus the regularized drift is time-integrable in $\mathbb H^1$, and the regularized diffusion is square-integrable in the corresponding Hilbert--Schmidt space. Moreover, $\|\mathbf u_\varepsilon(t)\|_{\mathbb H^1}\leq R$, so the derivatives of $\mathcal E$ are bounded on its range. The Hilbert-space It\^{o} formula \cite{da2014stochastic,prevot2007concise} therefore yields, for $0\leq t\leq T$,
\begin{equation}\label{regularized-energy-expanded}
\begin{split}
\mathcal{E}(\mathbf{u}_\varepsilon(t))
&=\mathcal{E}(J_\varepsilon\mathbf{u}_0)
-\int_0^{t\wedge\rho_K}
\langle\mathbf{A}(\mathbf{u}(s)),J_\varepsilon\mathbf{H}(J_\varepsilon\mathbf{u}(s))\rangle_{\mathbb{H}^{-1},\mathbb{H}^1}\,\mathrm{d}s\\
&\quad+\int_0^{t\wedge\rho_K}(c_\varepsilon(s)+r_\varepsilon(s))\,\mathrm{d}s
+\sum_{j=1}^{\infty}\int_0^{t\wedge\rho_K}m_{\varepsilon,j}(s)\,\mathrm{d}W_j(s),
\end{split}
\end{equation}
where
\[
\begin{aligned}
c_\varepsilon(s)&=D\mathcal{E}(J_\varepsilon\mathbf{u}(s))[J_\varepsilon\mathbf{C}(\mathbf{u}(s))],\\
r_\varepsilon(s)&=\frac12\sum_{j=1}^{\infty}D^2\mathcal{E}(J_\varepsilon\mathbf{u}(s))
[J_\varepsilon\mathbf{N}_j(\mathbf{u}(s)),J_\varepsilon\mathbf{N}_j(\mathbf{u}(s))],\\
m_{\varepsilon,j}(s)&=D\mathcal{E}(J_\varepsilon\mathbf{u}(s))[J_\varepsilon\mathbf{N}_j(\mathbf{u}(s))].
\end{aligned}
\]
The sign and the extra $J_\varepsilon$ in the drift term follow from $D\mathcal{E}=-\mathbf{H}$ and self-adjointness. We justify the limit of each term separately.

The continuous $\mathbb{H}^1$ path has compact range, so strong convergence of the uniformly bounded operators $J_\varepsilon$ is uniform on that range. Dominated convergence also gives convergence in $L^2(0,\rho_K;\mathbb{H}^3)$. If $\mathbf{z}_\varepsilon=(J_\varepsilon-I)\mathbf{u}$, estimate \eqref{energy-cubic-continuity} gives
\[
\begin{split}
&\|\mathbf{F}(J_\varepsilon\mathbf{u})-\mathbf{F}(\mathbf{u})\|_{L^2(0,\rho_K;\mathbb{H}^1)}\\
&\quad\lesssim_R\|\mathbf{z}_\varepsilon\|_{L^2(0,\rho_K;\mathbb{H}^2)}
+\sup_{0\leq s\leq\rho_K}\|\mathbf{z}_\varepsilon(s)\|_{\mathbb{H}^1}
\|\mathbf{u}\|_{L^2(0,\rho_K;\mathbb{H}^2)}\longrightarrow0.
\end{split}
\]
Consequently,
\begin{equation}\label{energy-smoothing-convergence}
\begin{split}
J_\varepsilon\mathbf{u}&\longrightarrow\mathbf{u}
\quad\textrm{in }\mathcal{C}([0,\rho_K];\mathbb{H}^1)\cap L^2(0,\rho_K;\mathbb{H}^3),\\
\mathbf{H}(J_\varepsilon\mathbf{u})&\longrightarrow\mathbf{H}(\mathbf{u})
\quad\textrm{in }L^2(0,\rho_K;\mathbb{H}^1),\qquad\mathbb{P}\textrm{-a.s.}
\end{split}
\end{equation}
The contraction property and strong convergence of $J_\varepsilon$ imply
\[
\begin{split}
&\|J_\varepsilon\mathbf{H}(J_\varepsilon\mathbf{u})-\mathbf{H}(\mathbf{u})\|_{L^2(0,\rho_K;\mathbb{H}^1)}\\
&\quad\leq\|\mathbf{H}(J_\varepsilon\mathbf{u})-\mathbf{H}(\mathbf{u})\|_{L^2(0,\rho_K;\mathbb{H}^1)}
+\|(J_\varepsilon-I)\mathbf{H}(\mathbf{u})\|_{L^2(0,\rho_K;\mathbb{H}^1)}\longrightarrow0.
\end{split}
\]
Thus the error in the drift primitive is bounded by
\[
\begin{split}
&\sup_{0\leq t\leq T}\left|\int_0^{t\wedge\rho_K}
\langle\mathbf{A}(\mathbf{u}(s)),J_\varepsilon\mathbf{H}(J_\varepsilon\mathbf{u}(s))-\mathbf{H}(\mathbf{u}(s))\rangle\,\mathrm{d}s\right|\\
&\quad\leq\|\mathbf{A}(\mathbf{u})\|_{L^2(0,\rho_K;\mathbb{H}^{-1})}
\|J_\varepsilon\mathbf{H}(J_\varepsilon\mathbf{u})-\mathbf{H}(\mathbf{u})\|_{L^2(0,\rho_K;\mathbb{H}^1)}\longrightarrow0.
\end{split}
\]
All quantities are finite by \eqref{effective-field-reg}--\eqref{effective-drift}. For almost every $0<s<\rho_K$, the limiting pairing satisfies
\begin{equation}\label{effective-cancellation}
\begin{split}
-\langle\mathbf{A}(\mathbf{u}(s)),\mathbf{H}(\mathbf{u}(s))\rangle
&=-\|\mathbf{H}(\mathbf{u}(s))\|_{\mathbb{L}^{2}}^2-\|\nabla\mathbf{H}(\mathbf{u}(s))\|_{\mathbb{L}^{2}}^2\\
&\quad+(\mathbf{u}(s)\times\mathbf{H}(\mathbf{u}(s)),\mathbf{H}(\mathbf{u}(s)))_{\mathbb{L}^2}\\
&=-\|\mathbf{H}(\mathbf{u}(s))\|_{\mathbb{H}^1}^2.
\end{split}
\end{equation}
The cross-product pairing vanishes pointwise, and its integrability follows from
\[
\|\mathbf{u}\times\mathbf{H}\|_{\mathbb{L}^{2}}\lesssim_d\|\mathbf{u}\|_{\mathbb{H}^1}\|\mathbf{H}\|_{\mathbb{H}^1}.
\]

For the noise terms, abbreviate $\mathscr{H}=\mathcal{L}_2(\ell^2;\mathbb{H}^1)$ and put
\[
\begin{aligned}
a_\varepsilon&=\sup_{0\leq s\leq\rho_K}\|(J_\varepsilon-I)\mathbf{u}(s)\|_{\mathbb{H}^1},\\
b_\varepsilon&=\sup_{0\leq s\leq\rho_K}\|(J_\varepsilon-I)\mathbf{N}(\mathbf{u}(s))\|_{\mathscr{H}},\\
d_\varepsilon&=\sup_{0\leq s\leq\rho_K}\|(J_\varepsilon-I)\mathbf{C}(\mathbf{u}(s))\|_{\mathbb{H}^1}.
\end{aligned}
\]
Continuity of $\mathbf{N}$ and $\mathbf{C}$ on $\mathbb{H}^1$ makes their ranges along the stopped path compact. Strong convergence of $J_\varepsilon$ on $\mathbb{H}^1$ and on $\mathscr{H}$ therefore gives $a_\varepsilon+b_\varepsilon+d_\varepsilon\to0$ almost surely. Estimate \eqref{noise-H1} also bounds these quantities by a deterministic constant depending only on $R,C_{**}$.

Write $c(s)=D\mathcal{E}(\mathbf{u}(s))[\mathbf{C}(\mathbf{u}(s))]$ and
$r(s)=\frac12\sum_jD^2\mathcal{E}(\mathbf{u}(s))[\mathbf{N}_j(\mathbf{u}(s)),\mathbf{N}_j(\mathbf{u}(s))]$.
The local Lipschitz bounds for the derivatives of $\mathcal{E}$ give
\[
\begin{aligned}
|c_\varepsilon(s)-c(s)|
&\leq C_{R,C_{**}}(a_\varepsilon+d_\varepsilon),\\
\left(\sum_j|m_{\varepsilon,j}(s)-m_j(\mathbf{u}(s))|^2\right)^{1/2}
&\leq C_{R,C_{**}}(a_\varepsilon+b_\varepsilon),\qquad 0\leq s\leq\rho_K.
\end{aligned}
\]
For the trace term, if $B$ is a bounded bilinear form on $\mathbb{H}^1$ and $S,T\in\mathscr{H}$, Cauchy--Schwarz in $j$ gives
\[
\begin{split}
\left|\sum_j\big(B[Se_j,Se_j]-B[Te_j,Te_j]\big)\right|
&\leq\|B\|\sum_j\|(S-T)e_j\|_{\mathbb{H}^1}
(\|Se_j\|_{\mathbb{H}^1}+\|Te_j\|_{\mathbb{H}^1})\\
&\leq\|B\|(\|S\|_{\mathscr{H}}+\|T\|_{\mathscr{H}})\|S-T\|_{\mathscr{H}}.
\end{split}
\]
First change $D^2\mathcal{E}(J_\varepsilon\mathbf{u}(s))$ to $D^2\mathcal{E}(\mathbf{u}(s))$, and then apply this inequality with $S=J_\varepsilon\mathbf{N}(\mathbf{u}(s))$, $T=\mathbf{N}(\mathbf{u}(s))$. It follows that
\[
|r_\varepsilon(s)-r(s)|\leq C_{R,C_{**}}(a_\varepsilon+b_\varepsilon).
\]
This also proves absolute convergence of every trace series used above. Dominated convergence now yields
\[
\begin{aligned}
\mathbb{E}\int_0^{\rho_K}|c_\varepsilon(s)+r_\varepsilon(s)-\mathcal{R}(\mathbf{u}(s))|\,\mathrm{d}s&\longrightarrow0,\\
\mathbb{E}\int_0^{\rho_K}\sum_j|m_{\varepsilon,j}(s)-m_j(\mathbf{u}(s))|^2\,\mathrm{d}s&\longrightarrow0.
\end{aligned}
\]
In particular, the Burkholder--Davis--Gundy inequality gives
\[
\begin{split}
&\mathbb{E}\sup_{0\leq t\leq T}\left|\sum_j\int_0^{t\wedge\rho_K}
(m_{\varepsilon,j}(s)-m_j(\mathbf{u}(s)))\,\mathrm{d}W_j(s)\right|^2\\
&\qquad\lesssim\mathbb{E}\int_0^{\rho_K}\sum_j|m_{\varepsilon,j}(s)-m_j(\mathbf{u}(s))|^2\,\mathrm{d}s\longrightarrow0.
\end{split}
\]
Finally, the local Lipschitz continuity of $\mathcal{E}$ implies
$\sup_{0\leq t\leq T}|\mathcal{E}(\mathbf{u}_\varepsilon(t))-\mathcal{E}(\mathbf{u}(t\wedge\rho_K))|\to0$ almost surely, and the same holds for the initial energy. Passing along an almost surely uniformly convergent subsequence in \eqref{regularized-energy-expanded} proves \eqref{stochastic-energy-identity} up to $\rho_K$, simultaneously for all $0\leq t\leq T$. For each integer $K$, the resulting identity holds on an event of full probability. Intersect these events over $K$. On this intersection, $\rho_K=T\wedge\tau_R$ for all sufficiently large $K$, so the identity up to $\tau_R$ follows for every $0\leq t\leq T$.
\end{proof}

\subsection{Coercivity and the stochastic energy bounds}
Define
\begin{equation}\label{energy-V}
\mathcal{V}(\mathbf{v})=1+\mathcal{E}(\mathbf{v})+2\|\mathbf{v}\|_{\mathbb{L}^{2}}^2
=1+\frac12\|\nabla\mathbf{v}\|_{\mathbb{L}^{2}}^2+\frac12\|\mathbf{v}\|_{\mathbb{L}^{4}}^4+\|\mathbf{v}\|_{\mathbb{L}^{2}}^2.
\end{equation}
Then
$\mathcal{V}(\mathbf{v})\asymp1+\|\mathbf{v}\|_{\mathbb{H}^1}^2+\|\mathbf{v}\|_{\mathbb{L}^{4}}^4$.
This coercive functional is finite on $\mathbb{H}^1$ and involves no spatial integral of a nonzero constant.

\begin{lemma}\label{energy-noise-bound}
For every $\mathbf{v}\in\mathbb{H}^1$,
\begin{equation}\label{energy-generator-bound}
|\mathcal{R}(\mathbf{v})|\leq C\mathcal{V}(\mathbf{v}),\qquad
\sum_{j=1}^{\infty}|m_j(\mathbf{v})+4(\mathbf{v},\mathbf{g}_j)_{\mathbb{L}^2}|^2
\leq C\mathcal{V}(\mathbf{v})^2,
\end{equation}
where $C$ depends only on $d$ and $C_{**}$.
\end{lemma}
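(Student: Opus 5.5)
The plan is to expand both quantities explicitly using \eqref{energy-derivatives} and the definitions of $\mathbf{N}_j$ and $\mathbf{C}$ in \eqref{ito-convention}. Each resulting pairing is then bounded by a product of norms of $\mathbf{v}$ that is linear in $\mathcal{V}(\mathbf{v})$ (for $\mathcal{R}$) or of order $\mathcal{V}(\mathbf{v})$ (for $m_j$). The summation in $j$ is closed with \eqref{asum2}.

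Throughout, we use $\mathbb{H}^2\hookrightarrow\mathbb{L}^\infty\cap\mathbb{W}^{1,4}$ and $\mathbb{H}^1\hookrightarrow\mathbb{L}^4$ for $d\le 3$. This gives, for the coefficients,
\[
\|\mathbf{h}_j\|_{\mathbb{L}^\infty}+\|\nabla\mathbf{h}_j\|_{\mathbb{L}^4}\lesssim\|\mathbf{h}_j\|_{\mathbb{H}^2},\qquad
\|\mathbf{g}_j\|_{\mathbb{L}^\infty}+\|\mathbf{g}_j\|_{\mathbb{L}^4}+\|\mathbf{g}_j\|_{\mathbb{H}^1}\lesssim\|\mathbf{g}_j\|_{\mathbb{H}^2},
\]
and for the solution the bounds $\|\mathbf{v}\|_{\mathbb{H}^1}^2\lesssim\mathcal{V}$ and $\|\mathbf{v}\|_{\mathbb{L}^4}^4\le 2\mathcal{V}$. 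Since $\mathcal{V}\ge1$, any power $\mathcal{V}^{\alpha}$ with $\alpha\le1$ is bounded by $\mathcal{V}$. All $j$-sums are handled by Cauchy--Schwarz against $\sum_j(\|\mathbf{h}_j\|_{\mathbb{H}^2}^2+\|\mathbf{g}_j\|_{\mathbb{H}^2}^2)\le C_{**}$.

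\emph{The martingale coefficients.} First we exploit the pointwise identities $\mathbf{F}(\mathbf{v})\cdot(\mathbf{v}\times\mathbf{h}_j)=0$, $\mathbf{v}\cdot(\mathbf{v}\times\mathbf{h}_j)=0$ and $\partial_i\mathbf{v}\cdot(\partial_i\mathbf{v}\times\mathbf{h}_j)=0$. With these, the multiplicative part of $m_j(\mathbf{v})$ reduces to
\[
\sum_i(\partial_i\mathbf{v},\mathbf{v}\times\partial_i\mathbf{h}_j)_{\mathbb{L}^2},
\]
which is bounded by $\|\nabla\mathbf{v}\|_{\mathbb{L}^2}\|\mathbf{v}\|_{\mathbb{L}^4}\|\nabla\mathbf{h}_j\|_{\mathbb{L}^4}\lesssim\mathcal{V}^{3/4}\|\mathbf{h}_j\|_{\mathbb{H}^2}$. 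The additive part of $m_j$ is $(\nabla\mathbf{v},\nabla\mathbf{g}_j)+2(\mathbf{F}(\mathbf{v}),\mathbf{g}_j)-2(\mathbf{v},\mathbf{g}_j)$. The three pieces are bounded by $\mathcal{V}^{1/2}$, $\|\mathbf{v}\|_{\mathbb{L}^4}^3\|\mathbf{g}_j\|_{\mathbb{L}^4}\lesssim\mathcal{V}^{3/4}$ and $\mathcal{V}^{1/2}$, each times $\|\mathbf{g}_j\|_{\mathbb{H}^2}$. The shift $4(\mathbf{v},\mathbf{g}_j)$ is also $\lesssim\mathcal{V}^{1/2}\|\mathbf{g}_j\|_{\mathbb{L}^2}$. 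Squaring and summing in $j$ therefore gives $C\mathcal{V}^{3/2}\le C\mathcal{V}^2$.

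\emph{The drift $\mathcal{R}$.} For the trace term $\frac12\sum_jD^2\mathcal{E}(\mathbf{v})[\mathbf{N}_j,\mathbf{N}_j]$, I will bound the four pieces separately:
\begin{enumerate}
\item[(i)] $\|\nabla\mathbf{N}_j(\mathbf{v})\|_{\mathbb{L}^2}^2\lesssim\|\mathbf{h}_j\|_{\mathbb{H}^2}^2\|\mathbf{v}\|_{\mathbb{H}^1}^2+\|\mathbf{g}_j\|_{\mathbb{H}^1}^2$, by \eqref{H2-multiplier};
\item[(ii)] $\|\mathbf{N}_j(\mathbf{v})\|_{\mathbb{L}^2}^2$, controlled as in \eqref{noise-L2};
\item[(iii)] the two quartic integrals, bounded by
\[
\|\mathbf{v}\|_{\mathbb{L}^4}^2\|\mathbf{N}_j(\mathbf{v})\|_{\mathbb{L}^4}^2\lesssim\|\mathbf{v}\|_{\mathbb{L}^4}^2\big(\|\mathbf{h}_j\|_{\mathbb{L}^\infty}^2\|\mathbf{v}\|_{\mathbb{L}^4}^2+\|\mathbf{g}_j\|_{\mathbb{L}^4}^2\big)\lesssim\mathcal{V}\big(\|\mathbf{h}_j\|_{\mathbb{H}^2}^2+\|\mathbf{g}_j\|_{\mathbb{H}^2}^2\big).
\end{enumerate}
For $D\mathcal{E}(\mathbf{v})[\mathbf{C}(\mathbf{v})]$, the gradient pairing is at most $\|\mathbf{v}\|_{\mathbb{H}^1}\|\mathbf{C}(\mathbf{v})\|_{\mathbb{H}^1}\lesssim\mathcal{V}$ by \eqref{noise-H1}, and the $\mathbb{L}^2$ pairing is handled the same way. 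The cubic pairing $2(\mathbf{F}(\mathbf{v}),\mathbf{C}(\mathbf{v}))$ splits into two parts. The multiplicative part is $\frac12\sum_j(\mathbf{F}(\mathbf{v}),(\mathbf{v}\times\mathbf{h}_j)\times\mathbf{h}_j)$, bounded by $\|\mathbf{v}\|_{\mathbb{L}^4}^4\sum_j\|\mathbf{h}_j\|_{\mathbb{L}^\infty}^2$. The additive part is $\frac12\sum_j(\mathbf{F}(\mathbf{v}),\mathbf{g}_j\times\mathbf{h}_j)$, bounded by $\|\mathbf{v}\|_{\mathbb{L}^4}^3\sum_j\|\mathbf{g}_j\|_{\mathbb{L}^4}\|\mathbf{h}_j\|_{\mathbb{L}^\infty}\lesssim\mathcal{V}^{3/4}$.

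I expect the only delicate point to be the quartic contributions: the term $(\mathbf{F}(\mathbf{v}),\mathbf{C}(\mathbf{v}))$ and the $|\mathbf{v}|^2|\mathbf{N}_j|^2$ integrals in the trace. Estimating them through $\mathbb{H}^1$ norms would give $\mathcal{V}^2$ and destroy the linear bound. The remedy is to keep every factor of $\mathbf{v}$ in $\mathbb{L}^4$ and to place all the weight on the coefficients in $\mathbb{L}^\infty$, so that exactly $\|\mathbf{v}\|_{\mathbb{L}^4}^4\le2\mathcal{V}$ appears. If a sharper constant were needed, one could instead use the cancellation $((\mathbf{v}\times\mathbf{h})\times\mathbf{h})\cdot\mathbf{v}=-|\mathbf{v}\times\mathbf{h}|^2$, which gives the multiplicative cubic term a sign, but the crude bound suffices here.
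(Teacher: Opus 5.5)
Your proposal is correct and follows essentially the same route as the paper: you expand $\mathcal{R}$ and $m_j$ via \eqref{energy-derivatives}, use the pointwise cross-product cancellations to reduce $m_j(\mathbf{v})+4(\mathbf{v},\mathbf{g}_j)_{\mathbb{L}^2}$ to exactly the expression in \eqref{energy-real-martingale}, and keep the quartic terms linear in $\mathcal{V}$ by placing $\mathbf{v}$ in $\mathbb{L}^4$ and the multiplicative coefficients in $\mathbb{L}^\infty$. The differences are only inessential H\"older choices. For instance, you use $\mathbb{L}^4\times\mathbb{L}^4$ instead of $\mathbb{L}^6\times\mathbb{L}^3$ for $\mathbf{v}\times\partial_i\mathbf{h}_j$, which even yields the sharper bound $C\mathcal{V}^{3/2}$ for the martingale coefficients.
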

\begin{proof}[\emph{\textbf{Proof}}]
Equation \eqref{noise-H1} bounds the gradient and quadratic terms in
$D\mathcal{E}(\mathbf{v})[\mathbf{C}(\mathbf{v})]$
by $C(1+\|\mathbf{v}\|_{\mathbb{H}^1}^2)$. For its quartic part, write
\[
\mathbf{C}(\mathbf{v})
=\frac12\sum_{j=1}^{\infty}(\mathbf{v}\times\mathbf{h}_j)\times\mathbf{h}_j+\mathbf{b},
\qquad
\mathbf{b}=\frac12\sum_{j=1}^{\infty}\mathbf{g}_j\times\mathbf{h}_j.
\]
Assumption \eqref{asum2}, Sobolev embedding and Cauchy--Schwarz in $j$ give
$\|\mathbf{b}\|_{\mathbb{L}^{4}}\leq C$. Hence
\[
|(\mathbf{F}(\mathbf{v}),\mathbf{C}(\mathbf{v}))_{\mathbb{L}^2}|
\leq C\|\mathbf{v}\|_{\mathbb{L}^{4}}^4+\|\mathbf{v}\|_{\mathbb{L}^{4}}^3\|\mathbf{b}\|_{\mathbb{L}^{4}}
\leq C(1+\|\mathbf{v}\|_{\mathbb{L}^{4}}^4).
\]
The quadratic and gradient parts of the trace are controlled by \eqref{noise-H1}. The remaining terms satisfy
\begin{equation}\label{energy-quartic-trace}
\begin{split}
\sum_{j=1}^{\infty}\int_{\mathbb{R}^d}
|\mathbf{v}(x)|^2|\mathbf{N}_j(\mathbf{v})(x)|^2\,\mathrm{d}x
\leq{}&2\sum_{j=1}^{\infty}\|\mathbf{h}_j\|_{\mathbb{L}^{\infty}}^2\|\mathbf{v}\|_{\mathbb{L}^{4}}^4+2\sum_{j=1}^{\infty}\|\mathbf{g}_j\|_{\mathbb{L}^{\infty}}^2\|\mathbf{v}\|_{\mathbb{L}^{2}}^2.
\end{split}
\end{equation}
Since $(\mathbf{v}\cdot\mathbf{N}_j(\mathbf{v}))^2
\leq|\mathbf{v}|^2|\mathbf{N}_j(\mathbf{v})|^2$, this proves the first bound in \eqref{energy-generator-bound}.

For the martingale coefficient, the identities
$\mathbf{v}\cdot(\mathbf{v}\times\mathbf{h}_j)=0$ and
$(\partial_i\mathbf{v},\partial_i\mathbf{v}\times\mathbf{h}_j)_{\mathbb{L}^2}=0$ imply
\begin{equation}\label{energy-real-martingale}
\begin{split}
m_j(\mathbf{v})+4(\mathbf{v},\mathbf{g}_j)_{\mathbb{L}^2}
={}&\sum_{i=1}^d
(\partial_i\mathbf{v},\mathbf{v}\times\partial_i\mathbf{h}_j+\partial_i\mathbf{g}_j)_{\mathbb{L}^2}\\
&+2(\mathbf{F}(\mathbf{v}),\mathbf{g}_j)_{\mathbb{L}^2}+2(\mathbf{v},\mathbf{g}_j)_{\mathbb{L}^2}.
\end{split}
\end{equation}
Using
$\|\mathbf{v}\nabla\mathbf{h}_j\|_{\mathbb{L}^{2}}
\leq\|\mathbf{v}\|_{\mathbb{L}^{6}}\|\nabla\mathbf{h}_j\|_{\mathbb{L}^{3}}$
and
$|(\mathbf{F}(\mathbf{v}),\mathbf{g}_j)_{\mathbb{L}^2}|
\leq\|\mathbf{v}\|_{\mathbb{L}^{4}}^3\|\mathbf{g}_j\|_{\mathbb{L}^{4}}$,
we obtain
\[
\sum_{j=1}^{\infty}|m_j(\mathbf{v})+4(\mathbf{v},\mathbf{g}_j)_{\mathbb{L}^2}|^2
\leq C\big(\|\mathbf{v}\|_{\mathbb{H}^1}^4
+\|\mathbf{v}\|_{\mathbb{H}^1}^2+\|\mathbf{v}\|_{\mathbb{L}^{4}}^6\big)
\leq C\mathcal{V}(\mathbf{v})^2.
\]
\end{proof}

\begin{proposition}\label{global-H1}
For every $p\geq1$ and $T>0$, there is a constant $C_{p,T}$ independent of $R$ such that
\begin{equation}\label{uniform-energy-moments}
\begin{split}
&\mathbb{E}\sup_{0\leq t\leq T\wedge\tau_R}\mathcal{V}(\mathbf{u}(t))^p\\
&\quad+\mathbb{E}\left(\int_0^{T\wedge\tau_R}
\big(\|\mathbf{H}(\mathbf{u}(s))\|_{\mathbb{H}^1}^2
+\|\mathbf{u}(s)\|_{\mathbb{H}^2}^2\big)\,\mathrm{d}s\right)^p
\leq C_{p,T}\mathcal{V}(\mathbf{u}_0)^p.
\end{split}
\end{equation}
In particular, $\tau_*=\infty$ almost surely.
\end{proposition}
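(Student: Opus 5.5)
We will add to the energy identity of Lemma~\ref{weak-energy-identity} twice the $\mathbb{L}^2$ balance, so as to obtain an identity for $\mathcal{V}(\mathbf{u}(t\wedge\tau_R))$. The $\mathbb{L}^2$ balance comes from the variational It\^{o} formula in $\mathbb{H}^1\subset\mathbb{L}^2\subset\mathbb{H}^{-1}$ for the local weak solution up to $\tau_R$; it has the same form as \eqref{exact-L2}, with $P_{\leq N}$ removed and the factor $a_R=1$. Its drift is $-\mathcal{D}(\mathbf{u})+\|\nabla\mathbf{u}\|_{\mathbb{L}^2}^2+2\|\mathbf{u}\|_{\mathbb{L}^2}^2+(\mathbf{C}(\mathbf{u}),\mathbf{u})+\frac12\|\mathbf{N}(\mathbf{u})\|^2_{\mathcal{L}_2}$, and its martingale is $\sum_j\int(\mathbf{g}_j,\mathbf{u})\,\mathrm{d}W_j$. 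After multiplying by $2$, this martingale coefficient is exactly $4(\mathbf{u},\mathbf{g}_j)$. The martingale in $\mathcal{V}$ therefore has coefficients $m_j(\mathbf{u})+4(\mathbf{u},\mathbf{g}_j)$, which is precisely the combination controlled in Lemma~\ref{energy-noise-bound}.

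The drift of $\mathcal{V}$ is $-\|\mathbf{H}(\mathbf{u})\|_{\mathbb{H}^1}^2-2\mathcal{D}(\mathbf{u})+\mathcal{R}(\mathbf{u})+2\|\nabla\mathbf{u}\|^2+4\|\mathbf{u}\|^2+2(\mathbf{C}(\mathbf{u}),\mathbf{u})+\|\mathbf{N}(\mathbf{u})\|^2$. By Lemma~\ref{energy-noise-bound} and \eqref{noise-L2}, every term except the two dissipations is bounded by $C\mathcal{V}(\mathbf{u})$. Thus
\[
\mathcal{V}(\mathbf{u}(t\wedge\tau_R))+\int_0^{t\wedge\tau_R}\big(\|\mathbf{H}(\mathbf{u})\|_{\mathbb{H}^1}^2+2\mathcal{D}(\mathbf{u})\big)\,\mathrm{d}s\leq\mathcal{V}(\mathbf{u}_0)+C\int_0^{t\wedge\tau_R}\mathcal{V}(\mathbf{u})\,\mathrm{d}s+\mathcal{M}(t\wedge\tau_R),
\]
with $\mathrm{d}\langle\mathcal{M}\rangle\leq C\mathcal{V}(\mathbf{u})^2\,\mathrm{d}t$. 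Since $\mathcal{D}\geq\|\Delta\mathbf{u}\|^2$ and $\|\mathbf{u}\|_{\mathbb{H}^2}^2\lesssim\|\mathbf{u}\|^2+\|\Delta\mathbf{u}\|^2\lesssim\mathcal{V}+\mathcal{D}$, the $\mathbb{H}^2$ integral is also controlled.

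We then raise this to the $p$th power and take suprema up to $t\wedge\tau_R$. Before $\tau_R$ we have $\|\mathbf{u}\|_{\mathbb{H}^1}\leq R$, so $\mathcal{V}$ is bounded there and all moments are finite a priori, which makes the absorption legitimate. The Burkholder--Davis--Gundy inequality gives
\[
\mathbb{E}\sup|\mathcal{M}|^p\lesssim\mathbb{E}\Big(\int\mathcal{V}^2\Big)^{p/2}\leq\mathbb{E}\Big[\sup\mathcal{V}^{p/2}\Big(\int\mathcal{V}\Big)^{p/2}\Big]\leq\tfrac12\mathbb{E}\sup\mathcal{V}^p+C\int_0^t\mathbb{E}\sup_{r\leq s\wedge\tau_R}\mathcal{V}^p\,\mathrm{d}s.
\]
Gronwall's lemma then yields \eqref{uniform-energy-moments}, with a constant independent of $R$. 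To obtain the homogeneous form $C\mathcal{V}(\mathbf{u}_0)^p$, we use $\mathcal{V}\geq1$, so that additive constants can be absorbed.

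Finally, we show $\tau_*=\infty$. On $\{\tau_R\leq T\}$ the solution satisfies $\mathcal{V}(\mathbf{u}(\tau_R))\gtrsim R^2$, so Chebyshev's inequality gives $\mathbb{P}(\tau_R\leq T)\lesssim C_{1,T}\mathcal{V}(\mathbf{u}_0)/R^2\to0$. Hence $\mathbb{P}(\tau_*\leq T)=0$ for every $T$, and $\tau_*=\infty$ almost surely.

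The main obstacle is justifying the $\mathbb{L}^2$ Itô formula at this regularity and matching its martingale exactly with the one in \eqref{stochastic-energy-identity}, so that the combined martingale has coefficients $m_j+4(\mathbf{u},\mathbf{g}_j)$. The additive noise term $-4(\mathbf{u},\mathbf{g}_j)$ in $m_j$ must cancel against the $\mathbb{L}^2$ part; otherwise its quadratic variation would only be of order $\mathcal{V}$, which is still harmless, but the bound of Lemma~\ref{energy-noise-bound} would not apply directly. We expect this to go through as in Proposition~\ref{cutoff-solution}, using \eqref{Hminus1-drift} and \eqref{noise-H1}.
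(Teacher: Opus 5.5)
Your proposal is correct and follows the paper's proof. You add twice the variational It\^{o} balance for $\|\mathbf{u}\|_{\mathbb{L}^2}^2$ to \eqref{stochastic-energy-identity}, and Lemma~\ref{energy-noise-bound} bounds the drift by $C\mathcal{V}$ and gives $\mathrm{d}\langle M\rangle\leq C\mathcal{V}^2\,\mathrm{d}t$. You close with Burkholder--Davis--Gundy, Young and Gronwall, where the bound $\mathcal{V}\leq C_R$ before $\tau_R$ legitimizes absorption, and conclude with Chebyshev at $\tau_R$. Raising the integrated inequality to the $p$th power instead of applying It\^{o}'s formula to $V^p$ is a harmless variant.

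There are two bookkeeping slips, and neither affects the argument:
\begin{enumerate}
\item The balance you wrote is for $\frac12\|\mathbf{u}\|_{\mathbb{L}^2}^2$, so reaching $\mathcal{V}$ needs four times it, not twice; as written, your drift corresponds to $1+\mathcal{E}+\|\mathbf{u}\|_{\mathbb{L}^2}^2$ rather than $\mathcal{V}$.
\item $m_j$ contains $-2(\mathbf{u},\mathbf{g}_j)$, not $-4(\mathbf{u},\mathbf{g}_j)$, so the combined coefficient $m_j+4(\mathbf{u},\mathbf{g}_j)$ involves no cancellation, and none is needed.
\end{enumerate}
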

\begin{proof}[\emph{\textbf{Proof}}]
Put $Q(t)=\|\mathbf{u}(t)\|_{\mathbb{L}^{2}}^2$. The variational It\^{o} formula in the triple
$\mathbb{H}^1\subset\mathbb{L}^2\subset\mathbb{H}^{-1}$
is applicable on the stopped intervals by \eqref{effective-drift}. Integration by parts gives, for $0\leq t\leq T\wedge\tau_R$,
\begin{equation}\label{energy-L2-balance}
\begin{split}
\mathrm{d}Q(t)
&+\Big[2\|\Delta\mathbf{u}(t)\|_{\mathbb{L}^{2}}^2+4\|\mathbf{u}(t)\|_{\mathbb{L}^{4}}^4
+4(\nabla\mathbf{F}(\mathbf{u}(t)),\nabla\mathbf{u}(t))_{\mathbb{L}^2}\Big]\,\mathrm{d}t\\
&=\Big[2\|\nabla\mathbf{u}(t)\|_{\mathbb{L}^{2}}^2+4Q(t)
+\Gamma(\mathbf{u}(t))\Big]\,\mathrm{d}t
+2\sum_{j=1}^{\infty}(\mathbf{u}(t),\mathbf{g}_j)_{\mathbb{L}^2}\,\mathrm{d}W_j(t),
\end{split}
\end{equation}
where the affine-noise cancellation yields
\[
\begin{split}
\Gamma(\mathbf{v})
&=2(\mathbf{v},\mathbf{C}(\mathbf{v}))_{\mathbb{L}^2}
+\sum_{j=1}^{\infty}\|\mathbf{N}_j(\mathbf{v})\|_{\mathbb{L}^{2}}^2\\
&=\sum_{j=1}^{\infty}
\big[(\mathbf{v}\times\mathbf{h}_j,\mathbf{g}_j)_{\mathbb{L}^2}+\|\mathbf{g}_j\|_{\mathbb{L}^{2}}^2\big]
\leq C(1+\|\mathbf{v}\|_{\mathbb{L}^{2}}^2).
\end{split}
\]
The cubic gradient pairing is nonnegative. Absorbing the linear gradient term by
$\|\nabla\mathbf{v}\|_{\mathbb{L}^{2}}^2\leq\varepsilon\|\Delta\mathbf{v}\|_{\mathbb{L}^{2}}^2+C_\varepsilon\|\mathbf{v}\|_{\mathbb{L}^{2}}^2$
therefore gives
\begin{equation}\label{energy-L2-estimate}
\mathrm{d}Q(t)+\|\Delta\mathbf{u}(t)\|_{\mathbb{L}^{2}}^2\,\mathrm{d}t
\leq C(1+Q(t))\,\mathrm{d}t
+2\sum_{j=1}^{\infty}(\mathbf{u}(t),\mathbf{g}_j)_{\mathbb{L}^2}\,\mathrm{d}W_j(t).
\end{equation}

Add twice this inequality to \eqref{stochastic-energy-identity}. With
$V(t)=\mathcal{V}(\mathbf{u}(t))$, Lemma~\ref{energy-noise-bound} gives
\begin{equation}\label{coercive-energy-inequality}
\begin{split}
\mathrm{d}V(t)+c\big(\|\mathbf{H}(\mathbf{u}(t))\|_{\mathbb{H}^1}^2
+\|\mathbf{u}(t)\|_{\mathbb{H}^2}^2\big)\,\mathrm{d}t
&\leq CV(t)\,\mathrm{d}t+\mathrm{d}M(t),\\
\mathrm{d}\langle M\rangle_t&\leq CV(t)^2\,\mathrm{d}t,
\end{split}
\end{equation}
where
\[
M(t)=\sum_{j=1}^{\infty}\int_0^t
\big[m_j(\mathbf{u}(s))+4(\mathbf{u}(s),\mathbf{g}_j)_{\mathbb{L}^2}\big]\,\mathrm{d}W_j(s)
\]
is initially defined up to $\tau_R$. We used
$\|\mathbf{u}(t)\|_{\mathbb{H}^2}^2
\lesssim Q(t)+\|\Delta\mathbf{u}(t)\|_{\mathbb{L}^{2}}^2$;
the resulting lower-order term is included in $CV(t)$.

We make the additional localization explicit. Fix $p\geq2$, write
\[
\mathscr D(t)=\|\mathbf H(\mathbf u(t))\|_{\mathbb H^1}^2
+\|\mathbf u(t)\|_{\mathbb H^2}^2,
\]
and, for integers $L\geq1$, define
\[
\sigma_L=T\wedge\tau_R\wedge\inf\left\{t\geq0:
\int_0^{t\wedge\tau_R}\big(\mathscr D(s)+V(s)^{2p}\big)\,\mathrm ds\geq L\right\}.
\]
The integrals are finite on $[0,T\wedge\tau_R]$ by local regularity, \eqref{effective-field-reg}, and the bound $V(t)\leq C_R$ before the exit time. Therefore $\sigma_L=T\wedge\tau_R$ for all sufficiently large $L$, almost surely. Since $V\geq1$, It\^{o}'s formula for $V^p$ and \eqref{coercive-energy-inequality}, with the nonnegative finite-variation terms discarded, give
\[
V(t\wedge\sigma_L)^p
\leq V(0)^p+C_p\int_0^{t\wedge\sigma_L}V(s)^p\,\mathrm ds+M_{p,L}(t),
\qquad 0\leq t\leq T,
\]
where
\[
M_{p,L}(t)=p\int_0^{t\wedge\sigma_L}V(s)^{p-1}\,\mathrm dM(s),
\qquad
\langle M_{p,L}\rangle_T
\leq C_p\int_0^{\sigma_L}V(s)^{2p}\,\mathrm ds\leq C_pL.
\]
In particular, $M_{p,L}$ is a square-integrable martingale. The contribution of the quadratic variation in It\^{o}'s formula is bounded by $C_pV^p$, since $V^{p-2}\,\mathrm d\langle M\rangle\leq CV^p\,\mathrm dt$.
The Burkholder--Davis--Gundy and Young inequalities yield
\[
\begin{split}
\mathbb E\sup_{0\leq r\leq t}|M_{p,L}(r)|
&\leq C_p\mathbb E\left(\int_0^{t\wedge\sigma_L}V(s)^{2p}\,\mathrm ds\right)^{1/2}\\
&\leq C_p\mathbb E\left[
\left(\sup_{0\leq r\leq t}V(r\wedge\sigma_L)^p\right)^{1/2}
\left(\int_0^{t\wedge\sigma_L}V(s)^p\,\mathrm ds\right)^{1/2}\right]\\
&\leq\frac12\mathbb E\sup_{0\leq r\leq t}V(r\wedge\sigma_L)^p
+C_p\int_0^t\mathbb E\sup_{0\leq r\leq s}V(r\wedge\sigma_L)^p\,\mathrm ds.
\end{split}
\]
Taking suprema in the stopped energy inequality, absorbing the first term on the right, and applying Gronwall's lemma gives
\[
\mathbb E\sup_{0\leq t\leq T}V(t\wedge\sigma_L)^p
\leq C_{p,T}V(0)^p,
\]
with a constant independent of $R$ and $L$.

The dissipation is estimated with the same stopping. Integrating \eqref{coercive-energy-inequality} and using $V\geq0$, we find
\[
c\int_0^{\sigma_L}\mathscr D(s)\,\mathrm ds
\leq V(0)+CT\sup_{0\leq t\leq T}V(t\wedge\sigma_L)
+\sup_{0\leq t\leq T}|M(t\wedge\sigma_L)|.
\]
Furthermore,
\[
\begin{split}
\mathbb E\sup_{0\leq t\leq T}|M(t\wedge\sigma_L)|^p
&\leq C_p\mathbb E\left(\int_0^{\sigma_L}V(s)^2\,\mathrm ds\right)^{p/2}\\
&\leq C_{p,T}\mathbb E\sup_{0\leq t\leq T}V(t\wedge\sigma_L)^p
\leq C_{p,T}V(0)^p.
\end{split}
\]
Here $M(\cdot\wedge\sigma_L)$ is itself square-integrable because $V\geq1$ and its quadratic variation is bounded by $CL$. Raising the integrated bound to the $p$th power proves
\[
\mathbb E\left(\int_0^{\sigma_L}\mathscr D(s)\,\mathrm ds\right)^p
\leq C_{p,T}V(0)^p.
\]
Let $L\to\infty$. The stopping times exhaust $[0,T\wedge\tau_R]$, and Fatou's lemma applied to these two nonnegative quantities gives \eqref{uniform-energy-moments} for $p\geq2$. For $1\leq p<2$, Jensen's inequality applied to the case $p=2$ gives the same estimate with $V(0)^p$ on the right.

On $\{\tau_R\leq T\}$, continuity gives
$\|\mathbf{u}(\tau_R)\|_{\mathbb{H}^1}=R$. Coercivity and \eqref{uniform-energy-moments} imply
\begin{equation}\label{energy-exit-probability}
\mathbb{P}\{\tau_R\leq T\}
\leq C_{p,T}\mathcal{V}(\mathbf{u}_0)^pR^{-2p}.
\end{equation}
Letting $R\to\infty$ proves $\mathbb{P}\{\tau_*\leq T\}=0$.
Taking integer $T$ gives $\tau_*=\infty$ almost surely, and Fatou's lemma transfers \eqref{uniform-energy-moments} to every finite interval.
\end{proof}

\begin{proof}[\emph{\textbf{Proof of Theorem \ref{the2}}}]
Proposition~\ref{local-existence} provides the maximal solution and local pathwise uniqueness. Proposition~\ref{global-H1} makes its lifetime infinite and proves \eqref{main-energy}. To recover the $\mathbb{H}^3$ estimate, use \eqref{energy-cubic-H1} and the elliptic norm equivalence to obtain
\begin{equation}\label{energy-H3-recovery}
\|\mathbf{v}\|_{\mathbb{H}^3}^2
\lesssim\|\mathbf{H}(\mathbf{v})\|_{\mathbb{H}^1}^2
+(1+\|\mathbf{v}\|_{\mathbb{H}^1}^4)\|\mathbf{v}\|_{\mathbb{H}^2}^2,
\qquad\mathbf{v}\in\mathbb{H}^3.
\end{equation}
Indeed,
$\Delta\mathbf{v}=\mathbf{H}(\mathbf{v})-2\mathbf{v}+2\mathbf{F}(\mathbf{v})$
and
$\|\mathbf{v}\|_{\mathbb{H}^3}\lesssim\|\Delta\mathbf{v}\|_{\mathbb{H}^1}+\|\mathbf{v}\|_{\mathbb{L}^{2}}$.
Consequently,
\[
\begin{split}
\mathbb{E}\left(\int_0^T\|\mathbf{u}(s)\|_{\mathbb{H}^3}^2\,\mathrm{d}s\right)^p
\lesssim{}&\mathbb{E}\left(\int_0^T
\|\mathbf{H}(\mathbf{u}(s))\|_{\mathbb{H}^1}^2\,\mathrm{d}s\right)^p\\
&+\left[\mathbb{E}\left(1+
\sup_{0\leq t\leq T}\|\mathbf{u}(t)\|_{\mathbb{H}^1}^{4p}\right)^2\right]^{1/2}
\left[\mathbb{E}\left(\int_0^T
\|\mathbf{u}(s)\|_{\mathbb{H}^2}^2\,\mathrm{d}s\right)^{2p}\right]^{1/2}.
\end{split}
\]
The right-hand side is finite by \eqref{uniform-energy-moments} at orders $p,2p,4p$. This proves \eqref{main-H1}. The consistent local construction gives strong $\mathbb{H}^1$ continuity and the weak solution identity on every finite interval. The energy identity follows by removing the exit times in Lemma~\ref{weak-energy-identity}. Global pathwise uniqueness follows from Proposition~\ref{local-existence}.
\end{proof}

\section{Persistence of \texorpdfstring{$\mathbb{H}^2$}{H2} regularity}\label{sec7}
We use the cutoff frequency approximations of Section~\ref{sec5} with initial data in $\mathbb H^2$. The higher estimates give compactness in $\mathcal Z_{\mathbf u}^{(2)}$, defined below, and identify the limit as a pathwise strong solution of the cutoff equation. The cutoff is then removed using the global $\mathbb H^1$ bounds of Section~\ref{sec6}.

\subsection{Higher estimates for the cutoff equation}
\begin{lemma}\label{cutoff-H2-regularity}
Let $\mathbf{u}_0\in\mathbb{H}^2$ and assume \eqref{asum2}. For every fixed $R\geq1$ and $T>0$, the solution $\mathbf{u}^R$ of Proposition~\ref{cutoff-solution} satisfies
\begin{equation}\label{cutoff-H2-paths}
\mathbf{u}^R\in\mathcal{C}([0,T];\mathbb{H}^2)
\cap L^2(0,T;\mathbb{H}^4),
\qquad\mathbb{P}\textrm{-a.s.}
\end{equation}
The cutoff equation holds in $\mathbb{L}^2$ for all $0\leq t\leq T$, almost surely.
\end{lemma}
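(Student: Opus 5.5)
We will derive $\mathbb H^2$ estimates for the frequency approximations $\mathbf u_N^R$ of \eqref{cutoff-approx}, now started from $P_{\leq N}\mathbf u_0$ with $\mathbf u_0\in\mathbb H^2$, and transfer them to the limit. The uniqueness in Proposition~\ref{cutoff-solution} then identifies that limit with $\mathbf u^R$. For fixed $N$, the paths are smooth. We therefore apply It\^o's formula to $\frac12\|\Delta\mathbf u_N^R\|_{\mathbb L^2}^2$, stopping at the exit time of the $\mathbb H^2$ norm from a ball of radius $m$, exactly as in Lemma~\ref{H1-estimate}. The dissipation is $\|\Delta^2\mathbf v\|_{\mathbb L^2}^2$.

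The drift pairings are handled as follows. The linear term $-\Delta\mathbf v$ is absorbed by interpolation. The zero-order cubic term is bounded by $\|\mathbf F(\mathbf v)\|_{\mathbb L^2}\|\Delta^2\mathbf v\|_{\mathbb L^2}\lesssim\|\mathbf v\|_{\mathbb H^1}^3\|\Delta^2\mathbf v\|_{\mathbb L^2}$. For the cutoff cubic term, we use $a_R\|\Delta\mathbf F(\mathbf v)\|_{\mathbb L^2}$ together with the expansion \eqref{cubic-laplacian}. Its terms are bounded by $\|\mathbf v\|_{\mathbb L^\infty}^2\|\Delta\mathbf v\|_{\mathbb L^2}+\|\mathbf v\|_{\mathbb L^\infty}\|\nabla\mathbf v\|_{\mathbb L^4}^2$. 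On the support of $a_R$, Gagliardo--Nirenberg with $\|\mathbf v\|_{\mathbb H^1}\leq 2R$ gives the bound $C_R(1+\|\mathbf v\|_{\mathbb H^2}^{\beta}\|\mathbf v\|_{\mathbb H^4}^{\gamma})$ with $\gamma<1$. This is exactly where the cutoff is used; it makes the estimate linear after Young's inequality.

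The precession term is the main obstacle. Integrating by parts once gives
\[
(\Delta(\mathbf v\times\Delta\mathbf v),\Delta\mathbf v)_{\mathbb L^2}=(2\nabla\mathbf v\times\nabla\Delta\mathbf v,\Delta\mathbf v)_{\mathbb L^2}+(\Delta\mathbf v\times\Delta\mathbf v,\Delta\mathbf v)_{\mathbb L^2}+(\mathbf v\times\Delta^2\mathbf v,\Delta\mathbf v)_{\mathbb L^2}.
\]
The middle term vanishes. I would bound the term with $\mathbf v\times\Delta^2\mathbf v$ by $\|\mathbf v\|_{\mathbb L^\infty}\|\Delta\mathbf v\|_{\mathbb L^2}\|\Delta^2\mathbf v\|_{\mathbb L^2}\lesssim\|\mathbf v\|_{\mathbb H^2}^2\|\mathbf v\|_{\mathbb H^4}$. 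The first term is controlled by $\|\nabla\mathbf v\|_{\mathbb L^6}\|\nabla\Delta\mathbf v\|_{\mathbb L^2}\|\Delta\mathbf v\|_{\mathbb L^3}$, which interpolates between $\mathbb H^2$ and $\mathbb H^4$ with total $\mathbb H^4$ power below $2$. After Young's inequality, both terms yield $\varepsilon\|\mathbf v\|_{\mathbb H^4}^2+C\|\mathbf v\|_{\mathbb H^2}^2(1+\|\mathbf v\|_{\mathbb H^2}^2)$. The quantity $\|\mathbf v\|_{\mathbb H^2}^2$ appearing here is time-integrable with all moments by \eqref{cutoff-L2} and \eqref{cutoff-H1}, since $\|\mathbf v\|_{\mathbb H^2}^2\leq\|\mathbf v\|_{\mathbb H^1}\|\mathbf v\|_{\mathbb H^3}$.

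The noise terms under \eqref{asum2} satisfy $\|\mathbf N(\mathbf v)\|_{\mathcal L_2(\ell^2;\mathbb H^2)}^2+\|\mathbf C(\mathbf v)\|_{\mathbb H^2}\lesssim1+\|\mathbf v\|_{\mathbb H^2}^2$, by the same product argument as \eqref{H2-multiplier}, using $\mathbf h_j\in\mathbb H^2$ and $\mathbb H^2\hookrightarrow\mathbb W^{1,4}$. The resulting inequality has the form $\mathrm dY+c\|\Delta^2\mathbf v\|^2\,\mathrm dt\leq C_R(1+g(t))Y\,\mathrm dt+\mathrm dM$, where $Y=\|\Delta\mathbf v\|_{\mathbb L^2}^2$ and $g=\|\mathbf v\|_{\mathbb H^2}^2\in L^1(0,T)$ has uniform moments. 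Because the Gronwall coefficient is random, I would use a stochastic Gronwall lemma or a pathwise bound on $\log(1+Y)$. This gives uniform-in-$N$ tightness of $\sup_t\|\mathbf u_N^R\|_{\mathbb H^2}+\|\mathbf u_N^R\|_{L^2\mathbb H^4}$, that is, a bound in probability rather than moments. A bound in probability suffices. Along the representation \eqref{H1-representation}, lower semicontinuity then places the limit in $L^\infty(0,T;\mathbb H^2)\cap L^2(0,T;\mathbb H^4)$ almost surely.

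The limit has the law of $\mathbf u^R$ by pathwise uniqueness and Yamada--Watanabe, so $\mathbf u^R$ has these bounds as well. Finally, the drift $\mathbf M_R(\mathbf u^R)$ lies in $L^2(0,T;\mathbb L^2)$ by the estimates above, and the diffusion lies in $L^2(0,T;\mathcal L_2(\ell^2;\mathbb H^2))$. Applying the variational It\^o theorem to $\Lambda^2\mathbf u^R$ in $\mathbb H^2\subset\mathbb L^2\subset\mathbb H^{-2}$, with localization, yields continuity in $\mathbb H^2$ and the identity in $\mathbb L^2$.
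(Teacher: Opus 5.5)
Your proof is correct. It reaches the conclusion by a somewhat different and shorter route than the paper, mainly in how the $\mathbb H^2$ information is passed to $\mathbf u^R$.

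\emph{The a priori estimate.} The paper works with $X_N=1+\|\mathbf u_N^R\|_{\mathbb H^2}^2$ and Gronwall coefficient $G_N=1+\|\mathbf u_N^R\|_{\mathbb L^\infty}^4$. It does not use the cutoff there; it only uses $0\le a_R\le1$. It bounds the precession term directly by $\|\mathbf v\|_{\mathbb L^\infty}\|\Delta\mathbf v\|_{\mathbb L^2}\|\mathbf v\|_{\mathbb H^4}$, so your integration by parts is unnecessary. The random coefficient is handled by stopping when $\int_0^tG_N\,\mathrm ds$ reaches $K$ and using the weight $\exp(-C\int G_N)$. This gives only a bound in probability, exactly as your $\log(1+Y)$ or stochastic-Gronwall argument does. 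One small correction to your bookkeeping: the zero-order cubic term is not cut off. It contributes an additive forcing $C\|\mathbf v\|_{\mathbb H^1}^6$, not a term of the form $(1+g)Y$. This is harmless, since that forcing has all moments uniformly in $N$ for fixed $R$.

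\emph{The transfer step.} The paper proves a new tightness result in $\mathcal Z^{(2)}_{\mathbf u}\cap\mathcal X_1$ and takes a fresh Skorokhod representation. It then re-identifies every nonlinear term and the stochastic integral at the $\mathbb H^2$ level, and only afterwards uses Yamada--Watanabe to transfer the Borel functional $Y$ to $\mathbf u^R$. You skip this new compactness and identification. The limit of the $\mathbb H^1$-level representation is already a martingale solution in the class of Proposition~\ref{cutoff-solution}. Moreover, $Y(f)=\sup_t\|f(t)\|_{\mathbb H^2}^2+\int_0^T\|f\|_{\mathbb H^4}^2\,\mathrm ds$ is lower semicontinuous on $\mathcal X_1$.

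You should write out the step that a bound in probability survives this limit, because Fatou alone does not give it. Lower semicontinuity yields $\bar{\mathbb P}\{Y(\mathbf u_*^R)>L\}\le\liminf_k\bar{\mathbb P}\{Y(\bar{\mathbf u}_k^R)>L\}\le\sup_N\mathbb P\{Y(\mathbf u_N^R)>L\}$, and the right-hand side tends to zero as $L\to\infty$. You also need Borel measurability of $Y$ on $\mathcal X_1$, for instance via $Y(f)=\sup_mY(P_{\leq m}f)$. With these, equality of joint laws gives $Y(\mathbf u^R)<\infty$ almost surely. Your final Pardoux step on $\Lambda^2\mathbf u^R$ is then the same as the paper's.

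Your route is more economical. The paper's route additionally produces, self-containedly, a martingale solution directly in the strong class, which the lemma does not require.
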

\begin{proof}[\emph{\textbf{Proof}}]
Throughout the proof, constants may depend on $d\leq3$ and the fixed noise bound $C_{**}$. The algebra property of $\mathbb H^2$ gives
\[
\begin{split}
\sum_j\|\mathbf N_j(\mathbf v)\|_{\mathbb H^2}^2
&\lesssim\left(\|\mathbf v\|_{\mathbb H^2}^2\sum_j\|\mathbf h_j\|_{\mathbb H^2}^2
+\sum_j\|\mathbf g_j\|_{\mathbb H^2}^2\right),\\
\|\mathbf C(\mathbf v)\|_{\mathbb H^2}
&\lesssim\left[\|\mathbf v\|_{\mathbb H^2}\sum_j\|\mathbf h_j\|_{\mathbb H^2}^2
+\left(\sum_j\|\mathbf g_j\|_{\mathbb H^2}^2\right)^{1/2}
\left(\sum_j\|\mathbf h_j\|_{\mathbb H^2}^2\right)^{1/2}\right].
\end{split}
\]
In particular, the correction series converges absolutely in $\mathbb H^2$, and
\begin{equation}\label{noise-H2}
\|\mathbf{N}(\mathbf{v})\|_{\mathcal{L}_2(\ell^2;\mathbb{H}^2)}^2
+\|\mathbf{C}(\mathbf{v})\|_{\mathbb{H}^2}^2
\lesssim(1+\|\mathbf{v}\|_{\mathbb{H}^2}^2),
\end{equation}
and both maps are globally Lipschitz in the corresponding norms. We also have the tame estimate
\begin{equation}\label{cubic-H2-tame}
\|\mathbf{F}(\mathbf{v})\|_{\mathbb{H}^2}
\lesssim\|\mathbf{v}\|_{\mathbb{L}^{\infty}}^2\|\mathbf{v}\|_{\mathbb{H}^2}.
\end{equation}
Indeed, differentiating the cubic polynomial and using the Gagliardo--Nirenberg inequality gives
\[
\begin{split}
\|D^2\mathbf F(\mathbf v)\|_{\mathbb{L}^{2}}
&\lesssim\big(\|\mathbf v\|_{\mathbb{L}^{\infty}}^2\|D^2\mathbf v\|_{\mathbb{L}^{2}}
+\|\mathbf v\|_{\mathbb{L}^{\infty}}\|\nabla\mathbf v\|_{\mathbb{L}^{4}}^2\big)\\
&\lesssim\|\mathbf v\|_{\mathbb{L}^{\infty}}^2\|D^2\mathbf v\|_{\mathbb{L}^{2}},\\
\|\nabla\mathbf v\|_{\mathbb{L}^{4}}^2&\lesssim\|\mathbf v\|_{\mathbb{L}^{\infty}}\|D^2\mathbf v\|_{\mathbb{L}^{2}}.
\end{split}
\]
The zeroth and first derivatives satisfy the same tame bound directly. Approximation by smooth functions proves \eqref{cubic-H2-tame} for every $\mathbf v\in\mathbb H^2$.

Write $\mathbf{v}_N(t)=\mathbf{u}_N^R(t)$ and
$X_N(t)=1+\|\mathbf{v}_N(t)\|_{\mathbb{H}^2}^2$.
For fixed $N$, the frequency support makes all Sobolev norms equivalent on the approximation space. Thus $\mathbf v_N$ has continuous paths in every Sobolev space and is globally defined. We initially stop at
\[
\kappa_m^N=T\wedge\inf\{t\geq0:X_N(t)\geq m\},\qquad m>X_N(0).
\]
These stopping times equal $T$ for all sufficiently large $m$, on each sample path. Applying It\^{o}'s formula in \eqref{cutoff-approx}, using self-adjointness of $P_{\leq N}$ and $P_{\leq N}\mathbf v_N=\mathbf v_N$, gives the following differential identity before stopping:
\begin{equation}\label{H2-exact-approximation}
\begin{split}
\mathrm dX_N(t)+2\|\Delta\Lambda^2\mathbf v_N(t)\|_{\mathbb{L}^{2}}^2\,\mathrm dt
&=\Big[2\|\nabla\Lambda^2\mathbf v_N(t)\|_{\mathbb{L}^{2}}^2+4\|\mathbf v_N(t)\|_{\mathbb H^2}^2\\
&\quad-4(\Lambda^2\mathbf F(\mathbf v_N(t)),\Lambda^2\mathbf v_N(t))_{\mathbb{L}^2}\\
&\quad+4a_R(\mathbf v_N(t))(\Lambda^2\mathbf F(\mathbf v_N(t)),\Delta\Lambda^2\mathbf v_N(t))_{\mathbb{L}^2}\\
&\quad-2(\mathbf v_N(t)\times\Delta\mathbf v_N(t),\Lambda^4\mathbf v_N(t))_{\mathbb{L}^2}\\
&\quad+2(\Lambda^2\mathbf C(\mathbf v_N(t)),\Lambda^2\mathbf v_N(t))_{\mathbb{L}^2}\\
&\quad+\|P_{\leq N}\mathbf N(\mathbf v_N(t))\|_{\mathcal L_2(\ell^2;\mathbb H^2)}^2\Big]\,\mathrm dt
+\mathrm dL_N(t).
\end{split}
\end{equation}
The martingale $L_N$ is specified below. For an arbitrary spatial function $\mathbf v$, the two cubic pairings obey
\[
\begin{split}
|(\Lambda^2\mathbf F(\mathbf v),\Lambda^2\mathbf v)_{\mathbb{L}^2}|
&\leq\|\mathbf F(\mathbf v)\|_{\mathbb H^2}\|\mathbf v\|_{\mathbb H^2}
\lesssim\|\mathbf v\|_{\mathbb{L}^{\infty}}^2\|\mathbf v\|_{\mathbb H^2}^2,\\
|(\Lambda^2\mathbf F(\mathbf v),\Delta\Lambda^2\mathbf v)_{\mathbb{L}^2}|
&\lesssim\|\mathbf v\|_{\mathbb{L}^{\infty}}^2\|\mathbf v\|_{\mathbb H^2}\|\mathbf v\|_{\mathbb H^4}.
\end{split}
\]
Young's inequality therefore yields
\[
\begin{split}
&4|(\Lambda^2\mathbf{F}(\mathbf{v}),\Lambda^2\mathbf{v})_{\mathbb{L}^2}|
+4a_R(\mathbf{v})|(\Lambda^2\mathbf{F}(\mathbf{v}),\Delta\Lambda^2\mathbf{v})_{\mathbb{L}^2}|\\
&\qquad\leq\varepsilon\|\mathbf{v}\|_{\mathbb{H}^4}^2
+C_\varepsilon(1+\|\mathbf{v}\|_{\mathbb{L}^{\infty}}^4)
\|\mathbf{v}\|_{\mathbb{H}^2}^2.
\end{split}
\]
Here $a_R(\mathbf{v})$ is spatially constant and $0\leq a_R\leq1$. The precession term satisfies
\begin{equation}\label{H2-precession}
\begin{split}
|(\mathbf{v}\times\Delta\mathbf{v},\Lambda^4\mathbf{v})_{\mathbb{L}^2}|
&\leq\|\mathbf{v}\|_{\mathbb{L}^{\infty}}\|\Delta\mathbf{v}\|_{\mathbb{L}^{2}}
\|\mathbf{v}\|_{\mathbb{H}^4}\\
&\leq\varepsilon\|\mathbf{v}\|_{\mathbb{H}^4}^2
+C_\varepsilon\|\mathbf{v}\|_{\mathbb{L}^{\infty}}^2
\|\mathbf{v}\|_{\mathbb{H}^2}^2.
\end{split}
\end{equation}
For the remaining terms, Fourier interpolation and \eqref{noise-H2} give
\[
\begin{split}
2\|\nabla\Lambda^2\mathbf v\|_{\mathbb{L}^{2}}^2
&\leq\varepsilon\|\Delta\Lambda^2\mathbf v\|_{\mathbb{L}^{2}}^2+C_\varepsilon\|\mathbf v\|_{\mathbb H^2}^2,\\
2|(\Lambda^2\mathbf C(\mathbf v),\Lambda^2\mathbf v)_{\mathbb{L}^2}|
+\|P_{\leq N}\mathbf N(\mathbf v)\|_{\mathcal L_2(\ell^2;\mathbb H^2)}^2
&\leq C_{C_{**}}(1+\|\mathbf v\|_{\mathbb H^2}^2).
\end{split}
\]
Since $\|\mathbf v\|_{\mathbb H^4}^2\leq C_d(\|\Delta\Lambda^2\mathbf v\|_{\mathbb{L}^{2}}^2+\|\mathbf v\|_{\mathbb H^2}^2)$, choosing the absorption parameters sufficiently small proves
\begin{equation}\label{H2-differential}
\begin{split}
\mathrm{d}X_N(t)+c\|\mathbf{v}_N(t)\|_{\mathbb{H}^4}^2\,\mathrm{d}t
&\leq CG_N(t)X_N(t)\,\mathrm{d}t+\mathrm{d}L_N(t),\\
\mathrm{d}\langle L_N\rangle_t&\leq CX_N(t)^2\,\mathrm{d}t,
\end{split}
\end{equation}
where
\[
\begin{split}
G_N(t)&=1+\|\mathbf{v}_N(t)\|_{\mathbb{L}^{\infty}}^4,\\
L_N(t)&=2\sum_{j=1}^{\infty}\int_0^t
(\Lambda^2\mathbf{v}_N(s),
\Lambda^2P_{\leq N}\mathbf{N}_j(\mathbf{v}_N(s)))_{\mathbb{L}^2}\,\mathrm{d}W_j(s).
\end{split}
\]
The constants in \eqref{H2-differential} are independent of $N$ and $R$. In particular, the quadratic variation estimate follows from
\[
\begin{split}
\mathrm d\langle L_N\rangle_t
&=4\sum_j|(\Lambda^2\mathbf v_N(t),\Lambda^2P_{\leq N}\mathbf N_j(\mathbf v_N(t)))_{\mathbb{L}^2}|^2\,\mathrm dt\\
&\leq4\|\mathbf v_N(t)\|_{\mathbb H^2}^2\|\mathbf N(\mathbf v_N(t))\|_{\mathcal L_2(\ell^2;\mathbb H^2)}^2\,\mathrm dt
\leq C_{C_{**}}X_N(t)^2\,\mathrm dt.
\end{split}
\]

The lower regularity estimates control the time integral of $G_N$. Indeed,
$\|\mathbf{v}\|_{\mathbb{L}^{\infty}}^4
\lesssim\|\mathbf{v}\|_{\mathbb{H}^2}^4
\leq\|\mathbf{v}\|_{\mathbb{H}^1}^2\|\mathbf{v}\|_{\mathbb{H}^3}^2$,
so
\begin{equation}\label{H2-coefficient-integrability}
\begin{split}
\int_0^T G_N(s)\,\mathrm{d}s
&\leq T+C\sup_{0\leq t\leq T}\|\mathbf{v}_N(t)\|_{\mathbb{H}^1}^2
\int_0^T\|\mathbf{v}_N(s)\|_{\mathbb{H}^3}^2\,\mathrm{d}s,\\
\sup_N\mathbb{E}\left(\int_0^T G_N(s)\,\mathrm{d}s\right)^q
&\leq C_{R,q,T},\qquad q\geq1,
\end{split}
\end{equation}
by \eqref{cutoff-H1} and H\"older's inequality in probability. For $K>T$, define
\[
\eta_K^N=T\wedge\inf\left\{t\geq0:
\int_0^tG_N(s)\,\mathrm{d}s\geq K\right\},
\qquad
w_N(t)=\exp\left(-C\int_0^{t\wedge\eta_K^N}G_N(s)\,\mathrm{d}s\right).
\]
The weighted process $S_N(t)=w_N(t)X_N(t\wedge\eta_K^N)$ satisfies
\[
\mathrm{d}S_N(t)
+c\mathbf{1}_{\{0\leq t\leq\eta_K^N\}}w_N(t)
\|\mathbf{v}_N(t)\|_{\mathbb{H}^4}^2\,\mathrm{d}t
\leq w_N(t)\,\mathrm{d}L_N(t\wedge\eta_K^N).
\]
Set $\mathscr M_N(t)=\int_0^{t\wedge\eta_K^N}w_N(s)\,\mathrm dL_N(s)$. Then
$\mathrm d\langle\mathscr M_N\rangle_t\leq C S_N(t)^2\mathbf1_{\{t\leq\eta_K^N\}}\,\mathrm dt$.
For $p\geq2$, apply It\^{o}'s formula to $S_N^p$ up to $\kappa_m^N$. Dropping the nonnegative dissipation gives
\[
S_N(t\wedge\kappa_m^N)^p
\leq X_N(0)^p+C_p\int_0^{t\wedge\kappa_m^N}S_N(s)^p\,\mathrm ds
+p\int_0^{t\wedge\kappa_m^N}S_N(s)^{p-1}\,\mathrm d\mathscr M_N(s).
\]
The stopped martingale is integrable because $S_N(s)\leq X_N(s\wedge\eta_K^N)\leq m$ for $s\leq\kappa_m^N$. For its maximal term, the Burkholder--Davis--Gundy and Young inequalities give
\[
\begin{split}
&\mathbb E\sup_{0\leq r\leq t}\left|p\int_0^{r\wedge\kappa_m^N}S_N(s)^{p-1}\,\mathrm d\mathscr M_N(s)\right|\\
&\quad\leq C_p\mathbb E\left(\int_0^{t\wedge\kappa_m^N}S_N(s)^{2p}\,\mathrm ds\right)^{1/2}\\
&\quad\leq\frac12\mathbb E\sup_{0\leq r\leq t}S_N(r\wedge\kappa_m^N)^p
+C_p\int_0^t\mathbb E\sup_{0\leq r\leq s}S_N(r\wedge\kappa_m^N)^p\,\mathrm ds.
\end{split}
\]
After absorption and Gronwall,
\[
\mathbb E\sup_{0\leq t\leq T}S_N(t\wedge\kappa_m^N)^p\leq C_{p,T}X_N(0)^p.
\]
For the dissipation, the integrated weighted equation and $S_N\geq0$ imply
\[
c\int_0^{\eta_K^N\wedge\kappa_m^N}w_N(s)\|\mathbf v_N(s)\|_{\mathbb H^4}^2\,\mathrm ds
\leq X_N(0)+\sup_{0\leq t\leq T}|\mathscr M_N(t\wedge\kappa_m^N)|.
\]
A second application of the Burkholder--Davis--Gundy inequality gives
\[
\begin{split}
\mathbb E\sup_{0\leq t\leq T}|\mathscr M_N(t\wedge\kappa_m^N)|^p
&\leq C_p\mathbb E\left(\int_0^{\eta_K^N\wedge\kappa_m^N}S_N(s)^2\,\mathrm ds\right)^{p/2}\\
&\leq C_{p,T}\mathbb E\sup_{0\leq t\leq T}S_N(t\wedge\kappa_m^N)^p
\leq C_{p,T}X_N(0)^p.
\end{split}
\]
Fatou's lemma removes $\kappa_m^N$, since $\kappa_m^N\uparrow T$ almost surely for fixed $N$. The bound $e^{-CK}\leq w_N\leq1$ then yields
\begin{equation}\label{H2-stopped-estimate}
\begin{split}
&\mathbb{E}\sup_{0\leq t\leq\eta_K^N}X_N(t)^p
+\mathbb{E}\left(\int_0^{\eta_K^N}
\|\mathbf{v}_N(s)\|_{\mathbb{H}^4}^2\,\mathrm{d}s\right)^p\\
&\qquad\leq C_{p,T,K}(1+\|\mathbf{u}_0\|_{\mathbb{H}^2}^2)^p.
\end{split}
\end{equation}
For $1\leq p<2$, apply the $p=2$ estimate and H\"older's inequality. No expectation of the exponential of $\int_0^T G_N(s)\,\mathrm ds$ is used.

We next use the stopped estimates to establish tightness at the $\mathbb H^2$ level. Put
\[
Y_N=\sup_{0\leq t\leq T}\|\mathbf{v}_N(t)\|_{\mathbb{H}^2}^2
+\int_0^T\|\mathbf{v}_N(s)\|_{\mathbb{H}^4}^2\,\mathrm{d}s.
\]
Since $G_N\geq1$, the event $\{\int_0^TG_N(s)\,\mathrm ds\leq K\}$ implies $\eta_K^N=T$. Splitting according to its complement, Markov's inequality and \eqref{H2-stopped-estimate} give
\[
\begin{split}
\mathbb{P}\{Y_N>L\}
&\leq\mathbb{P}\left\{\int_0^TG_N(s)\,\mathrm{d}s>K\right\}\\
&\quad+\mathbb{P}\left\{\sup_{0\leq t\leq\eta_K^N}\|\mathbf{v}_N(t)\|_{\mathbb{H}^2}^2
+\int_0^{\eta_K^N}\|\mathbf{v}_N(s)\|_{\mathbb{H}^4}^2\,\mathrm{d}s>L\right\}.
\end{split}
\]
Consequently, for every $q\geq1$,
\begin{equation}\label{H2-probability-bound}
\sup_N\mathbb{P}\{Y_N>L\}
\leq C_{R,q,T}K^{-q}
+\frac{C_{T,K}(1+\|\mathbf{u}_0\|_{\mathbb{H}^2}^2)}{L},
\qquad K>T,\quad L>0.
\end{equation}
Here the fixed noise bound is included in the constants. Choosing first $K$ and then $L$ in \eqref{H2-probability-bound} shows that $(Y_N)_{N\geq1}$ is bounded in probability.

\emph{Compactness in the higher-regularity path space.}
For initial data in $\mathbb H^2$, the appropriate analogue of \eqref{work} is
\begin{equation}\label{work-H2}
\mathcal Z_{\mathbf u}^{(2)}
=\mathcal C([0,T];U')\cap L^2_w(0,T;\mathbb H^4)
\cap L^2(0,T;\mathbb H^3_{loc})
\cap\mathcal C([0,T];\mathbb H^2_w),
\end{equation}
endowed with the supremum of these four topologies. The space $U$ chosen in Section~\ref{sec3} is also suitable here. The compactness argument of Lemma~\ref{4lem} applies with $(\mathbb L^2,\mathbb H^2,\mathbb H^1_{loc})$ replaced by $(\mathbb H^2,\mathbb H^4,\mathbb H^3_{loc})$. In particular, boundedness in $L^\infty(0,T;\mathbb H^2)\cap L^2(0,T;\mathbb H^4)$ and equicontinuity in $U'$ imply relative compactness in \eqref{work-H2}. The local interpolation inequality used in this argument is
\[
\|\mathbf v\|_{\mathbb H^3(\mathcal O_m)}^2
\leq\varepsilon\|\mathbf v\|_{\mathbb H^4}^2
+C_{\varepsilon,m}\|\mathbf v\|_{U'}^2,
\]
which follows from local Rellich compactness by the same contradiction argument as \eqref{local-interpolation}.

To verify time tightness, the algebra property of $\mathbb H^2$ and \eqref{noise-H2} give
\begin{equation}\label{H2-drift-tightness}
\begin{split}
D_N&:=\int_0^T
\|P_{\leq N}\mathbf M_R(\mathbf v_N(s))\|_{\mathbb L^2}^2\,\mathrm ds\\
&\leq C_{d,C_{**}}\int_0^T
\big(\|\mathbf v_N(s)\|_{\mathbb H^4}^2
+1+\|\mathbf v_N(s)\|_{\mathbb H^2}^6\big)\,\mathrm ds
\leq C_{d,T,C_{**}}(1+Y_N^3).
\end{split}
\end{equation}
Thus $(D_N)$ is bounded in probability, and the drift primitives have $1/2$-H\"older seminorms in $\mathbb L^2$ bounded by $D_N^{1/2}$. For the stochastic part, let
\[
\begin{split}
\zeta_B^N&=T\wedge\inf\{t\geq0:\|\mathbf v_N(t)\|_{\mathbb H^2}^2>B\},\\
Z_{N,B}(t)&=\int_0^t\mathbf1_{\{s\leq\zeta_B^N\}}
P_{\leq N}\mathbf N(\mathbf v_N(s))\,\mathrm dW(s).
\end{split}
\]
For every integer $q\geq2$, the Burkholder--Davis--Gundy inequality yields
\[
\mathbb E\|Z_{N,B}(t)-Z_{N,B}(s)\|_{\mathbb H^2}^{2q}
\leq C_{q,C_{**}}(1+B)^q|t-s|^q,
\qquad0\leq s\leq t\leq T.
\]
For fixed $B$, Kolmogorov's criterion therefore bounds the time modulus in probability, uniformly in $N$. On the event $\{Y_N<B\}$, the stopped and unstopped stochastic primitives coincide on $[0,T]$. Since $\sup_N\mathbb P\{Y_N\geq B\}\to0$ as $B\to\infty$, the same tightness conclusion holds without stopping. The compactness criterion above and \eqref{H2-probability-bound} prove tightness of the laws of $\mathbf v_N$ on $\mathcal Z_{\mathbf u}^{(2)}$.

Lemma~\ref{H1-enhanced-tightness} also gives tightness of the laws of $\mathbf v_N$ on $\mathcal X_1$, defined in \eqref{global-X}. Combining compact sets in these two path spaces and imposing equality of their $\mathcal C([0,T];U')$ coordinates gives tightness on their intersection. Together with the fixed Wiener law, this proves tightness of the joint laws on
$(\mathcal Z_{\mathbf u}^{(2)}\cap\mathcal X_1)\times\mathcal C([0,T];U_0)$.
Along a subsequence of frequency levels $k\to\infty$, the Jakubowski--Skorokhod theorem yields represented pairs on a new complete probability space $(\bar\Omega,\bar{\mathbf F},\bar{\mathbb P})$ such that
\begin{equation}\label{H2-representation}
\begin{split}
\mathscr L(\bar{\mathbf v}_k,\bar W^k)
&=\mathscr L(\mathbf v_k,W^k),\\
(\bar{\mathbf v}_k,\bar W^k)&\longrightarrow(\mathbf v,W_*)
\quad\textrm{in }
(\mathcal Z_{\mathbf u}^{(2)}\cap\mathcal X_1)
\times\mathcal C([0,T];U_0),
\quad\bar{\mathbb P}\textrm{-a.s.}
\end{split}
\end{equation}
The weak components of \eqref{H2-representation} and the uniform boundedness principle imply
\[
\begin{split}
&\sup_k\left(\sup_{0\leq t\leq T}\|\bar{\mathbf v}_k(t)\|_{\mathbb H^2}^2
+\int_0^T\|\bar{\mathbf v}_k(s)\|_{\mathbb H^4}^2\,\mathrm ds\right)\\
&\qquad+\sup_{0\leq t\leq T}\|\mathbf v(t)\|_{\mathbb H^2}^2
+\int_0^T\|\mathbf v(s)\|_{\mathbb H^4}^2\,\mathrm ds<\infty,
\qquad\bar{\mathbb P}\textrm{-a.s.}
\end{split}
\]
The lower-order moment estimates \eqref{cutoff-L2} and \eqref{cutoff-H1} pass to the represented sequence by equality of laws and to the limit by lower semicontinuity and Fatou's lemma.

We next identify the nonlinear terms at this regularity. Let $\|\mathbf{v}\|_{\mathbb{H}^2}+\|\mathbf{w}\|_{\mathbb{H}^2}\leq B$ and set $\mathbf{z}=\mathbf{v}-\mathbf{w}$. The decomposition
\[
\mathbf{F}(\mathbf{v})-\mathbf{F}(\mathbf{w})
=|\mathbf{v}|^2\mathbf{z}
+((\mathbf{v}+\mathbf{w})\cdot\mathbf{z})\mathbf{w}
\]
and the algebra property of $\mathbb{H}^2$ give
\[
\begin{split}
\|\mathbf{F}(\mathbf{v})-\mathbf{F}(\mathbf{w})\|_{\mathbb{H}^2}
&\lesssim\big(\|\mathbf{v}\|_{\mathbb{H}^2}^2+
(\|\mathbf{v}\|_{\mathbb{H}^2}+\|\mathbf{w}\|_{\mathbb{H}^2})\|\mathbf{w}\|_{\mathbb{H}^2}\big)\|\mathbf{z}\|_{\mathbb{H}^2}\\
&\leq C_B\|\mathbf{z}\|_{\mathbb{H}^2}.
\end{split}
\]
The cutoff is scalar and depends only on the spatial norm. Hence
\[
\begin{split}
&a_R(\mathbf{v})\mathbf{F}(\mathbf{v})-a_R(\mathbf{w})\mathbf{F}(\mathbf{w})\\
&\quad=a_R(\mathbf{v})(\mathbf{F}(\mathbf{v})-\mathbf{F}(\mathbf{w}))
+(a_R(\mathbf{v})-a_R(\mathbf{w}))\mathbf{F}(\mathbf{w}),
\end{split}
\]
where $|a_R(\mathbf{v})-a_R(\mathbf{w})|\leq C_R\|\mathbf{z}\|_{\mathbb{H}^1}$ by \eqref{cutoff-definition}. For the precession term, expand
\[
\mathbf{v}\times\Delta\mathbf{v}-\mathbf{w}\times\Delta\mathbf{w}
=\mathbf{z}\times\Delta\mathbf{v}+\mathbf{w}\times\Delta\mathbf{z}
\]
and use $\mathbb{H}^2\hookrightarrow\mathbb{L}^\infty$. These calculations prove
\begin{equation}\label{H2-nonlinear-continuity}
\begin{split}
\|\mathbf{F}(\mathbf{v})-\mathbf{F}(\mathbf{w})\|_{\mathbb{H}^2}&\leq C_B\|\mathbf{z}\|_{\mathbb{H}^2},\\
\|a_R(\mathbf{v})\mathbf{F}(\mathbf{v})-a_R(\mathbf{w})\mathbf{F}(\mathbf{w})\|_{\mathbb{H}^2}&\leq C_{R,B}\|\mathbf{z}\|_{\mathbb{H}^2},\\
\|\mathbf{v}\times\Delta\mathbf{v}-\mathbf{w}\times\Delta\mathbf{w}\|_{\mathbb{L}^{2}}
&\leq\|\mathbf{z}\|_{\mathbb{L}^{\infty}}\|\Delta\mathbf{v}\|_{\mathbb{L}^{2}}+\|\mathbf{w}\|_{\mathbb{L}^{\infty}}\|\Delta\mathbf{z}\|_{\mathbb{L}^{2}}\\
&\leq C_B\|\mathbf{z}\|_{\mathbb{H}^2}.
\end{split}
\end{equation}
Fix a sample point at which \eqref{H2-representation} and the preceding pathwise bounds hold. They provide a finite constant $B$ for which \eqref{H2-nonlinear-continuity} applies simultaneously to $\bar{\mathbf v}_k(s)$ and $\mathbf v(s)$ for all $k$ and almost every $s$. For example,
\[
\|\mathbf F(\bar{\mathbf v}_k)-\mathbf F(\mathbf v)\|_{L^2(0,T;\mathbb H^2)}
\leq C_B\|\bar{\mathbf v}_k-\mathbf v\|_{L^2(0,T;\mathbb H^2)}
\longrightarrow0.
\]
Applying the other two estimates in \eqref{H2-nonlinear-continuity} in the same way gives
\begin{equation}\label{H2-nonlinear-limits}
\begin{aligned}
\mathbf F(\bar{\mathbf v}_k)&\longrightarrow\mathbf F(\mathbf v)
&&\textrm{in }L^2(0,T;\mathbb H^2),\\
a_R(\bar{\mathbf v}_k)\Delta\mathbf F(\bar{\mathbf v}_k)&\longrightarrow a_R(\mathbf v)\Delta\mathbf F(\mathbf v)
&&\textrm{in }L^2(0,T;\mathbb L^2),\\
\bar{\mathbf v}_k\times\Delta\bar{\mathbf v}_k&\longrightarrow\mathbf v\times\Delta\mathbf v
&&\textrm{in }L^2(0,T;\mathbb L^2),
\end{aligned}
\end{equation}
almost surely. In particular, the global norm in the cutoff is identified by strong convergence in $\mathcal X_1$.
The projections are removed by the contraction estimate
\[
\|P_{\leq k}f_k-f\|_{L^2(0,T;X)}
\leq\|f_k-f\|_{L^2(0,T;X)}+\|(P_{\leq k}-I)f\|_{L^2(0,T;X)},
\]
where $X$ is the indicated Sobolev space. The second term tends to zero by strong convergence of the projections and dominated convergence. For the highest-order term,
\[
\Delta^2\bar{\mathbf v}_k\rightharpoonup\Delta^2\mathbf v
\quad\textrm{in }L^2(0,T;\mathbb L^2),
\qquad\bar{\mathbb P}\textrm{-a.s.}
\]
Its scalar time primitives converge at each time by weak convergence, and uniformly in time by their common $1/2$-H\"older bound. The projection error is bounded by
\[
T^{1/2}\|(P_{\leq k}-I)\phi\|_{\mathbb L^2}
\|\mathbf M_R(\bar{\mathbf v}_k)\|_{L^2(0,T;\mathbb L^2)}
\longrightarrow0,
\qquad\phi\in\mathbb L^2,
\]
by \eqref{H2-drift-tightness} and the represented pathwise bounds. The lower-order linear terms and the correction converge by \eqref{H2-representation} and \eqref{noise-H2}. Consequently,
\begin{equation}\label{H2-drift-limit}
\begin{split}
\sup_{0\leq t\leq T}\Bigg|
&\int_0^t(P_{\leq k}\mathbf M_R(\bar{\mathbf v}_k(s)),\phi)_{\mathbb L^2}\,\mathrm ds-\int_0^t(\mathbf M_R(\mathbf v(s)),\phi)_{\mathbb L^2}\,\mathrm ds
\Bigg|\longrightarrow0,
\qquad\bar{\mathbb P}\textrm{-a.s.}
\end{split}
\end{equation}
We identify the stochastic term by its scalar martingales, as in Section~\ref{sec5}. Set $\mathscr H_2=\mathcal L_2(\ell^2;\mathbb H^2)$. The Lipschitz bound \eqref{noise-H2} and \eqref{H2-representation} give
\[
\begin{split}
&\int_0^T\|P_{\leq k}\mathbf N(\bar{\mathbf v}_k(s))-\mathbf N(\mathbf v(s))\|_{\mathscr H_2}^2\,\mathrm ds\\
&\quad\leq C_{C_{**}}\|\bar{\mathbf v}_k-\mathbf v\|_{L^2(0,T;\mathbb H^2)}^2
+2\int_0^T\|(P_{\leq k}-I)\mathbf N(\mathbf v(s))\|_{\mathscr H_2}^2\,\mathrm ds
\longrightarrow0
\end{split}
\]
almost surely. The last integral converges by dominated convergence and \eqref{noise-H2}. For $\phi\in\mathbb H^2$, define the scalar coefficients
\[
b_{k,j}^\phi(s)=(P_{\leq k}\mathbf N_j(\bar{\mathbf v}_k(s)),\phi)_{\mathbb L^2},
\qquad
b_j^\phi(s)=(\mathbf N_j(\mathbf v(s)),\phi)_{\mathbb L^2}.
\]
Writing $b_k^\phi=(b_{k,j}^\phi)_{j\geq1}$ and $b^\phi=(b_j^\phi)_{j\geq1}$, we obtain
\[
\|b_k^\phi-b^\phi\|_{L^2(0,T;\ell^2)}\longrightarrow0,
\qquad\bar{\mathbb P}\textrm{-a.s.}
\]
The uniform lower-order moments ensure uniform integrability of these squared norms: indeed, \eqref{noise-L2} gives, for every $p\geq1$,
\[
\bar{\mathbb E}\left(\int_0^T\|b_k^\phi(s)\|_{\ell^2}^2\,\mathrm ds\right)^p
\leq C_{p,T,C_*,\phi}\left(1+\bar{\mathbb E}\sup_{0\leq s\leq T}
\|\bar{\mathbf v}_k(s)\|_{\mathbb L^2}^{2p}\right)
\leq C_{p,T,C_*,\phi,\|\mathbf u_0\|_{\mathbb L^2}},
\]
uniformly in $k$. Define
\[
\begin{split}
M_k^\phi(t)&=(\bar{\mathbf v}_k(t)-P_{\leq k}\mathbf u_0,\phi)_{\mathbb L^2}
-\int_0^t(P_{\leq k}\mathbf M_R(\bar{\mathbf v}_k(s)),\phi)_{\mathbb L^2}\,\mathrm ds,\\
M^\phi(t)&=(\mathbf v(t)-\mathbf u_0,\phi)_{\mathbb L^2}
-\int_0^t(\mathbf M_R(\mathbf v(s)),\phi)_{\mathbb L^2}\,\mathrm ds.
\end{split}
\]
By \eqref{H2-representation} and \eqref{H2-drift-limit}, $M_k^\phi\to M^\phi$ uniformly in time almost surely. Equality of laws gives the martingale identities for $M_k^\phi$ and their quadratic and cross variations. The Burkholder--Davis--Gundy inequality and the preceding estimate bound their fourth moments uniformly. Thus, exactly as in the martingale identification of Section~\ref{sec3}, testing increments against bounded continuous functions of the joint past and passing to the limit shows that $W_*$ is cylindrical Wiener noise in the augmented joint filtration and that
\[
\langle M^\phi\rangle_t=\int_0^t\|b^\phi(s)\|_{\ell^2}^2\,\mathrm ds,
\qquad
\langle M^\phi,W_{*,j}\rangle_t=\int_0^t b_j^\phi(s)\,\mathrm ds.
\]
The difference between $M^\phi$ and $\sum_j\int_0^\cdot b_j^\phi(s)\,\mathrm dW_{*,j}(s)$ has zero quadratic variation and initial value zero. Hence it vanishes. Applying density to a countable set of tests gives the cutoff equation on a single full-measure set.

The represented bounds and \eqref{H2-drift-tightness} imply
\[
\mathbf M_R(\mathbf v)\in L^2(0,T;\mathbb L^2),
\qquad
\mathbf N(\mathbf v)\in L^2(0,T;\mathscr H_2),
\qquad\bar{\mathbb P}\textrm{-a.s.}
\]
The drift primitive and the stochastic integral therefore have continuous $\mathbb L^2$ paths. The cutoff identity holds in $\mathbb L^2$ for every $0\leq t\leq T$. Continuity in $\mathbb L^2$ and boundedness in $\mathbb H^2$ give continuity in $\mathbb H^1$ by Fourier interpolation. Thus $(\mathbf v,W_*)$ is a martingale weak solution in the class of Proposition~\ref{cutoff-solution}.

Pathwise uniqueness from that proposition and the Yamada--Watanabe principle give joint uniqueness in law in this class. Consequently,
\[
\mathscr L(\mathbf v,W_*)=\mathscr L(\mathbf u^R,W)
\quad\textrm{on }\mathcal X_1\times\mathcal C([0,T];U_0).
\]
The higher spatial regularity transfers through this equality. To see this explicitly, define
\[
Y(f)=\sup_{0\leq t\leq T}\|f(t)\|_{\mathbb H^2}^2
+\int_0^T\|f(s)\|_{\mathbb H^4}^2\,\mathrm ds,
\]
with value $+\infty$ when either norm is not finite. This is a Borel functional on $\mathcal X_1$, since $Y(f)=\sup_mY(P_{\leq m}f)$ and each truncated functional is continuous there. Therefore
\[
Y(\mathbf u^R)<\infty,\qquad\mathbb P\textrm{-a.s.}
\]
The progressively measurable $\mathbb H^1$ solution has a progressively measurable $\mathbb H^2$ version, since the inclusion $\mathbb H^2\hookrightarrow\mathbb H^1$ has a Borel inverse on its image. Its drift belongs to $L^2(0,T;\mathbb L^2)$ and its diffusion belongs to $L^2(0,T;\mathscr H_2)$ almost surely. Its weak identity thus also holds in $\mathbb L^2$ for every time.

We verify the hypotheses of the Hilbert-triple continuity theorem.
For $\mathbf v\in\mathbb H^4$, the algebra property of $\mathbb H^2$
and its embedding into $\mathbb L^\infty$ give
\[
\begin{split}
\|\mathbf F(\mathbf v)\|_{\mathbb H^2}
+\|\Delta\mathbf F(\mathbf v)\|_{\mathbb L^2}
&\leq C_d\|\mathbf v\|_{\mathbb H^2}^3,\\
\|\mathbf v\times\Delta\mathbf v\|_{\mathbb L^2}
&\leq\|\mathbf v\|_{\mathbb L^\infty}\|\Delta\mathbf v\|_{\mathbb L^2}
\leq C_d\|\mathbf v\|_{\mathbb H^2}^2.
\end{split}
\]
Together with $0\leq a_R\leq1$ and \eqref{noise-H2}, these bounds imply
\begin{equation}\label{H2-drift-integrability}
\|\mathbf M_R(\mathbf v)\|_{\mathbb L^2}^2
\leq C_{d,C_{**}}\big(1+\|\mathbf v\|_{\mathbb H^4}^2
+\|\mathbf v\|_{\mathbb H^2}^6\big).
\end{equation}
Since $Y(\mathbf u^R)<\infty$, we obtain
\begin{equation}\label{H2-variational-integrability}
\int_0^T\Big(
\|\mathbf u^R(s)\|_{\mathbb H^4}^2
+\|\mathbf M_R(\mathbf u^R(s))\|_{\mathbb L^2}^2
+\|\mathbf N(\mathbf u^R(s))\|_{\mathcal L_2(\ell^2;\mathbb H^2)}^2
\Big)\,\mathrm ds<\infty,
\qquad\mathbb P\textrm{-a.s.}
\end{equation}
Set $\mathbf y=\Lambda^2\mathbf u^R$ and apply $\Lambda^2$
to the cutoff identity. In the Hilbert triple
\[
\mathbb H^2\subset\mathbb L^2\subset\mathbb H^{-2},
\]
the solution, drift and diffusion are respectively
$\mathbf y$, $\Lambda^2\mathbf M_R(\mathbf u^R)$ and
$\Lambda^2\mathbf N(\mathbf u^R)$. Their squared norms are
exactly the three terms in \eqref{H2-variational-integrability},
and $\mathbf y(0)=\Lambda^2\mathbf u_0\in\mathbb L^2$.
Pardoux's continuity theorem \cite[Lemma~1.4]{pardouxt1980stochastic}
(see also \cite[Theorem~4.2.5]{prevot2007concise}) therefore
gives a continuous $\mathbb L^2$ modification of $\mathbf y$.
The theorem is applied locally on the stochastic intervals
where the integral in \eqref{H2-variational-integrability}
is bounded; its almost sure finiteness removes the localization.
Thus $\mathbf u^R\in\mathcal C([0,T];\mathbb H^2)$ almost surely.
This modification agrees with the existing $\mathbb H^1$ solution
at every time and proves \eqref{cutoff-H2-paths}.
\end{proof}

\subsection{Global pathwise strong solutions}

\begin{proof}[\emph{\textbf{Proof of Corollary \ref{main-H2-corollary}}}]
Apply Lemma~\ref{cutoff-H2-regularity} for every integer $R$ and every positive integer time horizon. On a common event of probability one, all cutoff solutions have continuous $\mathbb{H}^2$ paths and belong locally to $L^2(0,\infty;\mathbb{H}^4)$. Proposition~\ref{global-H1} gives $\tau_R\uparrow\infty$ on the same event. Given a finite $T$ and a sample point in this event, choose an integer $R$ such that $\tau_R>T$. The global solution agrees with $\mathbf{u}^R$ on $[0,T]$, so
\[
\mathbf{u}\in\mathcal{C}([0,T];\mathbb{H}^2)
\cap L^2(0,T;\mathbb{H}^4).
\]
The cutoff equals one on this interval, so
$\mathbf M_R(\mathbf u(s))=\mathbf M(\mathbf u(s))$.
Estimates \eqref{H2-drift-integrability} and \eqref{noise-H2} give
\[
\mathbf M(\mathbf u)\in L^2(0,T;\mathbb L^2),
\qquad
\mathbf N(\mathbf u)\in L^2(0,T;\mathcal L_2(\ell^2;\mathbb H^2)),
\qquad\mathbb P\textrm{-a.s.}
\]
Thus all drift integrals are Bochner integrals in $\mathbb L^2$,
and the stochastic integral has continuous $\mathbb H^2$ paths
after localization. The cutoff identity gives \eqref{strong-id}
in $\mathbb L^2$, simultaneously for all $0\leq t\leq T$.
The continuous adapted $\mathbb H^2$ version is progressively measurable.

Pathwise uniqueness follows from Theorem~\ref{the2}, since every strong solution has the regularity required in that theorem.
\end{proof}

\section*{Data availability}

No data was used for the research described in the article.

\section*{Conflict of interest statement}

The authors declared that they have no conflicts of interest to this work.

\end{document}